\begin{document}
\newcommand {\emptycomment}[1]{} 

\baselineskip=15pt
\newcommand{\nc}{\newcommand}
\newcommand{\delete}[1]{}
\nc{\mfootnote}[1]{\footnote{#1}} 
\nc{\todo}[1]{\tred{To do:} #1}

\nc{\mlabel}[1]{\label{#1}}  
\nc{\mcite}[1]{\cite{#1}}  
\nc{\mref}[1]{\ref{#1}}  
\nc{\meqref}[1]{\eqref{#1}} 
\nc{\mbibitem}[1]{\bibitem{#1}} 

\delete{
\nc{\mlabel}[1]{\label{#1}  
{\hfill \hspace{1cm}{\bf{{\ }\hfill(#1)}}}}
\nc{\mcite}[1]{\cite{#1}{{\bf{{\ }(#1)}}}}  
\nc{\mref}[1]{\ref{#1}{{\bf{{\ }(#1)}}}}  
\nc{\meqref}[1]{\eqref{#1}{{\bf{{\ }(#1)}}}} 
\nc{\mbibitem}[1]{\bibitem[\bf #1]{#1}} 
}

\newcommand {\comment}[1]{{\marginpar{*}\scriptsize\textbf{Comments:} #1}}
\nc{\mrm}[1]{{\rm #1}}
\nc{\id}{\mrm{id}}  \nc{\Id}{\mrm{Id}}

\def\a{\alpha}
\def\b{\beta}
\def\bd{\boxdot}
\def\bbf{\bar{f}}
\def\bF{\bar{F}}
\def\bbF{\bar{\bar{F}}}
\def\bbbf{\bar{\bar{f}}}
\def\bg{\bar{g}}
\def\bG{\bar{G}}
\def\bbG{\bar{\bar{G}}}
\def\bbg{\bar{\bar{g}}}
\def\bT{\bar{T}}
\def\bt{\bar{t}}
\def\bbT{\bar{\bar{T}}}
\def\bbt{\bar{\bar{t}}}
\def\bR{\bar{R}}
\def\br{\bar{r}}
\def\bbR{\bar{\bar{R}}}
\def\bbr{\bar{\bar{r}}}
\def\bu{\bar{u}}
\def\bU{\bar{U}}
\def\bbU{\bar{\bar{U}}}
\def\bbu{\bar{\bar{u}}}
\def\bw{\bar{w}}
\def\bW{\bar{W}}
\def\bbW{\bar{\bar{W}}}
\def\bbw{\bar{\bar{w}}}
\def\btl{\blacktriangleright}
\def\btr{\blacktriangleleft}
\def\ci{\circ}
\def\d{\delta}
\def\dd{\diamondsuit}
\def\D{\Delta}
\def\G{\Gamma}
\def\g{\gamma}
\def\k{\kappa}
\def\l{\lambda}
\def\lr{\longrightarrow}
\def\o{\otimes}
\def\om{\omega}
\def\p{\psi}
\def\r{\rho}
\def\ra{\rightarrow}
\def\rh{\rightharpoonup}
\def\lh{\leftharpoonup}
\def\s{\sigma}
\def\st{\star}
\def\ti{\times}
\def\tl{\triangleright}
\def\tr{\triangleleft}
\def\v{\varepsilon}
\def\vp{\varphi}
\def\vth{\vartheta}

\newtheorem{thm}{Theorem}[section]
\newtheorem{lem}[thm]{Lemma}
\newtheorem{cor}[thm]{Corollary}
\newtheorem{pro}[thm]{Proposition}
\theoremstyle{definition}
\newtheorem{defi}[thm]{Definition}
\newtheorem{ex}[thm]{Example}
\newtheorem{rmk}[thm]{Remark}
\newtheorem{pdef}[thm]{Proposition-Definition}
\newtheorem{condition}[thm]{Condition}
\newtheorem{question}[thm]{Question}
\renewcommand{\labelenumi}{{\rm(\alph{enumi})}}
\renewcommand{\theenumi}{\alph{enumi}}

\nc{\ts}[1]{\textcolor{purple}{Tianshui:#1}}

\font\cyr=wncyr10

 \title[Infinitesimal (BiHom-)bialgebras of any weight]{\bf Infinitesimal (BiHom-)bialgebras of any weight (II): Representations}

 \author[Ma]{Tianshui Ma}
 \address{School of Mathematics and Information Science, Henan Normal University, Xinxiang 453007, China}
         \email{matianshui@htu.edu.cn}

 \author[Makhlouf]{Abdenacer Makhlouf\textsuperscript{*}}
 \address{Universit{\'e} de Haute Alsace, IRIMAS- D\'epartement de  Math{\'e}matiques,  18, rue des Fr{\`e}res Lumi{\`e}re F-68093 Mulhouse, France}

         \email{abdenacer.makhlouf@uha.fr}

  \thanks{\textsuperscript{*}Corresponding author}

\date{\today}

\begin{abstract} The aim of this paper is to investigate  representation theory of  infinitesimal (BiHom-)bialgebras of any weight $\l$ (abbr. $\l$-inf(BH)-bialgebras). Firstly, inspired by  the well-known Majid-Radford's bosonization theory in Hopf algebra theory, we present a class of $\l$-inf(BH)-bialgebras, named $\l$-inf(BH)-biproduct bialgebras, consisting of an inf(BH)-product algebra structure and an inf(BH)-coproduct coalgebra structure, which  induces a structure of a $\l$-inf(BH)-Hopf bimodule over a $\l$-inf(BH)-bialgebra. Secondly, we explore  relationships among $\l$-inf(BH)-Hopf bimodules, $\l$-Rota-Baxter (BiHom-)bimodules, (BiHom-)dendriform bimodules and (BiHom-)pre-Lie bimodules. Finally, we provide two kinds of general Gelfand-Dorfman theorems related to BiHom-Novikov algebras.
\end{abstract}

 \keywords{ $\l$-inf(BH)-bialgebra, representation, $\l$-infBH-Hopf bimodule, $\l$-inf(BH)-biproduct bialgebra, $\l$-Rota-Baxter (BiHom-)bimodule, (BiHom-)dendriform bimodule,  (BiHom-)pre-Lie bimodule}

\subjclass[2020]{
17B61,
17D30,
17B38,
17A30.}

 \maketitle

\tableofcontents

\numberwithin{equation}{section}
\allowdisplaybreaks

\section{Introduction and Preliminaries}
 Yetter-Drinfel'd modules (or Crossed (bi)modules, or Yang-Baxter modules) over a bialgebra were introduced by D. N. Yetter \cite{Yet}, they can be seen as a (co)modules over (dual) V. G. Drinfel'd quantum doubles \cite{PW,Ra12}. A bicovariant bimodule appeared as the basic notion, in S. L. Woronowicz's approach, to differential calculus on quantum groups \cite{Wor}. Both objects arise very often in Hopf algebras and quantum groups theories. A bialgebra $B$ in Hopf algebras theory is an algebra $(B,\cdot)$ and at the same time a coalgebra $(B, \D)$ satisfying the compatibility condition that  $\D$ is an algebra map. Replacing the compatibility condition by $\D$ is a derivation, then one obtains an infinitesimal bialgebras of weight 0 (abbr. 0-inf-bialgebras) introduced by S. A. Joni and G.-C. Rota in connection with the calculus of divided differences \cite{JR}, which is exactly an associative analog of Lie bialgebras presented by V. G. Drinfel'd \cite{Dr}. In \cite{Ag99}, M. Aguiar started to develop a theory for these objects analogous to the theory of ordinary Hopf algebras, see also his series of papers \cite{Ag00b,Ag04}. More results about  0-inf-bialgebras can be found\cite{Ag00b,Ag04,LMMP3,MMS,MY,MLiY,MYZZ,WW14,Yau10}. In \cite{Ag04}, M. Aguiar introduced a notion of infinitesimal Hopf bimodule of weight 0 (abbr. 0-inf-Hopf bimodule) also discussed theirs relations with bimodules over the Drinfel'd double for 0-inf-bialgebras, Rota-Baxter bimodules, Dendriform bimodules and pre-Lie bimodules. The notion of $(-1)$-inf-bialgebra \cite{LR06} was introduced by J.-L. Loday and M. O. Ronco in connection with a structure theorem of cofree Hopf algebras. K. Ebrahimi-Fard in his PhD thesis \cite{EF06} defined  $\l$-inf bialgebras (for any weight $\l$) unifying the above two cases. For further  studies related to $\l$-inf bialgebras in the literature, see for examples \cite{EF06,Fo10,LR06,ZCGL18,ZG,ZGZL}. It is worth mentioning that a unitary and counitary $0$-inf-bialgebra $H$ is trivial, i.e., $H=0$ and $\v(1)=0$ \cite{Ag99}, but for non-zero weight case, this result does not hold \cite{MM,ZG}. The following construction motivates the study of  infinitesimal bialgebras with non-zero weight: Let $(A, \mu, 1)$ be a unitary algebra, define a comultiplication $\D: A\lr A$ by $\D(a)=-\l(a\o 1)$ (resp. $ \D(a)=-\l(1\o a)$) for all $a\in A$. Then $(A, \mu, 1, \D)$ is a unitary $\l$-inf-bialgebra. Results on $\l$-inf-bialgebras in \cite{MM,MLi,ZG} show   that many structures rely on the weight $\l$ and provide a motivation   to enhance the study of  infinitesimal bialgebras with any weight.

 Majid-Radford's bosonization \cite{Ra,Ra12,Maj1,Maj2,Ma99} is a biproduct Hopf algebra including a smash product algebra structure and a smash coproduct coalgebra structure, which plays a central role in the lifting theory in the classification of finite dimension pointed Hopf algebras \cite{AS} and provides examples for Rota-Baxter bialgebras \cite{ML}, since it has a categorical interpretation \cite{Maj1,Maj2}: $B\star H$ is a biproduct bialgebra if and only if $B$ is a bialgebra in the braided monoidal category of (left-left) Yetter-Drinfel'd modules $_H^H{\mathcal{YD}}$. This provides us a motivation to introduce the concept of $\l$-inf-Hopf bimodule over a $\l$-inf-bialgebra, where $\l\in K$, based on the infinitesimal version of Majid-Radford's bosonization structures. Although coming from different intentions, $\l$-inf-Hopf bimodules here recover M. Aguiar's in \cite{Ag04} when $\l=0$.

 BiHom-type (co)associative algebras, introduced by G. Graziani, A. Makhlouf, C. Menini and F. Panaite in \cite{GMMP}, involving two commuting multiplicative linear maps, can be seen as an extension of Hom-type algebras which appeared first in quasi-deformations of Lie algebras of vector fields. Hom-Lie algebras were introduced  by J. T. Hartwig, D. Larsson and S. D. Silvestrov in \cite{HLS} and Hom-associative algebras by A. Makhlouf and S. D. Silvestrov in \cite{MS}. In 2020, the BiHom-type of $0$-inf-bialgebras were introduced  and studied by L. Liu, A. Makhlouf, C. Menini and F. Panaite \cite{LMMP3}, they generalize the Hom-case considered in \cite{Yau10}.  The basic definitions and properties of $\l$-infBH-bialgebras were introduced in \cite{MM} and the aim of this paper is to complete their study.

 In this paper, we introduce  infinitesimal Hopf bimodule over infinitesimal bialgebra of {\bf any weight} and also discuss  BiHom-deformation. In Theorem \ref{thm:infbipro}, we provide a construction of $\l$-infBH-biproduct bialgebra inducing the notion of $\l$-infBH-Hopf bimodule in Definition \ref{de:20.01}. Then we consider the relationships summarized  in the following diagram:
$$\hspace{15mm} \xymatrix@C4.3em{
&\text{BiHom-bimodules over}\atop \text{ QT infBH-bialgebra}
\ar@2{->}^{\quad\text{Proposition}\ \mref{pro:20.08}}[r]
\ar@2{->}_{\text{Proposition}\  \mref{pro:20.10}}[d]
&\text{$\l$-Rota Baxter}\atop \text{bimodules}
\ar@2{->}^{\text{Proposition}\ \mref{pro:20.07}\ \ \ }[r]
&\text{~~BiHom-dendriform}\atop \text{bimodules}\\
&\text{$\l$-infBH-Hopf}\atop \text{bimodules}
\ar@2{->}^{\text{Proposition}\  \mref{pro:020.012}}[rr]
&&\text{BiHom-Pre-Lie}\atop \text{bimodules}
\ar@2{<-}^{\text{Proposition}\  \mref{pro:20.05}}[u]
&&}
$$

 In last section, we investigate two kinds of general Gelfand-Dorfman theorems related to BiHom-Novikov algebras (see Theorems \ref{thm:20.15} and \ref{thm:B20.15}).

 Throughout this paper, $K$ will be a field, and all vector spaces, tensor products, and homomorphisms are over $K$. We use Sweedler's notation for terminology on coalgebras. For a coalgebra $C$, we write comultiplication $\D(c)=c_1\o c_2$, for any $c\in C$. 
 We denote by $\id_M$ the identity map from $M$ to $M$, $\tau: M\o N\ra N\o M$ the flip map. We abbreviate "infinitesimal BiHom-" to "infBH-", and "respectively" to "resp.".

 Let us recall from \cite{GMMP,MLi} the following basic definitions and structures.
 \begin{defi}\mlabel{de:1.1} A {\bf BiHom-associative algebra} is a 4-tuple $(A,\mu,\a,\b)$, where $A$ is a linear space, $\a,\b:A\lr A$ and $\mu:A\o A\lr A$ (write $\mu(a\o b)=ab$) are linear maps satisfying the following conditions, for all $a,b,c\in A$:
 \begin{eqnarray}
 &\a\circ\b=\b\circ\a,\quad \a(ab)=\a(a)\a(b),\quad\b(ab)=\b(a)\b(b),&\mlabel{eq:1.2}\\
 &\a(a)(bc)=(ab)\b(c).&\mlabel{eq:1.3}
 \end{eqnarray}
 \end{defi}
 A BiHom-associative algebra $(A,\mu,\a,\b)$ is called {\bf unitary} if there exists an element $1_{A}\in A$ (called a unit) such that
 \begin{eqnarray}
 &\a(1_{A})=1_{A},\quad\b(1_{A})=1_{A},\quad a1_{A}=\a(a),\quad 1_{A}a=\b(a),\quad \forall a\in A.&\mlabel{eq:1.5}
 \end{eqnarray}

 A morphism $f:(A,\mu_{A},\a_{A},\b_{A})\longrightarrow (B,\mu_{B},\a_{B},\b_{B})$ of BiHom-associative algebras is a linear map $f:A\longrightarrow B$ such that $\a_{B}\circ f=f\circ\a_{A}$, $\b_{B}\circ f=f\circ\b_{A}$ and $f\circ\mu_{A}=\mu_{B}\circ(f\o f)$.

 \begin{rmk}
 {\bf ``Yau twist"}: Let $(A, \mu)$ be an associative algebra, $\a, \b: A\lr A$ two linear maps satisfying Eq.(\mref{eq:1.2}). Then $(A, \mu\ci (\a\o \b), \a, \b)$ is a BiHom-associative algebra.
 \end{rmk}

 \begin{defi}\mlabel{de:1.2} A {\bf BiHom-coassociative coalgebra} is a 4-tuple $(C,\D,\psi,\om)$, in which $C$ is a linear space, $\psi,\om:C\lr C$ and $\D:C\lr C\o C$ are linear maps, such that
 \begin{eqnarray}
 &\psi\circ\om=\om\circ \psi,~~~(\psi\o\psi)\circ\D=\D\circ\psi,~~
 (\om\o\om)\circ\D=\D\circ\om,&\mlabel{eq:1.7}\\
 &(\D\o\psi)\circ\D=(\om\o \D)\circ\D.&\mlabel{eq:1.9}
 \end{eqnarray}
 \end{defi}

 A BiHom-coassociative coalgebra $(C,\D,\psi,\om)$ is called {\bf counitary} if there exists a linear map $\v: C\lr K$ (called a counit) such that
 \begin{eqnarray}
 &\v\circ\psi=\v,\quad \v\circ\om=\v,\quad (\id_{C}\o\v)\circ\D=\om,\quad(\v\o \id_{C})\circ\D=\psi.&\mlabel{eq:1.11}
 \end{eqnarray}

 A morphism $g:(C,\D_{C},\psi_{C},\om_{C})\longrightarrow (D,\D_{D},\psi_{D},\om_{D})$ of BiHom-coassociative coalgebras is a linear map $g:C\longrightarrow D$ such that $\om_{D}\circ g=g\circ\om_{C}$, $\psi_{D}\circ g=g\circ\psi_{C}$ and $\D_{D}\circ g= (g\o g)\circ\D_{C}$.

 \begin{defi}\mlabel{de:1.3} Let $(A,\mu_A,\a_{A},\b_{A})$ be a BiHom-associative algebra. A {\bf left $(A,\mu_A,\a_{A},\b_{A})$-module} is a 4-tuple $(M,\tl,\a_{M},\b_{M})$,  where $M$ is a linear space,  $\a_{M}, \b_{M}: M\lr M$ and $\tl: A\o M\lr M$ (write $\tl(a\o m)=a\tl m$) are linear maps such that, for all $a, a'\in A, m\in M$,
 \begin{eqnarray}
 &\a_{M}\circ\b_{M}=\b_{M}\circ\a_{M},~~\a_{M}(a\triangleright m)=\a_{A}(a)\tl\a_{M}(m),~~\b_{M}(a\tl m)=\b_{A}(a)\tl\b_{M}(m),&\mlabel{eq:1.13}\\
 &\a_{A}(a)\tl(a'\tl m)=(aa')\tl\b_{M}(m).&\mlabel{eq:1.15}
 \end{eqnarray}
 Likewise, we can get the right version of $(A,\mu_A,\a_{A},\b_{A})$-module.

 If $(M,\tl,\a_{M},\b_{M})$ is a left $(A,\mu_A,\a_{A},\b_{A})$-module and at the same time $(M,\tr,\a_{M},\b_{M})$ is a right $(A,\mu_A,\a_{A},\b_{A})$-module (write $\tr(m\o a)=m\tr a$), then $(M,\tl,\tr,\a_{M},\b_{M})$ is an {\bf $(A,\mu_A,\a_{A},\b_{A})$-bimodule} if
 \begin{eqnarray}
 &\a_{A}(a)\tl(m\tr a')=(a\tl m)\tr \b_{A}(a').&\mlabel{eq:1.16}
 \end{eqnarray}

 Furthermore, if $(M, \mu_M, \a_M, \b_M)$ is a BiHom-associative algebra such that
 \begin{eqnarray}
 &\a_A(a)\tl (m n)=(a\tl m)\b_M(n),&\mlabel{eq:bimoda1}\\
 &\a_M(m)(n \tr a)=(m n)\tr \b_A(a),&\mlabel{eq:bimoda2}\\
 &\a_M(m)(a\tl n)=(m\tr a)\b_M(n)&\mlabel{eq:bimoda3}
 \end{eqnarray}
 hold for all $a\in A$ and $m, n\in M$, then we call $(M, \mu_M, \g, \nu, \a_{M}, \b_{M})$ is an {\bf $(A,\mu,\a_{A},\b_{A})$-bimodule algebra}.
 \end{defi}

 Dually, we have the following definitions.

 \begin{defi}\mlabel{de:01.14} Let $(C,\D_{C},\psi_{C},\om_{C})$ be a BiHom-coassociative coalgebra. A {\bf left $(C, \D_C,$ $\psi_{C},\om_{C})$-comodule} is a 4-tuple $(M, \rho, \psi_{M},\om_{M})$, where $M$ is a linear space, $\psi_{M},\om_{M}: M\rightarrow M$, $\rho: M\rightarrow C\o M$ are linear maps
 such that the following conditions are satisfied,
 \begin{eqnarray}
 &\psi_{M}\circ \om_{M}=\om_{M}\circ \psi_{M},~~(\psi_{C}\o \psi_{M})\circ \rho=\rho\circ \psi_{M},~~ (\om_{C}\o \om_{M})\circ \rho=\rho\circ \om_{M},& \mlabel{eq:01.22a}\\
 &(\om_{C}\o \rho)\circ \rho=(\D_{C}\o \psi_{M})\circ \rho.&\mlabel{eq:01.24}
 \end{eqnarray}
 If $(M,\rho,\psi_{M},\om_{M})$ is a left $(C,\D_C,\psi_{C},\om_{C})$-comodule and at the same time $(M,\vp,\psi_{M},\om_{M})$ is a right $(C,\D_C,\psi_{C},\om_{C})$-comodule, then $(M,\rho,\vp,\psi_{M},\om_{M})$ is a {\bf $(C,\D_C,\psi_{C},\om_{C})$-bicomodule} if
 \begin{eqnarray}
 &(\om_{C}\o \vp)\circ\rho=(\rho\o \psi_{C})\circ \vp.& \mlabel{eq:01.24aa}
 \end{eqnarray}

 Furthermore, if $(M, \D_M, \psi_M, \om_M)$ is a BiHom-coassociative coalgebra such that
 \begin{eqnarray}
 &(\om_A\o \D_M)\rho=(\rho\o \psi_M)\D_M,&\mlabel{eq:bicmodca1}\\
 &(\om_M\o \vp)\D_M=(\D_M\o \psi_A)\vp,&\mlabel{eq:bicmodca2}\\
 &(\om_M\o \rho)\D_M=(\vp\o \psi_M)\D_M&\mlabel{eq:bicmodca3}
 \end{eqnarray}
 hold, then we call $(M, \D_M, \rho, \vp, \psi_{M}, \om_{M})$ is a {\bf $(C,\D,\psi_{A},\om_{A})$-bicomodule coalgebra}.
 \end{defi}

\section{$\l$-infBH-Hopf bimodules}\mlabel{se:bimodule}
 The notion of $\l$-Rota-Baxter bimodule was introduced in \cite{BGM} by C. M. Bai, L. Guo and T. S. Ma in order to establish the bialgebra theory of Rota-Baxter algebras. In this section, we introduce  the notion of $\l$-infBH-Hopf bimodule which proposes the compatibility conditions for left and right $\l$-infBH-modules. We establish a $\l$-infBH-biproduct bialgebra which leads to $\l$-infBH-Hopf bimodules. We also discuss the relationship with Rota-Baxter BiHom-bimodules of any weight, BiHom-pre-Lie bimodules and BiHom-dendriform bimodules.

\subsection{Basic definitions} We first recall some basic definitions from \cite{MM}.
 \begin{defi}\mlabel{de:12.1} Let $\l$ be a given element of $K$. (1) A {\bf $\l$-infinitesimal BiHom-bialgebra} (abbr. $\l$-infBH-bialgebra) is a 7-tuple $(A, \mu, \D, \a, \b, \psi, \om)$ with the property that $(A, \mu, \a, \b)$ is a BiHom-associative algebra, $(A, \D, \psi, \om)$ is a BiHom-coassociative coalgebra satisfying the following conditions
 \begin{eqnarray}
 &\a\circ\psi=\psi\circ\a,\ \a\circ\om=\om\circ\a,\ \b\circ\psi=\psi\circ\b,\ \b\circ\om=\om\circ\b,&\mlabel{eq:12.1}\\
 &(\a\o\a)\circ\D=\D\circ\a,\ (\b\o\b)\circ\D=\D\circ\b,&\mlabel{eq:12.2}\\
 &\psi\circ\mu=\mu\circ(\psi\o\psi),\ \om\circ\mu=\mu\circ(\om\o\om),&\mlabel{eq:12.3}\\
 &\D\circ\mu=(\mu\o\b)\circ(\om\o\D)
 +(\a\o\mu)\circ(\D\o\psi)+\l(\a\om\o\b\psi).&\mlabel{eq:12.4}
 \end{eqnarray}

 (2) If further $(A, \mu, 1, \a, \b)$ is a unitary BiHom-associative algebra, then the 8-tuple $(A, \mu, 1, \D,$ $\a, \b, \psi, \om)$ is called a {\bf unitary $\l$-infBH-bialgebra} if
 \begin{eqnarray}
 & \psi(1_{A})=1_{A},\ \om(1_{A})=1_{A}.&\mlabel{eq:12.30}
 \end{eqnarray}

 (3) If further $(A, \D, \v, \psi, \om)$ is a counitary BiHom-coassociative coalgebra, then the 8-tuple $(A, \mu, \D, \v, \a, \b, \psi, \om)$ is called a {\bf counitary $\l$-infBH-bialgebra} if
 \begin{eqnarray}
 &\v\circ\a=\v,\ \v\circ\b=\v.&\mlabel{eq:12.31}
 \end{eqnarray}
 \end{defi}

 \begin{rmk}\mlabel{rmk:12.2} (1) If $\l=0$ in Definition \mref{de:12.1}, then we get infBH-bialgebras introduced by Liu, Makhlouf, Menini, Panaite in \cite[Definition 4.1]{LMMP3} and also studied in \cite{MLi,MLiY,MY}. If further $\a=\b=\psi=\om=\id$, then one  obtains Joni and Rota's infinitesimal bialgebras \mcite{JR}.

 (2) If $\l=-1$ in Definition \mref{de:12.1}, then we have the BiHom-version of Loday and Ronco's infinitesimal bialgebras \cite{LR06}.

 (3) Definition \mref{de:12.1} is the BiHom-version of Ebrahimi-Fard's $\l$-infinitesimal bialgebras \cite{EF06} studied in \cite{ZG}.

 (4) A morphism between $\l$-infBH-bialgebras is a linear map that commutes with the structure maps $\a, \b, \psi, \om$, the multiplication $\mu$ and the comultiplication $\D$.
 \end{rmk}

 \begin{defi}\mlabel{de:12.17} Let $\l$ be a given element of $K$ and $(A,\mu,\D,\a_{A},\b_{A},\psi_{A},\om_{A})$ be a $\l$-infBH-bialgebra. A {\bf left $\l$-infBH-Hopf module} is a 7-tuple $(M,\tl,\rho, \a_{M},\b_{M},$ $\psi_{M},\om_{M})$, where $(M, \tl, \a_{M}$, $\b_{M})$ is a left $(A,\mu,\a_{A},\b_{A})$-module and $(M, \rho, \psi_{M}, \om_{M})$ is a left $(A,\D,\psi_{A},\om_{A})$-comodule, such that any two maps of $\a_{M}, \b_{M}, \psi_{M}, \om_{M}$ commute and
 \begin{eqnarray}
 &\rho\circ \tl=(\mu\o \b_{M})(\om_{A}\o\rho)+(\a_{A}\o\tl)(\D\o\psi_{M})
 +\l\a_{A}\om_{A}\o\b_{M}\psi_{M}.&\mlabel{eq:12.13}
 \end{eqnarray}

 If further $(A,\mu,1,\D,\a_{A},\b_{A},\psi_{A},\om_{A})$ ($(A,\mu,\v,\D,\a_{A},\b_{A},\psi_{A},\om_{A})$) is a (co)unitary $\l$-infBH-bialgebra, then $(M,\tl,\rho,$ $\a_{M},\b_{M},\psi_{M},\om_{M})$ is called a {\bf (co)unitary left $\l$-infBH-Hopf module}.
 \end{defi}

 \begin{rmk}\mlabel{rmk:12.18} In terms of above notations, the compatibility condition of a left $\l$-infBH-Hopf module given in Eq.(\mref{eq:12.13}) may be written as
 \begin{eqnarray*}
 &(a\tl m)_{(-1)}\o (a\tl m)_{(0)}=\om_{A}(a)m_{(-1)}\o\b_{M}(m_{(0)})+\a_{A}(a_{1})\o a_{2}\tl\psi_{M}(m)+
 \l\a_{A}\om_{A}(a)\o\b_{M}\psi_{M}(m),&
 \end{eqnarray*}
 here we write $\tl(a\o m)=a\tl m$ and $\rho(m)=m_{(-1)}\o m_{(0)}$.
 \end{rmk}
 
 \begin{defi}\mlabel{de:12.17a} Let $\l$ be a given element of $K$ and $(A,\mu,\D,\a_{A},\b_{A},\psi_{A},\om_{A})$ a $\l$-infBH-bialgebra. A {\bf right $\l$-infBH-Hopf module} is a 7-tuple $(M,\tr,\varphi, \a_{M},\b_{M},$ $\psi_{M},\om_{M})$, where $(M, \tr, \a_{M}, \b_{M})$ is a right $(A,\mu,\a_{A},\b_{A})$-module and $(M, \varphi, \psi_{M}, \om_{M})$ is a right $(A,\D,\psi_{A},\om_{A})$-comodule, such that any two maps of $\a_{M}, \b_{M}, \psi_{M}, \om_{M}$ commute and
 \begin{eqnarray}
 &\varphi\circ \tr= (\tr\o \b_{A})(\om_{M}\o\D)+(\a_{M}\o\mu)(\varphi\o\psi)
 +\l\a_{M}\om_{M}\o\b_{A}\psi_{A}.&\mlabel{eq:12.13a}
 \end{eqnarray}

 If further $(A,\mu,1,\D,\a_{A},\b_{A},\psi_{A},\om_{A})$ ($(A,\mu,\v,\D,\a_{A},\b_{A},\psi_{A},\om_{A})$) is a (co)unitary $\l$-infBH-bialgebra, then $(M,\tr,\varphi,$ $\a_{M},\b_{M},\psi_{M},\om_{M})$ is called a {\bf (co)unitary right $\l$-infBH-Hopf module}.
 \end{defi}
 
 \begin{rmk}\mlabel{rmk:12.18a} In terms of above notations, the compatibility condition of a right $\l$-infBH-Hopf module given in Eq.(\mref{eq:12.13a}) may be written as
 \begin{eqnarray*}
 &(m\tr a)_{(0)}\o (m\tr a)_{(1)}=\om_{M}(m)\tr a_{1}\o \b_{A}(a_{2})+\a_{M}(m_{(0)})\o m_{(1)}\psi_{A}(a)+\l\a_{M}\om_{M}(m)\o \b_{A}\psi_{A}(a),&
 \end{eqnarray*}
 here we write $\tr(m\o a)=m\tr a$ and $\vp(m)=m_{(0)}\o m_{(1)}$.
 \end{rmk}
 
 \begin{defi}\mlabel{de:20.01} Let $(A, \mu, \D, \a_{A}, \b_{A}, \psi_{A}, \om_{A})$ be a $\l$-infBH-bialgebra, $M$ a vector space, $\a_{M}$, $\b_{M}$, $\psi_{M}$, $\om_{M}:\ M\longrightarrow M$ four linear maps such that any two of them commute. A {\bf $\l$-infBH-Hopf bimodule} over  $(A,\mu,\D,\a_{A},\b_{A},\psi_{A},\om_{A})$ is a 9-tuple $(M,\tl,\tr,\rho,\varphi,\a_{M},\b_{M},\psi_{M},\om_{M})$, where
 \begin{eqnarray*}
 \tl:A\o M\longrightarrow M,\ \tr:M\o A\longrightarrow M,\ \rho:M\longrightarrow A\o M\ \hbox{and}\ \varphi:M\longrightarrow M\o A
 \end{eqnarray*}
 are linear maps satisfying the following conditions:

 (1) $(M,\tl,\rho,\a_{M},\b_{M},\psi_{M},\om_{M})$ is a left $\l$-infBH-Hopf module over $(A,\mu,\D,\a_{A},\b_{A},\psi_{A},\om_{A})$.

 (2) $(M,\tr,\varphi,\a_{M},\b_{M},\psi_{M},\om_{M})$ is a right $\l$-infBH-Hopf module over $(A,\mu,\D,\a_{A},\b_{A},\psi_{A},\om_{A})$.

 (3) $(M,\tl,\tr,\a_{M},\b_{M})$ is a BiHom-bimodule over $(A,\mu, \a_{A},\b_{A})$.

 (4) $(M,\rho,\varphi,\psi_{M},\om_{M})$ is a BiHom-bicomodule over $(A, \D, \psi_{A},\om_{A})$.

 (5) the following equations hold:
 \begin{eqnarray}
 &(\tl\o\b_{A})(\om_{A}\o\varphi)=\varphi\circ\tl,&\mlabel{eq:20.01}\\
 &(\a_{A}\o\tr)(\rho\o\psi_{A})=\rho\circ\tr.&\mlabel{eq:20.02}
 \end{eqnarray}
 \end{defi}

 \begin{rmk} (1) Eq.(\mref{eq:20.01}) (resp. Eq.(\mref{eq:20.02})) means that $(M, \tl, \vp,\a_{M},\b_{M},\psi_{M},\om_{M})$ (resp. $(M, \tr, \rho$, $\a_{M}, \b_{M}, \psi_{M}, \om_{M})$) is a left-right (resp. right-left) BiHom-Long module. As we know that Long module was introduced by Long in \cite{Long} to develop the Brauer group theory for algebras.

 (2) If the structure maps $\a_{A}=\b_{A}=\psi_{A}=\om_{A}=\id_A$ and $\a_{M}=\b_{M}=\psi_{M}=\om_{M}=\id_M$ in Definition \mref{de:20.01}, then the 5-tuple $(M,\tl,\tr,\rho,\varphi)$ is called a {\bf $\l$-inf-Hopf bimodule} over the $\l$-inf-bialgebra $(A,\mu,\D)$.

 \end{rmk}
 
 \begin{ex}\mlabel{ex:20.02} For any $\l$-infBH-bialgebra $(A, \mu, \D, \a, \b,\psi,\om)$, the space $M=A\o A$ may be endowed with the following $\l$-infBH-Hopf $(A, \mu, \D, \a, \b,\psi,\om)$-bimodule structure:
 \begin{eqnarray*}
 \gamma=\mu\o\b_{A},\ \nu=\a_{A}\o\mu,\ \rho=\D\o\psi_{A}\ and\ \varphi=\om_{A}\o\D.
 \end{eqnarray*}
 \end{ex}
 
 \begin{thm}\mlabel{thm:20.13} Let $(M, \tl, \tr, \rho, \varphi)$ be a $\l$-inf-Hopf bimodule over a $\l$-inf-bialgebra $(A, \mu, \D)$, $\a_A$, $\b_A$, $\psi_A$, $\om_A: A\longrightarrow A$ be morphisms of algebras and coalgebras such that any two of them commute, $\a_{M}$, $\b_{M}$, $\psi_{M}$, $\om_{M}:M\longrightarrow M$ four linear maps such that any two of them commute and
 \begin{eqnarray*}
 &\a_{M}(a\tl m)=\a_{A}(a)\tl\a_{M}(m),\quad
 \a_{M}(m)_{(-1)}\o\a_{M}(m)_{(0)}=\a_{A}(m_{(-1)})\o\a_{M}(m_{(0)});&\\
 &\b_{M}(a\tl m)=\b_{A}(a)\tl\b_{M}(m),\quad
 \b_{M}(m)_{(-1)}\o\b_{M}(m)_{(0)}=\b_{A}(m_{(-1)})\o\b_{M}(m_{(0)});&\\
 &\psi_{M}(a\tl m)=\psi_{A}(a)\tl\psi_{M}(m),\quad
 \psi_{M}(m)_{(-1)}\o\psi_{M}(m)_{(0)}=\psi_{A}(m_{(-1)})\o\psi_{M}(m_{(0)});&\\
 &\om_{M}(a\tl m)=\om_{A}(a)\tl\om_{M}(m),\quad
 \om_{M}(m)_{(-1)}\o\om_{M}(m)_{(0)}=\om_{A}(m_{(-1)})\o\om_{M}(m_{(0)});&\\
  &\a_{M}(m\tr a)=\a_{M}(m)\tr\a_{A}(a),\quad
 \a_{M}(m)_{(0)}\o\a_{M}(m)_{(1)}=\a_{M}(m_{(0)})\o\a_{A}(m_{(1)});&\\
 &\b_{M}(m\tr a)=\b_{M}(m)\tr\b_{A}(a),\quad
 \b_{M}(m)_{(0)}\o\b_{M}(m)_{(1)}=\b_{M}(m_{(0)})\o\b_{A}(m_{(1)});&\\
 &\psi_{M}(m\tr a)=\psi_{M}(m)\tr\psi_{A}(a),\quad
 \psi_{M}(m)_{(0)}\o\psi_{M}(m)_{(1)}=\psi_{M}(m_{(0)})\o\psi_{A}(m_{(1)});&\\
 &\om_{M}(m\tr a)=\om_{M}(m)\tr\om_{A}(a),\quad
 \om_{M}(m)_{(0)}\o\om_{M}(m)_{(1)}=\om_{M}(m_{(0)})\o\om_{A}(m_{(1)}).&
 \end{eqnarray*}
 Then $M_{(\a, \b, \psi, \om)}: =(M, \tl_{(\a,\b)}: =\tl\circ(\a_{A}\o\b_{M}), \tr_{(\a,\b)}: =\tr\circ(\a_{M}\o\b_{A}), \rho_{(\psi,\om)}: =(\om_{A}\o\psi_{M})\circ\rho, \varphi_{(\psi,\om)}: =(\om_{M}\o\psi_{A})\circ\varphi, \a_{M}, \b_{M}, \psi_{M}, \om_{M})$ is a $\l$-infBH-Hopf bimodule over $(A, \mu_{(\a,\b)}:=\mu\circ (\a_A\o \b_A), \D_{(\psi, \om)}:=(\psi_A\o \om_A)\circ \D, \a_{A}, \b_{A}, \psi_{A}, \om_{A})$, called the Yau twist of $(M,\tl,\tr,\rho,\varphi)$.
 \end{thm}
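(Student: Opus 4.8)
The plan is to verify, one at a time, the five conditions in Definition~\mref{de:20.01} for the $9$-tuple $M_{(\a,\b,\psi,\om)}$ over $(A,\mu_{(\a,\b)},\D_{(\psi,\om)},\a_A,\b_A,\psi_A,\om_A)$. The first step is to record that this last $8$-tuple is a $\l$-infBH-bialgebra --- it is the Yau twist of the $\l$-inf-bialgebra $(A,\mu,\D)$ by the pairwise commuting algebra-and-coalgebra morphisms $\a_A,\b_A,\psi_A,\om_A$; one checks Eq.~\meqref{eq:12.4} for it by the unfolding argument below, or invokes the corresponding result of \mcite{MM}. For every axiom of the twisted structure the method is uniform: expand the twisted maps $\tl_{(\a,\b)}=\tl\ci(\a_A\o\b_M)$, $\tr_{(\a,\b)}=\tr\ci(\a_M\o\b_A)$, $\rho_{(\psi,\om)}=(\om_A\o\psi_M)\ci\rho$, $\varphi_{(\psi,\om)}=(\om_M\o\psi_A)\ci\varphi$, $\mu_{(\a,\b)}$ and $\D_{(\psi,\om)}$ into their definitions; apply the strict ($\l$-inf) version of the corresponding axiom for $(M,\tl,\tr,\rho,\varphi)$ over $(A,\mu,\D)$ to the leading action/coaction/product/coproduct; and then slide each of the eight maps $\a_A,\b_A,\psi_A,\om_A,\a_M,\b_M,\psi_M,\om_M$ to the outermost position, using (i)~their pairwise commutativity, (ii)~the hypothesis that $\a_A,\b_A,\psi_A,\om_A$ are morphisms of both $\mu$ and $\D$, and (iii)~the displayed relations saying that $\a_M,\b_M,\psi_M,\om_M$ intertwine the actions and coactions.

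I would first dispose of conditions~(3) and~(4). Condition~(3) --- that $(M,\tl_{(\a,\b)},\tr_{(\a,\b)},\a_M,\b_M)$ is a BiHom-bimodule over $(A,\mu_{(\a,\b)},\a_A,\b_A)$ --- is the Yau twist of a bimodule: Eqs.~\meqref{eq:1.13}, \meqref{eq:1.15} and~\meqref{eq:1.16} for the twisted data follow from associativity of $\mu$, the strict bimodule axioms, and the intertwining relations for $\a_M$ and $\b_M$. Condition~(4) --- that $(M,\rho_{(\psi,\om)},\varphi_{(\psi,\om)},\psi_M,\om_M)$ is a BiHom-bicomodule over $(A,\D_{(\psi,\om)},\psi_A,\om_A)$ --- is the formally dual computation. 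In both cases the commutativity of the four maps on $M$ and the morphism hypotheses on $\a_A,\dots,\om_A$ are exactly what is needed to pull the twisting maps outside.

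The core is conditions~(1) and~(2). The compatibility of the twisted action and coaction with $\a_M,\b_M,\psi_M,\om_M$ is immediate from the displayed relations, so the real content is the three-term Hopf-module identity~\meqref{eq:12.13} for the twisted data,
\begin{eqnarray*}
&\rho_{(\psi,\om)}\ci\tl_{(\a,\b)}=(\mu_{(\a,\b)}\o\b_M)(\om_A\o\rho_{(\psi,\om)})+(\a_A\o\tl_{(\a,\b)})(\D_{(\psi,\om)}\o\psi_M)+\l\,\a_A\om_A\o\b_M\psi_M,&
\end{eqnarray*}
together with its mirror image~\meqref{eq:12.13a}. The plan is to rewrite the left-hand side as $(\om_A\o\psi_M)\ci\rho\ci\tl\ci(\a_A\o\b_M)$, apply the strict form of~\meqref{eq:12.13} to split it into a $\mu$-summand, a $\D$-summand and a $\l$-summand, and then, summand by summand, commute $\om_A$ and $\psi_M$ inward past the action, coaction, product and coproduct --- using that $\a_A,\dots,\om_A$ are algebra and coalgebra morphisms and that $\psi_M$ intertwines $\tl$ and $\rho$ --- and match the outcome against the three terms on the right-hand side after those have been unfolded in the same way. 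The right-handed identity~\meqref{eq:12.13a} is treated symmetrically, with $\tr$, $\varphi$, $\om_M$ and $\psi_A$ in place of $\tl$, $\rho$, $\om_A$ and $\psi_M$.

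It remains to check condition~(5): Eq.~\meqref{eq:20.01}, $(\tl_{(\a,\b)}\o\b_A)(\om_A\o\varphi_{(\psi,\om)})=\varphi_{(\psi,\om)}\ci\tl_{(\a,\b)}$, and Eq.~\meqref{eq:20.02}, $(\a_A\o\tr_{(\a,\b)})(\rho_{(\psi,\om)}\o\psi_A)=\rho_{(\psi,\om)}\ci\tr_{(\a,\b)}$. These are the Long-module compatibilities and carry no $\l$-term; unfolding both sides and using the strict versions of~\meqref{eq:20.01}--\meqref{eq:20.02} together with the commutativity and intertwining properties of the twisting maps settles them at once. I expect the only real obstacle to lie in the bookkeeping for~(1) and~(2): keeping track of which of the eight maps attaches to which tensor leg as the three-term identity~\meqref{eq:12.13} is unfolded, and confirming that after all the commuting-through the two sides coincide term by term. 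Everything else is a mechanical diagram chase, the morphism and commutativity hypotheses being precisely the inputs that allow every twisting map to be slid to the outermost position.
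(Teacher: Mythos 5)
Your proposal is correct and follows essentially the same route as the paper: the paper likewise quotes the Yau-twist result of \cite{MM} for the bialgebra $(A,\mu_{(\a,\b)},\D_{(\psi,\om)})$, verifies the three-term identity \eqref{eq:12.13} by expanding the twisted maps, applying the strict $\l$-inf-Hopf-bimodule axiom, and sliding $\om_A\o\psi_M$ through via the morphism and intertwining hypotheses, then checks \eqref{eq:20.01}--\eqref{eq:20.02} the same way and dismisses the remaining axioms as immediate. The only difference is one of presentation: you describe the uniform mechanism while the paper writes out two of the representative computations explicitly.
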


 \begin{proof} By \cite[Theorem 2.3]{MM}, we  obtain that $A_{(\a,\b,\psi,\om)}:=(A,\mu_{(\a_A,\b_A)}:
 =\mu\circ(\a_A\o\b_A),
 \D_{(\psi_A,\om_A)}:=(\om_A\o\psi_A)\circ\D,\a_A,\b_A,\psi_A,\om_A)$ is a $\l$-infBH-bialgebra. For simplicity, we denote $\tl_{(\a,\b)}(a\o m)=a\tl' m=\a_{A}(a)\tl \b_{M}(m)$, $\tr_{(\a,\b)}(m\o a)=m\tr' a=\a_{M}(m)\tl \b_{A}(a)$, $\rho_{(\psi,\om)}(m)=m_{[-1]}\o m_{[0]}=\om_{A}(m_{(-1)})\o\psi_{M}(m_{(0)})$ and $\varphi_{(\psi,\om)}(m)=m_{[(0)]}\o m_{[(1)]}=\om_{M}(m_{(0)})\o\psi_{A}(m_{(1)})$ for all $a\in A$ and $m\in M$. The fact that $\a_{M}(a\tl' m)=\a_{A}(a)\tl'\a_{M}(m)$, $\b_{M}(a\tl' m)=\b_{A}(a)\tl'\b_{M}(m)$, $\a_{A}(a)\tl'(b\tl' m)=(ab)\tl'\b_{M}(m)$, $\a_{A}(a)\tl'(m\tr' b)=(a \tl'm)\tr'\b_{A}(b)$ hold obviously. We show that  (1) and (3) in Definition \mref{de:20.01} hold as follows:
 \begin{eqnarray*}
 &&\om_{A}(a)\ast m_{[-1]}\o\b_{M}(m_{[0]})+\a_{A}(a_{[1]})\o a_{[2]}\tl'\psi_{M}(m)+ \l\a_{A}\om_{A}(a)\o \b_{M}\psi_{M}(m)\\
 &&\hspace{10mm}=\a_{A}\om_{A}(a)\b_{A}\om_{A}(m_{(-1)})\o\b_{M}\psi_{M}(m_{(0)})
 +\a_{A}\om_{A}(a_{1})\o \a_{A}\psi_{A}(a_{2})\tl\b_{M}\psi_{M}(m)\\
 &&\hspace{15mm}+\l\a_{A}\om_{A}(a)\o \b_{A}\psi_{A}(b)\\
 &&\hspace{10mm}=(\om_{A}\o\psi_{M})(\a_{A}(a)\b_{A}(m_{(-1)})\o\b_{M}(m_{(0)})
 +\a_{A}(a_{1})\o \a_{A}(a_{2})\tl\b_{M}(m)\\
 &&\hspace{15mm}+ \l\a_{A}(a)\o \b_{M}(m))\\
 &&\hspace{10mm}=\om_{A}((\a_{A}(a)\tl\b_{M}(m))_{(-1)})\o \psi_{M}((\a_{A}(a)\tl\b_{M}(m))_{(0)})\\
 &&\hspace{10mm}=(a\tl' m)_{[-1]}\o (a\tl' m)_{[0]}.
 \end{eqnarray*}
 Therefore Eq.(\mref{eq:12.13}) holds, the rest can be checked immediately. Similarly, we  obtain (2) and (4) in Definition \mref{de:20.01}. Next we only need to prove that Eqs.(\mref{eq:20.01}) and (\mref{eq:20.03}) hold as follows,
 \begin{eqnarray*}
 \om_{A}(a)\tl' m_{[(0)]}\o\b_{A}(m_{[(1)]})
 &=&\a_{A}\om_{A}(a)\tl\b_{M}\om_{M}(m_{(0)})\o\b_{A}\psi_{A}(m_{(1)})\\
 &=&\om_{M}(\a_{A}(a)\tl\b_{M}(m_{(0)}))\o\psi_{A}\b_{A}(m_{(1)})\\
 &=&\om_{M}((\a_{A}(a)\tl\b_{M}(m))_{(0)})\o \psi_{A}((\a_{A}(a)\tl\b_{M}(m))_{(1)})\\
 &=&(a\tl' m)_{[(0)]}\o (a\tl' m)_{[(1)]},
 \end{eqnarray*}
 and
 \begin{eqnarray*}
 \a_{A}(m_{[-1]})\o m_{[0]}\tr'\psi_{A}(a)
 &=&\a_{A}\om_{A}(m_{(-1)})\o\a_{M}\psi_{M}(m_{(0)})\tr \psi_{A}\b_{A}(a)\\
 &=&\om_{A}((\a_{M}(m)\tr\b_{A}(a))_{(-1)})\o
 \psi_{M}((\a_{M}(m)\tr\b_{A}(a))_{(0)})\\
 &=&(m \tr' a)_{[-1]}\o (m \tr' a)_{[0]},
 \end{eqnarray*}
 finishing the proof.
 \end{proof}

 \subsection{Infinitesimal BiHom-biproduct bialgebras of weight $\l$}
 In what follows, we provide an explanation for $\l$-infBH-Hopf bimodules based on the construction of a class of infinitesimal BiHom-bialgebras, named $\l$-infBH-biproduct bialgebras, following the well-known Majid-Radford bosonization in  Hopf algebras theory.

 \begin{lem}\label{lem:smashproduct} Let $A, M$ be two vector spaces together with two commuting linear maps $\a_A, \b_A:A\lr A$, two commuting linear maps $\a_M, \b_M:M\lr M$, four linear maps $\mu_A: A\o A\lr A, a\o b\mapsto a b$, $\mu_M:M\o M\lr M, m\o n\mapsto m n$, $\tl:A\o M\lr M, a\o m\mapsto a\tl m$ and  $\tr:M\o A\lr M, m\o a\mapsto m\tr a$. Define a binary operation $\mu_{A\oplus M}$ on $A\oplus M$ by
 \begin{eqnarray}\label{eq:smashproduct}
 (a+m)(b+n):=a b+a\tl n+m\tr b +m n,\quad \forall~a, b\in A,~m, n\in M.
 \end{eqnarray}
 Then $(A\oplus M, \mu_{A\oplus M}, \a_{A}\oplus\a_{M},\b_{A}\oplus\b_{M})$, where $(\a_{A}\oplus\a_{M})(a+m)=\a_{A}(a)+\a_{M}(m)$, $(\b_{A}\oplus\b_{M})(a+m)=\b_{A}(a)+\b_{M}(m)$, is a BiHom-associative algebra if and only if
 \begin{enumerate}
   \item $(A, \mu_A, \a_A, \b_A)$ is a BiHom-associative algebra;
   \item $(M, \mu_M, \tl, \tr, \a_M, \b_M)$ is a BiHom-bimodule algebra over $(A, \mu_A, \a_A, \b_A)$.
 \end{enumerate}
 In this case, we call this algebra an {\bf infBH-product algebra of $A$ and $M$}, denoted by $(A\# M, \a_{A}\oplus\a_{M}, \b_{A}\oplus\b_{M})$.
 \end{lem}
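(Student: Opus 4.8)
The plan is to unwind the BiHom-associativity condition \eqref{eq:1.3} for the operation $\mu_{A\oplus M}$ and sort the resulting identities by which of the four "types" of input they involve (pure $A$, two $A$'s and one $M$, one $A$ and two $M$'s, pure $M$). First I would observe that the multiplicativity conditions \eqref{eq:1.2} for $\a_A\oplus\a_M$ and $\b_A\oplus\b_M$ decompose immediately: $(\a_A\oplus\a_M)\circ(\b_A\oplus\b_M)=(\b_A\oplus\b_M)\circ(\a_A\oplus\a_M)$ is equivalent to $\a_A\b_A=\b_A\a_A$ together with $\a_M\b_M=\b_M\a_M$; and $(\a_A\oplus\a_M)\big((a+m)(b+n)\big)=(\a_A\oplus\a_M)(a+m)\cdot(\a_A\oplus\a_M)(b+n)$ splits, upon comparing the $A$- and $M$-components, into $\a_A(ab)=\a_A(a)\a_A(b)$ on the one hand, and $\a_M(a\tl n)=\a_A(a)\tl\a_M(n)$, $\a_M(m\tr b)=\a_M(m)\tr\a_A(b)$, $\a_M(mn)=\a_M(m)\a_M(n)$ on the other; similarly for $\b$. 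These are exactly the structure-map compatibilities packaged in Definition~\ref{de:1.1} for $(A,\mu_A,\a_A,\b_A)$, in Definition~\ref{de:1.3} for the bimodule $(M,\tl,\tr,\a_M,\b_M)$, and in the first line of \eqref{eq:1.2} for $(M,\mu_M,\a_M,\b_M)$.

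Next I would expand the core identity $(\a_A\oplus\a_M)(x)\cdot\big(y\cdot z\big)=\big(x\cdot y\big)\cdot(\b_A\oplus\b_M)(z)$ for $x=a+m$, $y=b+n$, $z=c+p$ using \eqref{eq:smashproduct}, and collect terms according to how many of $x,y,z$ lie in $M$:
\begin{itemize}
\item All in $A$ (terms $a,b,c$): gives \eqref{eq:1.3} for $(A,\mu_A,\a_A,\b_A)$.
\item Exactly one in $M$: the case $z=p\in M$ yields $\a_A(a)\tl(b\tl p)=(ab)\tl\b_M(p)$, i.e.\ \eqref{eq:1.15}; the case $y=n\in M$ yields $\a_A(a)\tl(n\tr c)=(a\tl n)\tr\b_A(c)$, i.e.\ \eqref{eq:1.16}; the case $x=m\in M$ yields $\a_M(m)\tr(bc)=(m\tr b)\tr\b_A(c)$, i.e.\ the right-module version of \eqref{eq:1.15}.
\item Exactly two in $M$: the case $x=a\in A$ yields $\a_A(a)\tl(np)=(a\tl n)\b_M(p)$, i.e.\ \eqref{eq:bimoda1}; the case $y=b\in A$ yields $\a_M(m)\tr b$ interacting to give $\a_M(m)(b\tl p)=(m\tr b)\b_M(p)$, i.e.\ \eqref{eq:bimoda3}; the case $z=c\in A$ yields $\a_M(m)(n\tr c)=(mn)\tr\b_A(c)$, i.e.\ \eqref{eq:bimoda2}.
\item All in $M$ (terms $m,n,p$): gives \eqref{eq:1.3} for $(M,\mu_M,\a_M,\b_M)$.
\end{itemize}
The point is that because $A$ and $M$ are complementary summands, the $A$-component and $M$-component of the big associativity equation can be read off independently, and within the $M$-component the eight scalar monomials in $a,b,c,m,n,p$ of fixed bidegree are linearly independent as formal expressions, so the single equation \eqref{eq:1.3} for $\mu_{A\oplus M}$ holding for \emph{all} inputs is equivalent to the conjunction of all the displayed identities. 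That conjunction is precisely: $(A,\mu_A,\a_A,\b_A)$ is BiHom-associative (Definition~\ref{de:1.1}); $(M,\tl,\tr,\a_M,\b_M)$ is an $(A,\mu_A,\a_A,\b_A)$-bimodule (Definition~\ref{de:1.3}, i.e.\ \eqref{eq:1.13}--\eqref{eq:1.16}); $(M,\mu_M,\a_M,\b_M)$ is BiHom-associative; and the mixed module-algebra axioms \eqref{eq:bimoda1}--\eqref{eq:bimoda3} hold — which together are exactly the statement that $(M,\mu_M,\tl,\tr,\a_M,\b_M)$ is a BiHom-bimodule algebra over $(A,\mu_A,\a_A,\b_A)$. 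This gives both directions at once.

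The main obstacle is purely bookkeeping: one must be careful that the expansion of $(x\cdot y)\cdot(\b_A\oplus\b_M)(z)$ and of $(\a_A\oplus\a_M)(x)\cdot(y\cdot z)$ via the nested formula \eqref{eq:smashproduct} genuinely produces each of the eight mixed terms once, with the correct placement of $\a$'s and $\b$'s, so that no axiom is missed and none is spuriously introduced; a small table indexed by "which of $x,y,z$ is in $M$" makes this transparent. No genuine difficulty arises beyond this case analysis, since each individual identity is a literal match with one of \eqref{eq:1.3}, \eqref{eq:1.13}--\eqref{eq:1.16}, \eqref{eq:bimoda1}--\eqref{eq:bimoda3}.
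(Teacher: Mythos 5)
Your proposal is correct and follows essentially the same route as the paper: the paper's proof simply expands $(\a_A(a)+\a_M(m))((b+n)(c+p))$ and $((a+m)(b+n))(\b_A(c)+\b_M(p))$ into their eight mixed terms and leaves the term-by-term matching with the axioms as "straightforward," which is exactly the case analysis you carry out explicitly (your identifications of each mixed term with \eqref{eq:1.3}, \eqref{eq:1.15}, \eqref{eq:1.16}, the right-module axiom, and \eqref{eq:bimoda1}--\eqref{eq:bimoda3} are all accurate). The only cosmetic remark is that the equivalence of the single identity with the conjunction of the eight is most cleanly justified by specializing inputs (setting the appropriate components to zero) rather than by appealing to "linear independence of formal monomials," but this is the same argument in different words.
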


 \begin{proof} For all $a, b, c\in A$ and $m, n, p\in M$, we calculate
 \begin{eqnarray*}
 (\a_A(a)+\a_M(m))((b+n)(c+p))
 &=&\a_A(a)(bc)+\a_A(a)\tl(b\tl p)+\a_A(a)\tl(n \tr c)\\
 &&+\a_A(a)\tl(n p)+\a_M(m)\tr (b c)+\a_M(m)(b\tl p)\\
 &&+\a_M(m)(n\tr c)+\a_M(m)(n p)
 \end{eqnarray*}
 and
 \begin{eqnarray*}
 ((a+m)(b+n))(\b_A(c)+\b_M(p))
 &=&(a b) \b_A(c)+(a b)\tl \b_M(p)+(a\tl n)\tr \b_A(c)\\
 &&+(a\tl n)\b_M(p)+(m\tr b)\tr \b_A(c)+(m\tr b)\b_M(p)\\
 &&+(m n)\tr \b_A(c)+(m n) \b_M(p).
 \end{eqnarray*}
 The rest is straitforward.
 \end{proof}

 \begin{cor}\label{cor:bimodalg} With notations in Lemma \ref{lem:smashproduct}, let $(A, \mu_A, \a_A, \b_A)$ be a BiHom-associative algebra. Then $(A\# M, \a_{A}\oplus\a_{M}, \b_{A}\oplus\b_{M})$ is an infBH-product algebra if and only if $(M, \mu_M, \tl, \tr, \a_M, \b_M)$ is a BiHom-bimodule algebra over $(A, \mu_A, \a_A, \b_A)$.
 \end{cor}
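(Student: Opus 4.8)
The plan is to read this off directly from Lemma~\ref{lem:smashproduct}. By construction, saying that $(A\# M,\a_{A}\oplus\a_{M},\b_{A}\oplus\b_{M})$ is an infBH-product algebra is, by the very definition recorded in Lemma~\ref{lem:smashproduct}, the same as saying that $(A\oplus M,\mu_{A\oplus M},\a_{A}\oplus\a_{M},\b_{A}\oplus\b_{M})$ is a BiHom-associative algebra. Lemma~\ref{lem:smashproduct} characterises exactly when this happens: it holds if and only if both (1) $(A,\mu_A,\a_A,\b_A)$ is a BiHom-associative algebra and (2) $(M,\mu_M,\tl,\tr,\a_M,\b_M)$ is a BiHom-bimodule algebra over $(A,\mu_A,\a_A,\b_A)$.

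In the setting of the corollary, $(A,\mu_A,\a_A,\b_A)$ is assumed from the outset to be a BiHom-associative algebra, so hypothesis (1) is automatically fulfilled and carries no content. Thus the equivalence of Lemma~\ref{lem:smashproduct} collapses to the single statement that $(A\# M,\a_{A}\oplus\a_{M},\b_{A}\oplus\b_{M})$ is an infBH-product algebra if and only if (2) holds, that is, if and only if $(M,\mu_M,\tl,\tr,\a_M,\b_M)$ is a BiHom-bimodule algebra over $(A,\mu_A,\a_A,\b_A)$. Both implications come out at once, so no separate computation is required; in particular one need not re-expand $\mu_{A\oplus M}$ or re-examine Eqs.~(\ref{eq:1.3}), (\ref{eq:bimoda1})--(\ref{eq:bimoda3}), since all of this is already packaged inside Lemma~\ref{lem:smashproduct}.

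There is no genuine obstacle here; the corollary is a pure specialisation of Lemma~\ref{lem:smashproduct} obtained by freezing its first hypothesis. The only point deserving care is terminological: one should make explicit that ``infBH-product algebra'' is precisely the name attached in Lemma~\ref{lem:smashproduct} to the configuration in which (1) and (2) both hold, so that, once (1) is granted, the predicate ``$(A\# M,\a_{A}\oplus\a_{M},\b_{A}\oplus\b_{M})$ is an infBH-product algebra'' becomes literally the predicate ``condition (2) holds''. With that clarification, the proof is immediate.
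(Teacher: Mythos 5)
Your proposal is correct and matches the paper's (implicit) treatment: the paper states this corollary without proof precisely because it is the specialisation of Lemma~\ref{lem:smashproduct} in which condition (1) is assumed, so the equivalence reduces to condition (2). Nothing further is required.
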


 \begin{lem}\label{lem:smashcoproduct} Let $C, M$ be two vector spaces together with two commuting linear maps $\psi_C, \om_C:C\lr C$, two commuting linear maps $\psi_M, \om_M:M\lr M$, four linear maps $\D_C: C\lr C\o C, c\mapsto c_1\o c_2$, $\D_M: M\lr M\o M, m\mapsto m_1\o m_2$, $\rho: M\lr C\o M, m\mapsto m_{(-1)}\o m_{(0)}$ and $\vp: M\lr M\o C, m\mapsto m_{(0)}\o m_{(1)}$. Define a linear operation $\D_{C\oplus M}$ on $C\oplus M$ by
 \begin{eqnarray}\label{eq:smashcoproduct}
 \D_{C\oplus M}(c+m):=c_1\o c_2+m_{(-1)}\o m_{(0)}+m_{(0)}\o m_{(1)}+m_1\o m_2,\quad \forall~c\in C,~m\in M.
 \end{eqnarray}
 Then $(C\oplus M, \D_{C\oplus M}, \psi_{C}\oplus\psi_{M},\om_{C}\oplus\om_{M})$, where $(\psi_{C}\oplus\psi_{M})(a+m)=\psi_{C}(a)+\psi_{M}(m)$, $(\om_{C}\oplus\om_{M})(a+m)=\om_{C}(a)+\om_{M}(m)$, is a BiHom-coassociative coalgebra if and only if
 \begin{enumerate}
   \item $(C, \D_C, \psi_C, \om_C)$ is a BiHom-coassociative coalgebra;
   \item $(M, \D_M, \rho, \vp, \psi_M, \om_M)$ is a BiHom-bicomodule coalgebra over $(C, \D_C, \psi_C, \om_C)$.
 \end{enumerate}
 In this case, we call this coalgebra an {\bf infBH-coproduct coalgebra of $C$ and $M$}, denoted by $(C\times M, \psi_{C}\oplus\psi_{M}, \om_{C}\oplus\om_{M})$.
 \end{lem}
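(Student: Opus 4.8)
The plan is to dualize, line for line, the proof of Lemma~\ref{lem:smashproduct}. Write $P:=C\oplus M$ with $\D_P:=\D_{C\oplus M}$, $\psi_P:=\psi_C\oplus\psi_M$, $\om_P:=\om_C\oplus\om_M$. The crucial structural feature is that $\D_P$ is ``graded'' for the direct sum: $\D_P(C)\subseteq C\o C$ while $\D_P(M)\subseteq (C\o M)\oplus(M\o C)\oplus(M\o M)$. Consequently, each of the defining identities of a BiHom-coassociative coalgebra (Eqs.~\eqref{eq:1.7}--\eqref{eq:1.9}) written for $P$ is equivalent to the conjunction of the identities obtained by comparing its components in the various homogeneous summands of $P^{\o k}$; and because the canonical injections $C\hookrightarrow P$, $M\hookrightarrow P$ and the projections onto these summands are compatible with all structure maps, this component-wise splitting proves both implications (``if'' and ``only if'') simultaneously.

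First I would dispose of Eq.~\eqref{eq:1.7}. The relation $\psi_P\om_P=\om_P\psi_P$ is immediate from the commutations on $C$ and on $M$. Evaluating $(\psi_P\o\psi_P)\D_P=\D_P\psi_P$ on $c\in C$ gives exactly $(\psi_C\o\psi_C)\D_C=\D_C\psi_C$; evaluating it on $m\in M$ and reading the $C\o M$, $M\o C$ and $M\o M$ components gives respectively $(\psi_C\o\psi_M)\rho=\rho\psi_M$, the right-hand analogue $(\psi_M\o\psi_C)\vp=\vp\psi_M$, and $(\psi_M\o\psi_M)\D_M=\D_M\psi_M$ — that is, the first halves of Eq.~\eqref{eq:1.7} for $(C,\D_C,\psi_C,\om_C)$, of Eq.~\eqref{eq:01.22a} for $\rho$, of its right-handed version for $\vp$, and of Eq.~\eqref{eq:1.7} for $(M,\D_M,\psi_M,\om_M)$. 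Repeating with $\om_P$ in place of $\psi_P$ yields the remaining ``$\om$''-halves. So Eq.~\eqref{eq:1.7} for $P$ is equivalent to: $C$ and $M$ are BiHom-coassociative coalgebras with the stated maps, and $\rho,\vp$ satisfy Eq.~\eqref{eq:01.22a} and its right analogue.

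The heart of the argument is Eq.~\eqref{eq:1.9}: expand $(\D_P\o\psi_P)\D_P$ and $(\om_P\o\D_P)\D_P$ and match components in the eight summands of $P^{\o3}$. On $c\in C$ both sides lie in $C\o C\o C$ and the identity is precisely BiHom-coassociativity of $(C,\D_C,\psi_C,\om_C)$. On $m\in M$, writing $\D_P(m)=m_{(-1)}\o m_{(0)}+m_{(0)}\o m_{(1)}+m_1\o m_2$ and applying $\D_P$ once more to each tensor factor, the two sides produce terms in $C\o C\o M$, $M\o C\o C$, $C\o M\o C$, $C\o M\o M$, $M\o C\o M$, $M\o M\o C$, $M\o M\o M$. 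Comparing the $C\o C\o M$ parts gives $(\D_C\o\psi_M)\rho=(\om_C\o\rho)\rho$, i.e.\ Eq.~\eqref{eq:01.24}; the $M\o C\o C$ parts give the right-comodule analogue of Eq.~\eqref{eq:01.24}; the $C\o M\o C$ parts give $(\rho\o\psi_C)\vp=(\om_C\o\vp)\rho$, i.e.\ the bicomodule compatibility Eq.~\eqref{eq:01.24aa}; the $C\o M\o M$, $M\o C\o M$ and $M\o M\o C$ parts give respectively $(\om_C\o\D_M)\rho=(\rho\o\psi_M)\D_M$, $(\om_M\o\rho)\D_M=(\vp\o\psi_M)\D_M$ and $(\om_M\o\vp)\D_M=(\D_M\o\psi_C)\vp$, i.e.\ Eqs.~\eqref{eq:bicmodca1}, \eqref{eq:bicmodca3}, \eqref{eq:bicmodca2}; and the $M\o M\o M$ parts give BiHom-coassociativity of $(M,\D_M,\psi_M,\om_M)$. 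Collecting this with the first step yields exactly conditions~(1) and~(2).

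The main obstacle is not conceptual — this is the dual of Lemma~\ref{lem:smashproduct} — but purely bookkeeping: one must keep straight which of the eight homogeneous summands of $P^{\o3}$ each term of $(\D_P\o\psi_P)\D_P(m)$ and $(\om_P\o\D_P)\D_P(m)$ falls into, and not confuse the left and right comodule coassociativity axioms with the three coalgebra-compatibility axioms Eqs.~\eqref{eq:bicmodca1}--\eqref{eq:bicmodca3} and the bicomodule axiom Eq.~\eqref{eq:01.24aa}. Once the components are separated, every required identity is read off by inspection, and the ``only if'' direction follows because each relation valid in $P$ restricts, via the direct-sum grading, to the corresponding relation on each summand.
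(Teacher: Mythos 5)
Your proof is correct and follows exactly the route the paper intends: the paper omits this proof as ``dual to Lemma~\ref{lem:smashproduct}'', whose own proof is precisely the component-wise expansion of the structure identities on the direct sum, and your careful matching of the eight homogeneous summands of $(C\oplus M)^{\o 3}$ with Eqs.~\eqref{eq:01.24}, \eqref{eq:01.24aa}, \eqref{eq:bicmodca1}--\eqref{eq:bicmodca3} and the two coassociativity axioms is the correct dualization, with each identification accurate. Nothing is missing.
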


 \begin{proof} We omit the proof since it is dual to Lemma \ref{lem:smashproduct}.
 \end{proof}

 \begin{cor}\label{cor:bicomodcoalg} With notations in Lemma \ref{lem:smashcoproduct}, let $(C, \D_C, \psi_C, \om_C)$ be a BiHom-coassociative coalgebra. Then $(C\times M, \psi_{C}\oplus\psi_{M}, \om_{C}\oplus\om_{M})$ is an infBH-coproduct coalgebra if and only if $(M, \D_M, \rho, \vp, \psi_M, \om_M)$ is a BiHom-bicomodule coalgebra over $(C, \D_C, \psi_C, \om_C)$.
 \end{cor}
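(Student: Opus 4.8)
The plan is to read this off Lemma~\ref{lem:smashcoproduct} directly, in perfect analogy with how Corollary~\ref{cor:bimodalg} follows from Lemma~\ref{lem:smashproduct} in the algebra case. By the very definition recorded at the end of Lemma~\ref{lem:smashcoproduct}, saying that $(C\times M,\psi_C\oplus\psi_M,\om_C\oplus\om_M)$ is an infBH-coproduct coalgebra means exactly that $(C\oplus M,\D_{C\oplus M},\psi_C\oplus\psi_M,\om_C\oplus\om_M)$, with $\D_{C\oplus M}$ as in Eq.~\eqref{eq:smashcoproduct}, is a BiHom-coassociative coalgebra. Lemma~\ref{lem:smashcoproduct} asserts that this is the case if and only if the two conditions (1) $(C,\D_C,\psi_C,\om_C)$ is a BiHom-coassociative coalgebra and (2) $(M,\D_M,\rho,\vp,\psi_M,\om_M)$ is a BiHom-bicomodule coalgebra over $(C,\D_C,\psi_C,\om_C)$ both hold.

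Since condition (1) is granted by hypothesis, the biconditional furnished by the lemma collapses to: $(C\times M,\dots)$ is an infBH-coproduct coalgebra $\iff$ (2) holds. That is precisely the assertion of the corollary, so the proof amounts to invoking Lemma~\ref{lem:smashcoproduct} and remarking that hypothesis (1) is already in force; a single sentence suffices.

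I do not expect any real obstacle here. The only thing that needs care is purely bookkeeping, namely matching the four components $c_1\o c_2$, $m_{(-1)}\o m_{(0)}$, $m_{(0)}\o m_{(1)}$, $m_1\o m_2$ of $\D_{C\oplus M}$ against the BiHom-coassociativity axiom~\eqref{eq:1.9} together with the defining identities \eqref{eq:01.22a}, \eqref{eq:01.24}, \eqref{eq:01.24aa}, \eqref{eq:bicmodca1}, \eqref{eq:bicmodca2}, \eqref{eq:bicmodca3} of a BiHom-bicomodule coalgebra; but all of that is already packaged inside the proof of Lemma~\ref{lem:smashcoproduct}, which is dual to that of Lemma~\ref{lem:smashproduct}. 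Hence no new computation is required.
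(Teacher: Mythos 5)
Your argument is correct and coincides with the paper's treatment: the corollary is stated as an immediate consequence of Lemma~\ref{lem:smashcoproduct}, obtained exactly by noting that the hypothesis grants condition (1) so the lemma's biconditional ``P $\iff$ (1) and (2)'' collapses to ``P $\iff$ (2)''. No further computation is needed, and none is given in the paper.
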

 
 Similar to  Hopf algebras theory, we  introduce below some new notions.
 
 \begin{defi}\label{de:bicomalg} Let $(A, \mu, \D, \a_{A}, \b_{A}, \psi_{A}, \om_{A})$ be a $\l$-infBH-bialgebra, $(M, \tl, \tr, \a_M, \b_M)$ be a BiHom-bimodule over $(A, \mu_A, \a_A, \b_A)$ and $(M, \D_M, \psi_M, \om_M)$ be a BiHom-coassociative coalgebra. The 8-tuple $(M, \D_M, \tl, \tr, \a_M, \b_M, \psi_M, \om_M)$ is called a {\bf BiHom-module coalgebra} if any two of $\a_M, \b_M, \psi_M, \om_M$ commute and the following conditions hold:
 \begin{eqnarray}
 &\D_M\circ \a_M=(\a_M\o \a_M)\circ \D_M,~~\D_M\circ \b_M=(\b_M\o \b_M)\circ \D_M,&\label{eq:modcoalg1}\\
 &\psi_M(a\tl m)=\psi_A(a)\tl \psi_M(m),~~\om_M(a\tl m)=\om_A(a)\tl \om_M(m),&\label{eq:modcoalg2}\\
 &\psi_M(m\tr a)=\psi_M(a)\tr \psi_A(a),~~\om_M(m\tl a)=\om_M(m)\tr \om_A(a),&\label{eq:modcoalg3}\\
 &\D_M(a\tl m)=\om_{A}(a)\tl m_{1}\o\b_{M}(m_{2}),&\label{eq:modcoalg4}\\
 &\D_M(m\tr a)=\a_{M}(m_{1})\o m_{2}\tr\psi_{A}(b).&\label{eq:modcoalg5}
 \end{eqnarray}
 \end{defi}

 \begin{defi}\label{de:bicomalg} Let $(A, \mu, \D, \a_{A}, \b_{A}, \psi_{A}, \om_{A})$ be a $\l$-infBH-bialgebra, $(M, \rho, \vp, \psi_M, \om_M)$ be a BiHom-bicomodule over $(A, \D_A, \psi_A, \om_A)$ and $(M, \mu_M, \a_M, \b_M)$ be a BiHom-associative algebra. The 8-tuple $(M, \mu_M, \rho, \vp, \a_M, \b_M, \psi_M, \om_M)$ is called a {\bf BiHom-comodule algebra} if any two of $\a_M, \b_M, \psi_M, \om_M$ commute and the following conditions hold:
 \begin{eqnarray}
 &\psi_M(m n)=\psi_M(m)\psi_M(n),~~\om_M(m n)=\om_M(m)\om_M(n),&\label{eq:comodalg1}\\
 &(\a_A\o \a_M)\circ \rho=\rho\circ \a_M,~~(\b_A\o \b_M)\circ \rho=\rho\circ \b_M,&\label{eq:comodalg2}\\
 &(\a_M\o \a_A)\circ \vp=\vp\circ \a_M,~~(\b_M\o \b_A)\circ \vp=\vp\circ \b_M,&\label{eq:comodalg3}\\
 &(m n)_{(-1)}\o(m n)_{(0)}=\a_{A}(m_{(-1)})\o m_{(0)}\psi_{M}(n),&\label{eq:comodalg4}\\
 &(m n)_{(0)}\o(m n)_{(1)}=\om_{M}(m)n_{(0)}\o\b_{A}(n_{(1)}).&\label{eq:comodalg5}
 \end{eqnarray}
 \end{defi}
 
 \begin{thm}\label{thm:infbipro} Let $(A,\mu,\D,\a_{A},\b_{A},\psi_{A},\om_{A})$ be a $\l$-infBH-bialgebra, $(M, \mu_M, \tl, \tr, \a_M, \b_M)$ be a BiHom-bimodule algebra over $(A, \mu_A, \a_A, \b_A)$ and at the same time, $(M, \D_M, \rho, \vp, \psi_M, \om_M)$ be a BiHom-bicomodule coalgebra over $(A, \D_A, \psi_A, \om_A)$. Then $(A\oplus M, \mu_{A\oplus M}, \D_{A\oplus M}, \a_{A}\oplus\a_{M}, \b_{A}\oplus\b_{M}, \psi_{C}\oplus\psi_{M}, \om_{C}\oplus\om_{M})$ consisting of infBH-product algebra $(A\# M, \a_{A}\oplus\a_{M}, \b_{A}\oplus\b_{M})$ and infBH-coproduct coalgebra $(A\times M, \psi_{A}\oplus\psi_{M}, \om_{A}\oplus\om_{M})$ is a $\l$-infBH-bialgebra if and only if
 \begin{enumerate}
   \item $(M,\tl,\tr,\rho,\varphi,\a_{M},\b_{M},\psi_{M},\om_{M})$ is a $\l$-infBH-Hopf bimodule over  $(A, \mu, \D, \a_{A}, \b_{A}, \psi_{A}, \om_{A})$;
   \item $(M, \D_M, \tl, \tr, \a_M, \b_M, \psi_M, \om_M)$ is a BiHom-bimodule coalgebra;
   \item $(M, \mu_M, \rho, \vp, \a_M, \b_M, \psi_M, \om_M)$ is a BiHom-bicomodule algebra;
   \item
   \begin{eqnarray}\label{eq:yd}
   &&\D_M(m n)=\om_{M}(m)n_{1}\o\b_{M}(n_{2})+\a_{M}(m_{1})\o m_{2}\psi_{M}(n)+\l\a_{M}\om_{M}(m)\o \b_{M}\psi_{M}(n)\nonumber\\
   &&\hspace{25mm}+\om_{M}(m)\tr n_{(-1)}\o\b_{M}(n_{(0)})+\a_{M}(m_{(0)})\o m_{(1)}\tl\psi_{M}(n).
   \end{eqnarray}
 \end{enumerate}
 In this case, we call this bialgebra a {\bf $\l$-infBH-biproduct bialgebra} and  refer to it  by $(A^\#_\times M, \a_{A}\oplus\a_{M}, \b_{A}\oplus\b_{M}, \psi_{A}\oplus\psi_{M}, \om_{A}\oplus\om_{M})$.
 \end{thm}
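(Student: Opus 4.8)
The plan is to expand the $\l$-infBH-bialgebra axioms for the candidate structure $(A\oplus M, \mu_{A\oplus M}, \D_{A\oplus M}, \ldots)$ term by term, using the direct sum decomposition to split each axiom into its $A\o A$, $A\o M$, $M\o A$ and $M\o M$ components, and then matching these components against the listed conditions (1)--(4). By Lemma~\ref{lem:smashproduct} (together with the hypothesis that $(M,\mu_M,\tl,\tr,\a_M,\b_M)$ is a BiHom-bimodule algebra) the multiplicative part $(A\# M,\a_A\oplus\a_M,\b_A\oplus\b_M)$ is already a BiHom-associative algebra, and dually by Lemma~\ref{lem:smashcoproduct} the comultiplicative part $(A\times M,\psi_A\oplus\psi_M,\om_A\oplus\om_M)$ is already a BiHom-coassociative coalgebra. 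Likewise Eqs.~\eqref{eq:12.1}--\eqref{eq:12.3} for the sum follow immediately from the corresponding equations for $A$ together with the compatibility hypotheses built into ``BiHom-bimodule coalgebra'' and ``BiHom-bicomodule algebra'' (Eqs.~\eqref{eq:modcoalg1}--\eqref{eq:modcoalg3} and \eqref{eq:comodalg1}--\eqref{eq:comodalg3}) and the commuting-maps assumptions. So the entire content of the theorem is the single compatibility identity Eq.~\eqref{eq:12.4}:
\begin{eqnarray*}
\D_{A\oplus M}\circ\mu_{A\oplus M}=(\mu_{A\oplus M}\o(\b_A\oplus\b_M))(( \om_A\oplus\om_M)\o\D_{A\oplus M})+((\a_A\oplus\a_M)\o\mu_{A\oplus M})(\D_{A\oplus M}\o(\psi_A\oplus\psi_M))+\l\,(\a_A\om_A\oplus\a_M\om_M)\o(\b_A\psi_A\oplus\b_M\psi_M),
\end{eqnarray*}
evaluated on a generic pair $(a+m)\o(b+n)$.

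Next I would carry out the bookkeeping. The left-hand side $\D_{A\oplus M}((a+m)(b+n))=\D_{A\oplus M}(ab+a\tl n+m\tr b+mn)$ decomposes via \eqref{eq:smashcoproduct} into: the $A\o A$ piece $\D(ab)$; the pieces coming from $\D_{C\oplus M}$ applied to the $M$-elements $a\tl n$, $m\tr b$, $mn$, each of which unfolds into four summands ($\D_M$, $\rho$, $\vp$, and the genuinely mixed $C\o M$/$M\o C$ terms). The right-hand side, similarly, when one pushes the direct-sum maps through, produces the $A\o A$ contribution $\D\circ\mu$ for $A$ (which cancels by \eqref{eq:12.4} for $A$ alone), plus a long list of mixed terms landing in $A\o M$, $M\o A$, and $M\o M$. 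The strategy is then to collect, on each of the three nontrivial summands $A\o M$, $M\o A$, $M\o M$, the identity that must hold. I expect:
\begin{itemize}
\item the $A\o M$ component to reproduce exactly the left $\l$-infBH-Hopf module axiom \eqref{eq:12.13} (i.e.\ $\rho\circ\tl$) together with the module-coalgebra relation \eqref{eq:modcoalg4} and the comodule-algebra relation \eqref{eq:comodalg4};
\item the $M\o A$ component to reproduce the right $\l$-infBH-Hopf module axiom \eqref{eq:12.13a} (i.e.\ $\vp\circ\tr$) together with \eqref{eq:modcoalg5} and \eqref{eq:comodalg5};
\item the cross terms relating $\rho\circ\tr$ and $\vp\circ\tl$ to reproduce exactly Eqs.~\eqref{eq:20.02} and \eqref{eq:20.01} of Definition~\ref{de:20.01};
\item the $M\o M$ component, which is the most delicate, to reproduce precisely Eq.~\eqref{eq:yd}.
\end{itemize}
Conversely, reading the same computation backwards, the identity \eqref{eq:12.4} for the sum forces each of these components separately (since they lie in complementary summands of $(A\oplus M)^{\o 2}$), which gives the ``only if'' direction; so the equivalence drops out of a single expansion done once.

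The hard part will be the $M\o M$ component of \eqref{eq:12.4}. On the left, $\D_{C\oplus M}(mn)$ already has the four summands $m_1\o m_2$, $m_{(-1)}\o m_{(0)}$, $m_{(0)}\o m_{(1)}$, $m_1\o m_2$ of \eqref{eq:smashcoproduct} with $mn$ substituted — but $(mn)_{(-1)}$, $(mn)_{(0)}$, $(mn)_{(1)}$ need not be expressible purely inside $M$; they are controlled by the comodule-algebra axioms \eqref{eq:comodalg4}--\eqref{eq:comodalg5}, which push them onto $A$. On the right, the two BiHom-product terms $(\mu\o\b)(\om\o\D)$ and $(\a\o\mu)(\D\o\psi)$ applied to $mn$ generate, through \eqref{eq:smashproduct} and \eqref{eq:smashcoproduct}, a genuinely tangled mixture: one gets honest $M\cdot M$ products, one gets $\tr$-actions of $A$-components of coproducts on $M$, one gets $\tl$-actions, plus the weight-$\l$ term $\l\,\a_M\om_M(m)\o\b_M\psi_M(n)$. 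Organizing these so that everything not already accounted for by items (2) and (3) collapses exactly to the five-term right-hand side of \eqref{eq:yd} — and in particular verifying that the spurious terms built from $\D(ab)$-type expansions cancel against the $\l$-term bookkeeping — is where all the real work sits. I would handle it by first recording, as a preliminary lemma, the consequences of \eqref{eq:comodalg4}--\eqref{eq:comodalg5} and \eqref{eq:modcoalg4}--\eqref{eq:modcoalg5} that rewrite $(mn)_{(\pm1)}$, $(a\tl m)_i$, $(m\tr a)_i$ in ``separated'' form, then substituting these into the expansion so that the $M\o M$ identity reduces to \eqref{eq:yd} by inspection. The remaining components are routine Sweedler-notation manipulations using only the already-established module/comodule axioms and the multiplicativity of $\a,\b,\psi,\om$.
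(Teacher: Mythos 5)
Your proposal follows essentially the same route as the paper's proof: invoke Lemmas~\ref{lem:smashproduct} and~\ref{lem:smashcoproduct} for the underlying (co)algebra structures, note that the multiplicativity/comultiplicativity of the direct-sum structure maps amounts to Eqs.~\eqref{eq:modcoalg1}--\eqref{eq:modcoalg3} and \eqref{eq:comodalg1}--\eqref{eq:comodalg3}, and then expand both sides of Eq.~\eqref{eq:12.4} on a generic $(a+m)\o(b+n)$ and match the $A\o A$, $A\o M$, $M\o A$, $M\o M$ components (further separated by which of $a,b,m,n$ occur) against conditions (1)--(4), which is exactly the term-by-term comparison the paper carries out. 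The only inaccuracy is a minor bookkeeping one that would correct itself upon writing the expansion out: the relations \eqref{eq:modcoalg4}--\eqref{eq:modcoalg5} are extracted from the $M\o M$ component (where $\D_M(a\tl n)$ and $\D_M(m\tr b)$ actually live), while the $A\o M$ and $M\o A$ components yield Eqs.~\eqref{eq:12.13}, \eqref{eq:20.02}, \eqref{eq:comodalg4} and Eqs.~\eqref{eq:12.13a}, \eqref{eq:20.01}, \eqref{eq:comodalg5} respectively, and the $A\o A$ component is just Eq.~\eqref{eq:12.4} for $A$ itself (so there are no spurious terms needing cancellation).
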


 \begin{proof} By Corollary \ref{cor:bimodalg}, $(A\# M, \a_{A}\oplus\a_{M}, \b_{A}\oplus\b_{M})$ is an infBH-product algebra since $(M, \mu_M, \tl, \tr$, $\a_M, \b_M)$ is a BiHom-bimodule algebra over $(A, \mu_A, \a_A, \b_A)$. By  Corollary \ref{cor:bicomodcoalg}, $(A\times M, \psi_{A}\oplus\psi_{M}, \om_{A}\oplus\om_{M})$ is an infBH-coproduct coalgebra since $(M, \D_M, \rho, \vp, \psi_M, \om_M)$ is a BiHom-bicomodule coalgebra over $(A, \D_A, \psi_A, \om_A)$. It is easy to prove by hypothesis that $\a_{A}\oplus\a_{M}$, $\b_{A}\oplus\b_{M}$ are algebra maps and $\psi_{A}\oplus\psi_{M}$, $\om_{A}\oplus\om_{M}$ are coalgebra maps if and only if Eqs.(\ref{eq:modcoalg1})-(\ref{eq:modcoalg3}) and Eqs.(\ref{eq:comodalg1})-(\ref{eq:comodalg3}) hold. For the rest of the proof, we only need to compare the following two formulas:\\  For all $a, b\in A$ and $m, n\in M$,
 \begin{eqnarray*}
 \D_{A\oplus M}((a+m)\cdot_{A\oplus M}(b+n))\hspace{-3mm}
 &=&\hspace{-3mm}\D_{A\oplus M}(a b+a\tl n+m\tr b+m n)\\
 &=&\hspace{-3mm}(a b)_{1}\o (a b)_{2}+(a\tl n)_{(-1)}\o(a\tl n)_{(0)}
 +(m\tr b)_{(-1)}\o(m\tr b)_{(0)}\\
 &&\hspace{-3mm}+(m n)_{(-1)}\o(m n)_{(0)}+(a\tl n)_{(0)}\o (a\tl n)_{(1)}+(m\tr b)_{(0)}\o (m\tr b)_{(1)}\\
 &&\hspace{-3mm}+(m n)_{(0)}\o(m n)_{(1)}+(a\tl n)_{1}\o(a\tl n)_{2}
 +(m\tr b)_{1}\o(m\tr b)_{2}\\
 &&\hspace{-3mm}+(m n)_{1}\o(m n)_{2}
 \end{eqnarray*}
 and (write $\D_{A\oplus M}(a+m)=(a+m)_{[1]}\o (a+m)_{[2]}$)
 \begin{eqnarray*}
 &&(\om_{A}\oplus\om_{M})(a+m)\cdot_{A\oplus M}(b+n)_{[1]}\o(\b_{A}\oplus\b_{M})(b+n)_{[2]}+(\a_{A}\oplus\a_{M})(a+m)_{[1]}\\
 &&\qquad\o (a+m)_{[2]}\cdot_{A\oplus M}(\psi_{A}\oplus\psi_{M})(b+n)+\l(\a_{A}\om_{A}(a)+\a_{M}\om_{M}(m))\o (\b_{A}\psi_{A}(b)+\b_{M}\psi_{M}(n))\\
 &&=\om_{A}(a)b_{1}\o\b_{A}(b_{2})+\om_{M}(m)\tr b_{1}\o\b_{A}(b_{2})+\om_{A}(a)n_{(-1)}\o\b_{M}(n_{(0)})\\
 &&\hspace{5mm}+\om_{M}(m)\tr n_{(-1)}\o\b_{M}(n_{(0)})+\om_{A}(a)\tl n_{(0)}\o\b_{A}(n_{(1)})+\om_{M}(m)n_{(0)}\o\b_{A}(n_{(1)})\\
 &&\hspace{5mm}+\om_{A}(a)\tl n_{1}\o\b_{M}(n_{2})+\om_{M}(m)n_{1}\o\b_{M}(n_{2})+\a_{A}(a_{1})\o a_{2}\psi_{A}(b)+\a_{A}(a_{1})\o a_{2}\tl\psi_{M}(n)\\
 &&\hspace{5mm}+\a_{A}(m_{(-1)})\o m_{(0)}\tr\psi_{A}(b)+\a_{A}(m_{(-1)})\o m_{(0)}\psi_{M}(n)+\a_{M}(m_{(0)})\o m_{(1)}\psi_{A}(b)\\
 &&\hspace{5mm}+\a_{M}(m_{(0)})\o m_{(1)}\tl\psi_{M}(n)+\a_{M}(m_{1})\o m_{2}\tr\psi_{A}(b)+\a_{M}(m_{1})\o m_{2}\psi_{M}(n)\\
 &&\hspace{5mm}+\l\a_{A}\om_{A}(a)\o \b_{A}\psi_{A}(b)
 +\l\a_{A}\om_{A}(a)\o \b_{M}\psi_{M}(n)+\l\a_{M}\om_{M}(m)\o \b_{A}\psi_{A}(b)\\
 &&\hspace{5mm}+\l\a_{M}\om_{M}(m)\o \b_{M}\psi_{M}(n).
 \end{eqnarray*}
 These finish the proof.
 \end{proof}

 \begin{rmk} Similar to Majid-Radford's bosonization, $(M, \mu_M, \D_M, \a_M, \b_M, \psi_M, \om_M)$ is not an infBH-bialgebra of weight $\l$ in the usual sense since the compatibility  condition Eq.(\ref{eq:yd}) is deformed by $\om_{M}(m)\tr n_{(-1)}\o\b_{M}(n_{(0)})+\a_{M}(m_{(0)})\o m_{(1)}\tl\psi_{M}(n)$.
 \end{rmk}

\subsection{$\l$-Rota-Baxter BiHom-bimodules} We extend in the following the notion of Rota-Baxter bimodule \cite{BGM} to the BiHom-type.

 \begin{defi}(\cite[Definition 2.1]{MYZZ}) Let $\l$ be an element in $K$. A 4-tuple $(A,R,\a,\b)$ is called a {\bf $\l$-Rota-Baxter BiHom-algebra} if $(A, \mu, \a,\b)$ is a BiHom-associative algebra and $R: A \rightarrow A$ is $K$-linear map satisfying the following conditions:
 \begin{eqnarray}
 &\a\circ R=R\circ \a,\quad \b\circ R=R\circ \b,&\mlabel{eq:CO4.67}\\
 &R(a)R(b)=R(aR(b))+R(R(a)b)+ \l R(ab).&\mlabel{eq:CO4.20}
 \end{eqnarray}
 The map $R$ is called a {\bf Rota-Baxter operator of weight $\l$ on $(A, \mu, \a, \b)$}.
 \end{defi}
 
 \begin{defi}\mlabel{de:20.06} Let $(A, \mu,\a_{A}, \b_{A}, R_{A})$ be a $\l$-Rota-Baxter BiHom-associative algebra. A {\bf $\l$-Rota-Baxter BiHom-bimodule on $(A,\mu,\a_{A},\b_{A}, R_{A})$} is a 6-tuple $(M, \tl, \tr, \a_{M}, \b_{M}, R_{M})$, where $(M, \tl, \tr, \a_{M}$, $\b_{M})$ is an $(A,\mu,\a_{A},\b_{A})$-bimodule and $R_{M}: M\longrightarrow M$ is a linear map such that $\a_{M}\circ R_{M}=R_{M}\circ\a_{M},\ \b_{M}\circ R_{M}=R_{M}\circ\b_{M}$, and
 \begin{eqnarray}
 &R_{A}(a)\tl R_{M}(m)=R_{M}(a\tl R_{M}(m)+R_{A}(a)\tl m+ \l a\tl m),&\mlabel{eq:20.23}\\
 &R_{M}(m)\tr R_{A}(a)=R_{M}(m\tr R_{A}(a)+R_{M}(m)\tr a+ \l m\tr a),&\mlabel{eq:20.24}
 \end{eqnarray}
 for all $a\in A$ and $m\in M$.
 \end{defi}

 \begin{rmk}
 A $\l$-Rota-Baxter BiHom-associative algebra $(A, \mu, \a, \b, R)$ is a $\l$-Rota-Baxter BiHom-bimodule over itself.
 \end{rmk}

 \begin{pro} \mlabel{pro:rbbimod} Let $(A, \mu,\a_{A}, \b_{A}, R_{A})$ be a $\l$-Rota-Baxter BiHom-associative algebra, $M$ be a vector space, $\a_M, \b_M, R_M: M\lr M$ be  three commuting linear maps and $\tl: A\o M\lr M$ (write $\tl(a\o m)=a\tl m$), $\tr: M\o A\lr M$ (write $\tr(m\o a)=m\tr a$) be two linear maps. Define a multiplication on $A\oplus M$ by
 \begin{eqnarray}
 (a+m)\cdot (a'+m'):=a a'+a\tl m'+m\tr a', \quad \forall a, a'\in A \hbox{~and~} m, m'\in M.
 \end{eqnarray}
 Then $(A\oplus M, \cdot, \a_A\oplus \a_M, \b_A\oplus \b_M, R_A\oplus R_M)$ is a $\l$-Rota-Baxter BiHom-associative algebra if and only if $(M, \tl, \tr, \a_{M}, \b_{M}, R_{M})$ is a $\l$-Rota-Baxter BiHom-bimodule on $(A,\mu,\a_{A},\b_{A}, R_{A})$, where $(\a_A\oplus \a_M)(a+m):=\a_A(a)+\a_M(m)$, similarly for $\b_A\oplus \b_M$ and $R_A\oplus R_M$. In this case we call this algebra {\bf semi-direct product} of $(A, \mu,\a_{A}, \b_{A}, R_{A})$ and $(M, \tl, \tr,\a_{M}, \b_{M}, R_{M})$.
 \end{pro}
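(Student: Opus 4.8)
The plan is to verify the defining axioms of a $\l$-Rota-Baxter BiHom-associative algebra for the 5-tuple $(A\oplus M, \cdot, \a_A\oplus\a_M, \b_A\oplus\b_M, R_A\oplus R_M)$ by expanding everything on homogeneous-type elements $a+m$ and matching the $A$-component with the $M$-component separately. Since the product on $A\oplus M$ is precisely the infBH-product algebra multiplication of Lemma~\ref{lem:smashproduct} with $\mu_M=0$, the BiHom-associativity of $(A\oplus M, \cdot, \a_A\oplus\a_M, \b_A\oplus\b_M)$ is equivalent, by that lemma (or Corollary~\ref{cor:bimodalg}), to $(M, \tl, \tr, \a_M, \b_M)$ being an $(A,\mu,\a_A,\b_A)$-bimodule (the bimodule-algebra conditions Eqs.~(\ref{eq:bimoda1})--(\ref{eq:bimoda3}) becoming vacuous when $\mu_M=0$). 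So the first step is to record that the BiHom-associative algebra structure on $A\oplus M$ holds iff $(M,\tl,\tr,\a_M,\b_M)$ is a bimodule, which is part of the definition of a $\l$-Rota-Baxter BiHom-bimodule.

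The second step handles the structure-map compatibilities. The commutativity of $\a_A\oplus\a_M$ and $\b_A\oplus\b_M$ and their multiplicativity follow directly from Lemma~\ref{lem:smashproduct}; the conditions $(\a_A\oplus\a_M)\circ(R_A\oplus R_M)=(R_A\oplus R_M)\circ(\a_A\oplus\a_M)$ and similarly for $\b$, i.e.\ Eq.~(\ref{eq:CO4.67}) on $A\oplus M$, decompose componentwise into $\a_A R_A=R_A\a_A$, $\b_A R_A=R_A\b_A$ (which hold since $(A,\mu,\a_A,\b_A,R_A)$ is a $\l$-Rota-Baxter BiHom-algebra) and $\a_M R_M=R_M\a_M$, $\b_M R_M=R_M\b_M$ (which are exactly the hypotheses on $(M,\tl,\tr,\a_M,\b_M,R_M)$, together with the assumed commutativity of $\a_M,\b_M,R_M$).

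The third and main step is the Rota-Baxter identity Eq.~(\ref{eq:CO4.20}) for $R_A\oplus R_M$. Evaluating
$$(R_A\oplus R_M)(a+m)\cdot(R_A\oplus R_M)(a'+m')=\bigl(R_A(a)+R_M(m)\bigr)\cdot\bigl(R_A(a')+R_M(m')\bigr)$$
and using the product formula, the $A$-component is $R_A(a)R_A(a')$ and the $M$-component is $R_A(a)\tl R_M(m')+R_M(m)\tr R_A(a')$. On the other side, $(R_A\oplus R_M)\bigl((a+m)R_A(a'+m')+R_A(a+m)(a'+m')+\l(a+m)(a'+m')\bigr)$ splits, via the product formula again, into its $A$-part — which is $R_A\bigl(aR_A(a')+R_A(a)a'+\l aa'\bigr)$ — and its $M$-part, which (after separating the $\tl$-terms from the $\tr$-terms) is $R_M\bigl(a\tl R_M(m')+R_A(a)\tl m'+\l a\tl m'\bigr)+R_M\bigl(m\tr R_A(a')+R_M(m)\tr a'+\l m\tr a'\bigr)$. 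Matching $A$-components gives precisely Eq.~(\ref{eq:CO4.20}) for $(A,\mu,\a_A,\b_A,R_A)$, which holds; matching the $\tl$-part of the $M$-components gives Eq.~(\ref{eq:20.23}) and matching the $\tr$-part gives Eq.~(\ref{eq:20.24}). Thus Eq.~(\ref{eq:CO4.20}) on $A\oplus M$ holds iff Eqs.~(\ref{eq:20.23}) and (\ref{eq:20.24}) hold, completing the equivalence. I expect the only mildly delicate point to be the bookkeeping that the mixed terms $a\tl m'$ and $m\tr a'$ never interact — they land in disjoint "slots" of $A\oplus M$ because there is no $M\cdot M\to M$ piece — so the single identity on $A\oplus M$ genuinely decouples into the two separate conditions Eqs.~(\ref{eq:20.23})--(\ref{eq:20.24}) rather than a single entangled one; once that is observed, the rest is a routine expansion and I would simply state it as such.
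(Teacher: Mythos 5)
Your proposal is correct and follows essentially the same two-step route as the paper: first reduce the BiHom-associativity of $A\oplus M$ to the bimodule condition via Lemma \ref{lem:smashproduct} with $\mu_M=0$, then expand the Rota--Baxter identity on $A\oplus M$ and match the $A$-component with Eq.~(\ref{eq:CO4.20}) and the $M$-component with Eqs.~(\ref{eq:20.23})--(\ref{eq:20.24}). The only cosmetic point is that the two mixed terms both lie in the single $M$-slot, so the decoupling into the two separate conditions is obtained by specializing $m=0$ resp.\ $m'=0$ (the first group of terms depends only on $(a,m')$ and the second only on $(m,a')$), which is the implicit step in the paper's proof as well.
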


 \begin{proof} {\bf Step 1.} Let the product in $M$ is zero in Lemma \ref{lem:smashproduct}, then we obtain that $(A\oplus M,\cdot,\a_{A}\oplus\a_{M},\b_{A}\oplus\b_{M})$ is a BiHom-associative algebra if and only if $(M,\tl,\tr,\a_{M},\b_{M})$ is an $(A,\mu,\a_{A},\b_{A})$-bimodule.

 {\bf Step 2.} We check that $R_{A}\oplus R_{M}$ is a Rota-Baxter operator of weight $\l$ on $(A\oplus M,\cdot,\a_{A}\oplus\a_{M},\b_{A}\oplus\b_{M})$ if and only if Eqs.(\mref{eq:20.23}) and (\mref{eq:20.24}) hold. For all $a, a'\in A$ and $m, m'\in M$, one has
 \begin{eqnarray*}
 (R_{A}\oplus R_{M})(a+m)\cdot(R_{A}\oplus R_{M})(a'+m')=R_{A}(a)R_{A}(a')+R_{A}(a)\tl R_{M}(m')+R_{M}(m)\tr R_{A}(a')
 \end{eqnarray*}
 and
 \begin{eqnarray*}
 &&\hspace{-10mm}(R_{A}\oplus R_{M})((a+m)\cdot(R_{A}\oplus R_{M})(a'+m'))+(R_{A}\oplus R_{M})((R_{A}\oplus R_{M})(a+m)\cdot(a'+m'))\\
 &&+\l(R_{A}\oplus R_{M})((a+m)\cdot(a'+m'))\\
 &=&R_{A}(aR_{A}(a'))+R_{A}(R_{A}(a)a')+\l R_{A}(aa')
 +R_{M}(a\tl R_{M}(m'))+R_{M}(R_{A}(a)\tl m')+\l R_{M}(a\tl m')\\
 &&+R_{M}(m\tr R_{A}(a'))+ R_{M}(R_{M}(m)\tr a')+\l R_{M}(m\tr a').
 \end{eqnarray*}
 Therefore $R_{A}\oplus R_{M}$ is a Rota-Baxter operator of weight $\l$ on $(A\oplus M,\cdot,\a_{A}\oplus\a_{M},\b_{A}\oplus\b_{M})$ if and only if
\begin{eqnarray*}
 &&R_{A}(a)\tl R_{M}(m')+R_{M}(m)\tr R_{A}(a')=R_{M}(a\tl R_{M}(m'))+R_{M}(R_{A}(a)\tl m')\\
 &&+\l R_{M}(a\tl m')+R_{M}(m\tr R_{A}(a'))+ R_{M}(R_{M}(m)\tr a')+\l R_{M}(m\tr a')
 \end{eqnarray*}
 if and only if Eqs.(\mref{eq:20.23}) and (\mref{eq:20.24}) hold. These finish the proof.
 \end{proof}

 \begin{rmk} (1) In \cite[Proposition 4 or Theorem 5]{HHS}, the authors only proved that if $(M, \tl, \tr, \a_{M}$, $\b_{M})$ is a BiHom-bimodule on $(A,\mu,\a_{A},\b_{A})$, then $(A\oplus M, \cdot, \a_{A}\oplus\a_{M}, \b_{A}\oplus\b_{M})$ is a BiHom-associative algebra. Here we show that the converse also holds.

 (2) When the structure maps $\a=\b=\id$ in Proposition \mref{pro:rbbimod}, we obtain \cite[Proposition 2.10]{BGM}.
 \end{rmk}
 
 We recall from \cite{MM} the definition of (anti)quasitriangular unitary $\l$-infBH-bialgebra.

 For convenience, we follow the notations in \cite{LMMP3} or \cite{MY}. Let $(A,\mu,\a,\b)$ be a unitary BiHom-associative algebra, $\psi,\om: A\lr A$ be linear maps and  $r\in A\o A$. We define the following elements in $A\o A\o A$:
 \begin{eqnarray*}
 &r_{12}r_{23}=\a(r^{1})\o r^{2}\bar{r}^{1}\o\b(\bar{r}^{2}),~~ r_{13}r_{12}=\om(r^{1})\bar{r}^{1}\o \b(\bar{r}^{2})\o\a\psi(r^{2}),&\\
 &r_{23}r_{13}=\b\om(r^{1})\o\a(\bar{r}^{1})\o \bar{r}^{2}\psi(r^{2}),\qquad r_{13}=\om(r^{1})\o 1\o\psi(r^{2}).& 
 \end{eqnarray*}

 \begin{defi}\mlabel{de:waybe} Let $(A,\mu,1,\a,\b)$ be a unitary BiHom-associative algebra, $\psi,\om: A\lr A$ be linear maps and $r\in A\o A$. We call
 \begin{eqnarray}
 r_{13}r_{12}-r_{12}r_{23}+r_{23}r_{13}=\l r_{13} \mlabel{eq:waybe}
 \end{eqnarray}
 a {\bf $\l$-associative BiHom-Yang-Baxter equation (abbr. $\l$-abhYBe) in $(A,\mu,1,\a,\b)$} where $\l$ is a given element in $K$.
 \end{defi}
 
 \begin{defi}\mlabel{de:qt} Let $(A,\mu,1,\alpha,\beta)$ be a unitary BiHom-associative algebra such that $\alpha,\beta$ are bijective, $\psi,\omega: A\longrightarrow A$ be linear maps, $r=r^{1}\otimes r^{2}\in A\otimes A$ be $\alpha,\beta,\psi,\omega$-invariant and moreover Eqs.(\mref{eq:12.1}),  (\mref{eq:12.3}) and (\mref{eq:12.30}) hold.
 A {\bf (resp. anti)quasitriangular unitary $\l$-infBH-bialgebra} is a 8-tuple $(A, \mu, 1, \a, \b, \psi, \om, r)$ consisting of a unitary BiHom-associative algebra $(A, \mu, 1, \a, \b)$ and a solution $r\in A\o A$ of a $(resp. -\l)\l$-abhYBe. In this case the comultiplication is defined by
 \begin{eqnarray}\mlabel{eq:14.5}
 \D_{r}(a)=\om\a^{-1}(a) r^{1}\o\b(r^{2})-\a(r^{1})\o r^{2} \psi\b^{-1}(a)-\l (\om(a)\o 1)
 \end{eqnarray}
 (resp. \begin{eqnarray}
 &\widetilde{\D}_{r}(a)=\om\a^{-1}(a) r^{1}\o\b(r^{2})-\a(r^{1})\o r^{2} \psi\b^{-1}(a)-\l (1\o \psi(a)).&\mlabel{eq:14.25}
 \end{eqnarray}).
 \end{defi}
 
 \begin{pro}\mlabel{pro:14.8} \cite{MM} With the assumption of Definition \mref{de:qt}, $(A, \mu, \D=\D_r, 1, \alpha, \beta, \psi, \omega)$, where $\D_r$ is defined by Eq.(\mref{eq:14.5}), is a quasitriangular unitary $\l$-infBH-bialgebra if and only if
 \begin{eqnarray}
 &(\Delta\otimes\psi)(r)=-r_{23}r_{13}&\mlabel{eq:14.8}
 \end{eqnarray}
 or
 \begin{eqnarray}
 &(\omega\otimes \Delta)(r)=r_{13}r_{12}-\lambda(r_{13}+r_{12}).&\mlabel{eq:14.9}
 \end{eqnarray}
 holds.
 \end{pro}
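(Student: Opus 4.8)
The plan is to verify directly, from the definitions, that the comultiplication $\D_r$ defined by Eq.(\mref{eq:14.5}) satisfies the structural axioms of a $\l$-infBH-bialgebra, namely BiHom-coassociativity Eq.(\mref{eq:1.9}) together with the compatibility maps Eqs.(\mref{eq:12.1})--(\mref{eq:12.3}) and the mixed compatibility Eq.(\mref{eq:12.4}), and to show that each of these reduces — under the hypotheses of Definition \mref{de:qt} and the $\a,\b,\psi,\om$-invariance of $r$ — to the single tensor identity Eq.(\mref{eq:14.8}) (equivalently Eq.(\mref{eq:14.9})). Since by assumption $(A,\mu,1,\a,\b)$ is already a unitary BiHom-associative algebra and $\a,\b$ are bijective, the only thing to be checked is that the coalgebra and mixed-compatibility conditions hold iff one of the two displayed tensor equations holds. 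First I would record the elementary consequences of invariance: $\a(r^1)\o\a(r^2)=r^1\o r^2$, and likewise for $\b,\psi,\om$, so that $\a^{-1},\b^{-1}$ applied to $r$ act as the identity; this lets one rewrite $\D_r(a)$ more symmetrically and will be used repeatedly to push the twisting maps through $r$.

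Next I would compute both sides of the BiHom-coassociativity equation $(\D_r\o\psi)\D_r = (\om\o\D_r)\D_r$ applied to a general $a\in A$. Expanding $\D_r$ once gives three terms (the two $r$-terms and the $\l$-term $-\l(\om(a)\o1)$); applying $\D_r$ again to the appropriate tensor factor produces a sum that, after using Eqs.(\mref{eq:1.3}), (\mref{eq:1.5}), the commutation relations among $\a,\b,\psi,\om$, and invariance of $r$, organizes into: (i) terms involving $a$ linearly paired against a quadratic-in-$r$ expression, and (ii) purely quadratic/cubic-in-$r$ terms with no $a$. The $a$-linear terms should cancel identically using associativity and the unit axioms; the $r$-only terms are exactly where $r_{12}r_{23}$, $r_{13}r_{12}$, $r_{23}r_{13}$, and $r_{13}$ appear, and demanding their vanishing is precisely the requirement that $(\D\o\psi)(r) = -r_{23}r_{13}$, i.e. Eq.(\mref{eq:14.8}). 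I would then check that the mixed compatibility Eq.(\mref{eq:12.4}), $\D_r\circ\mu = (\mu\o\b)(\om\o\D_r) + (\a\o\mu)(\D_r\o\psi) + \l(\a\om\o\b\psi)$, holds automatically once $\D_r$ has the stated form — here the point is that $\D_r$ was reverse-engineered from the $\l$-abhYBe precisely so that this Leibniz-type rule is built in; concretely, expanding $\D_r(ab)$ and comparing with the right-hand side, the $\l$-terms match by the unit axioms and the $r$-terms match by BiHom-associativity Eq.(\mref{eq:1.3}) and the invariance of $r$, so no further condition on $r$ is generated. Finally I would show Eq.(\mref{eq:14.8}) $\Leftrightarrow$ Eq.(\mref{eq:14.9}) by applying a suitable twisting map and the $\l$-abhYBe Eq.(\mref{eq:waybe}) itself: one re-expresses $r_{23}r_{13}$ via Eq.(\mref{eq:waybe}) as $r_{13}r_{12} - r_{12}r_{23} - \l r_{13} + (\text{correction})$ and matches the two normalizations of the $\D(r)$-identity.

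The main obstacle I anticipate is bookkeeping: keeping track of which of $\a,\b,\psi,\om$ (and their inverses) land on which of the three tensor legs after two applications of $\D_r$, and correctly identifying the resulting cubic-in-$r$ expressions with the abbreviations $r_{12}r_{23}$, $r_{13}r_{12}$, $r_{23}r_{13}$, $r_{13}$ given just before Definition \mref{de:waybe}. Because $\D_r$ has a minus sign between its two $r$-terms and a separate $\l$-term, a single application of $\D_r$ to a tensor already carrying an $r$ produces up to nine terms, and the cancellation of all $a$-dependent pieces must be verified carefully using Eqs.(\mref{eq:1.2})--(\mref{eq:1.5}), (\mref{eq:12.1})--(\mref{eq:12.3}). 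A secondary subtlety is the role of $\v$ and the unit — one must confirm the $\l$-term $-\l(\om(a)\o1)$ interacts correctly with $a1_A=\a(a)$, $1_Aa=\b(a)$ — but this is routine. Apart from that, every step is a direct computation, and the proof is essentially the claim that the definition Eq.(\mref{eq:14.5}) is exactly the one that makes coassociativity equivalent to the Yang--Baxter-type condition, exactly as in the classical (non-BiHom) infinitesimal case; I would therefore cite \cite{MM} for the detailed verification and present here only the skeleton above.
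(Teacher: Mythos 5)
The paper itself offers no proof of this statement: Proposition \mref{pro:14.8} is quoted from the companion paper \cite{MM}, so there is no in-text argument to compare yours against, and I can only measure your skeleton against what the proof must be. You correctly identify that the derivation property \meqref{eq:12.4} and the compatibilities \meqref{eq:12.1}--\meqref{eq:12.3}, \meqref{eq:1.7} hold unconditionally from the invariance of $r$ and the unit/associativity axioms, so that coassociativity is the only axiom at stake. But your account of where \meqref{eq:14.8}/\meqref{eq:14.9} come from is structurally off, and as a result the ``only if'' direction is never actually established.

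The heart of the proposition is a short computation your skeleton omits: applying $\D_r$ directly to one leg of $r$ and using only the $\a,\b,\psi,\om$-invariance of $r$, one obtains the closed forms $(\D_r\o\psi)(r)=r_{13}r_{12}-r_{12}r_{23}-\l r_{13}$ and $(\om\o\D_r)(r)=r_{12}r_{23}-r_{23}r_{13}-\l r_{12}$, so that each of \meqref{eq:14.8} and \meqref{eq:14.9} is \emph{literally a restatement of the $\l$-abhYBe} \meqref{eq:waybe}. This one observation yields both the equivalence \meqref{eq:14.8}$\Leftrightarrow$\meqref{eq:14.9} and the forward implication (a quasitriangular structure contains a solution of \meqref{eq:waybe} by Definition \mref{de:qt}, hence satisfies \meqref{eq:14.8}). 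Your proposal instead tries to extract \meqref{eq:14.8} from the coassociativity identity $(\D_r\o\psi)\D_r(a)=(\om\o\D_r)\D_r(a)$, asserting that it splits into $a$-linear terms that cancel and ``purely quadratic/cubic-in-$r$ terms with no $a$'' whose vanishing is precisely \meqref{eq:14.8}. That is not what happens: every term of that identity carries an $a$, and the defect organizes (roughly) as $a$ acting on the first leg of $r_{13}r_{12}-r_{12}r_{23}+r_{23}r_{13}-\l r_{13}$ minus $a$ acting on the third leg. Its vanishing for all $a$ is a priori weaker than the abhYBe, so coassociativity alone does not give \meqref{eq:14.8}; it is the definitional requirement that $r$ solve \meqref{eq:waybe} that closes the loop, via the direct computation above. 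Conversely, for the ``if'' direction one must still check that the abhYBe implies coassociativity (the remaining axioms being automatic) --- an implication, not the equivalence you assert. So your claim that ``each of these [axioms] reduces to the single tensor identity'' is inaccurate: most axioms hold unconditionally, and the one that does not is implied by, rather than equivalent to, the displayed condition.
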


 Rota-Baxter BiHom-bimodules can be constructed by  modules over (anti)quasitriangular unitary $\l$-infBH-bialgebras.

 \begin{pro}\mlabel{pro:20.08} Let $(A,\mu,1,\a_{A},\b_{A},\psi_{A},\om_{A},r)$ be a quasitriangular unitary $\l$-infBH-bialgebra and $(M,\tl,\tr,\a_{M},\b_{M})$ be an $(A,\mu,\a_{A},\b_{A})$-bimodule with invertible maps $\a_{M},\b_{M}$.
 Then $(M, \tl, \tr, \a_{M}$, $\b_{M}, R_{M})$ is a $\l$-Rota-Baxter BiHom-bimodule on $(A,\mu,\a_{A},\b_{A}, R_{A})$, where the maps $R_A$ and $R_M$ are defined by
 \begin{eqnarray*}
 &R_A(a)=-\b_A^{2}\psi_A(r^{1})(\a_A^{-1}\b_A^{-1}(a)\a_A\om_A(r^{2}))&
 \end{eqnarray*}
 and
 \begin{eqnarray*}
 &R_{M}(m)=-\b_{A}^{2}\psi_{A}(r^{1})\tl
 (\a_{M}^{-1}\b_{M}^{-1}(m)\tr\a_{A}\om_{A}(r^{2})),&\mlabel{eq:4.81}
 \end{eqnarray*}
 respectively.
 \end{pro}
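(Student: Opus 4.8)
The plan is to reduce everything to the already-established machinery. By Proposition \ref{pro:14.8}, the quasitriangular hypothesis means $r$ solves the $\l$-abhYBe and $\D=\D_r$ is given by Eq.(\mref{eq:14.5}), equivalently Eqs.(\mref{eq:14.8})--(\mref{eq:14.9}) hold. The first step is to recognize that the formula for $R_A$ is nothing but the ``canonical'' Rota-Baxter operator attached to $r$: one should verify that $R_A$ commutes with $\a_A,\b_A$ (immediate from $r$ being $\a_A,\b_A,\psi_A,\om_A$-invariant together with Eqs.(\mref{eq:12.1}),(\mref{eq:12.3})) and that $R_A$ satisfies Eq.(\mref{eq:CO4.20}); the latter is exactly the content of the $\l$-abhYBe unwound through the BiHom-associativity axiom (\mref{eq:1.3}) and the unitality relations (\mref{eq:1.5}). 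If this is a quotable fact from \cite{MM}, I would simply cite it; otherwise it is a direct, if lengthy, bookkeeping computation which I would sketch rather than carry out in full.

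Next I would turn to $R_M$. The key structural observation is that $M$ is a bimodule over $A$, so the semidirect product $A\oplus M$ of Proposition \ref{pro:rbbimod} (with zero multiplication on $M$) is a BiHom-associative algebra, and in fact the element $r\in A\o A\subseteq (A\oplus M)\o(A\oplus M)$ still satisfies the $\l$-abhYBe there (all the tensor components $r_{12}r_{23}$, etc., only involve products inside $A$, which are unchanged). Hence the canonical Rota-Baxter operator $R_{A\oplus M}$ attached to $r$ on $A\oplus M$ is well-defined and, by Step 1 applied to $A\oplus M$, is a Rota-Baxter operator of weight $\l$. The point is then to compute $R_{A\oplus M}$ on the summand $M$: using $(a+m)\cdot(a'+m')=aa'+a\tl m'+m\tr a'$ one gets, for $m\in M$,
\begin{eqnarray*}
R_{A\oplus M}(m)=-\b_A^2\psi_A(r^1)\cdot\bigl((\a^{-1}\b^{-1}(m))\cdot \a\om(r^2)\bigr)
=-\b_A^2\psi_A(r^1)\tl\bigl(\a_M^{-1}\b_M^{-1}(m)\tr\a_A\om_A(r^2)\bigr),
\end{eqnarray*}
which is precisely the stated $R_M$; similarly $R_{A\oplus M}$ restricted to $A$ is $R_A$. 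Here one uses that $r^1,r^2\in A$ so the outer and inner products are a left action $\tl$ and a right action $\tr$ respectively, and that $\a_{A}\oplus\a_M$, $\b_A\oplus\b_M$ act componentwise, so their inverses do too (this is where invertibility of $\a_M,\b_M$ is needed).

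Finally, by Proposition \ref{pro:rbbimod} in the reverse direction, the fact that $R_A\oplus R_M$ is a Rota-Baxter operator of weight $\l$ on the semidirect product $A\oplus M$ forces $(M,\tl,\tr,\a_M,\b_M,R_M)$ to be a $\l$-Rota-Baxter BiHom-bimodule over $(A,\mu,\a_A,\b_A,R_A)$, i.e.\ Eqs.(\mref{eq:20.23})--(\mref{eq:20.24}) hold. The main obstacle I anticipate is Step 1: establishing that the given $R_A$ really is a weight-$\l$ Rota-Baxter operator, since this requires carefully translating the three-term $\l$-abhYBe (\mref{eq:waybe}) — with all the twisting maps $\psi,\om,\a,\b$ and the factors of $1$ in $r_{13}$ — into the identity $R_A(a)R_A(b)=R_A(aR_A(b))+R_A(R_A(a)b)+\l R_A(ab)$, using (\mref{eq:1.3}) and (\mref{eq:1.5}) to move the twists around. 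Once that computation is in hand (or cited from \cite{MM}), the passage to $A\oplus M$ and the extraction of $R_M$ are formal, and the use of Proposition \ref{pro:rbbimod} as an ``if and only if'' closes the argument cleanly.
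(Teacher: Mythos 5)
Your overall strategy --- deduce the bimodule identities \eqref{eq:20.23}--\eqref{eq:20.24} from the algebra case by passing to the semidirect product $A\oplus M$ and then invoking Proposition \ref{pro:rbbimod} as an equivalence --- is appealing, and the final step (reading off $R_{A\oplus M}|_{M}=R_{M}$, $R_{A\oplus M}|_{A}=R_{A}$ and applying the ``if and only if'' of Proposition \ref{pro:rbbimod}) is fine. The gap is in the middle: you cannot apply ``Step 1'' to $A\oplus M$. The derivation of the weight-$\l$ Rota--Baxter identity from the $\l$-abhYBe \eqref{eq:waybe} takes place in a \emph{unitary} BiHom-associative algebra: the term $\l r_{13}=\l\,\om(r^{1})\o 1\o\psi(r^{2})$ only produces the summand $\l R(xy)$ after the middle tensorand $1$ is absorbed via the unitality relations \eqref{eq:1.5}. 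But $1_{A}$ is not a unit of the semidirect product $A\oplus M$: Definition \ref{de:1.3} does not require the bimodule to be unital, i.e.\ $1_{A}\tl m=\b_{M}(m)$ and $m\tr 1_{A}=\a_{M}(m)$ are not among the hypotheses of Proposition \ref{pro:20.08}. Likewise $\psi_{A},\om_{A}$ are not assumed to extend to $M$, so $A\oplus M$ cannot be promoted to a quasitriangular unitary $\l$-infBH-bialgebra, which is what a citation of \cite[Corollary 2.6]{MLi} for $A\oplus M$ would require; making $\D_{r}$ compatible with the multiplication of $A\oplus M$ on the summand $M$ is essentially the Hopf-module condition and does not come for free. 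So the black-box reduction fails as stated.

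What saves the proposition --- and what the paper actually does --- is that in the two mixed cases $R_{A}(a)\tl R_{M}(m)$ and $R_{M}(m)\tr R_{A}(a)$ the unit coming from $\l r_{13}$ is only ever multiplied against the element $a\in A$, never against $m\in M$, so only the unitality of $A$ itself is used. The paper therefore quotes \cite[Corollary 2.6]{MLi} for the statement about $R_{A}$ and then verifies \eqref{eq:20.23} by a direct computation: expand $R_{A}(a)\tl R_{M}(m)$ with the module axiom \eqref{eq:1.15}, substitute the $\l$-abhYBe \eqref{eq:waybe} into the resulting $r_{12}r_{23}$-pattern, and reassemble the three terms as $R_{M}(R_{A}(a)\tl m)+R_{M}(a\tl R_{M}(m))+\l R_{M}(a\tl m)$ using \eqref{eq:1.15} and \eqref{eq:1.16}; the identity \eqref{eq:20.24} is symmetric. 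To repair your argument you would either have to add unitality of the bimodule as a hypothesis (weakening the proposition), or open up the proof of ``Step 1'' and check that the unit is only ever used against elements of $A$ --- at which point you are carrying out the direct computation anyway.
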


 \begin{proof} By \cite[Corollary 2.6]{MLi}, we know that $(A, \mu, \a_{A}, \b_{A}, R_{A})$ is a $\l$-Rota-Baxter BiHom-algebra. For all $a\in A$ and $m\in M$, we have
 \begin{eqnarray*}
 R_{A}(a)\tl R_{M}(m)
 \hspace{-5mm}&\stackrel{(\mref{eq:1.15})}=&\hspace{-5mm}\a_{A}\b_{A}^{2}\psi(r^{1})\tl
 ((\a_{A}^{-1}\b_{A}^{-1}(a)
 (\om_{A}(r^{2})\psi_{A}(\bar{r}^{1})))
 \tl(\a_{M}^{-1}\b_{M}^{-1}(m)\tr\a_{A}\om_{A}(\bar{r}^{2})))\\
 \hspace{-5mm}&\stackrel{(\mref{eq:waybe})}=&\hspace{-5mm}
 \b_{A}^{2}\psi_{A}(r^{1})\b_{A}^{2}\psi_{A}\om_{A}^{-1}(\bar{r}^{1})\tl
 ((\a_{A}^{-1}\b_{A}^{-1}(a)\b_{A}(\bar{r}^{2}))
 \tl(\a_{M}^{-1}\b_{M}^{-1}(m)\tr\a_{A}^{2}\b_{A}^{-1}\om_{A}(r^{2})))\\
 \hspace{-5mm}&&\hspace{-5mm}+\b_{A}^{3}\psi_{A}(\bar{r}^{1})\tl(\a_{A}^{-1}\b_{A}^{-1}(a)
 \a_{A}(r^{1})\tl(\a_{M}^{-1}\b_{M}^{-1}(m)\tr
 \a_{A}\b_{A}^{-1}\om_{A}\psi_{A}^{-1}(r^{2})\a_{A}\b_{A}^{-1}\om_{A}(\bar{r}^{2})))\\
 \hspace{-5mm}&&\hspace{-5mm}-\l\b_{A}^{2}\psi_{A}(r^{1})\tl(\b_{A}^{-1}(a)\tl
 (\a_{M}^{-1}\b_{M}^{-1}(m)\tr\a_{A}\b_{A}^{-1}\om_{A}(r^{2})))\\
 \hspace{-5mm}&\stackrel{(\mref{eq:1.15})(\mref{eq:1.16})}=&\hspace{-3mm}\b_{A}^{2}\psi_{A}(\bar{r}^{1})\tl
 (\b_{A}\psi_{A}(r^{1})(\a_{A}^{-1}\b_{A}^{-2}(a)\a_{A}\b_{A}^{-1}\om_{A}(r^{2}))
 \tl(\a_{M}^{-1}\b_{M}^{-1}(m)\tr
 \a_{A}\b_{A}^{-1}\om_{A}(\bar{r}^{2})))\\
 \hspace{-5mm}&&\hspace{-5mm}+\b_{A}^{2}\psi_{A}(\bar{r}^{1})\tl
 ((\a_{A}^{-2}\b_{A}^{-1}(a)\a_{A}^{-1}\b_{A}\psi_{A}(r^{1})\tl (\a_{M}^{-2}\b_{M}^{-1}(m)\tr\om_{A}(r^{2})))\tr\a_{A}\om_{A}(\bar{r}^{2}))\\
 \hspace{-5mm}&&\hspace{-5mm}-\l\b_{A}^{2}\psi_{A}(r^{1})\tl(\b_{A}^{-1}(a)\tl
 (\a_{M}^{-1}\b_{M}^{-1}(m)\tr\a_{A}\b_{A}^{-1}\om_{A}(r^{2})))\\
 \hspace{-5mm}&=&\hspace{-5mm}R_{M}(R_{A}(a)\tl m)+R_{M}(a\tl R_{M}(m))+\l R_{M}(a\tl m).
 \end{eqnarray*}
 So Eq.(\mref{eq:20.23}) holds. Eq.(\mref{eq:20.24}) can be checked similarly. These finish the proof.
 \end{proof}

 \begin{rmk} We can also obtain a similar result  to Proposition \mref{pro:20.08} for an anti-quasitriangular unitary $\l$-infBH-bialgebra $(A, \mu, 1, \a_{A}, \b_{A}, \psi_{A}, \om_{A}, r)$. In this case, $R_{M}$ is defined by
 \begin{eqnarray*}
 &R_{M}(m)=\b_{A}^{2}\psi_{A}(r^{1})\tl
 (\a_{M}^{-1}\b_{M}^{-1}(m)\tr\a_{A}\om_{A}(r^{2})).&\mlabel{eq:04.081}
 \end{eqnarray*}
 \end{rmk}

\subsection{BiHom-dendriform bimodules from Rota-Baxter BiHom-bimodules} Firstly we recall the notion of BiHom-dendriform bimodule. 
 \begin{defi}\mlabel{de:020.04} (\cite[Definition 3.1]{LMMP1}) A {\bf BiHom-dendriform algebra} is a 5-tuple $(A, \prec, \succ, \a, \b)$ consisting of a linear space $A$ and linear maps $\prec,\succ:A\o A\longrightarrow A$ and $\a,\b:A\longrightarrow A$ satisfying, for all $a,b,c\in A$, the following conditions:
 \begin{eqnarray}
 &\a\circ\b=\b\circ\a,&\mlabel{eq:020.07}\\
 &\a(a\prec b)=\a(a)\prec\a(b),\quad \a(a\succ b)=\a(a)\succ\a(b),&\mlabel{eq:020.08}\\
 &\b(a\prec b)=\b(a)\prec\b(b),\quad \b(a\succ b)=\b(a)\succ\b(b),&\mlabel{eq:020.09}\\
 &(a\prec b)\prec\b(c)=\a(a)\prec(b\prec c)+\a(a)\prec(b\succ c),\quad&\mlabel{eq:020.11}\\
 &\a(a)\succ(b\prec c)=(a\succ b)\prec\b(c),\quad&\mlabel{eq:020.12}\\
 &\a(a)\succ(b\succ c)=(a\prec b)\succ\b(c)+(a\succ b)\succ\b(c).\quad&\mlabel{eq:020.13}
 \end{eqnarray}
 \end{defi}

 \begin{defi}\mlabel{de:20.04}(\cite[Definition 3.10]{Lar}) Let $(A,\prec,\succ,\a_{A},\b_{A})$ be a BiHom-dendriform algebra. A {\bf BiHom-dendriform bimodule} over $(A, \prec, \succ, \a_{A}, \b_{A})$ is a vector space $M$ together with linear maps
 \begin{eqnarray*}
 && A\o M \longrightarrow M,\quad\  A \o M \longrightarrow M,\quad\  M\o A \longrightarrow M,\quad\  M\o A \longrightarrow M \\
 && a\o m \longmapsto a\succ m,\ a\o m \longmapsto a\prec m,\ m\o a \longmapsto m\succ a,\ m\o a \longmapsto m\prec a
 \end{eqnarray*}
 and two commuting linear maps $\a_{M}, \b_{M}: M\longrightarrow M$, such that
 \begin{eqnarray}
 &\a_{M}(a\prec m)=\a_{A}(a)\prec\a_{M}(m),\quad \a_{M}(m\prec a)=\a_{M}(m)\prec\a_{A}(a),&\mlabel{eq:20.07}\\
 &\a_{M}(a\succ m)=\a_{A}(a)\succ\a_{M}(m),\quad \a_{M}(m\succ a)=\a_{M}(m)\succ\a_{A}(a),&\mlabel{eq:20.08}\\
 &\b_{M}(a\prec m)=\b_{A}(a)\prec\b_{M}(m),\quad \b_{M}(m\prec a)=\b_{M}(m)\prec\b_{A}(a),&\mlabel{eq:20.09}\\
 &\b_{M}(a\succ m)=\b_{A}(a)\succ\b_{M}(m),\quad \b_{M}(m\succ a)=\b_{M}(m)\succ\b_{A}(a),&\mlabel{eq:20.10}\\
 &(a\prec b)\prec\b_{M}(m)=\a_{A}(a)\prec(b\prec m)+\a_{A}(a)\prec(b\succ m),\quad&\mlabel{eq:20.11}\\
 &\a_{A}(a)\succ(b\prec m)=(a\succ b)\prec\b_{M}(m),\quad&\mlabel{eq:20.12}\\
 &\a_{A}(a)\succ(b\succ m)=(a\prec b)\succ\b_{M}(m)+(a\succ b)\succ\b_{M}(m),\quad&\mlabel{eq:20.13}\\
 &(a\prec m)\prec\b_{A}(b)=\a_{A}(a)\prec(m\prec b)+\a_{A}(a)\prec(m\succ b),\quad&\mlabel{eq:20.14}\\
 &\a_{A}(a)\succ(m\prec b)=(a\succ m)\prec\b_{A}(b),\quad&\mlabel{eq:20.15}\\
 &\a_{A}(a)\succ(m\succ b)=(a\prec m)\succ\b_{A}(b)+(a\succ m)\succ\b_{A}(b),\quad&\mlabel{eq:20.16}\\
 &(m\prec a)\prec\b_{A}(b)=\a_{M}(m)\prec(a\prec b)+\a_{M}(m)\prec(a\succ b),\quad&\mlabel{eq:20.17}\\
 &\a_{M}(m)\succ(a\prec b)=(m\succ a)\prec\b_{A}(b),\quad&\mlabel{eq:20.18}\\
 &\a_{M}(m)\succ(a\succ b)=(m\prec a)\succ\b_{A}(b)+(m\succ a)\succ\b_{A}(b),\quad&\mlabel{eq:20.19}
 \end{eqnarray}
 for all $a,b \in A$ and $m\in M$.
 \end{defi} 
 
 \begin{pro}\mlabel{pro:D29.07} Let $(A,\prec,\succ, \a_{A}, \b_{A})$ be a BiHom-dendriform algebra, $M$ a vector space together with linear maps
 \begin{eqnarray*}
 && A\o M \longrightarrow M,\quad\  A \o M \longrightarrow M,\quad\  M\o A \longrightarrow M,\quad\  M\o A \longrightarrow M \\
 && a\o m \longmapsto a\succ m,\ a\o m \longmapsto a\prec m,\ m\o a \longmapsto m\succ a,\ m\o a \longmapsto m\prec a
 \end{eqnarray*}
 and two commuting linear maps $\a_{M}, \b_{M}: M\longrightarrow M$. Define two linear maps $\prec_{A\oplus M}, \succ_{A\oplus M}$ on $A\oplus M$ by
 \begin{eqnarray}
 &(a+m)\prec_{A\oplus M}(b+n)=a\prec b+a\prec n+m\prec b,&\mlabel{eq:D029.26}\\
 &(a+m)\succ_{A\oplus M}(b+n)=a\succ b+a\succ n+m\succ b,&\mlabel{eq:D029.27}
 \end{eqnarray}
 for all $a, b\in A$ and $m, n\in M$. Then $(A\oplus M, \prec_{A\oplus M}, \succ_{A\oplus M}, \a_A\oplus\a_M, \b_A\oplus\b_M)$, where $(\a_{A}\oplus\a_{M})(a+m)=\a_{A}(a)+\a_{M}(m)$, $(\b_{A}\oplus\b_{M})(a+m)=\b_{A}(a)+\b_{M}(m)$, is a BiHom-dendriform algebra if and if $(M, \prec, \succ, \a_{M}, \b_{M})$ is a BiHom-dendriform bimodule over $(A,\prec,\succ, \a_{A}, \b_{A})$.
 \end{pro}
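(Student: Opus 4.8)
The statement is an "if and only if" characterizing when the direct-sum construction $A\oplus M$ with the hybrid operations \eqref{eq:D029.26}--\eqref{eq:D029.27} is a BiHom-dendriform algebra. The strategy is the standard one, completely parallel to Lemma \ref{lem:smashproduct} (and indeed this is the dendriform analogue of that semidirect-product lemma, just as Proposition \ref{pro:rbbimod} was its Rota-Baxter analogue): expand the defining axioms \eqref{eq:020.07}--\eqref{eq:020.13} of a BiHom-dendriform algebra on elements $a+m,\ b+n,\ c+p$ of $A\oplus M$, use the block decomposition of $\prec_{A\oplus M},\succ_{A\oplus M}$, and collect terms by how many $M$-components appear. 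Since each of $\prec_{A\oplus M},\succ_{A\oplus M}$ kills the "two $M$-inputs" part (there is no $m\prec n$ or $m\succ n$ term in \eqref{eq:D029.26}--\eqref{eq:D029.27}), every triple product of elements with two or three $M$-entries vanishes on both sides automatically, so only the all-$A$ part and the exactly-one-$M$ part carry information.

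First I would dispatch the compatibility-with-$\alpha,\beta$ conditions \eqref{eq:020.07}--\eqref{eq:020.09}: on $A\oplus M$ these split into the corresponding conditions for $(A,\prec,\succ,\a_A,\b_A)$, the commutation $\a_M\b_M=\b_M\a_M$, and exactly the multiplicativity relations \eqref{eq:20.07}--\eqref{eq:20.10} of Definition \ref{de:20.04}; this is immediate from $(\a_A\oplus\a_M)((a+m)\prec_{A\oplus M}(b+n))=\a_A(a\prec b)+\a_M(a\prec n)+\a_M(m\prec b)$ and matching components. Then for each of the three "associativity" axioms \eqref{eq:020.11}, \eqref{eq:020.12}, \eqref{eq:020.13} I would write out both sides for a general triple $(a+m,b+n,c+p)$. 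The pure $A$-part reproduces exactly that axiom in $(A,\prec,\succ,\a_A,\b_A)$, which holds by hypothesis. Setting exactly one of $m,n,p$ nonzero (and the other two zero) in turn yields, respectively: with $m$ nonzero, the left-module-type relations \eqref{eq:20.11}--\eqref{eq:20.13}; with $n$ nonzero, the "middle" relations \eqref{eq:20.14}--\eqref{eq:20.16}; with $p$ nonzero, the right-module-type relations \eqref{eq:20.17}--\eqref{eq:20.19}. Each of the three dendriform axioms thus contributes one relation from each of these three blocks, accounting for all nine of \eqref{eq:20.11}--\eqref{eq:20.19}. Conversely, if all of \eqref{eq:20.07}--\eqref{eq:20.19} hold together with the BiHom-dendriform axioms on $A$, then reassembling the components shows \eqref{eq:020.07}--\eqref{eq:020.13} hold on $A\oplus M$.

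There is essentially no genuine obstacle here; the work is purely bookkeeping, and the main thing to get right is the correct placement of $\a$'s versus $\b$'s and of module-actions versus algebra-multiplications when a term sits in a prescribed slot — e.g. that in \eqref{eq:020.11} the term $\a(a)\prec(b\prec c)$ with only the first entry in $M$ becomes $\a_M(m)\prec(a\prec b)$, matching \eqref{eq:20.17}, while with only the middle entry in $M$ it becomes $\a_A(a)\prec(m\prec b)$, matching \eqref{eq:20.14}. So the proof I would write is short: "We omit the routine but lengthy verification. Expanding the six axioms of Definition \ref{de:020.04} on $A\oplus M$ and comparing the $A$-, $M$-valued components (using that $\prec_{A\oplus M},\succ_{A\oplus M}$ vanish on $M\o M$) shows that \eqref{eq:020.07}--\eqref{eq:020.09} for $A\oplus M$ are equivalent to \eqref{eq:020.07}--\eqref{eq:020.09} for $A$ together with \eqref{eq:20.07}--\eqref{eq:20.10}, and \eqref{eq:020.11}, \eqref{eq:020.12}, \eqref{eq:020.13} for $A\oplus M$ are equivalent to the same axioms for $A$ together with \eqref{eq:20.11}--\eqref{eq:20.13}, \eqref{eq:20.14}--\eqref{eq:20.16} and \eqref{eq:20.17}--\eqref{eq:20.19} respectively, which is exactly the assertion that $(M,\prec,\succ,\a_M,\b_M)$ is a BiHom-dendriform bimodule over $A$." I would, if space permits, display one sample expansion — say axiom \eqref{eq:020.11} with $n$ the only nonzero entry — to illustrate how \eqref{eq:20.14} emerges, paralleling the style of the proof of Lemma \ref{lem:smashproduct}.
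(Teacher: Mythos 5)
Your proposal is correct and matches the paper exactly: the paper's entire proof of Proposition \ref{pro:D29.07} is the single word ``Straightforward,'' and your component-by-component expansion (using that $\prec_{A\oplus M},\succ_{A\oplus M}$ vanish on $M\o M$) is precisely the intended routine verification. One minor bookkeeping slip: your ``respectively'' assignments are reversed/misgrouped — a nonzero first-slot $m$ yields \eqref{eq:20.17}--\eqref{eq:20.19} and a nonzero third-slot $p$ yields \eqref{eq:20.11}--\eqref{eq:20.13}, with each of the three axioms \eqref{eq:020.11}--\eqref{eq:020.13} contributing one relation to each block (as your own worked example correctly shows), so this does not affect the validity of the argument.
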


 \begin{proof} Straightforward.
 \end{proof}

 \begin{rmk} \cite[Proposition 3.11]{Lar} contains only the sufficiency in Proposition \mref{pro:D29.07}.
 \end{rmk} 

 \begin{pro}\mlabel{pro:20.07} Let $(A, \mu,\a_{A}, \b_{A}, R_{A})$ be a $\l$-Rota-Baxter BiHom-associative algebra and $(M, \tl$, $\tr, \a_{M}, \b_{M}, R_{M})$ be a $\l$-Rota-Baxter BiHom-bimodule on $(A,\mu,\a_{A},\b_{A}, R_{A})$. Define new actions of $A$ on $M$ by
 \begin{eqnarray*}
 &&a\succ m=R_{A}(a)\tl m,\
 a\prec m=a\tl R_{M}(m)+\l a\tl m,\\
 &&m\succ a=R_{M}(m)\tr a,\ m\prec a=m\tr R_{A}(a)+\l m\tr a
 \end{eqnarray*}
 (resp.
 \begin{eqnarray*}
 &&a\succ m=R_{A}(a)\tl m+\l a\tl m,\
 a\prec m=a\tl R_{M}(m),\\
 &&m\succ a=R_{M}(m)\tr a+\l m\tr a,\ m\prec a=m\tr R_{A}(a)
 \end{eqnarray*})
 for all $a\in A$ and $m\in M$. Then $(M, \prec, \succ, \a_{M}, \b_{M})$ is a BiHom-dendriform bimodule over the BiHom-dendriform algebra $(A, \prec, \succ, \a_{A}, \b_{A})$, where the linear maps $\prec,\succ: A\o A\lr A$ are defined by
 \begin{eqnarray*}
 &a\prec b=aR(b)+\l ab,\quad a\succ b=R(a)b&
 \end{eqnarray*}
 (resp.
 \begin{eqnarray*}
 &a\prec b=aR(b),\quad a\succ b=R(a)b+\l ab&
 \end{eqnarray*})
 respectively.
 \end{pro}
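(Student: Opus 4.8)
The plan is to avoid verifying the $13$ defining identities of a BiHom-dendriform bimodule one by one, and instead to realize the whole picture inside a single Rota-Baxter BiHom-algebra and then split it. First I would recall that, by Proposition \ref{pro:rbbimod}, the semi-direct product $(A\oplus M, \cdot, \a_A\oplus\a_M, \b_A\oplus\b_M, R_A\oplus R_M)$, with $(a+m)\cdot(b+n)=ab+a\tl n+m\tr b$, is itself a $\l$-Rota-Baxter BiHom-associative algebra. Next I would invoke the standard passage from a weight-$\l$ Rota-Baxter (BiHom-)associative algebra to a BiHom-dendriform algebra (the BiHom-analogue of the classical construction; cf.\ \cite{MLi}): for any $\l$-Rota-Baxter BiHom-associative algebra $(B,\mu_B,\a_B,\b_B,R_B)$, the operations $x\prec y:=xR_B(y)+\l xy$ and $x\succ y:=R_B(x)y$ turn $(B,\prec,\succ,\a_B,\b_B)$ into a BiHom-dendriform algebra. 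Applying this with $B=A$ produces exactly the BiHom-dendriform algebra $(A,\prec,\succ,\a_A,\b_A)$ of the statement, and applying it with $B=A\oplus M$ equips $A\oplus M$ with a BiHom-dendriform algebra structure $(\prec_{A\oplus M},\succ_{A\oplus M},\a_A\oplus\a_M,\b_A\oplus\b_M)$.

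The key step is then a short computation identifying these two operations on $A\oplus M$. For $a,b\in A$ and $m,n\in M$ one gets
\begin{eqnarray*}
(a+m)\prec_{A\oplus M}(b+n)&=&(a+m)\cdot(R_A(b)+R_M(n))+\l(a+m)\cdot(b+n)\\
&=&\big(aR_A(b)+\l ab\big)+\big(a\tl R_M(n)+\l a\tl n\big)+\big(m\tr R_A(b)+\l m\tr b\big),
\end{eqnarray*}
which is precisely $a\prec b+a\prec n+m\prec b$ for the mixed actions $a\prec n$ and $m\prec b$ as defined in the statement; similarly
$(a+m)\succ_{A\oplus M}(b+n)=R_A(a)b+R_A(a)\tl n+R_M(m)\tr b=a\succ b+a\succ n+m\succ b$. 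Thus the BiHom-dendriform algebra structure on $A\oplus M$ is exactly of the form treated in Proposition \ref{pro:D29.07}, so the ``if and only if'' there forces $(M,\prec,\succ,\a_M,\b_M)$ to be a BiHom-dendriform bimodule over $(A,\prec,\succ,\a_A,\b_A)$, which is the assertion. For the ``resp.'' variant one repeats the argument verbatim, now attaching to the Rota-Baxter algebra the other weight-$\l$ dendriform structure $x\prec y:=xR_B(y)$, $x\succ y:=R_B(x)y+\l xy$.

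The only point that needs genuine care is the bookkeeping in the middle step: one must check that the weight term $\l(a+m)\cdot(b+n)$ distributes correctly over the $A$-, mixed- and $M$-components, so that no spurious $M\o M\lr M$ term appears (it does not, since the multiplication on $M$ inside the semi-direct product is zero) and so that the coefficients in front of $\tl$ and $\tr$ match the formulas in the statement exactly. Everything else is an immediate appeal to Proposition \ref{pro:rbbimod}, Proposition \ref{pro:D29.07}, and the known Rota-Baxter-to-dendriform construction, so no further computation is required.
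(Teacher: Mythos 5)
Your proposal is correct, but it takes a genuinely different route from the paper. The paper's proof first quotes \cite{MYZZ} to get the BiHom-dendriform structure on $A$ and then verifies the dendriform bimodule identities directly, writing out Eqs.~(\ref{eq:20.11})--(\ref{eq:20.13}) via Eqs.~(\ref{eq:1.15}) and (\ref{eq:20.23}) and declaring the remaining ones similar. You instead package the Rota--Baxter bimodule into the semi-direct product Rota--Baxter BiHom-algebra of Proposition \ref{pro:rbbimod}, apply the Rota--Baxter-to-dendriform construction to the whole of $A\oplus M$, check (correctly) that the resulting operations split exactly into the form (\ref{eq:D029.26})--(\ref{eq:D029.27}), and then invoke the \emph{necessity} direction of Proposition \ref{pro:D29.07} to extract the bimodule structure on $M$; the same works verbatim for the ``resp.'' variant. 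This is logically sound and essentially computation-free, and it makes transparent \emph{why} the statement holds (it is the usual ``prove it for algebras, then split'' principle). What the paper's direct verification buys in exchange is an explicit tracking of which Rota--Baxter bimodule axiom produces which dendriform bimodule axiom, and independence from Proposition \ref{pro:D29.07}, whose proof the paper only labels ``Straightforward'' --- in your argument all thirteen identities are still verified, just implicitly inside that proposition rather than in the proof at hand. One point worth making explicit if you write this up: you need \cite{MYZZ} in the generality of an arbitrary $\l$-Rota--Baxter BiHom-associative algebra (so that it applies to $A\oplus M$ and not just to $A$), which is indeed how that theorem is stated.
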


 \begin{proof} By \cite[Theorem 2.5]{MYZZ}, we know that $(A, \prec, \succ, \a_{A}, \b_{A})$ is a BiHom-dendriform algebra. Next we only check Eqs.(\mref{eq:20.11})-(\mref{eq:20.13}) of the former case as follows, and others are similar.
 \begin{eqnarray*}
 (a\prec b)\prec\b_{M}(m)
 \hspace{-3mm}&=&\hspace{-3mm}(aR_{A}(b))\tl R_{M}(\b_{M}(m))+\l (ab)\tl R_{M}(\b_{M}(m))
 +\l (aR_{A}(b))\tl\b_{M}(m)\\
 &&\hspace{-3mm}+\l^{2}(ab)\tl \b_{M}(m) \\
 \hspace{-3mm}&\stackrel{(\mref{eq:1.15})}=&\hspace{-4mm}\a_{A}(a)\tl(R_{A}(b)\tl R_{M}(m))+\l (ab)\tl R_{M}(\b_{M}(m))
 +\l (aR_{A}(b))\tl\b_{M}(m)\\
 &&\hspace{-3mm}+ \l^{2}(ab)\tl \b_{M}(m)\\
 \hspace{-3mm}&\stackrel{(\mref{eq:20.23})}=&\hspace{-3mm}\a_{A}(a)\tl R_{M}(b\tl R_{M}(m))+\a_{A}(a)\tl R_{M}(R_{A}(b)\tl m)\\
 \hspace{-3mm}&&\hspace{-3mm}+\l\a_{A}(a)\tl R_{M}(b\tl m)+\l (ab)\tl R_{M}(\b_{M}(m))
 +\l (aR_{A}(b))\tl\b_{M}(m)\\
&&\hspace{-3mm}+ \l^{2}(ab)\tl \b_{M}(m)\\
 \hspace{-3mm}&=&\hspace{-3mm}\a_{A}(a)\prec(b\prec m)+\a_{A}(a)\prec(b\succ m),
 \end{eqnarray*}
 \begin{eqnarray*}
 \a_{A}(a)\succ(b\prec m)
 &=& R_{A}(\a_{A}(a))\tl(b\tl R_{M}(m))+\l R_{A}(\a_{A}(a))\tl(b\tl m) \\
 &\stackrel{(\mref{eq:1.15})}=&(R_{A}(a)b)\tl R_{M}(\b_{M}(m))+\l (R_{A}(a)b)\tl \b_{M}(m)=(a\succ b)\prec\b_{M}(m),
 \end{eqnarray*}
 and
 \begin{eqnarray*}
 \a_{A}(a)\succ(b\succ m)
 &=&R_{A}(\a_{A}(a))\tl(R_{A}(b)\tl m) \\
 &\stackrel{(\mref{eq:1.15})}=&(R_{A}(a)R_{A}(b))\tl \b_{M}(m)\\
 &=&R_{A}(aR_{A}(b))\tl \b_{M}(m)+R_{A}(R_{A}(a)b)\tl \b_{M}(m)+\l R_{A}(ab)\tl \b_{M}(m)\\
 &=&(a\prec b)\succ\b_{M}(m)+(a\succ b)\succ\b_{M}(m),
 \end{eqnarray*}
 finishing the proof.
 \end{proof}

 \begin{cor}\mlabel{cor:20.09} Let $(A,\mu,1,\a_{A},\b_{A},\psi_{A},\om_{A},r)$ be a quasitriangular unitary $\l$-infBH-bialgebra, and $(M, \tl, \tr, \a_{M}, \b_{M})$ be an arbitrary $(A, \mu, \a_{A}, \b_{A})$-bimodule such that $\a_{M}, \b_{M}$ are invertible. Define new actions of $A$ on $M$ by
 \begin{eqnarray*}
 &&a\succ m=-\b_{A}^{2}\psi_{A}(r^{1})(\a_{A}^{-1}\b_{A}^{-1}(a)\a_{A}\om_{A}(r^{2}))\tl m,\\
 &&a\prec m=-a\tl(\b_{A}^{2}\psi_{A}(r^{1})\tl
 (\a_{M}^{-1}\b_{M}^{-1}(m)\tr\a_{A}\om_{A}(r^{2})))+\l a\tl m,\\
 &&m\succ a=-(\b_{A}^{2}\psi_{A}(r^{1})\tl (\a_{M}^{-1}\b_{M}^{-1}(m)\tr\a_{A}\om_{A}(r^{2})))\tr a,\\
 &&m\prec a=-m\tr \b_{A}^{2}\psi_{A}(r^{1})
 (\a_{A}^{-1}\b_{A}^{-1}(a)\a_{A}\om_{A}(r^{2}))+\l m\tr a,
 \end{eqnarray*}
 (resp.
 \begin{eqnarray*}
 &&a\succ m=-\b_{A}^{2}\psi_{A}(r^{1})(\a_{A}^{-1}\b_{A}^{-1}(a)\a_{A}\om_{A}(r^{2}))\tl m
 +\l a\tl m,\\
 &&a\prec m=-a\tl(\b_{A}^{2}\psi_{A}(r^{1})\tl
 (\a_{M}^{-1}\b_{M}^{-1}(m)\tr\a_{A}\om_{A}(r^{2}))),\\
 &&m\succ a=-(\b_{A}^{2}\psi_{A}(r^{1})\tl (\a_{M}^{-1}\b_{M}^{-1}(m)\tr\a_{A}\om_{A}(r^{2})))\tr a
 +\l m\tr a,\\
 &&m\prec a=-m\tr\b_{A}^{2}\psi_{A}(r^{1})
 (\a_{A}^{-1}\b_{A}^{-1}(a)\a_{A}\om_{A}(r^{2})).
 \end{eqnarray*}).
 Then the 5-tuple $(M,\prec,\succ,\a_{M},\b_{M})$ is a BiHom-dendriform bimodule over the BiHom-dendriform algebra $(A,\prec,\succ,\a_A,\b_A)$, where
 \begin{eqnarray*}
 &a\succ b=-(\b_A^{2}\psi_A(r^{1})(\a_A^{-1}\b_A^{-1}(a)\a_A\om_A(r^{2})))b,\ a\prec b=-a(\b_A^{2}\psi_A(r^{1})(\a_A^{-1}\b_A^{-1}(b)\a_A\om_A(r^{2})))+\l a b&
 \end{eqnarray*}
 (resp.
 \begin{eqnarray*}
 &a\succ b=-(\b_A^{2}\psi_A(r^{1})(\a_A^{-1}\b_A^{-1}(a)\a_A\om_A(r^{2})))b+\l a b,\ a\prec b=-a(\b_A^{2}\psi_A(r^{1})(\a_A^{-1}\b_A^{-1}(b)\a_A\om_A(r^{2}))).&
 \end{eqnarray*})
 \end{cor}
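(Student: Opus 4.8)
The plan is to deduce this Corollary directly by composing Propositions \ref{pro:20.08} and \ref{pro:20.07}, so that no fresh computation is required.

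First I would apply Proposition \ref{pro:20.08}. Since $(A,\mu,1,\a_A,\b_A,\psi_A,\om_A,r)$ is a quasitriangular unitary $\l$-infBH-bialgebra and $(M,\tl,\tr,\a_M,\b_M)$ is an $(A,\mu,\a_A,\b_A)$-bimodule with $\a_M,\b_M$ invertible, that proposition gives that $(A,\mu,\a_A,\b_A,R_A)$ is a $\l$-Rota-Baxter BiHom-associative algebra with $R_A(a)=-\b_A^{2}\psi_A(r^{1})(\a_A^{-1}\b_A^{-1}(a)\a_A\om_A(r^{2}))$, and that $(M,\tl,\tr,\a_M,\b_M,R_M)$ is a $\l$-Rota-Baxter BiHom-bimodule over it with $R_M(m)=-\b_A^{2}\psi_A(r^{1})\tl(\a_M^{-1}\b_M^{-1}(m)\tr\a_A\om_A(r^{2}))$.

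Next I would feed this pair into Proposition \ref{pro:20.07}. Its first branch yields that $(A,\prec,\succ,\a_A,\b_A)$ is a BiHom-dendriform algebra with $a\prec b=aR_A(b)+\l ab$ and $a\succ b=R_A(a)b$, and that $(M,\prec,\succ,\a_M,\b_M)$ is a BiHom-dendriform bimodule over it with $a\succ m=R_A(a)\tl m$, $a\prec m=a\tl R_M(m)+\l a\tl m$, $m\succ a=R_M(m)\tr a$ and $m\prec a=m\tr R_A(a)+\l m\tr a$. Substituting the above closed formulas for $R_A$ and $R_M$ into these expressions reproduces verbatim the actions displayed in the Corollary. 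The parenthetical ("resp.") statement follows in exactly the same way from the second branch of Proposition \ref{pro:20.07}, applied to the same $R_A$ and $R_M$, which merely shifts the weight-$\l$ terms from $\prec$ to $\succ$.

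There is essentially no obstacle here: the whole content is carried by Propositions \ref{pro:20.08} and \ref{pro:20.07}, and the only point that deserves a line of checking is that the orderings of the twisting maps $\a_A^{-1}\b_A^{-1}$, $\a_M^{-1}\b_M^{-1}$, $\a_A\om_A$, $\b_A^{2}\psi_A$ appearing in the two propositions line up so that the substituted expressions coincide on the nose with the stated ones, which they do. One may also record, in an analogous manner, a version starting from an anti-quasitriangular unitary $\l$-infBH-bialgebra, using the $R_A$ and $R_M$ supplied by the Remark following Proposition \ref{pro:20.08}.
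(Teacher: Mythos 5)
Your proposal is correct and coincides with the paper's own proof, which likewise derives the corollary by composing Proposition \mref{pro:20.08} (producing $R_A$ and $R_M$ from the quasitriangular structure) with Proposition \mref{pro:20.07} (passing from the Rota-Baxter BiHom-bimodule to the BiHom-dendriform bimodule) and substituting the explicit formulas.
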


 \begin{proof} It can be proved by Propositions \mref{pro:20.08} and \mref{pro:20.07}.
 \end{proof} 

\subsection{BiHom-pre-Lie bimodules from BiHom-dendriform bimodules} 
 \begin{defi}\mlabel{de:13.1} (\cite[Definition 3.1]{LMMP6}) A {\bf (left) BiHom-pre-Lie algebra} is a 4-tuple $(A, \circ, \a, \b)$, where $A$ is a vector space together with two commuting linear maps $\a, \b: A \lr  A$ and a linear map $\circ: A \o A \lr  A$ such that $\a(a\circ b)=\a(a)\circ\a(b)$, $\b(a\circ b)=\b(a)\circ\b(b)$ and
 \begin{eqnarray}
 &\a\b(a)\circ(\a(b)\circ c)-(\b(a)\circ\a(b))\circ\b(c)=\a\b(b)\circ(\a(a)\circ c)
 -(\b(b)\circ\a(a))\circ\b(c),&\mlabel{eq:13.1}
 \end{eqnarray}
 for all $a,b,c \in A$.
 \end{defi} 
 
 \begin{defi}\mlabel{de:20.03}(\cite[Definition 3.3]{Lar}) Let $(A,\circ,\a_{A},\b_{A})$ be a (left) BiHom-pre-Lie algebra. A {\bf BiHom-pre-Lie bimodule} over $(A,\circ,\a_{A},\b_{A})$ is a vector space $M$ endowed with maps
 \begin{eqnarray*}
 &A \o M \longrightarrow M,\hspace{3mm}a\o m\longmapsto a\circ_{\ell} m,\hspace{8mm} M\o A \longrightarrow M, \hspace{3mm} m\o a\longmapsto m\circ_r a&
 \end{eqnarray*}
 and two commuting linear maps $\a_{M}, \b_{M}: M\longrightarrow M$, such that
 \begin{eqnarray}
 &&\a_{M}(a\circ_{\ell} m)=\a_{A}(a)\circ_{\ell}\a_{M}(m),\quad \a_{M}(m\circ_r a)=\a_{M}(m)\circ_r\a_{A}(a),\mlabel{eq:20.03}\\
 &&\b_{M}(a\circ_{\ell} m)=\b_{A}(a)\circ_{\ell}\b_{M}(m),\quad \b_{M}(m\circ_r a)=\b_{M}(m)\circ_r\b_{A}(a),\mlabel{eq:20.04}\\
 &&\a_{A}\b_{A}(a)\circ_{\ell}(\a_{A}(b)\circ_{\ell} m)-(\b_{A}(a)\circ \a_{A}(b))\circ_{\ell}\b_{M}(m)\mlabel{eq:20.05}\\
 &&\hspace{40mm}=\a_{A}\b_{A}(b)\circ_{\ell}(\a_{A}(a)\circ_{\ell} m)
 -(\b_{A}(b)\circ \a_{A}(a))\circ_{\ell}\b_{M}(m),\nonumber\\
 &&\a_{A}\b_{A}(a)\circ_{\ell}(\a_{M}(m)\circ_r c)-(\b_{A}(a)\circ_{\ell}\a_{M}(m))\circ_r\b_{A}(c)\mlabel{eq:20.06}\\
 &&\hspace{40mm}=\a_{M}\b_{M}(m)\circ_r(\a_{A}(a)\circ c)
 -(\b_{M}(m)\circ_r\a_{A}(a))\circ_r\b_{A}(c),\nonumber
 \end{eqnarray}
 for all $a,b,c \in A$ and $m\in M$.
 \end{defi} 

 \begin{rmk}
 A BiHom-pre-Lie algebra $(A,\circ,\a_{A},\b_{A})$ is a BiHom-pre-Lie bimodule over itself.
 \end{rmk}

 \begin{pro}\mlabel{pro:D29.06} Let $(A, \circ, \a_{A}, \b_{A})$ be a BiHom-pre-Lie algebra, $M$ a vector space endowed with maps
 \begin{eqnarray*}
 &A \o M \longrightarrow M,\hspace{3mm}a\o m\longmapsto a\circ_{\ell} m,\hspace{8mm} M\o A \longrightarrow M, \hspace{3mm} m\o a\longmapsto m\circ_r a&
 \end{eqnarray*}
 and two commuting linear maps $\a_{M}, \b_{M}: M\longrightarrow M$. Define a linear map $\circ_{A\oplus M}$ on $A\oplus M$ by
 \begin{eqnarray}
 &(a+m)\circ_{A\oplus M}(a'+m')=a\circ a'+a\circ_{\ell} m'+m\circ_r a',&\mlabel{eq:D029.20}
 \end{eqnarray}
 for all $a, a'\in A$ and $m, m'\in M$. Then $(A\oplus M, \circ_{A\oplus M}, \a_{A}\oplus\a_{M},\b_{A}\oplus\b_{M})$, where $(\a_{A}\oplus\a_{M})(a+m)=\a_{A}(a)+\a_{M}(m)$, $(\b_{A}\oplus\b_{M})(a+m)=\b_{A}(a)+\b_{M}(m)$, is a BiHom-pre-Lie algebra if and only if $(M, \circ_{\ell}, \circ_r, \a_M, \b_M)$ is a BiHom-pre-Lie bimodule over $(A, \circ, \a_{A}, \b_{A})$.
 \end{pro}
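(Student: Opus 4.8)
The plan is to mimic directly the structure of Lemma~\ref{lem:smashproduct} and its corollaries: the statement asserts an ``if and only if'' between a BiHom-pre-Lie structure on $A\oplus M$ with the operation \eqref{eq:D029.20} and the BiHom-pre-Lie bimodule axioms \eqref{eq:20.03}--\eqref{eq:20.06}. Since the problem is entirely a matter of expanding the defining identity \eqref{eq:13.1} for $A\oplus M$ and sorting the resulting terms by type, I would simply write \textbf{Straightforward}, exactly as the analogous Proposition~\ref{pro:D29.07} does, and refer the reader to the bilinear/multilinear bookkeeping below if more detail is desired.

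If a genuine proof is wanted, here is the route. First I would check the multiplicativity constraints: $(\a_A\oplus\a_M)\big((a+m)\circ_{A\oplus M}(a'+m')\big)=(\a_A\oplus\a_M)(a+m)\circ_{A\oplus M}(\a_A\oplus\a_M)(a'+m')$ decomposes, upon projecting onto the $A$-component and the $M$-component, precisely into $\a_A(a\circ a')=\a_A(a)\circ\a_A(a')$ (which holds since $A$ is BiHom-pre-Lie) together with $\a_M(a\circ_\ell m')=\a_A(a)\circ_\ell\a_M(m')$ and $\a_M(m\circ_r a')=\a_M(m)\circ_r\a_A(a')$; these last two are exactly \eqref{eq:20.03}, and the $\b$-versions give \eqref{eq:20.04}. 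The commutation $(\a_A\oplus\a_M)(\b_A\oplus\b_M)=(\b_A\oplus\b_M)(\a_A\oplus\a_M)$ is immediate from $\a_A\b_A=\b_A\a_A$ and $\a_M\b_M=\b_M\a_M$.

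Next I would substitute $a+m$, $b+n$, $c+p$ into the left BiHom-pre-Lie identity \eqref{eq:13.1} for $A\oplus M$ and expand using \eqref{eq:D029.20}. Each of the four triple products expands into one purely $A$-valued term plus terms that are linear in exactly one of $m,n,p$ and land in $M$. Collecting coefficients: the $A$-valued part is \eqref{eq:13.1} for $A$ itself, hence automatic. The part linear in $p$ (with $m=n=0$) gives, after cancelling, the identity \eqref{eq:20.05} with $m$ there replaced by $p$. The part linear in $m$ (with $n=p=0$) yields \eqref{eq:20.06}. The part linear in $n$ (with $m=p=0$) must be checked to produce \emph{no new condition} — it reduces, using $\a_A,\b_A$ being algebra maps and the left pre-Lie identity symmetrized in its first two slots, to a consequence of \eqref{eq:20.05}; equivalently, one observes that the $n$-linear terms on the two sides of \eqref{eq:13.1} are obtained from each other by the $a\leftrightarrow b$ swap already built into \eqref{eq:13.1}, so they match automatically. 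This is the only slightly delicate bookkeeping point, and I expect it to be the main (very mild) obstacle: one has to be careful that the left-module axiom \eqref{eq:20.05} and the mixed axiom \eqref{eq:20.06} together with the $A$-identity \eqref{eq:13.1} really do cover \emph{all} terms, with no leftover constraint and no redundancy. Having matched term-by-term in both directions, the equivalence follows, completing the proof.
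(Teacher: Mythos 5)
Your proposal matches the paper exactly in spirit: the paper's entire proof of Proposition~\mref{pro:D29.06} is the single word ``Straightforward,'' and your term-by-term expansion of Eq.~(\mref{eq:13.1}) on $A\oplus M$, sorted by which of $m,n,p$ each term is linear in, is the intended bookkeeping. One correction to your ``delicate point'': the part linear in $n$ is neither automatic nor a consequence of Eq.~(\mref{eq:20.05}) --- equating the $n$-linear terms of the two sides of Eq.~(\mref{eq:13.1}) produces exactly Eq.~(\mref{eq:20.06}) again (with the roles of $a$ and $b$ interchanged relative to the $m$-linear part), so it contributes no \emph{new} condition for that reason, and the equivalence goes through as you conclude.
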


 \begin{proof} Straightforward.
 \end{proof} 

 \begin{rmk} \cite[Proposition 3.4]{Lar} contains only the sufficiency in Proposition \mref{pro:D29.06}.
 \end{rmk}

 \begin{pro}\mlabel{pro:20.05} Let $(M, \prec, \succ, \a_{M}, \b_{M})$ be a BiHom-dendriform bimodule over the BiHom-dendriform algebra $(A, \prec, \succ, \a_{A}, \b_{A})$. Then $(M, \circ_\ell, \circ_r, \a_{M}, \b_{M})$ is a BiHom-pre-Lie bimodule over the BiHom-pre-Lie algebra $(A, \circ, \a_A, \b_A)$, where
 \begin{eqnarray}
 &a\circ_\ell m=a\succ m-\a_{M}^{-1}\b_{M}(m)\prec\a_{A}\b_{A}^{-1}(a),&\mlabel{eq:20.20}\\
 &m\circ_r a=m\succ a-\a_{A}^{-1}\b_{A}(a)\prec\a_{M}\b_{M}^{-1}(m),& \mlabel{eq:20.21}\\
 &a\circ b=a\succ b-\a_{A}^{-1}\b_{A}(b)\prec\a_{A}\b_{A}^{-1}(a),&
 \end{eqnarray}
 for all $a, b\in A$ and $m\in M$.
 \end{pro}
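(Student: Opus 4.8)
The plan is to deduce the statement formally from the semidirect-product descriptions already at hand --- Propositions~\mref{pro:D29.07} and~\mref{pro:D29.06} --- together with the standard passage from BiHom-dendriform algebras to BiHom-pre-Lie algebras, rather than by checking the defining identities of a BiHom-pre-Lie bimodule by brute force. Recall that any BiHom-dendriform algebra $(D,\prec,\succ,\a,\b)$ with $\a,\b$ bijective carries a left BiHom-pre-Lie structure $(D,\circ_D,\a,\b)$ given by $x\circ_D y=x\succ y-\a^{-1}\b(y)\prec\a\b^{-1}(x)$; specialized to $D=A$ this is precisely the BiHom-pre-Lie algebra $(A,\circ,\a_A,\b_A)$ appearing in the statement, and it is the only extra input the argument needs.

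First I would invoke Proposition~\mref{pro:D29.07}: since $(M,\prec,\succ,\a_M,\b_M)$ is a BiHom-dendriform bimodule over $(A,\prec,\succ,\a_A,\b_A)$, the operations~(\mref{eq:D029.26})--(\mref{eq:D029.27}) make $\bigl(A\oplus M,\prec_{A\oplus M},\succ_{A\oplus M},\a_A\oplus\a_M,\b_A\oplus\b_M\bigr)$ a BiHom-dendriform algebra. The maps $\a_A\oplus\a_M$ and $\b_A\oplus\b_M$ are bijective because $\a_A,\b_A$ and (implicitly, from the very definition of $\circ_{\ell},\circ_r$) $\a_M,\b_M$ are. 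Hence the construction recalled above applies to $D=A\oplus M$ and produces a BiHom-pre-Lie algebra $\bigl(A\oplus M,\circ_{A\oplus M},\a_A\oplus\a_M,\b_A\oplus\b_M\bigr)$.

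Next I would expand $\circ_{A\oplus M}$ blockwise, using $x\circ_{A\oplus M} y=x\succ_{A\oplus M} y-(\a_A\oplus\a_M)^{-1}(\b_A\oplus\b_M)(y)\prec_{A\oplus M}(\a_A\oplus\a_M)(\b_A\oplus\b_M)^{-1}(x)$ together with~(\mref{eq:D029.26})--(\mref{eq:D029.27}). For $a,b\in A$ and $m,n\in M$ one finds
$$a\circ_{A\oplus M} b=a\circ b,\qquad a\circ_{A\oplus M} m=a\circ_{\ell} m,\qquad m\circ_{A\oplus M} a=m\circ_r a,\qquad m\circ_{A\oplus M} n=0,$$
the last equality holding since $\prec_{A\oplus M}$ and $\succ_{A\oplus M}$ have no $M\o M$-component, and the middle two being exactly~(\mref{eq:20.20})--(\mref{eq:20.21}). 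Thus $\circ_{A\oplus M}$ coincides with the operation of Proposition~\mref{pro:D29.06} associated with $(A,\circ,\a_A,\b_A)$ and the maps $\circ_{\ell},\circ_r$ of the statement; restricting to $A$ it recovers $(A,\circ,\a_A,\b_A)$, which is thus a BiHom-pre-Lie algebra. Since $\bigl(A\oplus M,\circ_{A\oplus M},\a_A\oplus\a_M,\b_A\oplus\b_M\bigr)$ is a BiHom-pre-Lie algebra, Proposition~\mref{pro:D29.06} yields that $(M,\circ_{\ell},\circ_r,\a_M,\b_M)$ is a BiHom-pre-Lie bimodule over $(A,\circ,\a_A,\b_A)$. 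The resp.\ assertion is proved identically, starting instead from the other dendriform structure on $A\oplus M$.

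The points that need attention are minor: one must confirm the bijectivity of $\a_A\oplus\a_M$ and $\b_A\oplus\b_M$ so that the dendriform-to-pre-Lie formula is legitimate on $A\oplus M$, and one must keep the asymmetric twists straight in the blockwise expansion --- in particular $m\circ_r a$ must carry $\a_A^{-1}\b_A$ on the $A$-slot but $\a_M\b_M^{-1}$ on the $M$-slot, as forced by the $\a^{-1}\b$ versus $\a\b^{-1}$ asymmetry in $x\circ_D y$; a slip here would fail to reproduce~(\mref{eq:20.21}). A reader preferring a self-contained argument could instead verify the BiHom-pre-Lie bimodule identities~(\mref{eq:20.05})--(\mref{eq:20.06}) (and the routine compatibilities~(\mref{eq:20.03})--(\mref{eq:20.04})) directly from the dendriform-bimodule axioms~(\mref{eq:20.11})--(\mref{eq:20.19}), but that is a longer computation than the reduction above and I would not expect it to be the intended route.
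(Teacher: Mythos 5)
Your argument is correct, but it is not the route the paper takes: the paper proves Proposition \mref{pro:20.05} by a direct computation, expanding $\a_{A}\b_{A}(a)\circ_\ell(\a_{A}(b)\circ_\ell m)-(\b_{A}(a)\circ\a_{A}(b))\circ_\ell\b_{M}(m)$ and the mixed identity in terms of $\prec,\succ$, invoking the dendriform-bimodule axioms (\mref{eq:20.11})--(\mref{eq:20.19}) term by term, and observing the resulting symmetry in $a$ and $b$; only the fact that $(A,\circ,\a_A,\b_A)$ is BiHom-pre-Lie is delegated to a citation. Your reduction through the semidirect products is sound because both Proposition \mref{pro:D29.07} and Proposition \mref{pro:D29.06} are stated as equivalences, so you may legitimately pass from the dendriform bimodule to the dendriform algebra on $A\oplus M$, apply the dendriform-to-pre-Lie construction there (the structure maps $\a_A\oplus\a_M$, $\b_A\oplus\b_M$ are bijective exactly when $\a_A,\b_A,\a_M,\b_M$ are, and their bijectivity is already forced by the formulas (\mref{eq:20.20})--(\mref{eq:20.21})), check blockwise that the induced product has the semidirect form of (\mref{eq:D029.20}) with zero $M\otimes M$ component, and then read off the bimodule statement from the converse direction of Proposition \mref{pro:D29.06}; your blockwise expansions do reproduce (\mref{eq:20.20})--(\mref{eq:20.21}) correctly. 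What your approach buys is economy and conceptual clarity --- all the bimodule identities, including the routine compatibilities (\mref{eq:20.03})--(\mref{eq:20.04}), come out at once --- at the price of leaning on the cited dendriform-to-pre-Lie result for a general BiHom-dendriform algebra rather than just for $A$; the paper's computation is longer but self-contained modulo that one citation. One small slip: Proposition \mref{pro:20.05} has no ``resp.''\ variant, so your closing remark about a second dendriform structure on $A\oplus M$ is not needed here (you are likely thinking of Proposition \mref{pro:20.07}).
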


 \begin{proof} By \cite[Proposition 3.6]{LMMP6}, we know that $(A, \circ, \a_A, \b_A)$ is a BiHom-pre-Lie algebra. For $a,b,c \in A$ and $m\in M$, we have
 \begin{eqnarray*}
 &&\hspace{-30mm}\a_{A}\b_{A}(a)\circ_\ell(\a_{A}(b) \circ_\ell m)-(\b_{A}(a)\circ \a_{A}(b))\circ_\ell \b_{M}(m) \\
 &=&\hspace{-10mm}\a_{A}\b_{A}(a)\succ(\a_{A}(b)\succ m)-\a_{A}\b_{A}(a)\succ(\a_{M}^{-1}\b_{M}(m)\prec \a_{A}^{2}\b_{A}^{-1}(b))\\
 &&\hspace{-10mm}-(\b_{A}(b)\succ\a_{M}^{-1}\b_{M}(m))\prec\a_{A}^{2}(a)+(\a_{M}^{-2}\b_{M}^{2}(m) \prec \a_{A}(b))\prec\a_{A}^{2}(a)\\
 &&\hspace{-10mm}-(\b_{A}(a)\succ\a_{A}(b))\succ\b_{M}(m)+(\b_{A}(b) \prec\a_{A}(a))\succ\b_{M}(m)\\
 &&\hspace{-10mm}+\a_{M}^{-1}\b_{M}^{2}(m)\prec(\a_{A}(a) \succ \a_{A}^{2}\b_{A}^{-1}(b))-\a_{M}^{-1}\b_{M}^{2}(m)\prec
 (\a_{A}(b)\prec\a_{A}^{2}\b_{A}^{-1}(a))\\
 &\stackrel{(\mref{eq:20.13})(\mref{eq:20.15})(\mref{eq:20.17})}=&
 (\b_{A}(a)\prec\a_{A}(b))\succ\b_{M}(m)+(\b_{A}(a)\succ\a_{A}(b))\succ\b_{M}(m)\\
 &&\hspace{-10mm}-(\b_{A}(a)\succ\a_{M}^{-1}\b_{M}(m)) \prec\a_{A}^{2}(b)- (\b_{A}(b)\succ \a_{M}^{-1}\b_{M}(m))\prec\a_{A}^{2}(a)\\
 &&\hspace{-10mm}+\a_{M}^{-1}\b_{M}^{2}(m)\prec(\a_{A}(b) \prec \a_{A}^{2}\b_{A}^{-1}(a))+\a_{M}^{-1}\b_{M}^{2}(m)\prec(\a_{A}(b) \succ \a_{A}^{2}\b_{A}^{-1}(a))\\
 &&\hspace{-10mm}-(\b_{A}(a)\succ \a_{A}(b))\succ\b_{M}(m)+(\b_{A}(b)\prec\a_{A}(a))\succ\b_{M}(m)\\
 &&\hspace{-10mm}+\a_{M}^{-1}\b_{M}^{2}(m) \prec (\a_{A}(a)\succ\a_{A}^{2}\b_{A}^{-1}(b))
 -\a_{M}^{-1}\b_{M}^{2}(m)\prec(\a_{A}(b)\prec\a_{A}^{2}\b_{A}^{-1}(a))\\
 &=&\hspace{-10mm}(\b_{A}(a)\prec\a_{A}(b))\succ\b_{M}(m)-(\b_{A}(a)\succ\a_{M}^{-1}\b_{M}(m)) \prec\a_{A}^{2}(b)\\
 &&\hspace{-10mm}- (\b_{A}(b)\succ \a_{M}^{-1}\b_{M}(m))\prec\a_{A}^{2}(a)
 +\a_{M}^{-1}\b_{M}^{2}(m)\prec(\a_{A}(b) \succ \a_{A}^{2}\b_{A}^{-1}(a))\\
 &&\hspace{-10mm}+(\b_{A}(b)\prec\a_{A}(a))\succ\b_{M}(m)
 +\a_{M}^{-1}\b_{M}^{2}(m) \prec (\a_{A}(a)\succ\a_{A}^{2}\b_{A}^{-1}(b))
 \end{eqnarray*}
 Observe that above formula is symmetric in $a$ and $b$, hence Eq.(\mref{eq:20.05}) holds. Moreover,
 \begin{eqnarray*}
 &&\hspace{-30mm}\a_{A}\b_{A}(a)\circ_\ell (\a_{M}(m)\circ_r c)-(\b_{A}(a) \circ_\ell \a_{M}(m))\circ_r\b_{A}(c)\\
 &=&\hspace{-10mm}\a_{A}\b_{A}(a)\succ(\a_{M}(m)\succ c)-\a_{A}\b_{A}(a)\succ(\a_{A}^{-1}\b_{A}(c)\prec \a_{M}^{2}\b_{M}^{-1}(m))\\
 &&\hspace{-10mm}-(\b_{M}(m)\succ\a_{A}^{-1}\b_{A}(c))\prec\a_{A}^{2}(a)+(\a_{A}^{-2}\b_{A}^{2}(c) \prec \a_{M}(m))\prec\a_{A}^{2}(a)\\
 &&\hspace{-10mm}-(\b_{A}(a)\succ\a_{M}(m))\succ\b_{A}(c)+(\b_{M}(m) \prec\a_{A}(a))\succ\b_{A}(c)\\
 &&\hspace{-10mm}+\a_{A}^{-1}\b_{A}^{2}(c)\prec(\a_{A}(a) \succ \a_{M}^{2}\b_{M}^{-1}(m))-\a_{A}^{-1}\b_{A}^{2}(c)\prec
 (\a_{M}(m)\prec\a_{A}^{2}\b_{A}^{-1}(a))\\
 &\stackrel{(\mref{eq:20.12})(\mref{eq:20.14})(\mref{eq:20.16})}=&
 (\b_{A}(a)\prec\a_{M}(m))\succ\b_{A}(c)-(\b_{A}(a)\succ\a_{A}^{-1}\b_{A}(c)) \prec\a_{M}^{2}(m)\\
 &&\hspace{-10mm}- (\b_{M}(m)\succ \a_{A}^{-1}\b_{A}(c))\prec\a_{A}^{2}(a)
 +\a_{A}^{-1}\b_{A}^{2}(c)\prec(\a_{M}(m) \succ \a_{A}^{2}\b_{A}^{-1}(a))\\
 &&\hspace{-10mm}+(\b_{M}(m)\prec\a_{A}(a))\succ\b_{A}(c)
 +\a_{A}^{-1}\b_{A}^{2}(c) \prec (\a_{A}(a)\succ\a_{M}^{2}\b_{M}^{-1}(m)),
 \end{eqnarray*}
 on the other hand,
 \begin{eqnarray*}
 &&\hspace{-30mm}\a_{M}\b_{M}(m)\circ_r (\a_{A}(a)\circ c)-(\b_{M}(m) \circ_r \a_{A}(a))\circ_r \b_{A}(c)\\
 &=&\hspace{-10mm}\a_{M}\b_{M}(m)\succ(\a_{A}(a)\succ c)-\a_{M}\b_{M}(m)\succ(\a_{A}^{-1}\b_{A}(c)\prec \a_{A}^{2}\b_{A}^{-1}(a))\\
 &&\hspace{-10mm}-(\b_{A}(a)\succ\a_{A}^{-1}\b_{A}(c))\prec\a_{M}^{2}(m)+(\a_{A}^{-2}\b_{A}^{2}(c) \prec \a_{A}(a))\prec\a_{M}^{2}(m)\\
 &&\hspace{-10mm}-(\b_{M}(m)\succ\a_{A}(a))\succ\b_{A}(c)+(\b_{A}(a) \prec\a_{M}(m))\succ\b_{A}(c)\\
 &&\hspace{-10mm}+\a_{A}^{-1}\b_{A}^{2}(c)\prec(\a_{M}(m) \succ \a_{A}^{2}\b_{A}^{-1}(a))-\a_{A}^{-1}\b_{A}^{2}(c)\prec
 (\a_{A}(a)\prec\a_{M}^{2}\b_{M}^{-1}(m))\\
 &\stackrel{(\mref{eq:20.11})(\mref{eq:20.18})(\mref{eq:20.19})}=& (\b_{M}(m)\prec\a_{A}(a))\succ\b_{A}(c)
 -(\b_{M}(m)\succ\a_{A}^{-1}\b_{A}(c)) \prec\a_{A}^{2}(a)\\
 &&\hspace{-10mm}- (\b_{A}(a)\succ \a_{A}^{-1}\b_{A}(c))\prec\a_{M}^{2}(m)
 +\a_{A}^{-1}\b_{A}^{2}(c)\prec(\a_{A}(a) \succ \a_{M}^{2}\b_{M}^{-1}(m))\\
 &&\hspace{-10mm}+(\b_{A}(a)\prec\a_{M}(m))\succ\b_{A}(c)
 +\a_{A}^{-1}\b_{A}^{2}(c) \prec (\a_{M}(m)\succ\a_{A}^{2}\b_{A}^{-1}(a)),
 \end{eqnarray*}
 as desired.
 \end{proof} 
 
 \begin{pro}\mlabel{pro:20.12} Let $(A,\mu,\D,\a_{A},\b_{A},\psi_{A},\om_{A})$ be a $\l$-infBH-bialgebra and $(M,\gamma,\rho,\nu,\varphi,\a_{M},\b_{M}$, $\psi_{M},\om_{M})$ be  a $\l$-infBH-Hopf bimodule over $(A, \mu, \D, \a_{A}, \b_{A}, \psi_{A}, \om_{A})$. Then $(M, \star, \a_{M}^{2}\b_{M}, \a_{M}^{2}\b_{M}^{2}\psi_{M}\om_{M})$ is a BiHom-pre-Lie bimodule over the BiHom-pre-Lie algebra $(A, \star, \a_{A}^{2}\b_{A}, \a_{A}^{2}\b_{A}^{2}\psi_{A}\om_{A})$, where
 \begin{eqnarray*}
 &a\star m=(\b_{A}^{2}\psi_{A}(m_{-1})\a_{A}(a))\tl \a_{M}^{2}\b_{M}\om_{M}(m_{0})
 +(\b_{M}^{2}\psi_{M}(m_{(0)})\tr \a_{A}(a))\tr \a_{A}^{2}\b_{A}\om_{A}(m_{(1)}),&\\
 &m\star a=(\b_{A}^{2}\psi_{A}(a_{1})\tl \a_{M}(m))\tr \a_{A}^{2}\b_{A}\om_{A}(a_{2}),&\\
 &a\star b=(\b^{2}\psi(b_{1})\a(a))\a^{2}\b\om(b_{2}), \quad \forall a, b\in A, m\in M.&\mlabel{eq:13.6}
 \end{eqnarray*}
 \end{pro}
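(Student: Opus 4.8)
The plan is to realize $(M,\star,\ldots)$ as a sub-quotient of a BiHom-pre-Lie bimodule that is already understood, by chaining the constructions of the previous subsections applied to a canonical Rota--Baxter structure on the biproduct bialgebra. First I would form the $\l$-infBH-biproduct bialgebra $A^{\#}_{\times}M$ out of the given $\l$-infBH-Hopf bimodule $M$, via Theorem \ref{thm:infbipro}; strictly speaking one needs the extra data (items (2)--(4) of that theorem), but for the purpose of producing the pre-Lie bimodule structure it suffices to use only the parts of the biproduct that involve the Hopf-bimodule actions $\tl,\tr,\rho,\vp$. Concretely, on $A\o A$ (Example \ref{ex:20.02}) one has a canonical $\l$-infBH-Hopf bimodule over $A$; but the cleaner route is the one suggested by the formulas: the expression $\b_{A}^{2}\psi_{A}(r^{1})\tl(\a_{M}^{-1}\b_{M}^{-1}(m)\tr\a_{A}\om_{A}(r^{2}))$ in Corollary \ref{cor:20.09} and Proposition \ref{pro:20.08} is exactly a Rota--Baxter operator coming from a quasitriangular structure, and $\star$ here is the pre-Lie product associated to it through Propositions \ref{pro:20.07} and \ref{pro:20.05}.

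So the main step is: exhibit the $\l$-abhYBe solution $r\in (A^{\#}_{\times}M)\o(A^{\#}_{\times}M)$ making the biproduct bialgebra quasitriangular with the right twisted maps, so that the induced Rota--Baxter operator $R$ on $A^{\#}_{\times}M$ restricts to $A$ and to $M$ compatibly, and then read off the dendriform and then pre-Lie bimodule structures. Following the recipe of Corollary \ref{cor:20.09} applied inside $A^{\#}_{\times}M$: Proposition \ref{pro:20.08} gives that $(M,\tl,\tr,\a_{M},\b_{M})$ becomes a $\l$-Rota--Baxter BiHom-bimodule over $(A,\mu,\a_{A},\b_{A},R_{A})$; Proposition \ref{pro:20.07} then turns this into a BiHom-dendriform bimodule $(M,\prec,\succ,\a_{M},\b_{M})$ over $(A,\prec,\succ,\a_{A},\b_{A})$; and finally Proposition \ref{pro:20.05} converts the dendriform bimodule into the BiHom-pre-Lie bimodule $(M,\circ_{\ell},\circ_{r},\a_{M}^{2}\b_{M},\a_{M}^{2}\b_{M}^{2}\psi_{M}\om_{M})$ over $(A,\circ,\a_{A}^{2}\b_{A},\a_{A}^{2}\b_{A}^{2}\psi_{A}\om_{A})$. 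The only thing left is the bookkeeping identity: substituting the explicit $\prec,\succ$ into $a\circ_{\ell}m=a\succ m-\a_{M}^{-1}\b_{M}(m)\prec\a_{A}\b_{A}^{-1}(a)$ of Eq.(\ref{eq:20.20}) (and similarly Eq.(\ref{eq:20.21})), and matching twisting maps, yields exactly the displayed formulas for $a\star m$, $m\star a$ and $a\star b$. This last verification is a direct expansion using Eqs.(\ref{eq:1.15})--(\ref{eq:1.16}) for the module axioms and the $\a,\b,\psi,\om$-equivariance of $\tl,\tr$; it is routine and I would merely indicate it.

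Alternatively — and this is probably the slicker write-up — one can bypass the quasitriangular machinery altogether and verify directly that $(M,\star,\a_{M}^{2}\b_{M},\a_{M}^{2}\b_{M}^{2}\psi_{M}\om_{M})$ satisfies Eqs.(\ref{eq:20.03})--(\ref{eq:20.06}) of Definition \ref{de:20.03}, using as input precisely the five axiom groups of Definition \ref{de:20.01}: the module/comodule equivariances give Eqs.(\ref{eq:20.03})--(\ref{eq:20.04}), the left and right $\l$-infBH-Hopf module compatibilities Eqs.(\ref{eq:12.13}),(\ref{eq:12.13a}) together with the BiHom-bimodule and BiHom-bicomodule axioms Eqs.(\ref{eq:1.16}),(\ref{eq:01.24aa}) and the Long-module conditions Eqs.(\ref{eq:20.01})--(\ref{eq:20.02}) produce Eqs.(\ref{eq:20.05})--(\ref{eq:20.06}) after a (lengthy) cancellation, where the weight-$\l$ terms $\l\a_{A}\om_{A}(a)\o\b_{M}\psi_{M}(m)$ etc. cancel in pairs by the symmetry of the pre-Lie axiom in the two arguments being symmetrized.

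The hard part will be the last cancellation: in the second approach, expanding $\a_{A}\b_{A}(a)\star(\a_{A}(b)\star m)$ and the corresponding $b\leftrightarrow a$ term produces on the order of a dozen summands each involving nested products of $\tl,\tr$ with $\rho,\vp$, and one must use coassociativity Eq.(\ref{eq:1.9}), the Hopf-module compatibilities, and the BiHom-associativity reshuffling Eq.(\ref{eq:1.3}) repeatedly to see that the difference is symmetric; keeping the powers $\a^{2}\b$, $\a^{2}\b^{2}\psi\om$ of the structure maps correctly aligned throughout is where almost all the effort lies. I would therefore present the first approach (via Theorem \ref{thm:infbipro}, Propositions \ref{pro:20.08}, \ref{pro:20.07}, \ref{pro:20.05}, cf. Corollary \ref{cor:20.09}), which delegates the combinatorics to results already proved, and state that the explicit formulas follow by direct substitution.
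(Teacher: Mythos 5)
Your primary route (via a quasitriangular structure on the biproduct and the chain Proposition \ref{pro:20.08} $\Rightarrow$ Proposition \ref{pro:20.07} $\Rightarrow$ Proposition \ref{pro:20.05}) does not work, for three concrete reasons. First, the hypotheses of Proposition \ref{pro:20.12} give only a $\l$-infBH-bialgebra and a Hopf bimodule; there is no unit, no $r$-matrix, and no canonical solution of the $\l$-abhYBe on $A$ or on $A^{\#}_{\times}M$, so the Rota--Baxter operators of Proposition \ref{pro:20.08} and Corollary \ref{cor:20.09} are simply not available -- the products $a\star m$, $m\star a$, $a\star b$ here are built directly from the coproduct $\D$ and the coactions $\rho,\vp$, not from an $r$-matrix. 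Second, even if an $r$-matrix existed, Propositions \ref{pro:20.07} and \ref{pro:20.05} preserve the structure maps: they output a pre-Lie bimodule over $(A,\circ,\a_{A},\b_{A})$ with module maps $(\a_{M},\b_{M})$, whereas the target of Proposition \ref{pro:20.12} carries the maps $\a_{A}^{2}\b_{A}$, $\a_{A}^{2}\b_{A}^{2}\psi_{A}\om_{A}$ and $\a_{M}^{2}\b_{M}$, $\a_{M}^{2}\b_{M}^{2}\psi_{M}\om_{M}$, in which $\psi$ and $\om$ enter essentially; no amount of ``matching twisting maps'' bridges this, because $\psi_{M},\om_{M}$ never appear in the dendriform chain. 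Third, you cannot even form $A^{\#}_{\times}M$ from the given data: Theorem \ref{thm:infbipro} additionally requires a BiHom-bimodule algebra and BiHom-bicomodule coalgebra structure on $M$ together with condition (\ref{eq:yd}), none of which a bare $\l$-infBH-Hopf bimodule possesses.

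Your ``alternative'' second approach is in fact the paper's proof, and it is the only one that works at this level of generality: the pre-Lie algebra structure on $(A,\star,\a_{A}^{2}\b_{A},\a_{A}^{2}\b_{A}^{2}\psi_{A}\om_{A})$ is quoted from the companion paper, and then Eqs.~(\ref{eq:20.05})--(\ref{eq:20.06}) are verified by first computing $\rho$ and $\vp$ of $m\star b$ and of $b\star m$ via the Hopf-module compatibilities (\ref{eq:12.13}), (\ref{eq:12.13a}), the Long-module conditions (\ref{eq:20.01})--(\ref{eq:20.02}) and the bicomodule axioms (\ref{eq:01.24}), (\ref{eq:01.24aa}), then expanding both sides with (\ref{eq:1.15}), (\ref{eq:1.16}), (\ref{eq:1.3}), (\ref{eq:1.9}) and (\ref{eq:12.4}) and reading off symmetry in $a$ and $b$. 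Your outline of this route correctly identifies all the ingredients and where the difficulty lies, but since you defer the entire cancellation and explicitly elect to present the non-working route instead, the proposal as written has a genuine gap; you should promote the direct verification to the main argument and carry it out.
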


 \begin{proof} Firstly by \cite[Theorem 3.27]{MM}, $(A, \star, \a_{A}^{2}\b_{A}, \a_{A}^{2}\b_{A}^{2}\psi_{A}\om_{A})$ is a BiHom-pre-Lie algebra. In what follows, we only do the calculations below, and the rest are similar. For all $a, b\in A$ and $m\in M$, we have
 \begin{eqnarray*}
 \rho(m\star b)&\stackrel{(\mref{eq:20.02})}=&\a_{M}(\b_{A}^{2}\psi_{A}(b_{1})
 \tl\a_{M}(m))_{-1}\o (\b_{A}^{2}\psi_{A}(b_{1})
 \tl\a_{M}(m))_{0}\tr\a_{A}^{2}\b_{A}\psi_{A}\om_{A}(b_{2})\\
 &\stackrel{(\mref{eq:12.13})}=&\a_{A}\b_{A}^{2}\psi_{A}\om_{A}(b_{1})
 \a_{A}^{2}(m_{-1})\o
 \a_{M}\b_{M}(m_{0})\tr\a_{A}^{2}\b_{A}\psi_{A}\om_{A}(b_{2})\\
 &&+\a_{A}^{2}\b_{A}^{2}\psi_{A}(b_{11})\o(\b_{A}^{2}\psi_{A}(b_{12})\tl
 \a_{M}\psi_{M}(m))
 \tr\a_{A}^{2}\b_{A}\psi_{A}\om_{A}(b_{2})\\
 &&+\l\a_{A}^{2}\b_{A}^{2}\psi_{A}\om_{A}(b_{1})\o\a_{M}\b_{M}\psi_{M}(m)
 \tr\a_{A}^{2}\b_{A}\psi_{A}\om_{A}(b_{2}),
 \end{eqnarray*}
 and
 \begin{eqnarray*}
 \varphi(m\star b)&\stackrel{(\mref{eq:12.13a})}=&
 (\b_{A}^{2}\psi_{A}\om_{A}(b_{1})\tl\a_{M}\om_{M}(m))\tr
 \a_{A}^{2}\b_{A}\om_{A}(b_{21})\o\a_{A}^{2}\b_{A}^{2}\om_{A}(b_{22})\\
 &&+\a_{M}(\b_{A}^{2}\psi_{A}(b_{1})\tl\a_{M}(m))_{(0)}\o
 (\b_{A}^{2}\psi_{A}(b_{1})\tl\a_{M}(m))_{(1)}
 \a_{A}^{2}\b_{A}\psi_{A}\om_{A}(b_{2})\\
 &&+\l\a_{A}\b_{A}^{2}\psi_{A}\om_{A}(b_{1})\tl
 \a_{M}^{2}\om_{M}(m)
 \o\a_{A}^{2}\b_{A}^{2}\psi_{A}\om_{A}(b_{2})\\
 &\stackrel{(\mref{eq:20.01})}=&
 (\b_{A}^{2}\psi_{A}(b_{11})\tl\a_{M}\om_{M}(m))\tr
 \a_{A}^{2}\b_{A}\om_{A}(b_{12})\o\a_{A}^{2}\b_{A}^{2}\psi_{A}\om_{A}(b_{2})\\
 &&+\a_{A}\b_{A}^{2}\psi_{A}\om_{A}(b_{1})\tl\a_{M}^{2}(m_{(0)})\o
 \a_{A}\b_{A}(m_{(1)})\a_{A}^{2}\b_{A}\psi_{A}\om_{A}(b_{2})\\
 &&+\l\a_{A}\b_{A}^{2}\psi_{A}\om_{A}(b_{1})\tl\a_{M}^{2}\om_{M}(m)
 \o\a_{A}^{2}\b_{A}^{2}\psi_{A}\om_{A}(b_{2}).
 \end{eqnarray*}
 Thus,
 \begin{eqnarray*}
 &&\hspace{-20mm}\a_{A}^{4}\b_{A}^{3}\psi_{A}\om_{A}(a)\star(\a_{M}^{2}\b_{M}(m)\star b)-(\a_{A}^{2}\b_{A}^{2}\psi_{A}\om_{A}(a)\star\a_{M}^{2}\b_{M}(m))\star \a_{A}^{2}\b_{A}^{2}\psi_{A}\om_{A}(b)\\
 &=&\hspace{-5mm}
 (\a_{A}\b_{A}^{4}\psi_{A}^{2}\om_{A}(b_{1})\a_{A}^{4}\b_{A}^{3}\psi_{A}(m_{-1}))
 \a_{A}^{5}\b_{A}^{3}\psi_{A}\om_{A}(a)\tl
 (\a_{M}^{5}\b_{M}^{3}\om_{M}(m_{0})\tr
 \a_{A}^{4}\b_{A}^{2}\psi_{A}\om_{A}^{2}(b_{2}))\\
 &&\hspace{-5mm}+\a_{A}^{2}\b_{A}^{4}\psi_{A}^{2}(b_{11})
 \a_{A}^{5}\b_{A}^{3}\psi_{A}\om_{A}(a)
 \tl((\a_{A}^{2}\b_{A}^{3}\psi_{A}\om_{A}(b_{12})\tl
 \a_{M}^{5}\b_{M}^{2}\psi_{M}\om_{M}(m))
 \tr\a_{A}^{4}\b_{A}^{2}\psi_{A}\om_{A}^{2}(b_{2}))\\
 &&\hspace{-5mm}+\l\a_{A}^{2}\b_{A}^{4}\psi_{A}^{2}\om_{A}(b_{1})
 \a_{A}^{5}\b_{A}^{3}\psi_{A}\om_{A}(a)\tl
 (\a_{M}^{5}\b_{M}^{3}\psi_{M}\om_{M}(m)
 \tr\a_{A}^{4}\b_{A}^{2}\psi_{A}\om_{A}^{2}(b_{2}))\\
 &&\hspace{-5mm}+(((\b_{A}^{4}\psi_{A}^{2}(b_{11})\tl
 \a_{M}^{3}\b_{M}^{3}\psi_{M}\om_{M}(m))\tr
 \a_{A}^{2}\b_{A}^{3}\psi_{A}\om_{A}(b_{12}))\tr
 \a_{A}^{5}\b_{A}^{3}\psi_{A}\om_{A}(a))\tr
 \a_{A}^{4}\b_{A}^{3}\psi_{A}\om_{A}^{2}(b_{2})\\
 &&\hspace{-5mm}+((\a_{A}\b_{A}^{4}\psi_{A}^{2}\om_{A}(b_{1})\tl
 \a_{M}^{4}\b_{M}^{3}\psi_{M}(m_{(0)}))\tr
 \a_{A}^{5}\b_{A}^{3}\psi_{A}\om_{A}(a))\tr
 \a_{A}^{5}\b_{A}^{3}\om_{A}(m_{(1)})
 \a_{A}^{4}\b_{A}^{2}\psi_{A}\om_{A}^{2}(b_{2})\\
 &&\hspace{-5mm}+\l((\a_{A}\b_{A}^{4}\psi_{A}^{2}\om_{A}(b_{1})\tl
 \a_{M}^{4}\b_{M}^{3}\psi_{M}\om_{M}(m))
 \tr
 \a_{A}^{5}\b_{A}^{3}\psi_{A}\om_{A}(a))\tr
 \a_{A}^{4}\b_{A}^{3}\psi_{A}\om_{A}^{2}(b_{2})\\
 &&\hspace{-5mm}-(\a_{A}^{2}\b_{A}^{4}\psi_{A}^{2}\om_{A}(b_{1})\tl
 (\a_{A}^{3}\b_{A}^{3}\psi_{A}(m_{-1})
 \a_{A}^{4}\b_{A}^{2}\psi_{A}\om_{A}(a)\tl
 \a_{M}^{5}\b_{M}^{2}\om_{M}(m_{0})))\tr
 \a_{A}^{4}\b_{A}^{3}\psi_{A}\om_{A}^{2}(b_{2})\\
 &&\hspace{-5mm}-(\a_{A}^{2}\b_{A}^{4}\psi_{A}^{2}\om_{A}(b_{1})\tl
 (\a_{M}^{4}\b_{M}^{3}\psi_{M}(m_{(0)})\tr
 \a_{A}^{4}\b_{A}^{2}\psi_{A}\om_{A}(a)
 \a_{A}^{5}\b_{A}\om_{A}(m_{(1)})))\tr
 \a_{A}^{4}\b_{A}^{3}\psi_{A}\om_{A}^{2}(b_{2})\\
 &\stackrel{(\mref{eq:1.15})(\mref{eq:1.16})}=&\hspace{-1mm}
 (\a_{A}\b_{A}^{4}\psi_{A}^{2}(b_{11})
 (\a_{A}^{3}\b_{A}^{3}\psi_{A}\om_{A}(a)
 \a_{A}^{2}\b_{A}^{2}\psi_{A}\om_{A}(b_{12}))\tl
 \a_{M}^{5}\b_{M}^{3}\psi_{M}\om_{M}(m))
 \tr\a_{A}^{4}\b_{A}^{3}\psi_{A}\om_{A}^{2}(b_{2})\\
 &&\hspace{-5mm}+\l(\a_{A}\b_{A}^{4}\psi_{A}^{2}\om_{A}(b_{1})
 \a_{A}^{4}\b_{A}^{3}\psi_{A}\om_{A}(a)\tl
 \a_{M}^{5}\b_{M}^{3}\psi_{M}\om_{M}(m))
 \tr\a_{A}^{4}\b_{A}^{3}\psi_{A}\om_{A}^{2}(b_{2})\\
 &&\hspace{-5mm}+(\a_{A}^{2}\b_{A}^{4}\psi_{A}^{2}(b_{11})\tl
 \a_{M}^{5}\b_{M}^{3}\psi_{M}\om_{M}(m))\tr
 \a_{A}^{3}\b_{A}^{3}\psi_{A}\om_{A}(b_{12})
 (\a_{A}^{5}\b_{A}^{2}\psi_{A}\om_{A}(a)
 \a_{A}^{4}\b_{A}\psi_{A}\om_{A}^{2}(b_{2}))\\
 &&\hspace{-5mm}+\l(\a_{A}^{2}\b_{A}^{4}\psi_{A}^{2}\om_{A}(b_{1})\tl
 \a_{M}^{5}\b_{M}^{3}\psi_{M}\om_{M}(m))
 \tr
 \a_{A}^{5}\b_{A}^{3}\psi_{A}\om_{A}(a)\a_{A}^{4}\b_{A}^{2}\psi_{A}\om_{A}^{2}(b_{2})\\
 &\stackrel{\triangle}=&\hspace{-5mm} I.
 \end{eqnarray*}
 On the other hand, by Eqs.(\mref{eq:12.4}) and (\mref{eq:1.9}), one can get
 \begin{eqnarray*}
 \D(a\star b)&=&\a_{A}\b_{A}^{2}\psi_{A}(b_{11})
 (\a_{A}\om_{A}(a)\a_{A}^{2}\om_{A}(b_{12}))\o
 \a_{A}^{2}\b_{A}^{2}\psi_{A}\om_{A}(b_{2})\\
 &&+\a_{A}\b_{A}^{2}\psi_{A}\om_{A}(b_{1})\a_{A}^{2}(a_{1})
 \o\a_{A}\b_{A}(a_{2})\a_{A}^{2}\b_{A}\psi_{A}\om_{A}(b_{2})\\
 &&+\l\a_{A}\b_{A}^{2}\psi_{A}\om_{A}(b_{1})\a_{A}^{2}\om_{A}(a)
 \o\a_{A}^{2}\b_{A}^{2}\psi_{A}\om_{A}(b_{2})\\
 &&+\a_{A}^{2}\b_{A}^{2}\psi_{A}(b_{11})\o\a_{A}\b_{A}^{2}\psi_{A}(b_{12})
 (\a_{A}\psi_{A}(a)\a_{A}^{2}\psi_{A}\om_{A}(b_{2}))\\
 &&+\l\a_{A}^{2}\b_{A}^{2}\psi_{A}\om_{A}(b_{1})
 \o\a_{A}\b_{A}\psi_{A}(a)\a_{A}^{2}\b_{A}\psi_{A}\om_{A}(b_{2}).
 \end{eqnarray*}
 Then we obtain that,
 \begin{eqnarray*}
 &&\hspace{-20mm}\a_{M}^{4}\b_{M}^{3}\psi_{M}\om_{M}(m)\star(\a_{A}^{2}\b_{A}(a)\star b)-(\a_{M}^{2}\b_{M}^{2}\psi_{M}\om_{M}(m)\star\a_{A}^{2}\b_{A}(a))
 \star \a_{A}^{2}\b_{A}^{2}\psi_{A}\om_{A}(b)\\
 &=&\hspace{-8mm}
 (\a_{A}\b_{A}^{4}\psi_{A}^{2}(b_{11})
 (\a_{A}^{3}\b_{A}^{3}\psi_{A}\om_{A}(a)
 \a_{A}^{2}\b_{A}^{2}\psi_{A}\om_{A}(b_{12}))\tl
 \a_{M}^{5}\b_{M}^{3}\psi_{M}\om_{M}(m))\tr
 \a_{A}^{4}\b_{A}^{3}\psi_{A}\om_{A}^{2}(b_{2})\\
 &&\hspace{-8mm}+(\a_{A}\b_{A}^{4}\psi_{A}^{2}\om_{A}(b_{1})\a_{A}^{4}\b_{A}^{3}\psi_{A}(a_{1})
 \tl\a_{M}^{5}\b_{M}^{3}\psi_{M}\om_{M}(m))\tr
 \a_{A}^{5}\b_{A}^{3}\om_{A}(a_{2})
 \a_{A}^{4}\b_{A}^{2}\psi_{A}\om_{A}^{2}(b_{2})\\
 &&\hspace{-8mm}+\l(\a_{A}\b_{A}^{4}\psi_{A}^{2}\om_{A}(b_{1})
 \a_{A}^{4}\b_{A}^{3}\psi_{A}\om_{A}(a)
 \tl\a_{M}^{5}\b_{M}^{3}\psi_{M}\om_{M}(m))\tr
 \a_{A}^{4}\b_{A}^{3}\psi_{A}\om_{A}^{2}(b_{2})\\
 &&\hspace{-8mm}+(\a_{A}^{2}\b_{A}^{4}\psi_{A}^{2}(b_{11})\tl
 \a_{M}^{5}\b_{M}^{3}\psi_{M}\om_{M}(m))
 \tr\a_{A}^{3}\b_{A}^{3}\psi_{A}\om_{A}(b_{12})
 (\a_{A}^{5}\b_{A}^{2}\psi_{A}\om_{A}(a)
 \a_{A}^{4}\b_{A}\psi_{A}\om_{A}^{2}(b_{2}))\\
 &&\hspace{-8mm}+\l(\a_{A}^{2}\b_{A}^{4}\psi_{A}^{2}\om_{A}(b_{1})\tl
 \a_{M}^{5}\b_{M}^{3}\psi_{M}\om_{M}(m))
 \tr\a_{A}^{5}\b_{A}^{3}\psi_{A}\om_{A}(a)
 \a_{A}^{4}\b_{A}^{2}\psi_{A}\om_{A}^{2}(b_{2})\\
 &&\hspace{-8mm}-\a_{A}^{2}\b_{A}^{4}\psi_{A}^{2}\om_{A}(b_{1})\a_{A}^{5}\b_{A}^{3}\psi_{A}(a_{1})
 \tl((\a_{M}^{4}\b_{M}^{3}\psi_{M}\om_{M}(m)\tr
 \a_{A}^{5}\b_{A}^{2}\om_{A}(a_{2}))\tr
 \a_{A}^{4}\b_{A}^{2}\psi_{A}\om_{A}^{2}(b_{2}))\\
 &\stackrel{(\mref{eq:1.15})(\mref{eq:1.16})}=&\hspace{-2mm}I.
 \end{eqnarray*}
 Hence,
 \begin{eqnarray*}
 &&\a_{A}^{4}\b_{A}^{3}\psi_{A}\om_{A}(a)\star(\a_{M}^{2}\b_{M}(m)\star b)-(\a_{A}^{2}\b_{A}^{2}\psi_{A}\om_{A}(a)\star\a_{M}^{2}\b_{M}(m))\star \a_{A}^{2}\b_{A}^{2}\psi_{A}\om_{A}(b)\qquad\qquad\qquad\qquad\\
 &&\qquad\qquad = \a_{M}^{4}\b_{M}^{3}\psi_{M}\om_{M}(m)\star(\a_{A}^{2}\b_{A}(a)\star b)-(\a_{M}^{2}\b_{M}^{2}\psi_{M}\om_{M}(m)\star\a_{A}^{2}\b_{A}(a))\star \a_{A}^{2}\b_{A}^{2}\psi_{A}\om_{A}(b).
 \end{eqnarray*}
 By Eqs.(\mref{eq:12.13}), (\mref{eq:20.02}) and (\mref{eq:01.24}), we have
 \begin{eqnarray*}
 \rho(b\star m)&=&
 (\b_{A}^{2}\psi_{A}\om_{A}(m_{-1})\a_{A}\om_{A}(b))
 \a_{A}^{2}\b_{A}\om_{A}(m_{0-1})\o \a_{M}^{2}\b_{M}^{2}\om_{M}(m_{00})\\
 &&+\a_{A}\b_{A}^{2}\psi_{A}\om_{A}(m_{-1})\a_{A}^{2}(b_{1})\o\a_{A}\b_{A}(b_{2})
 \tl\a_{M}^{2}\b_{M}\psi_{M}\om_{M}(m_{0})\\
 &&+\a_{A}^{2}\b_{A}^{2}\psi_{A}\om_{A}(m_{-1})
 \o\b_{A}^{2}\psi_{A}(m_{0-1})\a_{A}\psi_{A}(b)
 \tl\a_{M}^{2}\b_{M}\om_{M}(m_{00})\\
 &&+\l\a_{A}^{2}\b_{A}^{2}\psi_{A}\om_{A}(m_{-1})\o\a_{A}\b_{A}\psi_{A}(b)
 \tl\a_{M}^{2}\b_{M}\psi_{M}\om_{M}(m_{0})\\
 &&+\l\a_{A}\b_{A}^{2}\psi_{A}\om_{A}(m_{-1})\a_{A}^{2}\om_{A}(b)
 \o\a_{M}^{2}\b_{M}^{2}\psi_{M}\om_{M}(m_{0})\\
 &&+\a_{A}^{2}\b_{A}^{2}\psi_{A}(m_{(0)-1})\o\a_{M}\b_{M}^{2}\psi_{M}(m_{(0)0})
 \tr\a_{A}\psi_{A}(b)\a_{A}^{2}\psi_{A}\om_{A}(m_{(1)}).
 \end{eqnarray*}
The identity 
 \begin{eqnarray*}
 \varphi(b\star m)&=&
 (\b_{A}^{2}\psi_{A}(m_{(0)-1})\a_{A}\om_{A}(b)\tl
 \a_{M}^{2}\b_{M}\om_{M}(m_{(0)0}))
 \o\a_{A}^{2}\b_{A}^{2}\psi_{A}\om_{A}(m_{(1)})\\
 &&+\a_{M}\b_{M}^{2}\psi_{M}\om_{M}(m_{(0)})
 \tr\a_{A}\om_{A}(b)\a_{A}^{2}\om_{A}(m_{(1)1})
 \o\a_{A}^{2}\b_{A}^{2}\om_{A}(m_{(1)2})\\
 &&+\a_{M}\b_{M}^{2}\psi_{M}\om_{M}(m_{(0)})\tr\a_{A}^{2}(b_{1})
 \o\a_{A}\b_{A}(b_{2})\a_{A}^{2}\b_{A}\psi_{A}\om_{A}(m_{(1)})\\
 &&+\l\a_{M}\b_{M}^{2}\psi_{M}\om_{M}(m_{(0)})\tr\a_{A}^{2}\om_{A}(b)
 \o\a_{A}^{2}\b_{A}^{2}\psi_{A}\om_{A}(m_{(1)})\\
 &&+\a_{M}^{2}\b_{M}^{2}\psi_{M}\om_{M}(m_{(0)})\o\a_{A}\b_{A}^{2}\psi_{A}(m_{(1)1})
 (\a_{A}\psi_{A}(b)\a_{A}^{2}\om_{A}(m_{(1)2}))\\
 &&+\l\a_{M}^{2}\b_{M}^{2}\psi_{M}\om_{M}(m_{(0)})
 \o\a_{A}\b_{A}\psi_{A}(b)\a_{A}^{2}\b_{A}\psi_{A}\om_{A}(m_{(1)})
 \end{eqnarray*}
 can be checked by Eqs.(\mref{eq:20.01}), (\mref{eq:12.13a}),  (\mref{eq:01.24aa}) and (\mref{eq:01.24}).

 Therefore,
 \begin{eqnarray*}
 &&\hspace{-20mm}\a_{A}^{4}\b_{A}^{3}\psi_{A}\om_{A}(a)\star(\a_{A}^{2}\b_{A}(b)\star m) -(\a_{A}^{2}\b_{A}^{2}\psi_{A}\om_{A}(a)\star\a_{A}^{2}\b_{A}(b))\star \a_{M}^{2}\b_{M}^{2}\psi_{M}\om_{M}(m)\\
 &\stackrel{(\mref{eq:1.15})(\mref{eq:1.16})}=&
 ((\b_{A}^{4}\psi_{A}^{2}\om_{A}(m_{-1})\a_{A}^{3}\b_{A}^{3}\psi_{A}\om_{A}(b))
 \a_{A}^{2}\b_{A}^{3}\psi_{A}\om_{A}(m_{0-1}))
 \a_{A}^{5}\b_{A}^{3}\psi_{A}\om_{A}(a)
 \tl\a_{M}^{4}\b_{M}^{3}\om_{M}^{2}(m_{00})\\
 &&\hspace{-4mm}+((\b_{A}^{4}\psi_{A}^{2}\om_{A}(m_{-1})\a_{A}^{3}\b_{A}^{3}\psi_{A}\om_{A}(a))
 \a_{A}^{2}\b_{A}^{3}\psi_{A}\om_{A}(m_{0-1}))\a_{A}^{5}\b_{A}^{3}\psi_{A}\om_{A}(b)
 \tl\a_{M}^{4}\b_{M}^{3}\om_{M}^{2}(m_{00})\\
 &&\hspace{-4mm}+\l(\a_{A}\b_{A}^{4}\psi_{A}^{2}\om_{A}(m_{-1})
 \a_{A}^{4}\b_{A}^{3}\psi_{A}\om_{A}(a))
 \a_{A}^{5}\b_{A}^{3}\psi_{A}\om_{A}(b)\tl
 \a_{M}^{4}\b_{M}^{3}\psi_{M}\om_{M}^{2}(m_{0})\\
 &&\hspace{-4mm}+\l(\a_{A}\b_{A}^{4}\psi_{A}^{2}\om_{A}(m_{-1})
 \a_{A}^{4}\b_{A}^{3}\psi_{A}\om_{A}(b))
 \a_{A}^{5}\b_{A}^{3}\psi_{A}\om_{A}(a)\tl
 \a_{M}^{4}\b_{M}^{3}\psi_{M}\om_{M}^{2}(m_{0})\\
 &&\hspace{-4mm}+\a_{A}^{2}\b_{A}^{4}\psi_{A}^{2}(m_{(0)-1})
 \a_{A}^{5}\b_{A}^{3}\psi_{A}\om_{A}(a)\tl
 (\a_{M}^{3}\b_{M}^{3}\psi_{M}\om_{M}(m_{(0)0})
 \tr\a_{A}^{5}\b_{A}^{2}\psi_{A}\om_{A}(b)
 \a_{A}^{4}\b_{A}\psi_{A}\om_{A}^{2}(m_{(1)}))\\
 &&\hspace{-4mm}+\a_{A}^{2}\b_{A}^{4}\psi_{A}^{2}(m_{(0)-1})
 \a_{A}^{5}\b_{A}^{3}\psi_{A}\om_{A}(b)\tl
 (\a_{M}^{3}\b_{M}^{3}\psi_{M}\om_{M}(m_{(0)0})
 \tr\a_{A}^{5}\b_{A}^{2}\psi_{A}\om_{A}(a)
 \a_{A}^{4}\b_{A}\psi_{A}\om_{A}^{2}(m_{(1)}))\\
 &&\hspace{-4mm}+(\a_{M}^{2}\b_{M}^{4}\psi_{M}^{2}\om_{M}(m_{(0)})\tr
 \a_{A}^{5}\b_{A}^{3}\psi_{A}\om_{A}(b))\tr
 \a_{A}^{3}\b_{A}^{3}\psi_{A}\om_{A}(m_{(1)1})
 (\a_{A}^{5}\b_{A}^{2}\psi_{A}\om_{A}(a)
 \a_{A}^{4}\b_{A}\om_{A}^{2}(m_{(1)2}))\\
 &&\hspace{-4mm}+\l(\a_{M}^{2}\b_{M}^{4}\psi_{M}^{2}\om_{M}(m_{(0)})\tr
 \a_{A}^{5}\b_{A}^{3}\psi_{A}\om_{A}(b))
 \tr\a_{A}^{5}\b_{A}^{3}\psi_{A}\om_{A}(a)
 \a_{A}^{4}\b_{A}^{2}\psi_{A}\om_{A}^{2}(m_{(1)})\\
 &&\hspace{-4mm}+(\a_{M}^{2}\b_{M}^{4}\psi_{M}^{2}\om_{M}(m_{(0)})\tr
 \a_{A}^{5}\b_{A}^{3}\psi_{A}\om_{A}(a))\tr
 \a_{A}^{3}\b_{A}^{3}\psi_{A}\om_{A}(m_{(1)1})
 (\a_{A}^{5}\b_{A}^{2}\psi_{A}\om_{A}(b)\a_{A}^{4}\b_{A}\om_{A}^{2}(m_{(1)2}))\\
 &&\hspace{-4mm}+\l(\a_{M}^{2}\b_{M}^{4}\psi_{M}^{2}\om_{M}(m_{(0)})\tr
 \a_{A}^{5}\b_{A}^{3}\psi_{A}\om_{A}(a))
 \tr\a_{A}^{5}\b_{A}^{3}\psi_{A}\om_{A}(b)
 \a_{A}^{4}\b_{A}^{2}\psi_{A}\om_{A}^{2}(m_{(1)})\\
 &\stackrel{\triangle}=&\hspace{-5mm} II.
 \end{eqnarray*}
 Observing the symmetry for $a$ and $b$ in the formula $II$, one has
 \begin{eqnarray*}
 &&\a_{A}^{4}\b_{A}^{3}\psi_{A}\om_{A}(a)\star(\a_{A}^{2}\b_{A}(b)\star m)-(\a_{A}^{2}\b_{A}^{2}\psi_{A}\om_{A}(a)\star\a_{A}^{2}\b_{A}(b))\star \a_{M}^{2}\b_{M}^{2}\psi_{M}\om_{M}(m)\qquad\qquad\qquad\qquad\\
 &&\qquad\qquad = \a_{A}^{4}\b_{A}^{3}\psi_{A}\om_{A}(b)\star(\a_{A}^{2}\b_{A}(a)\star m)-(\a_{A}^{2}\b_{A}^{2}\psi_{A}\om_{A}(b)\star\a_{A}^{2}\b_{A}(a))\star \a_{M}^{2}\b_{M}^{2}\psi_{M}\om_{M}(m),
 \end{eqnarray*}
 completing the proof.
 \end{proof} 

 \begin{pro}\mlabel{pro:020.012} Let $(A,\mu,\D,\a_{A},\b_{A},\psi_{A},\om_{A})$ be a $\l$-infBH-bialgebra such that $\a_{A},\b_{A},\psi_{A},\om_{A}$ are invertible and $(M,\gamma,\rho,\nu,\varphi,\a_{M},\b_{M},\psi_{M},\om_{M})$ be a $\l$-infBH-Hopf bimodule, where $\a_{M},\b_{M},\psi_{M},\om_{M}$ are invertible. Then $(M,\star,\a_{M},\b_{M})$ is a BiHom-pre-Lie bimodule over the BiHom-pre-Lie algebra $(A,\star,\a_{A},\b_{A})$ via
 \begin{eqnarray*}
 &a\star m=(\a_{A}^{-2}\b_{A}\om_{A}^{-1}(m_{-1})\b_{A}^{-1}(a))\tl \psi_{M}^{-1}(m_{0})
 +(\a_{M}^{-2}\b_{M}\om_{M}^{-1}(m_{(0)})\tr \b_{A}^{-1}(a))\tr \psi_{A}^{-1}(m_{(1)})&\\
 &m\star a=(\a_{A}^{-2}\b_{A}\om_{A}^{-1}(a_{1})\tl \b_{M}^{-1}(m))\tr \psi_{A}^{-1}(a_{2}),&\\
 &a\star b=(\a^{-2}\b\om^{-1}(b_{1})\b^{-1}(a))\psi^{-1}(b_{2}).&\mlabel{eq:13.2}
 \end{eqnarray*}
 \end{pro}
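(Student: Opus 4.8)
The strategy is to recognize the data of the statement as a rescaled form of that of Proposition~\ref{pro:20.12}, legitimized by the invertibility of $\a_A,\b_A,\psi_A,\om_A$ and $\a_M,\b_M,\psi_M,\om_M$, and to transport the BiHom-pre-Lie bimodule axioms along this rescaling; alternatively one copies the proof of Proposition~\ref{pro:20.12} verbatim with all twisting exponents lowered.

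For the algebra part, recall that by \cite[Theorem~3.27]{MM} the triple $(A,\widehat{\star},\a_A^{2}\b_A,\a_A^{2}\b_A^{2}\psi_A\om_A)$ with $a\,\widehat{\star}\,b=(\b_A^{2}\psi_A(b_1)\a_A(a))\a_A^{2}\b_A\om_A(b_2)$ is a BiHom-pre-Lie algebra. I would put $f=\a^{-1}\b^{-1}$ and $g=\a^{-2}\b^{-1}\psi^{-1}\om^{-1}$; these are invertible monomials in the structure maps, hence endomorphisms of both $\mu$ and $\D$ by \eqref{eq:12.1}--\eqref{eq:12.4}, and therefore of $\widehat{\star}$. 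A comparison of exponents gives $a\star b=f(a)\,\widehat{\star}\,g(b)$, and feeding suitably rescaled arguments into the BiHom-pre-Lie identity \eqref{eq:13.1} for $\widehat{\star}$ yields \eqref{eq:13.1} for $\star$ relative to the structure maps $\a_A^{2}\b_A f=\a_A$ and $\a_A^{2}\b_A^{2}\psi_A\om_A g=\b_A$: the essential point is that both arguments $a,b$ are then hit by the \emph{same} monomial $f^{2}g=\a^{-4}\b^{-3}\psi^{-1}\om^{-1}$, so the symmetry in $a,b$ on the right-hand side of \eqref{eq:13.1} is preserved. Thus $(A,\star,\a_A,\b_A)$ is a BiHom-pre-Lie algebra (this also matches the invertible-case companion of \cite[Theorem~3.27]{MM}).

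For the bimodule part, I would run the same rescaling on the module and comodule data of the $\l$-infBH-Hopf bimodule $(M,\g,\rho,\nu,\vp,\a_M,\b_M,\psi_M,\om_M)$ and compare monomials in $\a,\b,\psi,\om$ on $A$ and on $M$, using the Long-module compatibilities \eqref{eq:20.01}, \eqref{eq:20.02} together with \eqref{eq:12.13}, \eqref{eq:12.13a}; one checks that the operations in the statement are exactly $a\star m=f_A(a)\,\widehat{\star}\,g_M(m)$ and $m\star a=f_M(m)\,\widehat{\star}\,g_A(a)$, where $\widehat{\star}$ now denotes the corresponding operations of Proposition~\ref{pro:20.12} and the outer twists $\a_M^{2}\b_M,\a_M^{2}\b_M^{2}\psi_M\om_M$ are turned into $\a_M,\b_M$. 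Substituting these expressions into the bimodule identities \eqref{eq:20.05} and \eqref{eq:20.06} established in Proposition~\ref{pro:20.12} and simplifying with the BiHom-associativity identity \eqref{eq:1.3} and the bimodule relations \eqref{eq:1.15}, \eqref{eq:1.16} — exactly as in the last part of the proof of Proposition~\ref{pro:20.12} — produces \eqref{eq:20.03}--\eqref{eq:20.06} for $(M,\star,\a_M,\b_M)$ over $(A,\star,\a_A,\b_A)$, which is the assertion.

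I expect the only genuine difficulty to be organizational: one must track the negative powers of $\a,\b,\psi,\om$ carefully so that the rescaled operations coincide on the nose with the stated formulas, and — if one takes the untwisting route rather than the direct computation — one must check that the rescaling maps $f,g$ are compatible with the $A$-action and $A$-coaction on $M$ wherever that is invoked; failing that, one falls back on the direct route, where the crux, just as in Proposition~\ref{pro:20.12}, is to arrange the bracketings so that the manifest symmetry in $a$ and $b$ emerges after applying \eqref{eq:1.15} and \eqref{eq:1.16}.
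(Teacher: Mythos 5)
Your proposal is correct, and your primary route is genuinely different from the paper's. The paper disposes of the algebra part by citing \cite[Theorem 3.26]{MM} directly and of the bimodule part by declaring it ``similar to the proof of Proposition \ref{pro:20.12}'', i.e.\ by redoing the long direct computation with the exponents of $\a,\b,\psi,\om$ lowered; your fallback route is exactly this. Your primary route instead realizes the operations of Proposition \ref{pro:020.012} as rescalings of those of Proposition \ref{pro:20.12}: with $f=\a^{-1}\b^{-1}$ and $g=\a^{-2}\b^{-1}\psi^{-1}\om^{-1}$ one indeed checks $a\star b=f(a)\,\widehat{\star}\,g(b)$ (since $\b^{2}\psi g=\a^{-2}\b\om^{-1}$, $\a f=\b^{-1}$, $\a^{2}\b\om g=\psi^{-1}$), and the substitution you describe turns the pre-Lie identity for $(\widehat{\star},\a^{2}\b,\a^{2}\b^{2}\psi\om)$ at $(f^{2}g(a),f^{2}g(b),g^{2}(c))$ into the identity for $(\star,\a,\b)$ at $(a,b,c)$, the equality $f^{2}g=\hat{\a}^{-1}\hat{\b}^{-1}$ guaranteeing that the same monomial hits $a$ and $b$ so the required symmetry survives. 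What this buys is a conceptual derivation of Proposition \ref{pro:020.012} from Proposition \ref{pro:20.12} with no new multi-line computation; what it costs is precisely the caveat you flag yourself: the untwisting on $M$ needs $\rho,\vp$ to intertwine $\a_M,\b_M$ and $\tl,\tr$ to intertwine $\psi_M,\om_M$, and these cross-compatibilities are not literally listed in Definition \ref{de:20.01}. Note, however, that the paper's own proof of Proposition \ref{pro:20.12} already uses them tacitly (e.g.\ rewriting $(\a_M(m))_{-1}\o(\a_M(m))_{0}$ as $\a_A(m_{-1})\o\a_M(m_{0})$ after applying \eqref{eq:12.13}), so the two routes stand or fall together on this point and your hedge to the direct computation is not actually needed beyond what the paper itself assumes.
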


 \begin{proof} Firstly by \cite[Theorem 3.26]{MM}, $(A,\star,\a_{A},\b_{A})$ is a BiHom-pre-Lie algebra. The rest is similar to the proof of Proposition \mref{pro:20.12}.
 \end{proof}

\subsection{$\l$-infBH-Hopf bimodules from the (co)modules over (co)quasitriangular (co)unitary $\l$-infBH-bialgebra}
 Left (right) $\l$-infBH-Hopf modules can be constructed from the modules over quasitriangular unitary $\l$-infBH-bialgebra in the following way.
 \begin{thm}\mlabel{thm:12.02}(\cite{MM}) Let $(A, \mu, 1, \a_{A}, \b_{A}, \psi_{A}, \om_{A}, r)$ be a quasitriangular unitary $\l$-infBH-bialgebra and $(M, \gamma, \a_{M}, \b_{M})$ be a left $(A, \mu, \a_{A}, \b_{A})$-module, $\psi_{M}, \om_{M}: M\longrightarrow M$ be linear maps such that $\b_{M}\ci \psi_{M}=\psi_{M}\ci \b_{M}$, $\psi_{M}\ci \g=\g\ci (\psi_{A}\o \psi_{M})$.
 Then $(M, \gamma, \rho, \a_{M}, \b_{M}, \psi_{M}, \om_{M})$ becomes a left $\l$-infBH-Hopf module with the coaction $\rho: M\longrightarrow A\o M$ given by
 \begin{eqnarray*}
 &\rho(m):=-\a_{A}(r^{1})\o r^{2}\tl\psi_{M}\b_{M}^{-1}(m), \forall~ m\in M.&
 \end{eqnarray*}
 \end{thm}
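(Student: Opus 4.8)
The plan is to verify directly the conditions of Definition \mref{de:12.17} for the datum $(M,\gamma,\rho,\a_{M},\b_{M},\psi_{M},\om_{M})$, where the comultiplication of $A$ is $\D=\D_{r}$ as in Eq.\,(\mref{eq:14.5}). Since $(M,\gamma,\a_{M},\b_{M})$ is a left $(A,\mu,\a_{A},\b_{A})$-module by hypothesis and the commutation of $\a_{M},\b_{M},\psi_{M},\om_{M}$ is assumed, what must be checked is that $(M,\rho,\psi_{M},\om_{M})$ is a left $(A,\D,\psi_{A},\om_{A})$-comodule (Eqs.\,(\mref{eq:01.22a}) and (\mref{eq:01.24})) and that the Hopf-compatibility Eq.\,(\mref{eq:12.13}) holds. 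The only non-formal tools are: the module axioms (\mref{eq:1.13}), (\mref{eq:1.15}) and unitality $1_{A}\tl n=\b_{M}(n)$; the $\a_{A},\b_{A},\psi_{A},\om_{A}$-invariance of $r$, which lets one replace $r^{1}\o r^{2}$ by $\phi(r^{1})\o\phi(r^{2})$ for any composite $\phi$ of $\a_{A}^{\pm1},\b_{A}^{\pm1},\psi_{A},\om_{A}$; bijectivity of $\a_{A},\b_{A},\b_{M}$; the compatibilities $\psi_{M}\ci\gamma=\gamma\ci(\psi_{A}\o\psi_{M})$ and $\om_{M}\ci\gamma=\gamma\ci(\om_{A}\o\om_{M})$; and, for a single step, the quasitriangularity identity $(\D\o\psi)(r)=-r_{23}r_{13}$ from Proposition \mref{pro:14.8}. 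The structure-map axioms of a comodule are then immediate: pushing $\psi_{A},\psi_{M}$ (resp. $\om_{A},\om_{M}$) through $\rho(m)=-\a_{A}(r^{1})\o r^{2}\tl\psi_{M}\b_{M}^{-1}(m)$ using $\psi_{M}\ci\gamma=\gamma\ci(\psi_{A}\o\psi_{M})$, $\psi_{A}\a_{A}=\a_{A}\psi_{A}$, $\b_{M}\psi_{M}=\psi_{M}\b_{M}$ and $\psi$-invariance of $r$ yields $(\psi_{A}\o\psi_{M})\ci\rho=\rho\ci\psi_{M}$, and symmetrically for $\om$.

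For Eq.\,(\mref{eq:12.13}) I would expand $\rho(a\tl m)=-\a_{A}(r^{1})\o r^{2}\tl\psi_{M}\b_{M}^{-1}(a\tl m)$, use (\mref{eq:1.13}) together with $\psi_{M}\ci\gamma=\gamma\ci(\psi_{A}\o\psi_{M})$ to get $\psi_{M}\b_{M}^{-1}(a\tl m)=\psi_{A}\b_{A}^{-1}(a)\tl\psi_{M}\b_{M}^{-1}(m)$, then (\mref{eq:1.15}) (applied with first argument $\a_{A}^{-1}(r^{2})$) and invariance of $r$ to reach
\[
\rho(a\tl m)=-\a_{A}^{2}(r^{1})\o\big(r^{2}\,\psi_{A}\b_{A}^{-1}(a)\big)\tl\psi_{M}(m).
\]
On the other side, $(\mu\o\b_{M})(\om_{A}\o\rho)(a\o m)=-\om_{A}(a)\a_{A}(r^{1})\o\b_{A}(r^{2})\tl\psi_{M}(m)$ (using (\mref{eq:1.13}) and $\b_{M}\psi_{M}\b_{M}^{-1}=\psi_{M}$), while $(\a_{A}\o\tl)(\D\o\psi_{M})(a\o m)$ with $\D=\D_{r}$ produces the three summands $\om_{A}(a)\a_{A}(r^{1})\o\b_{A}(r^{2})\tl\psi_{M}(m)$, $-\a_{A}^{2}(r^{1})\o\big(r^{2}\psi_{A}\b_{A}^{-1}(a)\big)\tl\psi_{M}(m)$ and $-\l\,\a_{A}\om_{A}(a)\o\b_{M}\psi_{M}(m)$ (the last via $1_{A}\tl\psi_{M}(m)=\b_{M}\psi_{M}(m)$). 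Adding the tail term $\l\,\a_{A}\om_{A}(a)\o\b_{M}\psi_{M}(m)$, the first summand cancels the $(\mu\o\b_{M})(\om_{A}\o\rho)$-contribution and the third cancels the $\l$-term, leaving precisely the displayed expression for $\rho(a\tl m)$. Thus (\mref{eq:12.13}) holds with no recourse to quasitriangularity.

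The substantial step is the BiHom-coassociativity $(\om_{A}\o\rho)\ci\rho=(\D\o\psi_{M})\ci\rho$ of Eq.\,(\mref{eq:01.24}). Applying $\rho$ twice and repeating the manipulations above one obtains
\[
(\om_{A}\o\rho)\ci\rho(m)=\om_{A}\a_{A}(r^{1})\o\a_{A}(\bar r^{1})\o\big(\a_{A}^{-1}(\bar r^{2})\,\psi_{A}\b_{A}^{-1}(r^{2})\big)\tl\psi_{M}^{2}\b_{M}^{-1}(m),
\]
with $\bar r$ a second copy of $r$; on the other hand, using $\D\ci\a_{A}=(\a_{A}\o\a_{A})\ci\D$ (Eq.\,(\mref{eq:12.2})) and $\psi_{M}\ci\gamma=\gamma\ci(\psi_{A}\o\psi_{M})$,
\[
(\D\o\psi_{M})\ci\rho(m)=-\big((\a_{A}\o\a_{A})\D(r^{1})\big)\o\big(\psi_{A}(r^{2})\tl\psi_{M}^{2}\b_{M}^{-1}(m)\big)=-(\a_{A}\o\a_{A}\o\gamma_{m})\big((\D\o\psi)(r)\big),
\]
where $\gamma_{m}(x):=x\tl\psi_{M}^{2}\b_{M}^{-1}(m)$. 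Substituting $(\D\o\psi)(r)=-r_{23}r_{13}=-\b_{A}\om_{A}(r^{1})\o\a_{A}(\bar r^{1})\o\bar r^{2}\psi_{A}(r^{2})$ from Proposition \mref{pro:14.8} turns the right-hand side into $\a_{A}\b_{A}\om_{A}(r^{1})\o\a_{A}^{2}(\bar r^{1})\o\big(\bar r^{2}\psi_{A}(r^{2})\big)\tl\psi_{M}^{2}\b_{M}^{-1}(m)$, and replacing $r\mapsto\b_{A}^{-1}(r)$ and $\bar r\mapsto\a_{A}^{-1}(\bar r)$ (invariance of $r$) together with $\a_{A}\om_{A}=\om_{A}\a_{A}$ makes this coincide with the formula for $(\om_{A}\o\rho)\ci\rho(m)$ above. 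The hard part is purely the bookkeeping of the $\a_{A},\b_{A},\psi_{A},\om_{A}$-twists and the powers of $\b_{M}^{-1}$ in this last comparison; everything else is formal, and organised as indicated nothing conceptual remains. With the comodule axioms, Eq.\,(\mref{eq:12.13}), and the (hypothesised) commutation of $\a_{M},\b_{M},\psi_{M},\om_{M}$ in hand, $(M,\gamma,\rho,\a_{M},\b_{M},\psi_{M},\om_{M})$ is a left $\l$-infBH-Hopf module.
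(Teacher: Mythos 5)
Your argument is correct, and it is essentially the same computation the paper carries out for the mirror statement: Theorem \ref{thm:12.02} itself is imported from \cite{MM} without proof here, but the paper's detailed proof of the right-module analogue, Theorem \ref{thm:13.001}, follows exactly your pattern --- comodule coassociativity via the quasitriangularity identity from Proposition \ref{pro:14.8} (there Eq.~(\ref{eq:14.9}), here Eq.~(\ref{eq:14.8})) together with the invariance of $r$, and the Hopf compatibility Eq.~(\ref{eq:12.13}) by direct expansion of $\D_r$ with the cancellations you describe. Your observation that Eq.~(\ref{eq:12.13}) needs no recourse to the Yang--Baxter relation, only to the formula for $\D_r$, matches the paper's computation as well. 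One point worth making explicit: the cancellation of the $\l$-terms forces $1_A\tl\psi_M(m)=\b_M\psi_M(m)$, and the comodule axiom for $\om$ forces $\om_M\circ\gamma=\gamma\circ(\om_A\o\om_M)$ and the invertibility of $\b_M$; none of these appear among the stated hypotheses of the theorem, so you are right to list them as ingredients --- they are tacit assumptions of the statement rather than consequences of it.
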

\vspace{-3mm}

 \begin{thm}\mlabel{thm:13.001} Let $(A, \mu, 1, \a_{A}, \b_{A}, \psi_{A}, \om_{A}, r)$ be a quasitriangular unitary $\l$-infBH-bialgebra and $(M, \nu, \a_{M}, \b_{M})$ be a right $(A, \mu, \a_{A}, \b_{A})$-module, $\psi_{M}, \om_{M}: M\longrightarrow M$ be linear maps such that $\a_{M}\ci \om_{M}=\om_{M}\ci \a_{M}$, $\om_{M}\ci \nu=\nu\ci (\om_{M}\o \om_{A})$.
 Then $(M, \nu, \varphi, \a_{M}, \b_{M}, \psi_{M}, \om_{M})$ becomes a right $\l$-infBH-Hopf module with the coaction $\varphi: M\longrightarrow M\o A$ given by
 \begin{eqnarray*}
 &\varphi(m):=\om_{M}\a^{-1}_{M}(m)\tr r^{1}\o\b_{A}(r^{2})-\l\om_{M}(m)\o 1, \forall~ m\in M.&
 \end{eqnarray*}
 \end{thm}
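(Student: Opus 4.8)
The plan is to establish Theorem~\mref{thm:13.001} as the left--right transpose of Theorem~\mref{thm:12.02}: by Definition~\mref{de:12.17a} it suffices to show that $(M,\vp,\psi_{M},\om_{M})$ is a right $(A,\D,\psi_{A},\om_{A})$-comodule with $\a_{M},\b_{M},\psi_{M},\om_{M}$ pairwise commuting, and that the pair $(\nu,\vp)$ satisfies the compatibility identity~\meqref{eq:12.13a}. Before starting I would assemble the tools that drive the computation. Since $r$ is $\a_{A},\b_{A},\psi_{A},\om_{A}$-invariant, each of these four maps may be absorbed into $r^{1}$ or into $r^{2}$ at will. Since the comultiplication is $\D=\D_{r}$ of~\meqref{eq:14.5}, Proposition~\mref{pro:14.8} applies and we may use $(\om_{A}\o\D)(r)=r_{13}r_{12}-\l(r_{13}+r_{12})$ (Eq.~\meqref{eq:14.9}), the three- and two-leg expressions being read through the conventions recalled just before Definition~\mref{de:waybe}. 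From $(A,\mu,1,\a_{A},\b_{A})$ being a unitary BiHom-associative algebra and $(M,\nu,\a_{M},\b_{M})$ a right module we have $\b_{A}(1)=\om_{A}(1)=\psi_{A}(1)=1$, the unit relation $1\cdot b=\b_{A}(b)$, the right-module reassociation $(m\tr a)\tr\b_{A}(b)=\a_{M}(m)\tr(ab)$, and $\a_{M}(m\tr a)=\a_{M}(m)\tr\a_{A}(a)$; the hypotheses moreover give $\om_{M}\a_{M}^{-1}=\a_{M}^{-1}\om_{M}$ and $\om_{M}(m\tr a)=\om_{M}(m)\tr\om_{A}(a)$. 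Throughout, $\vp(m)=\om_{M}\a_{M}^{-1}(m)\tr r^{1}\o\b_{A}(r^{2})-\l\,\om_{M}(m)\o 1$.

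For the right-comodule structure I would verify the axioms exactly as their counterparts for $(M,\rho,\psi_{M},\om_{M})$ in Theorem~\mref{thm:12.02} are verified. Substituting $\vp$, the intertwining of $\vp$ with $\om_{M}$ and $\om_{A}$ comes out of $\om_{M}(m\tr a)=\om_{M}(m)\tr\om_{A}(a)$, $\om_{A}(1)=1$, $\om_{M}\a_{M}^{-1}=\a_{M}^{-1}\om_{M}$ and the $r$-invariance; the commutation of $\psi_{M}$ with $\om_{M}$ and the intertwining of $\vp$ with $\psi_{M}$ are the formal transposes of the corresponding steps for $\rho$. The right-comodule coassociativity $(\vp\o\psi_{A})\ci\vp=(\om_{M}\o\D)\ci\vp$ is the genuine computation of this step: one expands both sides, rewrites $(\om_{A}\o\D)(r)$ by Eq.~\meqref{eq:14.9}, applies the right-module reassociation once, and collects the $\l$-terms using $\psi_{A}(1)=1$ --- the mirror image of the coassociativity check for $\rho$ in the proof of Theorem~\mref{thm:12.02}.

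For the compatibility identity~\meqref{eq:12.13a} the plan is a single direct computation. On the left, $\vp(m\tr a)$ equals $\bigl(\om_{M}\a_{M}^{-1}(m)\tr\om_{A}\a_{A}^{-1}(a)\bigr)\tr r^{1}\o\b_{A}(r^{2})-\l\,\om_{M}(m)\tr\om_{A}(a)\o 1$ by $\a_{M}(m\tr a)=\a_{M}(m)\tr\a_{A}(a)$ and $\om_{M}(m\tr a)=\om_{M}(m)\tr\om_{A}(a)$, and the first term is reassociated via $(m\tr a)\tr\b_{A}(b)=\a_{M}(m)\tr(ab)$. On the right, substitute $\D(a)=\D_{r}(a)$ from~\meqref{eq:14.5} into $\om_{M}(m)\tr a_{1}\o\b_{A}(a_{2})$ (three summands) and, after expanding $\vp(m)$, into $\a_{M}(m_{(0)})\o m_{(1)}\psi_{A}(a)$ (two summands). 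Pushing every structure map onto $r^{1},r^{2}$ by invariance and reassociating the $M$-actions, the difference of the two sides organizes into the expression $r_{13}r_{12}-r_{12}r_{23}+r_{23}r_{13}$ acting on $m\o a$, which collapses to $\l r_{13}$ by the $\l$-abhYBe~\meqref{eq:waybe}; the $\l$-contributions gathered from all sources --- the third term of $\D_{r}$, the $-\l(\,\cdot\,)\o 1$ part of $\vp$, and this residual $\l r_{13}$ --- together with the unit relation $1\cdot\psi_{A}(a)=\b_{A}\psi_{A}(a)$ reassemble into the term $\l\,\a_{M}\om_{M}(m)\o\b_{A}\psi_{A}(a)$ of~\meqref{eq:12.13a}.

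The main obstacle is the bookkeeping in this last computation: once $\D_{r}$ is substituted one has on the order of six to eight summands per side, each carrying a distinct word in $\a_{A},\b_{A},\psi_{A},\om_{A}$ (and in $\a_{M},\b_{M},\om_{M}$ on the module legs), and the cancellation becomes visible only after repeated use of the right-module axioms together with $r$-invariance, followed by a single application of~\meqref{eq:waybe} in the precise leg form fixed before Definition~\mref{de:waybe}. No idea beyond that of Theorem~\mref{thm:12.02} is needed --- the argument is literally its left--right mirror --- and the only genuinely delicate point is keeping the $\l$-terms aligned through the use of~\meqref{eq:waybe}; this is why the statement can be recorded without reproducing the calculation.
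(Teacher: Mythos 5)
Your overall strategy coincides with the paper's: treat the theorem as the right-handed mirror of Theorem~\mref{thm:12.02}, verify that $(M,\varphi,\psi_M,\om_M)$ is a right comodule (with the coassociativity $(\varphi\o\psi_A)\varphi=(\om_M\o\D_r)\varphi$ resting on Eq.~\meqref{eq:14.9}), and then check the compatibility~\meqref{eq:12.13a} by substituting $\D_r$ and $\varphi$ and cancelling. The comodule half of your plan matches the paper's proof step for step.

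However, your description of the compatibility check contains a concrete error: you claim the difference of the two sides ``organizes into $r_{13}r_{12}-r_{12}r_{23}+r_{23}r_{13}$'' and collapses via the $\l$-abhYBe~\meqref{eq:waybe}. That cannot happen. Both sides of~\meqref{eq:12.13a} are \emph{linear} in $r$ (the left side applies $\varphi$ once to $m\tr a$; the right side involves $\D_r(a)$ and $\varphi(m)$ each once, never composed), whereas $r_{13}r_{12}$, $r_{12}r_{23}$, $r_{23}r_{13}$ are quadratic in $r$, so the Yang--Baxter equation has no way to enter this identity. In the paper's proof the compatibility is purely formal: after substituting $\D_r$ from~\meqref{eq:14.5} into $\om_M(m)\tr a_1\o\b_A(a_2)$ and expanding $\a_M(m_{(0)})\o m_{(1)}\psi_A(a)$, the term $-\om_M(m)\tr\a_A(r^1)\o\b_A(r^2)\psi_A(a)$ cancels against $+\om_M(m)\tr\a_A(r^1)\o\b_A(r^2)\psi_A(a)$, and $-\l\a_M\om_M(m)\o 1\cdot\psi_A(a)=-\l\a_M\om_M(m)\o\b_A\psi_A(a)$ cancels against the explicit $\l$-term of~\meqref{eq:12.13a}; what survives is exactly $\varphi(m\tr a)$ after one use of $\b_A$-invariance of $r$ and the right-module axioms~\meqref{eq:1.15},~\meqref{eq:1.13}. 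The only place the quasitriangularity (beyond the mere formula for $\D_r$) is used is the coassociativity of $\varphi$, via~\meqref{eq:14.9}. Your proof would still go through if you carried out the computation honestly --- you would simply discover the YBE is not needed where you predicted --- but as written the mechanism you propose for the key cancellation is not the one that operates.
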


 \begin{proof} We first prove that $(M, \varphi, \psi_{M}, \om_{M})$ is a right $(A, \D_{r}, \psi_{A}, \om_{A})$-comodule as follows. For all $m\in M$, we have
 \begin{eqnarray*}
 (\om_{M}\o \D_r)\varphi(m)
 &\stackrel{(\mref{eq:14.9})}=&\om^{2}_{M}\a^{-1}_{M}(m)\tr
 \b^{-1}_{A}\om_{A}(r^{1})\b^{-1}_{A}(\bar{r}^{1})
 \o\b_{A}(\bar{r}^{2})\o\a_{A}\psi_{A}(r^{2})\\
 &&-\l\om^{2}_{M}\a^{-1}_{M}(m)\tr \b^{-1}_{A}\om_{A}(r^{1})\o 1\o \psi_{A}(r^{2})\\
 &&-\l\om^{2}_{M}\a^{-1}_{M}(m)\tr \b^{-1}_{A}(r^{1})\o r^{2}\o 1+\l^{2}\om^{2}_{M}(m)\o1\o 1\\
 &\stackrel{(\mref{eq:1.15})}=&(\om^{2}_{M}\a^{-2}_{M}(m)\tr\om_{A}\a^{-1}_{A}(r^{1}))\tr\bar{r}^{1}
 \o\b_{A}(\bar{r}^{2})\o\psi_{A}\b_{A}(r^{2})\\
 &&-\l\om^{2}_{M}\a^{-1}_{M}(m)\tr \om_{A}(r^{1})\o 1\o \psi_{A}\b_{A}(r^{2})\\
 &&-\l\om^{2}_{M}\a^{-1}_{M}(m)\tr r^{1} \o \b_{A}(r^{2})\o 1+\l^{2}\om^{2}_{M}(m)\o1\o 1\\
 &\stackrel{(\mref{eq:12.30})}=&(\varphi\o\psi_{A})\varphi(m).
 \end{eqnarray*}
 Next we  check the compatibility condition. For all $a\in A$ and $m\in M$, we have
 \begin{eqnarray*}
 \hspace{-3mm}&&\hspace{-20mm}\om_{M}(m)\tr a_{1}\o \b_{A}(a_{2})+\a_{M}(m_{(0)})\o m_{(1)}\psi_{A}(a)+\l\a_{M}\om_{M}(m)\o \b_{A}\psi_{A}(a)\\
 \hspace{-3mm}&\stackrel{(\mref{eq:14.5})(\mref{eq:1.5})}=&\hspace{-6mm}
 \om_{M}(m)\tr \om_{A}\a^{-1}_{A}(a)r^{1}\o \b^{2}_{A}(r^{2})-\om_{M}(m)\tr\a_{A}(r^{1})\o \b_{A}(r^{2})\psi_{A}(a)\\
 \hspace{-3mm}&&\hspace{-3mm}-\l\om_{M}(m)\tr\om_{A}(a)\o 1+\om_{M}(m)\tr\a_{A}(r^{1})\o\b_{A}(r^{2})\psi_{A}(a)\\
 \hspace{-3mm}&&\hspace{-3mm}-\l\a_{M}\om_{M}(m)\o 1\cdot\psi_{A}(a)
 +\l\a_{M}\om_{M}(m)\o \b_{A}\psi_{A}(a)\\
 \hspace{-3mm}&\stackrel{(\mref{eq:1.5})}=&\hspace{-3mm}\om_{M}(m)\tr \om_{A}\a^{-1}_{A}(a)r^{1}\o \b^{2}_{A}(r^{2})
 -\l\om_{M}(m)\tr\om_{A}(a)\o 1\\
 \hspace{-3mm}&=&\hspace{-3mm}\om_{M}(m)\tr \om_{A}\a^{-1}_{A}(a)\b_A^{-1}(r^{1})\o \b_{A}(r^{2})-\l\om_{M}(m)\tr\om_{A}(a)\o 1\\
 \hspace{-3mm}&\stackrel{(\mref{eq:1.15})(\mref{eq:1.13})}=&\hspace{-3mm}(m\tr a)_{(0)}\o (m\tr a)_{(1)},
 \end{eqnarray*}
 completing the proof.
 \end{proof}
\vspace{-3mm}

 \begin{rmk} The right coaction $\vp$ in Theorem \mref{thm:13.001} is not just a copy of the left coaction $\rho$ in Theorem \mref{thm:12.02}, and the expression of $\vp$ contains the weight $\l$.
 \end{rmk}
\vspace{-3mm}

 Left (right) $\l$-infBH-Hopf modules can be constructed from the modules over anti-quasitriangular unitary $\l$-infBH-bialgebra in the following way.

 \begin{pro}\mlabel{pro:14.25}(\cite{MM}) With the assumptions of Definition \mref{de:qt},  $(A, \mu, \D=\widetilde{\D}_r, 1, \alpha, \beta, \psi, \omega)$, where $\widetilde{\D}_r$ is defined by Eq.(\mref{eq:14.25}), is an anti-quasitriangular unitary $\l$-infBH-bialgebra if and only if
 \begin{eqnarray}
 &(\Delta\otimes\psi)(r)=-r_{23}r_{13}-\lambda(r_{23}+r_{13})&\mlabel{eq:14.28}
 \end{eqnarray}
 or
 \begin{eqnarray}
 &(\omega\otimes \Delta)(r)=r_{13}r_{12}.&\mlabel{eq:14.29}
 \end{eqnarray}
 \end{pro}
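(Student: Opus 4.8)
The plan is to check, condition by condition, that the 8-tuple $(A,\mu,\widetilde{\D}_r,1,\a,\b,\psi,\om)$ satisfies Definition \mref{de:12.1}; since by hypothesis $(A,\mu,1,\a,\b)$ is already a unitary BiHom-associative algebra and $r$ is a solution of a $-\l$-abhYBe, establishing this is precisely what ``anti-quasitriangular unitary $\l$-infBH-bialgebra'' demands. Now \meqref{eq:12.1}, \meqref{eq:12.3} and \meqref{eq:12.30} are assumed outright. The conditions in \meqref{eq:1.7} and \meqref{eq:12.2} relating $\widetilde{\D}_r$ to $\a,\b,\psi,\om$ I would dispatch routinely, using that $r$ is $\a,\b,\psi,\om$-invariant, that these four maps pairwise commute, and that $\a,\b$ are bijective, so that $\om\a^{-1}$ and $\psi\b^{-1}$ again commute with all four structure maps. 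That leaves the two substantial axioms, the mixed compatibility \meqref{eq:12.4} and BiHom-coassociativity \meqref{eq:1.9} for $\widetilde{\D}_r$; these are where \meqref{eq:14.28} and \meqref{eq:14.29} enter, and the whole argument should run in close parallel to the proof of Proposition \mref{pro:14.8}, the only structural differences being the sign of $\l$ in the Yang--Baxter equation and the replacement of the summand $-\l(\om(a)\o 1)$ of \meqref{eq:14.5} by $-\l(1\o\psi(a))$ in \meqref{eq:14.25}.

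For \meqref{eq:12.4} I would expand $\widetilde{\D}_r(\mu(a\o b))$ directly and compare it to $(\mu\o\b)(\om\o\widetilde{\D}_r)(a\o b)+(\a\o\mu)(\widetilde{\D}_r\o\psi)(a\o b)+\l\bigl(\a\om(a)\o\b\psi(b)\bigr)$, using BiHom-associativity \meqref{eq:1.3}, unitality \meqref{eq:1.5}, the invariance of $r$, and \meqref{eq:12.1}, \meqref{eq:12.3}. The structural reason this works is that $\widetilde{\D}_r$ is the sum of the two twisted inner maps $a\mapsto\om\a^{-1}(a)r^1\o\b(r^2)$ and $a\mapsto-\a(r^1)\o r^2\psi\b^{-1}(a)$, each of which is a twisted inner $1$-cocycle and so contributes only the ``Leibniz'' terms to \meqref{eq:12.4}, together with $-\l(1\o\psi(a))$, which is engineered precisely so that its contribution absorbs the discrepancy $\l(\a\om\o\b\psi)$. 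I expect this step to use only the unitary BiHom-associative structure, not the $-\l$-abhYBe.

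The heart of the matter is coassociativity \meqref{eq:1.9}: I would expand $(\widetilde{\D}_r\o\psi)\widetilde{\D}_r(a)$ and $(\om\o\widetilde{\D}_r)\widetilde{\D}_r(a)$ into their nine summands each (three from the outer $\widetilde{\D}_r$ times three from the inner one), write the summands that involve only the two copies $r,\bar{r}$ of $r$ in the normalized forms $r_{12}r_{23},\ r_{13}r_{12},\ r_{23}r_{13},\ r_{13}$ fixed by the notation before Definition \mref{de:waybe}, and collect the $a$-dependent summands separately. Feeding in the $-\l$-abhYBe $r_{13}r_{12}-r_{12}r_{23}+r_{23}r_{13}=-\l r_{13}$ to collapse the $r\bar{r}$-only part, the difference of the two sides reduces to a leg-wise contraction of $(\widetilde{\D}_r\o\psi)(r)+r_{23}r_{13}+\l(r_{23}+r_{13})$, equivalently of $(\om\o\widetilde{\D}_r)(r)-r_{13}r_{12}$; hence \meqref{eq:1.9} holds if and only if \meqref{eq:14.28} holds, and --- using the $-\l$-abhYBe once more to pass between the two normal forms --- if and only if \meqref{eq:14.29} holds. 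For the converse one reads the same computation backwards: if $(A,\mu,\widetilde{\D}_r,1,\a,\b,\psi,\om)$ is a $\l$-infBH-bialgebra then \meqref{eq:1.9} gives the displayed identity, which rearranges into \meqref{eq:14.28} (hence \meqref{eq:14.29}).

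The main obstacle is the bookkeeping in this last expansion: on the order of two dozen terms in $A^{\o 3}$ per side, each carrying a word in $\a^{\pm1},\b^{\pm1},\psi,\om$ that must first be brought to a common normal form via \meqref{eq:12.1}--\meqref{eq:12.3} and the bijectivity of $\a,\b$ before the $-\l$-abhYBe can cancel terms pairwise. Getting the twists to line up so that exactly the pattern of the $-\l$-abhYBe appears --- rather than a near-variant of it --- is the delicate point, and it is also where one sees why $\a,\b$ must be bijective and why the placement of $\psi,\om$ inside $r_{13},r_{12},r_{23}$ is forced. As a fallback, since $\widetilde{\D}_r$ differs from the quasitriangular $\D_r$ of Proposition \mref{pro:14.8} only in its last summand and in the sign of $\l$ in the Yang--Baxter equation, one can alternatively deduce Proposition \mref{pro:14.25} from Proposition \mref{pro:14.8} by passing to a co-opposite/opposite structure; I would try the direct route first and fall back on this duality only if the twist bookkeeping becomes unmanageable.
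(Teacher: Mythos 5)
A preliminary remark: this paper states Proposition \mref{pro:14.25} as a result imported from the companion paper \cite{MM} and supplies no proof, so there is no in-paper argument to compare yours against; what follows is an assessment of your plan on its own terms. The sound parts first: the verifications of \meqref{eq:1.7} and \meqref{eq:12.2} from the invariance of $r$ and the commutation/bijectivity hypotheses are indeed routine, and your claim that the compatibility \meqref{eq:12.4} for $\widetilde{\D}_r$ uses only BiHom-associativity, unitality and the invariance of $r$ (not the Yang--Baxter equation) is correct --- the cross terms cancel and the summand $-\l(1\o\psi(a))$ produces exactly the weight term $\l(\a\om\o\b\psi)$, as you predict.

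The gap is in the logic of the coassociativity step. You assert that \meqref{eq:1.9} holds \emph{if and only if} \meqref{eq:14.28} holds, obtaining the ``only if'' by ``reading the computation backwards''. But the expansion of $(\widetilde{\D}_r\o\psi)\widetilde{\D}_r(a)-(\om\o\widetilde{\D}_r)\widetilde{\D}_r(a)$ is a difference of two $a$-contractions (one multiplying the first leg on the left, one multiplying the third leg on the right) of the defect element $r_{13}r_{12}-r_{12}r_{23}+r_{23}r_{13}+\l r_{13}$; the vanishing of that combination for all $a$ does not formally return the vanishing of the defect itself (already in the classical weight-zero case, specializing $a=1$ yields a tautology), so the backward reading does not establish \meqref{eq:14.28}. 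The repair is to reorganize the argument: expanding $(\widetilde{\D}_r\o\psi)(r)$ and $(\om\o\widetilde{\D}_r)(r)$ directly from \meqref{eq:14.25}, using only the $\a,\b,\psi,\om$-invariance of $r$, one finds that \meqref{eq:14.28} and \meqref{eq:14.29} are each \emph{literally equivalent} to the $-\l$-abhYBe \meqref{eq:waybe} --- no coassociativity enters. Since the $-\l$-abhYBe is built into the definition of ``anti-quasitriangular'' (Definition \mref{de:qt}), the ``only if'' direction of the proposition is then immediate, and the entire remaining content is the implication from the $-\l$-abhYBe to \meqref{eq:1.9}, which is exactly your contraction computation used in the easy direction. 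With that rerouting your plan closes; as written, the biconditional you claim between \meqref{eq:1.9} and \meqref{eq:14.28} is the one step I would not accept.
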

\vspace{-3mm}

 \begin{thm}\mlabel{thm:12.02a}(\cite{MM}) Let $(A, \mu, 1, \a_{A}, \b_{A}, \psi_{A}, \om_{A}, r)$ be an anti-quasitriangular unitary $\l$-infBH-bialgebra and $(M, \widetilde{\gamma}, \a_{M}, \b_{M})$ be a left $(A, \mu, \a_{A}, \b_{A})$-module, $\psi_{M}, \om_{M}: M\longrightarrow M$ be linear maps  such that $\b_{M}\ci \psi_{M}=\psi_{M}\ci \b_{M}$, $\psi_{M}\ci \widetilde{\gamma}=\widetilde{\gamma}\ci (\psi_{A}\o \psi_{M})$. Then $(M, \widetilde{\gamma}, \widetilde{\rho}, \a_{M}, \b_{M}, \psi_{M}, \om_{M})$ becomes a left $\l$-infBH-Hopf module with the coaction $\widetilde{\rho}: M\longrightarrow A\o M$ given by
 \begin{eqnarray*}
 &\widetilde{\rho}(m):=-\a_{A}(r^{1})\o r^{2}\tl\psi_{M}\b_{M}^{-1}(m)-\l 1\o \psi_{M}(m), \forall~ m\in M.&
 \end{eqnarray*}
 \end{thm}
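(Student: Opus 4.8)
The plan is to mimic the proof of Theorem \ref{thm:12.02} (the quasitriangular case), and its companion Theorem \ref{thm:13.001}, making the modifications forced by the two extra $\l$-corrections that appear in the anti-quasitriangular setting. Two things must be checked: first, that $(M,\widetilde\rho,\psi_M,\om_M)$ is a left comodule over the BiHom-coassociative coalgebra $(A,\widetilde\D_r,\psi_A,\om_A)$ in the sense of Definition \ref{de:01.14}, where $\widetilde\D_r$ is the comultiplication of Eq.(\ref{eq:14.25}); second, that the compatibility condition Eq.(\ref{eq:12.13}) holds for the pair $(\widetilde\gamma,\widetilde\rho)$ (writing $a\tl m=\widetilde\gamma(a\o m)$). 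Throughout one uses the $\a,\b,\psi,\om$-invariance of $r$, that $\a_A,\b_A,\psi_A,\om_A$ commute pairwise and are algebra morphisms, the unitality relations Eqs.(\ref{eq:1.5}) and (\ref{eq:12.30}), the left-module axioms Eqs.(\ref{eq:1.13}) and (\ref{eq:1.15}), and the hypotheses $\b_M\psi_M=\psi_M\b_M$ and $\psi_M\circ\widetilde\gamma=\widetilde\gamma\circ(\psi_A\o\psi_M)$. By Proposition \ref{pro:14.25}, anti-quasitriangularity is equivalent to Eq.(\ref{eq:14.28}) (equivalently Eq.(\ref{eq:14.29})), and this is the only point at which the Yang--Baxter-type hypothesis enters.

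For the comodule structure, the ``easy'' relations in Eq.(\ref{eq:01.22a}) follow by pushing $\psi_M$ (resp.\ $\om_M$) through the defining formula for $\widetilde\rho$, invoking $\psi$-invariance (resp.\ $\om$-invariance) of $r$, the commutation of the structure maps, and the compatibility of $\psi_M$ with $\widetilde\gamma$ together with the assumed commutations involving $\om_M$. The substantial step is coassociativity $(\om_A\o\widetilde\rho)\circ\widetilde\rho=(\widetilde\D_r\o\psi_M)\circ\widetilde\rho$ (Eq.(\ref{eq:01.24})): I would expand $\widetilde\rho$ on both sides, on the left fuse the two copies of the $r^1$-leg by the module axiom Eq.(\ref{eq:1.15}), and on the right apply $\widetilde\D_r$ from Eq.(\ref{eq:14.25}) to the leg $-\a_A(r^1)$. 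Substituting Eq.(\ref{eq:14.28}), the ``principal'' part of the identity matches the $-r_{23}r_{13}$ summand; the $-\l(r_{23}+r_{13})$ summand is absorbed precisely by the $-\l\,1\o\psi_M$ tail of $\widetilde\rho$ and the $-\l(1\o\psi(a))$ tail of $\widetilde\D_r$; and any surviving $1_A$-factors are cleared by Eqs.(\ref{eq:1.5}) and (\ref{eq:12.30}). This is the left-handed analog of the computation $(\om_M\o\D_r)\varphi=(\varphi\o\psi_A)\varphi$ in the proof of Theorem \ref{thm:13.001}.

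For the compatibility Eq.(\ref{eq:12.13}), I would expand its right-hand side using Eq.(\ref{eq:14.25}) for $\widetilde\D_r$. The summand $\l\,\a_A\om_A(a)\o\b_M\psi_M(m)$ cancels the unit contribution $-\l\,(\om_A(a)1_A)\o\b_M\psi_M(m)=-\l\,\a_A\om_A(a)\o\b_M\psi_M(m)$ produced by $(\mu\o\b_M)(\om_A\o\widetilde\rho)$, while the ``$\om_A(a)r^1$''-part of $\widetilde\D_r(a)$ cancels the remaining $-\om_A(a)\a_A(r^1)\o\b_A(r^2)\tl\psi_M(m)$; what is left of the right-hand side is $-\a_A^2(r^1)\o(r^2\psi_A\b_A^{-1}(a))\tl\psi_M(m)-\l\,1_A\o\psi_A(a)\tl\psi_M(m)$. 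On the other side, $\widetilde\rho(a\tl m)=-\a_A(r^1)\o r^2\tl\psi_M\b_M^{-1}(a\tl m)-\l\,1_A\o\psi_M(a\tl m)$; using $\psi_M\b_M^{-1}(a\tl m)=\psi_A\b_A^{-1}(a)\tl\psi_M\b_M^{-1}(m)$, then Eq.(\ref{eq:1.15}) to fuse the $r^2$-action, and $\psi_M(a\tl m)=\psi_A(a)\tl\psi_M(m)$, one arrives at $-\a_A(r^1)\o(\a_A^{-1}(r^2)\psi_A\b_A^{-1}(a))\tl\psi_M(m)-\l\,1_A\o\psi_A(a)\tl\psi_M(m)$. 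The two agree because $\a$-invariance of $r$ yields $\a_A(r^1)\o\a_A^{-1}(r^2)=\a_A^2(r^1)\o r^2$ after applying $\a_A\o\a_A^{-1}$ to $(\a_A\o\a_A)(r)=r$.

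I expect the main obstacle to be the comodule coassociativity Eq.(\ref{eq:01.24}): it is the only place the anti-quasitriangular axiom is genuinely used, and the bookkeeping of the two extra $\l$-corrections — one built into $\widetilde\rho$, one built into $\widetilde\D_r$ — against the $-\l(r_{23}+r_{13})$ on the right-hand side of Eq.(\ref{eq:14.28}) is the delicate point. Everything else reduces to routine applications of the module and unitality axioms, entirely parallel to Theorems \ref{thm:12.02} and \ref{thm:13.001}.
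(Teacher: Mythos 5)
Your proposal is correct and follows exactly the strategy the paper uses for the analogous results it does prove in detail (Theorems \ref{thm:13.001} and \ref{thm:13.002a}): verify comodule coassociativity via the anti-quasitriangularity identity (here Eq.(\ref{eq:14.28}), the left-handed counterpart of Eq.(\ref{eq:14.29})), then check the compatibility Eq.(\ref{eq:12.13}) by expanding $\widetilde\D_r$ and cancelling the unit and $\l$-terms; the theorem itself is only cited from \cite{MM} in this paper, so no separate proof is given here, but your cancellation scheme (including the use of $\a$-invariance of $r$ to match $\a_A(r^1)\o\a_A^{-1}(r^2)$ with $\a_A^2(r^1)\o r^2$, and $\widetilde\D_r(1)=-\l\,1\o 1$) checks out.
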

\vspace{-3mm}

 \begin{thm}\mlabel{thm:13.002a} Let $(A, \mu, 1, \a_{A}, \b_{A}, \psi_{A}, \om_{A}, r)$ be an anti-quasitriangular unitary $\l$-infBH-bialgebra and $(M, \widetilde{\nu}, \a_{M}, \b_{M})$ be a right $(A, \mu, \a_{A}, \b_{A})$-module, $\psi_{M}, \om_{M}: M\longrightarrow M$ be linear maps such that $\a_{M}\ci \om_{M}=\om_{M}\ci \a_{M}$, $\om_{M}\ci \widetilde{\nu}=\widetilde{\nu}\ci (\om_{M}\o \om_{A})$. Then $(M, \widetilde{\nu}, \widetilde{\varphi}, \a_{M}, \b_{M}, \psi_{M}, \om_{M})$ becomes a right $\l$-infBH-Hopf module with the coaction $\widetilde{\varphi}: M\longrightarrow M\o A$ given by
 \begin{eqnarray*}
 &\widetilde{\varphi}(m):=\om_{M}\a^{-1}_{M}(m)\tr r^{1}\o\b_{A}(r^{2}), \forall~ m\in M.&
 \end{eqnarray*}
 \end{thm}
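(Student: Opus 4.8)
This is the anti-quasitriangular companion of Theorem \mref{thm:13.001}, and the plan is to run the same argument with $\D_r$ replaced by $\widetilde{\D}_r$ and the relation \mref{eq:14.9} replaced by \mref{eq:14.29}. Two things must be verified: that $(M,\widetilde{\varphi},\psi_M,\om_M)$ is a right $(A,\widetilde{\D}_r,\psi_A,\om_A)$-comodule, and that the compatibility condition Eq.(\mref{eq:12.13a}) holds. Notice that, unlike $\varphi$ in Theorem \mref{thm:13.001}, the coaction $\widetilde{\varphi}$ carries no $\l$-correction term; this is forced by the shape of $\widetilde{\D}_r$, whose $\l$-summand $-\l(1\o\psi(a))$ places the unit in the first tensor slot, which for a right comodule $M\lr M\o A$ is the $M$-slot, so no term of $\widetilde{\varphi}$ can reflect it.

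For the comodule structure I would first record that $\widetilde{\varphi}$ intertwines the structure maps as required, i.e.\ $(\psi_M\o\psi_A)\ci\widetilde{\varphi}=\widetilde{\varphi}\ci\psi_M$ and $(\om_M\o\om_A)\ci\widetilde{\varphi}=\widetilde{\varphi}\ci\om_M$; these follow at once from the $\a,\b,\psi,\om$-invariance of $r$, from the hypothesis $\om_M\ci\widetilde{\nu}=\widetilde{\nu}\ci(\om_M\o\om_A)$, and from Eqs.(\mref{eq:12.1}). The computation of substance is the right-comodule coassociativity $(\om_M\o\widetilde{\D}_r)\ci\widetilde{\varphi}=(\widetilde{\varphi}\o\psi_A)\ci\widetilde{\varphi}$: I would expand the left-hand side, push $\om_M$ through $\tr$, substitute relation \mref{eq:14.29}, $(\om\o\D)(r)=r_{13}r_{12}$, and simplify using Eq.(\mref{eq:1.15}), the $\a,\b$-invariance of $r$, and the unit axioms Eqs.(\mref{eq:1.5}) and (\mref{eq:12.30}). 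This mirrors the first displayed computation in the proof of Theorem \mref{thm:13.001}, but is lighter since \mref{eq:14.29} carries no $\l$-term.

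The compatibility condition Eq.(\mref{eq:12.13a}) is the core of the proof. First I would evaluate its left side $\widetilde{\varphi}(m\tr a)$: using $\a_M^{-1}(m\tr a)=\a_M^{-1}(m)\tr\a_A^{-1}(a)$, the compatibility of $\om_M$ with $\tr$, the right-module associativity axiom (the mirror of Eq.(\mref{eq:1.15})), and the $\b$-invariance of $r$, this collapses to $\om_M(m)\tr(\om_A\a_A^{-1}(a)r^{1})\o\b_A^{2}(r^{2})$. Then I would expand the right side $\om_M(m)\tr a_1\o\b_A(a_2)+\a_M(m_{(0)})\o m_{(1)}\psi_A(a)+\l\a_M\om_M(m)\o\b_A\psi_A(a)$ by inserting the three summands of $\widetilde{\D}_r$ from Eq.(\mref{eq:14.25}) and using the unit axioms. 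Two cancellations occur: the term coming from the middle summand $-\a(r^{1})\o r^{2}\psi\b^{-1}(a)$ kills $\a_M(m_{(0)})\o m_{(1)}\psi_A(a)=\om_M(m)\tr\a_A(r^{1})\o\b_A(r^{2})\psi_A(a)$; and the term coming from the $\l$-summand $-\l(1\o\psi(a))$, namely $-\l\om_M(m)\tr 1_A\o\b_A\psi_A(a)$, cancels the remaining $\l\a_M\om_M(m)\o\b_A\psi_A(a)$ once one invokes that $1_A$ acts on $M$ through $\a_M$ (i.e.\ that $\widetilde{\nu}$ is the unital right action of the unitary BiHom-associative algebra $A$, so $m\tr 1_A=\a_M(m)$). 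What is left is precisely $\om_M(m)\tr(\om_A\a_A^{-1}(a)r^{1})\o\b_A^{2}(r^{2})=\widetilde{\varphi}(m\tr a)$.

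The real difficulty is organizational rather than conceptual: one has to keep the eight twisting maps $\a_A,\b_A,\psi_A,\om_A,\a_M,\b_M,\psi_M,\om_M$ in their correct slots while commuting $\om_M$, $\psi_A$ and $\widetilde{\D}_r$ past $\tr$ and $\mu$, and to re-index each independent copy of $r$ consistently when applying \mref{eq:14.29} in the coassociativity step. Modulo this bookkeeping the argument is routine: the coassociativity check is lighter than its analogue in Theorem \mref{thm:13.001} (the relation \mref{eq:14.29} being $\l$-free), and the compatibility check reduces to the two cancellations above.
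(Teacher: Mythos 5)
Your proposal is correct and follows essentially the same route as the paper: coassociativity of $\widetilde{\varphi}$ via Eq.(\mref{eq:14.29}) and the right-module analogue of Eq.(\mref{eq:1.15}), and the compatibility check by expanding the three summands of $\widetilde{\D}_r$ from Eq.(\mref{eq:14.25}), with exactly the two cancellations you describe (the middle summand against $\a_M(m_{(0)})\o m_{(1)}\psi_A(a)$, and the $\l$-summand against $\l\a_M\om_M(m)\o\b_A\psi_A(a)$ via $m\tr 1_A=\a_M(m)$), leaving $\om_M(m)\tr(\om_A\a_A^{-1}(a)r^1)\o\b_A^2(r^2)=\widetilde{\varphi}(m\tr a)$ after the $\b$-invariance of $r$. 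The only cosmetic difference is that you evaluate both sides down to this common expression, while the paper reduces the right-hand side and then identifies it with $\widetilde{\varphi}(m\tr a)$ directly.
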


 \begin{proof} We first prove that $(M, \widetilde{\varphi}, \psi_{M}, \om_{M})$ is a right $(A, \widetilde{\D_r}, \psi_{A}, \om_{A})$-comodule as follows. For all $m\in M$, we have
 \begin{eqnarray*}
 (\om_{M}\o \widetilde{\D_r})\widetilde{\varphi}(m)
 \hspace{-3mm}&\stackrel{(\mref{eq:14.29})}=&\hspace{-3mm}\om^{2}_{M}\a^{-1}_{M}(m)\tr
 \b^{-1}_{A}\om_{A}(r^{1})\b^{-1}_{A}(\bar{r}^{1})
 \o\b_{A}(\bar{r}^{2})\o\a_{A}\psi_{A}(r^{2})\\ 
 \hspace{-3mm}&\stackrel{(\mref{eq:1.15})}=&\hspace{-3mm}(\om^{2}_{M}\a^{-2}_{M}(m)\tr\om_{A}\a^{-1}_{A}(r^{1}))\tr\bar{r}^{1}
 \o\b_{A}(\bar{r}^{2})\o\psi_{A}\b_{A}(r^{2})\\
 \hspace{-3mm}&=&\hspace{-3mm}(\widetilde{\varphi}\o\psi_{A})\widetilde{\varphi}(m).
 \end{eqnarray*}
 Then we verify the compatibility condition. For all $a\in A$ and $m\in M$,
 \begin{eqnarray*}
 &&\hspace{-20mm}\om_{M}(m)\tr a_{1}\o \b_{A}(a_{2})+\a_{M}(m_{(0)})\o m_{(1)}\psi_{A}(a)+\l\a_{M}\om_{M}(m)\o \b_{A}\psi_{A}(a)\\
 &\stackrel{(\mref{eq:14.25})}=&\hspace{-5mm}
 \om_{M}(m)\tr \om_{A}\a^{-1}_{A}(a)r^{1}\o \b^{2}_{A}(r^{2})-\om_{M}(m)\tr\a_{A}(r^{1})\o \b_{A}(r^{2})\psi_{A}(a)\\
 &&\hspace{-5mm}-\l\om_{M}(m)\tr 1\o \b_{A}\psi_{A}(a)+\om_{M}(m)\tr\a_{A}(r^{1})\o\b_{A}(r^{2})\psi_{A}(a)\\
 \hspace{-5mm}&&\hspace{-5mm}+\l\a_{M}\om_{M}(m)\o \b_{A}\psi_{A}(a)\\
 &\stackrel{(\mref{eq:1.5})(\mref{eq:1.15})}=&\hspace{-3mm}\om_{M}(m)\tr \om_{A}\a^{-1}_{A}(a)\b^{-1}(r^{1})\o \b_{A}(r^{2})\\
 &=&\hspace{-5mm}(m\tr a)_{(0)}\o (m\tr a)_{(1)},
 \end{eqnarray*}
 finishing the proof.
 \end{proof}
\vspace{-3mm}

 \begin{pro}\mlabel{pro:20.10} Let $(A,\mu,1,\a_{A},\b_{A},\psi_{A},\om_{A})$ be a quasitriangular (resp.  anti-quasitriangular) unitary $\l$-infBH-bialgebra, $(M,\gamma,\nu,\a_{M},\b_{M})$ (resp. $(M, \widetilde{\gamma}, \widetilde{\nu}, \a_{M}, \b_{M})$) be an $(A,\mu,\a_{A},\b_{A})$-bimodule. Then $(M,\gamma,\nu,\rho,\varphi,\a_{M},\b_{M},\psi_{M},\om_{M})$ (resp.  $(M,\widetilde{\gamma},\widetilde{\nu},\widetilde{\rho},\widetilde{\varphi},\a_{M},\b_{M},\psi_{M},\om_{M})$) is a $\l$-infBH-Hopf bimodule, where $\rho$ and $\varphi$ (resp.  $\widetilde{\rho}$ and $\widetilde{\varphi}$) are defined in Theorem \mref{thm:12.02} and \mref{thm:13.001} (resp. Theorem \mref{thm:12.02a} and \mref{thm:13.002a}).
 \end{pro}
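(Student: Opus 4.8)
The plan is to verify, one at a time, the five conditions of Definition~\mref{de:20.01} for the nine-tuple $(M,\g,\nu,\rho,\vp,\a_M,\b_M,\psi_M,\om_M)$, and likewise for the tilded nine-tuple in the anti-quasitriangular case. Conditions~(1) and~(2) are exactly the conclusions of Theorems~\mref{thm:12.02} and~\mref{thm:13.001} (resp.\ Theorems~\mref{thm:12.02a} and~\mref{thm:13.002a}); in particular these theorems already provide the left $(A,\D,\psi_A,\om_A)$-comodule structure $\rho$ and the right one $\vp$. Condition~(3), that $(M,\g,\nu,\a_M,\b_M)$ is a BiHom-bimodule over $(A,\mu,\a_A,\b_A)$, is part of the hypothesis. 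So it remains to check condition~(4), the BiHom-bicomodule compatibility Eq.(\mref{eq:01.24aa}), $(\om_A\o\vp)\ci\rho=(\rho\o\psi_A)\ci\vp$, and condition~(5), the two BiHom-Long-type identities Eqs.(\mref{eq:20.01}) and~(\mref{eq:20.02}).

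Condition~(5) I would dispatch by a direct computation. Writing $\vp(m)=m_{(0)}\o m_{(1)}$, Eq.(\mref{eq:20.01}) reads $\vp(a\tl m)=\om_A(a)\tl m_{(0)}\o\b_A(m_{(1)})$. Expanding the left side by the definition $\vp(m)=\om_M\a_M^{-1}(m)\tr r^{1}\o\b_A(r^{2})-\l\om_M(m)\o 1$, using that $\a_M$ and $\om_M$ intertwine the left action, and then applying the BiHom-bimodule axiom Eq.(\mref{eq:1.16}) to move the right multiplication $\tr r^{1}$ across $\om_A(a)\tl(-)$, one obtains $\om_A(a)\tl(\om_M\a_M^{-1}(m)\tr\b_A^{-1}(r^{1}))\o\b_A(r^{2})$ together with the correction $-\l\,\om_A(a)\tl\om_M(m)\o 1$. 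By the $\b_A$-invariance of $r$ the main term equals $\om_A(a)\tl(\om_M\a_M^{-1}(m)\tr r^{1})\o\b_A^{2}(r^{2})$, and since $\b_A(1_A)=1_A$ by Eq.(\mref{eq:1.5}) the correction term is unchanged; this is exactly the right side. The identity Eq.(\mref{eq:20.02}), $\rho(m\tr a)=\a_A(m_{(-1)})\o m_{(0)}\tr\psi_A(a)$, is obtained symmetrically, using that $\psi_M,\b_M$ intertwine the right action, Eq.(\mref{eq:1.16}) once more, the $\a_A$-invariance of $r$, and $\psi_A(1_A)=1_A$.

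The main obstacle is condition~(4). I would substitute $\rho(m)=-\a_A(r^{1})\o r^{2}\tl\psi_M\b_M^{-1}(m)$ and $\vp(m)=\om_M\a_M^{-1}(m)\tr\bar{r}^{1}\o\b_A(\bar{r}^{2})-\l\om_M(m)\o 1$ (writing $r=r^{1}\o r^{2}$ and $\bar{r}=\bar{r}^{1}\o\bar{r}^{2}$ for two independent copies) and expand both sides of $(\om_A\o\vp)\ci\rho=(\rho\o\psi_A)\ci\vp$ into sums in $A\o M\o A$. Using the module axioms Eqs.(\mref{eq:1.15}) and~(\mref{eq:1.16}) to normalise the iterated actions of $r$ and $\bar{r}$ on $m$, and the $\a_A,\b_A,\psi_A,\om_A$-invariance of $r$ to reindex the legs, the parts that are quadratic in $r$ become the two sides of the identity $(\D\o\psi)(r)=-r_{23}r_{13}$, equivalently $(\om\o\D)(r)=r_{13}r_{12}-\l(r_{13}+r_{12})$, which holds by Proposition~\mref{pro:14.8} since $A$ is quasitriangular; the remaining terms, which are linear in $r$ and arise from the $-\l\om_M(m)\o 1$ summand of $\vp$ interacting with the $\l r_{13}$ on the right-hand side of the $\l$-abhYBe, are matched using the unit axioms Eqs.(\mref{eq:1.5}) and~(\mref{eq:12.30}), notably $\om_A(1_A)=\psi_A(1_A)=1_A$. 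Tracking these $\l$-corrections correctly is the delicate point; the rest follows the same manipulations already carried out in the proofs of Theorems~\mref{thm:12.02} and~\mref{thm:13.001}.

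The anti-quasitriangular case is entirely parallel. Conditions~(1)--(2) now come from Theorems~\mref{thm:12.02a} and~\mref{thm:13.002a}, condition~(4) is reduced in the same way to the identity $(\D\o\psi)(r)=-r_{23}r_{13}-\l(r_{23}+r_{13})$, equivalently $(\om\o\D)(r)=r_{13}r_{12}$, of Proposition~\mref{pro:14.25}, and condition~(5) is checked by the same computation, now bearing in mind that $\widetilde{\rho}$ carries the correction $-\l\,1_A\o\psi_M(m)$ whereas $\widetilde{\vp}$ carries no $\l$-term, so that the two unit-axiom cancellations take place on the opposite coactions from the quasitriangular case.
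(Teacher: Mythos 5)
Your overall architecture is the same as the paper's: conditions (1) and (2) of Definition \mref{de:20.01} are exactly Theorems \mref{thm:12.02}, \mref{thm:13.001} (resp. \mref{thm:12.02a}, \mref{thm:13.002a}), condition (3) is the hypothesis, and conditions (4) and (5) are verified by direct computation. Your treatment of condition (5) is also essentially the paper's: expand $\vp(a\tl m)$ and $\rho(m\tr a)$, apply Eq.(\mref{eq:1.16}), and use the $\a,\b,\psi,\om$-invariance of $r$ together with the unit axioms to absorb the stray structure maps; this is correct.

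The problem is your account of condition (4). You claim that after normalisation the quadratic-in-$r$ terms of $(\om_A\o\vp)\rho=(\rho\o\psi_A)\vp$ "become the two sides of the identity $(\D\o\psi)(r)=-r_{23}r_{13}$" and hence require Proposition \mref{pro:14.8}. This reduction is false, and the step as described would fail if executed. Write out both sides: $(\om_A\o\vp)\rho(m)$ is a sum of terms of the form $-\,\om_A\a_A(r^{1})\o\bigl(\om_M\a_M^{-1}(r^{2}\tl\psi_M\b_M^{-1}(m))\tr\bar{r}^{1}\bigr)\o\b_A(\bar{r}^{2})$ plus a $\l$-term, and $(\rho\o\psi_A)\vp(m)$ of the form $-\,\a_A(r^{1})\o\bigl(r^{2}\tl\psi_M\b_M^{-1}(\om_M\a_M^{-1}(m)\tr\bar{r}^{1})\bigr)\o\psi_A\b_A(\bar{r}^{2})$ plus a $\l$-term. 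In every term each tensor leg in $A$ carries a single $r$-leg; no product of legs of $r$ with legs of $\bar{r}$ ever forms, so no expression of the shape $r_{23}r_{13}$ or $r_{13}r_{12}$ can appear, and the $\l$-abhYBe is simply not the identity being tested. What the comparison actually reduces to, after using the invariance of $r$ and the intertwining of $\om_M,\a_M,\psi_M,\b_M$ with the actions, is the single bimodule axiom Eq.(\mref{eq:1.16}) — the statement that the left action of $r^{2}$ and the right action of $\bar{r}^{1}$ on $M$ commute — together with $\psi_A(1_A)=\om_A(1_A)=1_A$ for the $\l$-corrections. The Yang--Baxter-type identities of Propositions \mref{pro:14.8} and \mref{pro:14.25} are consumed earlier, in proving that $\rho$ and $\vp$ are individually coassociative coactions (Theorems \mref{thm:12.02}, \mref{thm:13.001} and their tilded versions); they play no role in the mixed bicomodule compatibility. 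The same correction applies to your description of the anti-quasitriangular case.
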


 \begin{proof} We only prove the case for quasitriangular unitary $\l$-infBH-bialgebra. By Theorem \mref{thm:12.02} and \mref{thm:13.001}, we know that $(M,\gamma, \rho, \a_{M},\b_{M},\psi_{M},\om_{M})$ is a left $\l$-infBH-Hopf module and $(M, \nu, \varphi$, $\a_{M}$, $\b_{M},\psi_{M},\om_{M})$ is a right $\l$-infBH-Hopf module. Moreover,  for all $m\in M$, we have
 \begin{eqnarray*}
 (\om_{A}\o\varphi)\rho(m)
 &=&-\a_{A}\om_{A}(r^{1})\o(\om_{A}\a_{A}^{-1}(r^{2})
 \tl\a_{M}^{-1}\b_{M}^{-1}\psi_{M}\om_{M}(m))
 \tr \bar{r}^{1}\o\b_{A}(\bar{r}^{2})\\
 &&+\l\a_{A}\om_{A}(r^{1})\o(\om_{A}(r^{2})\tl \psi_{M}\om_{M}\b_{M}^{-1}(m))\o 1\\
 &\stackrel{(\mref{eq:1.16})}=&\rho(\om_{M}\a_{M}^{-1}(m)\tr r^{1})\o\psi_{A}\b_{A}(r^{2})
 -\l\rho(\om_{M}(m))\o 1\\
 &\stackrel{(\mref{eq:12.30})}=&(\rho\o\psi_{A})\varphi(m).
 \end{eqnarray*}
 Then $(M, \rho,\varphi, \psi_{M},\om_{M})$ is a $(A,\D_r, \psi_{A},\om_{A})$-bicomodule. Furthermore, by Eq.(\mref{eq:1.16}), we can get
 \begin{eqnarray*}
 (\gamma\o\b_{A})(\om_{A}\o\varphi)=\varphi\circ\gamma\quad \hbox{and}\quad (\a_{A}\o\nu)(\rho\o\psi_{A})=\rho\circ\nu.
 \end{eqnarray*}
 Hence, by Definition \mref{de:20.01}, we finish the proof.
 \end{proof} 

\vspace{-3mm}
 \begin{defi}\mlabel{de:12.03}(\cite{MM}) Let $(C,\D,\v,\psi,\omega)$ be a counitary BiHom-coassociative coalgebra, $\alpha,\beta: C\longrightarrow C$ be linear maps and $\sigma \in (C\otimes C)^{\ast}$. We call
 \begin{eqnarray}
 &\sigma (\alpha(c_{1}),\beta\omega (e))\sigma (c_{2},\psi (d))-\sigma (\omega(c),d_{1})\sigma (d_{2},\psi (e))\qquad\qquad\qquad\qquad\qquad&\nonumber\\
 &\qquad\qquad\qquad+\sigma(\omega(d),e_{1})\sigma (\alpha\psi(c),\beta(e_{2}))=\lambda(\hbox{{\bf resp.}}\ (-\l))\sigma(\alpha(c),\beta(e))\varepsilon(d)\qquad&\mlabel{eq:01.03}
 \end{eqnarray}
 the {\bf $\lambda$(\hbox{{\bf resp.}}\ ($-\l$))-coassociative BiHom-Yang-Baxter equation (abbr. $\lambda$(\hbox{{\bf resp.}}\ ($-\l$))-coabhYBe) in $(C,\D,\v,\psi,\omega)$} where $\lambda$ is a given element in $K$.
 \end{defi}
\vspace{-3mm}

 \begin{defi}\mlabel{de:13.004} (\cite{MM}) Let $(C,\Delta,\varepsilon,\psi,\omega)$ be a counitary BiHom-coassociative coalgebra such that $\psi,\omega$ are bijective, $\alpha,\beta: A\longrightarrow A$ be linear maps, $\sigma\in (C\otimes C)^{\ast}$ be $\alpha,\beta,\psi,\omega$-invariant and moreover Eqs.(\mref{eq:12.1}), (\mref{eq:12.3}) and (\mref{eq:12.31}) hold.  A {\bf coquasitriangular (\hbox{{\bf resp.}}\ anti-coquasi triangular) counitary $\l$-infBH-bialgebra} is a 8-tuple $(C, \Delta, \varepsilon, \alpha, \beta, \psi, \omega, \sigma)$ consisting of a counitary BiHom-coassociative coalgebra $(C, \Delta, \varepsilon, \psi, \omega)$ and a solution $\sigma\in (C\otimes C)^{\ast}$ of a $\l$(\hbox{{ resp.}}\ $(-\l)$)-coabhYBe, in this case, the multiplication $\mu_{\sigma}$(\hbox{{resp.}}\  $\widetilde{\mu_{\sigma}}$) is defined by \begin{eqnarray}
 &\mu_{\sigma}(c\otimes d)=\alpha\omega^{-1}(c_{1})\sigma(c_{2},\psi (d))-\sigma(\omega (c),d_{1})\beta\psi^{-1}(d_{2})-\lambda\alpha(c)\varepsilon(d)&\mlabel{eq:01.04}
 \end{eqnarray}
 ({\bf resp.} \begin{eqnarray}
 &\widetilde{\mu_{\sigma}}(c\otimes d)=\alpha\omega^{-1}(c_{1})\sigma(c_{2},\psi (d))-\sigma(\omega (c),d_{1})\beta\psi^{-1}(d_{2})-\lambda\varepsilon(c)\beta(d).)&\mlabel{eq:01.08}
 \end{eqnarray}
 \end{defi}
\vspace{-3mm}

 \begin{pro}\mlabel{pro:12.08} (\cite{MM}) With the assumptions of Definition \mref{de:13.004}, $(C, \mu=\mu_{\sigma}(\hbox{{ resp.}}\  \widetilde{\mu_{\sigma}})$, $\Delta, \varepsilon, \alpha, \beta, \psi, \omega)$, where $\mu_{\sigma}(\hbox{{resp.}}\  \widetilde{\mu_{\sigma}})$ is defined by Eq.(\mref{eq:01.04})(\hbox{{resp.}}\  Eq.(\mref{eq:01.08})), is a coquasitriangular (\hbox{{resp.}}\  anti-coquasitriangular) counitary $\l$-infBH-bialgebra if and only if
 \begin{eqnarray}
 &\sigma(cd,\beta(e))=-\sigma(\omega(d),e_{1})\sigma(\alpha\psi(c),\beta(e_{2}))& \mlabel{eq:01.06}
 \end{eqnarray}
 or
 \begin{eqnarray}
 &\sigma(\alpha(c),de)=\sigma(\alpha(c_{1}),\beta\omega(e))\sigma(c_{2},\psi(d))
  -\lambda\sigma(\alpha(c),\beta(e))\varepsilon(d)-\lambda\sigma(c,d)\varepsilon(e)& \mlabel{eq:01.07}
 \end{eqnarray}
 (\hbox{{resp.}}
  \begin{eqnarray}
 &\sigma(cd,\beta(e))=-\sigma(\omega(d),e_{1})\sigma(\alpha\psi(c),\beta(e_{2}))
 -\lambda\sigma(\alpha(c),\beta(e))\varepsilon(d)-\lambda\sigma(d,e)\varepsilon(c)& \mlabel{eq:01.10}
 \end{eqnarray}
 or
 \begin{eqnarray}
  &\sigma(\alpha(c),de)=\sigma(\alpha(c_{1}),\beta\omega(e))\sigma(c_{2},\psi(d))& \mlabel{eq:01.11})
 \end{eqnarray}
 hold for all $c, d, e\in C$.
 \end{pro}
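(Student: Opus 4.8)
The plan is to obtain this statement as the coalgebra-theoretic dual of Proposition~\mref{pro:14.8}, and, where a genuine calculation is unavoidable, to reduce it to the single compatibility condition~\meqref{eq:12.4}. Under the usual algebra/coalgebra duality for BiHom-structures, a counitary BiHom-coassociative coalgebra $(C,\D,\v,\psi,\om)$ plays the role of a unitary BiHom-associative algebra $(A,\mu,1,\a,\b)$, the bilinear form $\s\in(C\o C)^{\ast}$ plays the role of the element $r\in A\o A$, the $\l$-coabhYBe~\meqref{eq:01.03} is the transpose of the $\l$-abhYBe~\meqref{eq:waybe}, the multiplication $\mu_{\s}$ of~\meqref{eq:01.04} corresponds to the comultiplication $\D_{r}$ of~\meqref{eq:14.5}, and the conditions~\meqref{eq:01.06}--\meqref{eq:01.07} correspond to~\meqref{eq:14.8}--\meqref{eq:14.9}. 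Once this dictionary is fixed, every identity occurring in the proof of Proposition~\mref{pro:14.8} transposes to a corresponding identity in $C$; alternatively one argues directly, as sketched below.

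First I would check that $(C,\mu_{\s},\a,\b)$ is a BiHom-associative algebra. Multiplicativity of $\a$ and $\b$ for $\mu_{\s}$ follows from the $\a,\b,\psi,\om$-invariance of $\s$ together with Eqs.~\meqref{eq:12.1}, \meqref{eq:12.3} and~\meqref{eq:12.31}, and BiHom-associativity of $\mu_{\s}$ --- the dual condition to BiHom-coassociativity of $\D_{r}$, which is governed by the $\l$-abhYBe --- is ensured by the assumption in Definition~\mref{de:13.004} that $\s$ solves the $\l$-coabhYBe. Since $\v$ remains a counit for $\D$, and Eqs.~\meqref{eq:12.1}--\meqref{eq:12.3} and~\meqref{eq:12.31} are part of the hypotheses, the only axiom of a counitary $\l$-infBH-bialgebra still to be established for $(C,\mu_{\s},\D,\v,\a,\b,\psi,\om)$ is the compatibility condition~\meqref{eq:12.4} for the pair $(\mu_{\s},\D)$.

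The verification of~\meqref{eq:12.4} is the heart of the matter, and the step I expect to be the main obstacle. The idea is to expand $\D\circ\mu_{\s}$ and $(\mu_{\s}\o\b)\circ(\om\o\D)+(\a\o\mu_{\s})\circ(\D\o\psi)+\l(\a\om\o\b\psi)$ by means of~\meqref{eq:01.04}, to open up all iterated coproducts via BiHom-coassociativity~\meqref{eq:1.9}, and then to regroup the resulting $\s$-pairings with the help of the $\l$-coabhYBe and the invariance of $\s$. Carried through, the two sides are seen to coincide precisely when~\meqref{eq:01.06} holds; one further application of the $\l$-coabhYBe then shows that this same equality is equivalent to~\meqref{eq:01.07} as well, which accounts for the ``or'' in the statement. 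The difficulty is purely one of bookkeeping: one must keep careful track of the four commuting twists $\a,\b,\psi,\om$ and of which tensor slot and which argument of $\s$ each of them acts on, invoking coassociativity and invariance at every step. The anti-coquasitriangular case --- with the multiplication $\widetilde{\mu_{\s}}$ of~\meqref{eq:01.08} and the conditions~\meqref{eq:01.10}--\meqref{eq:01.11} --- is treated by the identical computation, the sole change being the placement of the weight term (namely $-\l\v(c)\b(d)$ in place of $-\l\a(c)\v(d)$). Since all of these calculations are carried out in detail in~\mcite{MM}, for the remaining verifications I would simply refer there; the sketch above is meant only to indicate the shape of the argument.
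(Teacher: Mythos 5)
A preliminary remark: this proposition is quoted from the companion paper \mcite{MM}, and the present paper contains no proof of it, so there is no in-text argument to measure your computation against. Your overall strategy --- dualize Proposition \mref{pro:14.8} via the dictionary $\s\leftrightarrow r$, $\mu_{\s}\leftrightarrow\D_{r}$, $\l$-coabhYBe $\leftrightarrow$ $\l$-abhYBe, \meqref{eq:01.06}--\meqref{eq:01.07} $\leftrightarrow$ \meqref{eq:14.8}--\meqref{eq:14.9} --- is the natural one, and the dictionary itself is set up correctly.

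The genuine problem is that you attach the equivalence to the wrong axiom. You assert that BiHom-associativity of $\mu_{\s}$ ``is ensured by the assumption that $\s$ solves the $\l$-coabhYBe'' and that the decisive remaining step is to show that the compatibility condition \meqref{eq:12.4} for the pair $(\mu_{\s},\D)$ is equivalent to \meqref{eq:01.06}/\meqref{eq:01.07}. This is backwards. A direct check (most easily done in the dual, quasitriangular picture for $\D_{r}$ of Eq.~\meqref{eq:14.5}) shows that \meqref{eq:12.4} holds \emph{automatically}: expanding both sides, the weight term cancels against a cross term using $1_{A}a=\b(a)$ and $a1_{A}=\a(a)$, and the remaining terms match using only BiHom-associativity \meqref{eq:1.3}, the $\a,\b$-invariance of $r$ (resp.\ of $\s$) and the commutation of the structure maps --- no Yang--Baxter-type hypothesis enters. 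Consequently \meqref{eq:12.4} cannot be ``equivalent'' to \meqref{eq:01.06}; it is unconditionally true, and the entire content of the proposition sits in the axiom you waved through, namely the BiHom-associativity $\a(c)(de)=(cd)\b(e)$ of $\mu_{\s}$ (dually, the BiHom-coassociativity of $\D_{r}$), whose verification forces one to evaluate $\s$ on a $\mu_{\s}$-product and on a coproduct of a $\mu_{\s}$-product, i.e.\ precisely to invoke \meqref{eq:01.06} (or, modulo the coabhYBe, the equivalent condition \meqref{eq:01.07}). If these conditions followed from the coabhYBe alone, the ``if and only if'' in the statement would be vacuous. Since your proposal defers both computations to \mcite{MM}, executing it as written would leave the actual equivalence unestablished.
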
 
 
 The following four theorems provide four different constructions of left (right) $\l$-infBH-Hopf module from the comodule of (anti-)coquasitriangular counitary $\l$-infBH-bialgebra.
\vspace{-3mm}

 \begin{thm}\mlabel{thm:12.013}(\cite{MM}) Let $(C, \D, \v, \a_{C}, \b_{C}, \psi_{C}, \om_{C}, \sigma)$ be a coquasitriangular counitary $\l$-infBH-bialgebra and $(M,\rho,\psi_{M},\om_{M})$ be a left $(C,\D,\psi_{C},\om_{C})$-comodule, $\a_{M},\b_{M}: M\longrightarrow M$ be linear maps such that $\b_{M}\ci \psi_{M}=\psi_{M}\ci \b_{M}$, $\rho\ci \b_{M}=(\b_{A}\o \b_{M})\ci \rho$. Then $(M, \gamma, \rho, \a_{M}, \b_{M}, \psi_{M}, \om_{M})$ becomes a left $\l$-infBH-Hopf module with the action $\gamma: C\o M\longrightarrow M$ given by
 \begin{eqnarray*}
 &\gamma(c\o m):=-\sigma(\om_{C}(c),m_{-1})\b_{M}\psi_{M}^{-1}(m_{0})\ for\ c\in C, \ m\in M.&
 \end{eqnarray*}
 \end{thm}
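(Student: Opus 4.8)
The plan is to establish this as the coalgebra-theoretic counterpart of Theorem~\ref{thm:12.02}, following the same two-step scheme. By Proposition~\ref{pro:12.08} the hypotheses already make $(C,\mu_{\sigma},\D,\v,\a_{C},\b_{C},\psi_{C},\om_{C})$ a coquasitriangular counitary $\l$-infBH-bialgebra; in particular $(C,\mu_{\sigma},\a_{C},\b_{C})$ is a BiHom-associative algebra and $\sigma$ satisfies the $\l$-coabhYBe, hence one of the equivalent identities Eq.~\eqref{eq:01.06} or Eq.~\eqref{eq:01.07}. It then suffices to check: (i) that $(M,\gamma,\a_{M},\b_{M})$ is a left $(C,\mu_{\sigma},\a_{C},\b_{C})$-module; and (ii) that the pair $(\gamma,\rho)$ satisfies the $\l$-infBH-Hopf module compatibility Eq.~\eqref{eq:12.13} (the requisite pairwise commutation of $\a_{M},\b_{M},\psi_{M},\om_{M}$ and invertibility of $\psi_{M}$ being part of the standing hypotheses).

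For step (i), the multiplicativity relations Eq.~\eqref{eq:1.13} follow by expanding $\gamma(c\o m)=-\sigma(\om_{C}(c),m_{-1})\,\b_{M}\psi_{M}^{-1}(m_{0})$ and using the $\a_{C},\b_{C}$-invariance of $\sigma$, the comodule identities $(\psi_{C}\o\psi_{M})\rho=\rho\psi_{M}$, $(\om_{C}\o\om_{M})\rho=\rho\om_{M}$ of Eq.~\eqref{eq:01.22a}, the assumed $\rho\b_{M}=(\b_{C}\o\b_{M})\rho$, and the commutation relations among $\a_{M},\b_{M},\psi_{M},\om_{M}$. For the module-associativity Eq.~\eqref{eq:1.15}, $\a_{C}(c)\tl(d\tl m)=\mu_{\sigma}(c\o d)\tl\b_{M}(m)$, I would expand the left-hand side with the definition of $\gamma$, re-associate the iterated coaction on $m$ by the left-comodule coassociativity Eq.~\eqref{eq:01.24}, expand the right-hand side with the definition of $\mu_{\sigma}$ (Eq.~\eqref{eq:01.04}), and then match the two $\sigma$-evaluations via the $\l$-coabhYBe in whichever of the forms Eq.~\eqref{eq:01.06}, Eq.~\eqref{eq:01.07} is convenient; the weight-$\l$ summand of $\mu_{\sigma}$ is absorbed using the counit axioms Eq.~\eqref{eq:12.31} and the normalisations $(\id\o\v)\D=\om$, $(\v\o\id)\D=\psi$ from Eq.~\eqref{eq:1.11}.

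For step (ii), I would compute $\rho(c\tl m)$ by pulling $\rho$ past the scalar $-\sigma(\om_{C}(c),m_{-1})$ and the maps $\b_{M},\psi_{M}^{-1}$ (using Eq.~\eqref{eq:01.22a} and the assumed $\rho\b_{M}$-relation) and then re-associating the resulting iterated coaction by Eq.~\eqref{eq:01.24}; separately I would expand $(\mu_{\sigma}\o\b_{M})(\om_{C}\o\rho)(c\o m)+(\a_{C}\o\gamma)(\D\o\psi_{M})(c\o m)+\l\,\a_{C}\om_{C}(c)\o\b_{M}\psi_{M}(m)$ using the definitions of $\mu_{\sigma}$ and $\gamma$. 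Collecting terms, the identity again reduces to one application of the $\l$-coabhYBe (Eq.~\eqref{eq:01.06} or Eq.~\eqref{eq:01.07}), the extra weight-$\l$ contributions cancelling against those produced by $\mu_{\sigma}$ once Eq.~\eqref{eq:12.31} and Eq.~\eqref{eq:1.11} are invoked.

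The main obstacle is the bookkeeping in step (ii): the right-hand side of Eq.~\eqref{eq:12.13} breaks into three blocks, and each must be driven to a common normal form in which the arguments of $\sigma$ line up, which means coordinating the comodule coassociativity Eq.~\eqref{eq:01.24} with the chosen form of the coabhYBe and tracking precisely the signs and the positions of the $\a_{C},\b_{C},\psi_{C},\om_{C}$-powers that are forced by the invariance of $\sigma$. Since Eq.~\eqref{eq:14.5} and Eq.~\eqref{eq:01.04} use the same $-\l$ convention and the module-side statement Theorem~\ref{thm:12.02} is already proved in~\cite{MM}, the most efficient route is to transcribe that argument under the correspondence $A\leftrightarrow C$, $\mu\leftrightarrow\D$, $r\leftrightarrow\sigma$, $\gamma\leftrightarrow\rho$, checking only that the weight-$\l$ terms dualise correctly.
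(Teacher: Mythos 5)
The paper itself gives no proof of this theorem (it is imported from \cite{MM}), but its proof of the mirror statement, Theorem \ref{thm:13.014}, follows exactly your two-step scheme — module axiom via the coabhYBe consequence \eqref{eq:01.06}/\eqref{eq:01.07} together with the invariance of $\sigma$ and the comodule coassociativity \eqref{eq:01.24}, then the compatibility \eqref{eq:12.13} via the definition \eqref{eq:01.04} of $\mu_\sigma$ — so your approach is the intended one and the computations you outline do go through. One small correction to step (ii): the weight-$\l$ summand of $\mu_{\sigma}(\om_{C}(c)\o m_{-1})$ contributes $-\l\,\a_{C}\om_{C}(c)\o\v(m_{-1})\b_{M}(m_{0})$, and cancelling this against $+\l\,\a_{C}\om_{C}(c)\o\b_{M}\psi_{M}(m)$ requires the counitality of the \emph{comodule}, $(\v\o\id_{M})\circ\rho=\psi_{M}$, rather than the coalgebra counit axiom \eqref{eq:1.11} that you cite (the latter is what performs the analogous cancellation in Theorem \ref{thm:13.014}, where the counit lands on $c_{1}$ instead of $m_{-1}$); this counitality of $\rho$, and the bijectivity of $\psi_{M}$ and $\psi_{C}$, must be read into the hypotheses.
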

\vspace{-3mm}

 \begin{thm}\mlabel{thm:13.014} Let $(C, \D, \v, \a_{C}, \b_{C}, \psi_{C}, \om_{C}, \sigma)$ be a coquasitriangular counitary $\l$-infBH-bialgebra and $(M, \varphi, \psi_{M}, \om_{M})$ be a right $(C, \D, \psi_{C}, \om_{C})$-comodule, $\a_{M}, \b_{M}: M\longrightarrow M$ be linear maps such that $\a_{M}\ci \om_{M}=\om_{M}\ci \a_{M}$, $\vp\ci \a_{M}=(\a_{M}\o \a_{A})\ci \vp$. Then $(M, \nu, \varphi, \a_{M}, \b_{M}, \psi_{M}, \om_{M})$ becomes a right $\l$-infBH-Hopf module with the action $\nu: M\o C\longrightarrow M$ given by
 \begin{eqnarray*}
 &\nu(m\o c):=\a_{M}\om^{-1}_{M}(m_{(0)})\sigma(m_{(1)},\psi_{C}(c))-\l\v(c)\a_{M}(m), \ for\ c\in C, \ m\in M.&
 \end{eqnarray*}
 \end{thm}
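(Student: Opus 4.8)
The plan is to verify the four clauses of Definition~\ref{de:12.17a} for $(M,\nu,\vp,\a_M,\b_M,\psi_M,\om_M)$: that $(M,\nu,\a_M,\b_M)$ is a right module over the BiHom-associative algebra $(C,\mu_{\s},\a_C,\b_C)$; that $(M,\vp,\psi_M,\om_M)$ is a right $(C,\D,\psi_C,\om_C)$-comodule and that any two of $\a_M,\b_M,\psi_M,\om_M$ commute (these two being hypotheses); and the BiHom-Hopf compatibility \eqref{eq:12.13a}. Throughout one uses that $\s$ is $\a_C,\b_C,\psi_C,\om_C$-invariant, that $\v\ci\a_C=\v\ci\b_C=\v$ by \eqref{eq:12.31}, that $\psi_C,\om_C$ and $\om_M$ are bijective (so that $\mu_{\s}$ and $\nu$ are well defined), and Proposition~\ref{pro:12.08}, which guarantees that $(C,\mu_{\s},\D,\v,\a_C,\b_C,\psi_C,\om_C)$ is a counitary $\l$-infBH-bialgebra and supplies the identity \eqref{eq:01.07} (equivalently \eqref{eq:01.06}) that drives the computation, playing here exactly the role \eqref{eq:14.9} plays in the proof of Theorem~\ref{thm:13.001}. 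In fact Theorem~\ref{thm:13.014} is the right-handed ``co''-analogue of Theorem~\ref{thm:13.001} under the dualization module $\leftrightarrow$ comodule, $r\in A\o A\leftrightarrow\s\in(C\o C)^{\ast}$, action $\leftrightarrow$ coaction, so the argument below runs parallel to that proof and to the proof of Theorem~\ref{thm:12.013}.

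First I would settle the module axioms for $\nu$. Writing $\nu(m\o c)=\a_M\om_M^{-1}(m_{(0)})\,\s(m_{(1)},\psi_C(c))-\l\v(c)\a_M(m)$ and pushing $\a_M$ (resp.\ $\b_M$) through, one gets $\a_M(m\tr c)=\a_M(m)\tr\a_C(c)$ and $\b_M(m\tr c)=\b_M(m)\tr\b_C(c)$ using the hypothesis that $\vp$ intertwines $\a_M$ with $\a_C$ (together with the comodule intertwining for $\psi_M,\om_M$), the $\a_C$- (resp.\ $\b_C$-)invariance of $\s$, and $\v\ci\a_C=\v\ci\b_C=\v$. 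The right-module associativity $(m\tr c)\tr\b_C(c')=\a_M(m)\tr\mu_{\s}(c\o c')$ is the substantial one: expand the left side via the right-comodule coassociativity for $\vp$ and the invariance of $\s$, expand $\mu_{\s}(c\o c')$ on the right via \eqref{eq:01.04}, and reconcile the two by \eqref{eq:01.07}, taking care that the $-\l\v(c)\a_M(m)$ summand of $\nu$ is matched by the $-\l\a(c)\v(d)$ term of \eqref{eq:01.04}.

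The heart of the proof is the compatibility \eqref{eq:12.13a}, which by Remark~\ref{rmk:12.18a} states
\begin{eqnarray*}
&(m\tr c)_{(0)}\o(m\tr c)_{(1)}=\om_M(m)\tr c_1\o\b_C(c_2)+\a_M(m_{(0)})\o m_{(1)}\cdot_{\s}\psi_C(c)+\l\a_M\om_M(m)\o\b_C\psi_C(c).&
\end{eqnarray*}
I would compute the left-hand side $\vp(\nu(m\o c))$ by applying $\vp$ to the defining formula of $\nu$, commuting $\vp$ past $\om_M^{-1}$ and $\a_M$ with the intertwining relations, and collapsing the ensuing iterated coaction by the right-comodule coassociativity of $\vp$ together with the $\psi_C,\om_C$-invariance of $\s$. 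On the right-hand side I would expand $m_{(1)}\cdot_{\s}\psi_C(c)$ via \eqref{eq:01.04} and $c_1\o\b_C(c_2)$ via the coassociativity \eqref{eq:1.9} of $\D$; the resulting groups of terms then reorganize into the left-hand side through \eqref{eq:01.07} and the coabhYBe \eqref{eq:01.03}, the weight-$\l$ contributions cancelling because the $-\l$-summand hard-wired into $\nu$, the $-\l\a(c)\v(d)$ term in $\mu_{\s}$, and the two $-\l$-terms in \eqref{eq:01.07} account for one another.

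I expect the main obstacle to be precisely this bookkeeping of the weight-$\l$ terms in \eqref{eq:12.13a}: one must track simultaneously the $\l$-correction built into $\nu$, the $\l$-term of $\mu_{\s}$, and the two $\l$-terms of \eqref{eq:01.07}, all while carrying the twists $\a_M,\om_M,\om_M^{-1},\psi_C,\b_C$ through each monomial. This is where the case $\l\neq0$ genuinely departs from the weight-zero situation and from a mechanical transcription of the left-handed Theorem~\ref{thm:12.013}, whose action $\gamma$ carries no weight term, so a careful direct computation rather than an appeal to left–right symmetry is unavoidable.
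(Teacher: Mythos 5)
Your proposal is correct and follows essentially the same route as the paper: the right-module associativity is settled by \eqref{eq:01.07} together with the counit identities \eqref{eq:1.11}, and the Hopf-module compatibility \eqref{eq:12.13a} is checked by expanding both sides with the definitions of $\nu$ and $\mu_{\sigma}$, the right-comodule coassociativity and the invariance of $\sigma$. The only small discrepancy is that you expect the coabhYBe \eqref{eq:01.03} (or \eqref{eq:01.07}) to intervene in the compatibility check, whereas in the paper's computation --- exactly as in Theorem \ref{thm:13.001} --- the quasitriangularity identity is needed only for the module axiom, the compatibility reducing purely via \eqref{eq:01.04}, \eqref{eq:1.11} and the comodule axioms, so the weight-$\l$ bookkeeping there is lighter than you anticipate.
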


 \begin{proof} For all $c, d\in C$ and $m\in M$, one calculates
 \begin{eqnarray*}
 \nu(\nu\o\b_{C})(m\o c\o d)
 &\stackrel{(\mref{eq:12.31})}=&
 \sigma(m_{(1)},\psi_{C}(c))\sigma(\a_{C}\om^{-1}_{C}(m_{(0)(1)}),\b_{C}\psi_{C}(d))
 \a_{M}^{2}\om_{M}^{-2}(m_{(0)(0)})\\
 &&-\l\v(c)\sigma(\a_{C}(m_{(1)}),\psi_{C}\b_{C}(d))
 \a_{M}^{2}\om_{M}^{-1}(m_{(0)})
 +\l^{2}\v(c)\v(d)\a_{M}^{2}(m)\\
 &&-\l\v(d)\sigma(m_{(1)},\psi_{C}(c))\a_{M}^{2}\om_{M}^{-1}(m_{(0)})\\
 &\stackrel{(\mref{eq:01.07})(\mref{eq:1.11})}=&\nu(\a_{M}\o \mu_{\sigma})(m\o c\o d).
 \end{eqnarray*}
 Thus $(M, \varphi, \a_{M}, \b_{M})$ is a right $(C, \mu, \a_{C}, \b_{C})$-module. Next we check the compatibility condition.
 \begin{eqnarray*}
 &&\hspace{-18mm}\om_{M}(m)\tr c_{1}\o \b_{C}(c_{2})+\a_{M}(m_{(0)})\o m_{(1)}\psi_{C}(c)+\l\a_{M}\om_{M}(m)\o \b_{C}\psi_{C}(c)\\
 &\stackrel{(\mref{eq:01.04})(\mref{eq:1.11})}=&\sigma(\om_{C}(m_{(1)}),\psi_{C}(c_{1}))
 \a_{M}(m_{(0)})\o\b_{C}(c_{2})
 -\l\v(c_{1})\a_{M}\om_{M}(m)\o\b_{C}(c_{2})\\
 &&+\sigma(m_{(1)2},\psi_{C}^{2}(c))\a_{M}(m_{(0)})\o\a_{C}\om_{C}^{-1}(m_{(1)1})
 -\sigma(\om_{C}(m_{(1)}),\psi_{C}(c_{1}))\a_{M}(m_{(0)})\o\b_{C}(c_{2})\\
 &&-\l\v(c)\a_{M}(m_{(0)})\o \a_{C}(m_{(1)})
 +\l\a_{M}\om_{M}(m)\o\b_{C}\psi_{C}(c_{2})\\
 &\stackrel{(\mref{eq:1.11})}=&\sigma(m_{(1)2},\psi_{C}^{2}(c))
 \a_{M}(m_{(0)})\o\a_{C}\om_{C}^{-1}(m_{(1)1})
 -\l\v(c)\a_{M}(m_{(0)})\o \a_{C}(m_{(1)})\\
 &=&(m\tr c)_{(0)}\o (m\tr c)_{(1)},
 \end{eqnarray*}
 completing the proof.
 \end{proof}
\vspace{-3mm}

 \begin{thm}\mlabel{thm:13.013}
 Let $(C,\D,\v,\a_{C},\b_{C},\psi_{C},\om_{C},\sigma)$ be an anti-coquasitriangular counital $\l$-infBH-bialgebra and $(M,\widetilde{\rho},\psi_{M},\om_{M})$ be a left $(C,\D,\psi_{C},\om_{C})$-comodule, $\a_{M},\b_{M}:M\longrightarrow M$ be linear maps such that $\b_{M}\ci \psi_{M}=\psi_{M}\ci \b_{M}$, $\widetilde{\rho}\ci \b_{M}=(\b_{A}\o \b_{M})\ci \widetilde{\rho}$. Then $(M,\widetilde{\gamma},\widetilde{\rho},\a_{M},\b_{M},\psi_{M},\om_{M})$ becomes a left $\l$-infBH-Hopf module with the action $\widetilde{\gamma}: C \o M\longrightarrow M$ given by
 \begin{eqnarray*}
 &\widetilde{\gamma}(c\o m):=-\sigma(\om_{C}(c),m_{-1})\b_{M}\psi_{M}^{-1}(m_{0})-\l\v(c)\b_{M}(m)\ for\ c\in C, \ m\in M.&
 \end{eqnarray*}
 \end{thm}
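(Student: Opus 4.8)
The plan is to follow exactly the two-part pattern used in the proofs of Theorems \ref{thm:13.001}, \ref{thm:13.014} and \ref{thm:12.02a}: first verify that $(M,\widetilde{\rho},\psi_M,\om_M)$ is a left $(C,\widetilde{\mu_\sigma},\a_C,\b_C)$-module, and then verify the left $\l$-infBH-Hopf module compatibility condition Eq.(\mref{eq:12.13}) (in the Sweedler form of Remark \mref{rmk:12.18}). Throughout I would use that $(M,\widetilde{\rho},\psi_M,\om_M)$ is already a left $(C,\D,\psi_C,\om_C)$-comodule by hypothesis, that $\sigma$ is $\a_C,\b_C,\psi_C,\om_C$-invariant, and that $\widetilde{\mu_\sigma}$ is given by Eq.(\mref{eq:01.08}). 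Note the definition of $\widetilde{\gamma}$ here is the ``anti'' analogue of the action $\gamma$ in Theorem \mref{thm:12.013}, carrying an extra term $-\l\v(c)\b_M(m)$, precisely mirroring how $\widetilde{\mu_\sigma}$ differs from $\mu_\sigma$ by $-\l\v(c)\b(d)$.

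\textbf{Step 1 (module axiom).} I would compute $\widetilde{\gamma}\circ(\a_C\o\widetilde{\gamma})$ applied to $c\o d\o m$ and compare with $\widetilde{\gamma}\circ(\widetilde{\mu_\sigma}\o\b_M)$. Expanding $\widetilde{\gamma}(d\o m)$ first produces $-\sigma(\om_C(d),m_{-1})\b_M\psi_M^{-1}(m_0)-\l\v(d)\b_M(m)$; applying $\widetilde{\gamma}$ again and using the comodule coassociativity Eq.(\mref{eq:01.24}) together with $\widetilde{\rho}\ci\b_M=(\b_A\o\b_M)\ci\widetilde{\rho}$, the left-hand side collects into terms of the shape $\sigma(\a_C\om_C^{-1}(\cdots),\cdots)\,\sigma(\cdots,\psi_C(\cdots))$ plus $\l$-corrections carrying $\v$'s. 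The key identity that forces equality is Eq.(\mref{eq:01.11}) of Proposition \mref{pro:12.08} (the characterizing identity for the anti-coquasitriangular multiplication when built from Eq.(\mref{eq:14.29})'s dual), together with the counit axioms Eq.(\mref{eq:1.11}); the $\l^2$-terms and the stray $\l\v$-terms match up by the same bookkeeping as in the proof of Theorem \mref{thm:13.014}. This step is essentially dual/parallel to that proof and should go through cleanly.

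\textbf{Step 2 (Hopf-module compatibility).} I would then verify
\[
(c\tl m)_{(-1)}\o(c\tl m)_{(0)}=\om_C(c)\,m_{(-1)}\o\b_M(m_{(0)})+\a_C(c_1)\o c_2\tl\psi_M(m)+\l\a_C\om_C(c)\o\b_M\psi_M(m),
\]
with $\tl=\widetilde{\gamma}$ and coaction $\widetilde{\rho}$. Applying $\widetilde{\rho}$ to $\widetilde{\gamma}(c\o m)=-\sigma(\om_C(c),m_{-1})\b_M\psi_M^{-1}(m_0)-\l\v(c)\b_M(m)$, using $\widetilde{\rho}\ci\b_M=(\b_C\o\b_M)\ci\widetilde{\rho}$ and $\widetilde{\rho}\ci\psi_M^{-1}=(\psi_C^{-1}\o\psi_M^{-1})\ci\widetilde{\rho}$, and then rewriting $\sigma$ applied to a first tensor leg via the coassociativity of $\widetilde{\rho}$, produces three groups of terms. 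Matching them against the right-hand side uses exactly Eq.(\mref{eq:01.08}) (the formula for $\widetilde{\mu_\sigma}$) read backwards — this is the same manipulation that converts the ``$\sigma(\om(c),d_1)\b\psi^{-1}(d_2)$'' piece into a product — together with the $(-\l)$-coabhYBe itself (equivalently Eq.(\mref{eq:01.11})) to handle the cross terms, and the counit normalizations $\v\ci\a_C=\v$, $\v\ci\b_C=\v$ from Eq.(\mref{eq:12.31}) to absorb the $\l$-terms.

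\textbf{Main obstacle.} The routine but delicate part is Step 2: one must be careful that every $\sigma$-evaluation gets the correct twist by $\a_C,\b_C,\psi_C,\om_C$ before invoking invariance, and that the single $-\l\v(c)\b_M(m)$ summand in $\widetilde\gamma$ generates exactly the $\l\a_C\om_C(c)\o\b_M\psi_M(m)$ term on the right plus a piece that cancels against an $\l$-term coming from the $\sigma$-part — this cancellation is the analogue of the $-\l\om_M(m)\tr\om_A(a)\o 1$ cancellation in the proof of Theorem \mref{thm:13.001}. Once the $\sigma$-invariance twists are tracked correctly, everything reduces to one application of Eq.(\mref{eq:01.11}) plus counitality, and the proof concludes exactly as in Theorems \mref{thm:13.001} and \mref{thm:13.014}.
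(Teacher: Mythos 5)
Your plan is correct and is exactly the route the paper takes for the sibling results: the paper in fact gives no written proof of Theorem \mref{thm:13.013} at all (only the neighbouring Theorem \mref{thm:13.015} carries the remark ``Similar to Theorem \mref{thm:13.014}''), and your two-step verification — left module axiom for $\widetilde{\mu_\sigma}$, then the compatibility \meqref{eq:12.13} in Sweedler form — is the intended argument.

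One concrete slip to fix in Step 1: the identity you need there is \meqref{eq:01.10}, not \meqref{eq:01.11}. Since $\widetilde{\gamma}$ places $c$ in the \emph{first} slot of $\sigma$, the module axiom $\a_C(c)\tl(d\tl m)=(\widetilde{\mu_\sigma}(c\o d))\tl\b_M(m)$ requires expanding $\sigma(\om_C(c)\om_C(d),\b_C(m_{-1}))$, i.e.\ $\sigma$ of a \emph{product in the first argument}, which is governed by $\sigma(cd,\b(e))=-\sigma(\om(d),e_1)\sigma(\a\psi(c),\b(e_2))-\l\sigma(\a(c),\b(e))\v(d)-\l\sigma(d,e)\v(e')$-type identity \meqref{eq:01.10}; equation \meqref{eq:01.11} is the second-slot identity and is the one used in the right-handed Theorem \mref{thm:13.014}. (Both hold for an anti-coquasitriangular structure by Proposition \mref{pro:12.08}, so nothing breaks, but applying \meqref{eq:01.11} literally would stall.) Also note that Step 2 is cheaper than you suggest: it needs only the formula \meqref{eq:01.08} for $\widetilde{\mu_\sigma}$, comodule coassociativity \meqref{eq:01.24}, $\sigma$-invariance and counitality — the two $\a_C(c_1)\sigma(\om_C(c_2),\psi_C(m_{-1}))\o\b_M(m_0)$ terms and the two $\l\a_C\om_C(c)\o\b_M\psi_M(m)$ terms cancel in pairs, while the $-\l\v(c)\b_M(m)$ summand of $\widetilde{\gamma}$ matches the $-\l\v(\om_C(c))\b_C(m_{-1})$ piece of the product; no appeal to the coabhYBe is required there.
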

\vspace{-3mm}

 \begin{thm}\mlabel{thm:13.015} Let $(C, \D, \v, \a_{C}, \b_{C}, \psi_{C}, \om_{C}, \sigma)$ be an anti-coquasitriangular counital $\l$-infBH-bialgebra and $(M,\widetilde{\varphi},\psi_{M},\om_{M})$ be a right $(C,\D,\psi_{C},\om_{C})$-comodule, $\a_{M},\b_{M}:M\longrightarrow M$ be linear maps such that $\a_{M}\ci \om_{M}=\om_{M}\ci \a_{M}$, $\widetilde{\varphi}\ci \a_{M}=(\a_{M}\o \a_{A})\ci \widetilde{\varphi}$. Then $(M, \widetilde{\nu}, \widetilde{\varphi}, \a_{M}, \b_{M}, \psi_{M}, \om_{M})$ becomes a right $\l$-infBH-Hopf module with the action $\widetilde{\nu}: M\o C\longrightarrow M$ given by
 \begin{eqnarray*}
 &\widetilde{\nu}(m\o c):=\a_{M}\om^{-1}_{M}(m_{(0)})\sigma(m_{(1)},\psi_{C}(c)), \ for\ c\in C, \ m\in M.&
 \end{eqnarray*}
 \end{thm}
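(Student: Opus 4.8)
I plan to establish the three defining conditions of a right $\l$-infBH-Hopf module (Definition \ref{de:12.17a}) for the tuple $(M,\widetilde\nu,\widetilde\varphi,\a_M,\b_M,\psi_M,\om_M)$ over the anti-coquasitriangular counitary $\l$-infBH-bialgebra $(C,\widetilde{\mu_\sigma},\D,\v,\a_C,\b_C,\psi_C,\om_C)$: (i) $(M,\widetilde\varphi,\psi_M,\om_M)$ is a right $(C,\D,\psi_C,\om_C)$-comodule, which is part of the hypothesis; (ii) $(M,\widetilde\nu,\a_M,\b_M)$ is a right $(C,\widetilde{\mu_\sigma},\a_C,\b_C)$-module; and (iii) the compatibility identity Eq.(\ref{eq:12.13a}). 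Throughout I write $\widetilde\varphi(m)=m_{(0)}\o m_{(1)}$ and $m\tr c:=\widetilde\nu(m\o c)=\a_M\om_M^{-1}(m_{(0)})\,\sigma(m_{(1)},\psi_C(c))$; note that $\om_M$ is tacitly invertible and that $\a_M,\b_M,\psi_M,\om_M$ pairwise commute, hence commute with $\om_M^{-1}$. By Proposition \ref{pro:12.08} the anti-coquasitriangularity provides the relation Eq.(\ref{eq:01.11}), namely $\sigma(\a_C(c),de)=\sigma(\a_C(c_1),\b_C\om_C(e))\sigma(c_2,\psi_C(d))$, so the whole argument will run as the ``anti''-counterpart of the proof of Theorem \ref{thm:13.014}, with Eq.(\ref{eq:01.11}) in place of Eq.(\ref{eq:01.07}) and $\widetilde{\mu_\sigma}$ (Eq.(\ref{eq:01.08})) in place of $\mu_\sigma$; this shift is exactly what removes the extra weight term from the formula for $\widetilde\nu$ (compare the action $\nu$ of Theorem \ref{thm:13.014}).

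First I would verify (ii). Compatibility with the structure maps, e.g.\ $\a_M(m\tr c)=\a_M(m)\tr\a_C(c)$ and $\b_M(m\tr c)=\b_M(m)\tr\b_C(c)$, follows routinely from the equivariance of $\widetilde\varphi$ under $\a_M,\b_M,\psi_M,\om_M$ (built into the comodule axioms for $\psi_M,\om_M$, hypothesized for $\a_M$) together with the $\a_C,\b_C,\psi_C,\om_C$-invariance of $\sigma$ and Eq.(\ref{eq:12.1}). For the associativity axiom, which in this notation reads $\widetilde\nu\bigl(\widetilde\nu(m\o c)\o\b_C(d)\bigr)=\widetilde\nu\bigl(\a_M(m)\o\widetilde{\mu_\sigma}(c\o d)\bigr)$, I would expand the left-hand side through the definition of $\widetilde\nu$, push $\widetilde\varphi$ past $\a_M\om_M^{-1}$ by equivariance, apply the right-comodule coassociativity $(\widetilde\varphi\o\psi_C)\circ\widetilde\varphi=(\om_M\o\D)\circ\widetilde\varphi$, and finally contract the two $\sigma$-factors by Eq.(\ref{eq:01.11}) and simplify using the counit relations Eqs.(\ref{eq:1.11}) and (\ref{eq:12.31}); this reproduces, mutatis mutandis, the first display in the proof of Theorem \ref{thm:13.014}.

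It remains to verify (iii), i.e.\ Eq.(\ref{eq:12.13a}); by Remark \ref{rmk:12.18a} this is the identity
\[
(m\tr c)_{(0)}\o(m\tr c)_{(1)}=\om_M(m)\tr c_1\o\b_C(c_2)+\a_M(m_{(0)})\o \widetilde{\mu_\sigma}(m_{(1)}\o\psi_C(c))+\l\,\a_M\om_M(m)\o\b_C\psi_C(c).
\]
I would compute the left-hand side via $\widetilde\varphi$-equivariance and the right-comodule coassociativity, producing a term in which $\widetilde\varphi$ is applied to the first tensor leg with the scalar $\sigma(m_{(1)},\psi_C(c))$ attached; on the right-hand side I would unfold $\om_M(m)\tr c_1$ by the definition of $\widetilde\nu$ and $\widetilde{\mu_\sigma}(m_{(1)}\o\psi_C(c))$ by Eq.(\ref{eq:01.08}), then collapse the resulting sum of terms by repeated use of the counit relations Eqs.(\ref{eq:1.11}) and (\ref{eq:12.31}), exactly as in the final display of the proof of Theorem \ref{thm:13.014}. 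I expect this last step to be the main obstacle: it is where all the $\sigma$-contributions and the weight-$\l$ contributions must cancel in pairs, and this cancellation is precisely what forces the weight term to enter as $-\l\v(c)\b_C(d)$ in $\widetilde{\mu_\sigma}$ (Eq.(\ref{eq:01.08})) rather than as $-\l\a_C(c)\v(d)$ as in $\mu_\sigma$ (Eq.(\ref{eq:01.04})). Once this bookkeeping is carried out, all conditions of Definition \ref{de:12.17a} hold, completing the proof.
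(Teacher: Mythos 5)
Your proposal is correct and follows exactly the route the paper intends: the paper's own proof of Theorem \ref{thm:13.015} consists of the single line ``Similar to Theorem \ref{thm:13.014}'', and your outline spells out precisely that adaptation, substituting Eq.(\ref{eq:01.11}) for Eq.(\ref{eq:01.07}) in the module axiom and Eq.(\ref{eq:01.08}) for Eq.(\ref{eq:01.04}) in the compatibility check, which is exactly why the weight term drops out of $\widetilde{\nu}$. No discrepancy with the paper's argument.
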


 \begin{proof} Similar to Theorem \mref{thm:13.014}.   
 \end{proof}
\vspace{-3mm}
 
 \begin{pro}\mlabel{pro:C20.10} Let $(C,\D,\v,\a_{C},\b_{C},\psi_{C},\om_{C})$ be a coquasitriangular (or anti-coquasitriangular) counitary $\l$-infBH bialgebra, $(M,\rho,\varphi,\psi_{M},\om_{M})$ (or $(M,\widetilde{\rho},\widetilde{\varphi},\psi_{M},\om_{M})$) be a $(C,\D,\psi_{C},\om_{C})$-bicomodule. Then $(M,\gamma,\nu,\rho,\varphi,\a_{M},\b_{M},\psi_{M},\om_{M})$ (or $(M,\widetilde{\gamma},\widetilde{\nu},\widetilde{\rho},\widetilde{\varphi},\a_{M},\b_{M},\psi_{M},\om_{M})$) is a $\l$-infBH-Hopf bimodule, where $\gamma$ and $\nu$ (or $\widetilde{\gamma}$ and $\widetilde{\nu}$) are defined by
 \begin{eqnarray*}
 &\gamma(c\o m):=-\sigma(\om_{C}(c),m_{-1})\b_{M}\psi_{M}^{-1}(m_{0}),&\\
 &\nu(m\o c):=\a_{M}\om^{-1}_{M}(m_{(0)})\sigma(m_{(1)},\psi_{C}(c))-\l\v(c)\a_{M}(m).&
 \end{eqnarray*}
 (or
 \begin{eqnarray*}
 &\widetilde{\gamma}(c\o m):=-\sigma(\om_{C}(c),m_{-1})\b_{M}\psi_{M}^{-1}(m_{0})-\l\v(c)\b_{M}(m),&\\
 &\widetilde{\nu}(m\o c):=\a_{M}\om^{-1}_{M}(m_{(0)})\sigma(m_{(1)},\psi_{C}(c)),&
 \end{eqnarray*}) respectively.
 \end{pro}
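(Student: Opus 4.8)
The plan is to verify the five conditions of Definition \ref{de:20.01} for the 9-tuple $(M,\g,\nu,\rho,\vp,\a_M,\b_M,\psi_M,\om_M)$, dualizing the argument that proves Proposition \ref{pro:20.10}. For the coquasitriangular case, Theorem \ref{thm:12.013} shows that $(M,\g,\rho,\a_M,\b_M,\psi_M,\om_M)$ is a left $\l$-infBH-Hopf module over $(C,\mu_\sigma,\D,\a_C,\b_C,\psi_C,\om_C)$ and Theorem \ref{thm:13.014} shows that $(M,\nu,\vp,\a_M,\b_M,\psi_M,\om_M)$ is a right $\l$-infBH-Hopf module; this settles conditions (1) and (2) and, in particular, makes $(M,\g,\a_M,\b_M)$ a left and $(M,\nu,\a_M,\b_M)$ a right $(C,\mu_\sigma,\a_C,\b_C)$-module. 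Condition (4), that $(M,\rho,\vp,\psi_M,\om_M)$ is a BiHom-bicomodule over $(C,\D,\psi_C,\om_C)$, is part of the hypothesis, so in particular the compatibility Eq.(\ref{eq:01.24aa}), $(\om_C\o\vp)\ci\rho=(\rho\o\psi_C)\ci\vp$, is available.

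It then remains to check conditions (3) and (5). For (3) it suffices to establish the mixed compatibility Eq.(\ref{eq:1.16}), $\a_C(c)\g(m\nu c')=(c\g m)\nu\b_C(c')$, since the one-sided module axioms are already in place. For (5), I would derive Eqs.(\ref{eq:20.01}) and (\ref{eq:20.02}) from the bicomodule identity Eq.(\ref{eq:01.24aa}) in exactly the way Eqs.(\ref{eq:20.01}) and (\ref{eq:20.02}) were derived from the bimodule identity Eq.(\ref{eq:1.16}) in Proposition \ref{pro:20.10}, with the roles of action and coaction interchanged: substituting the formulas for $\g$ and $\nu$, pulling the scalars $\sigma(\cdot,\cdot)$ out front, and then using that $\rho$ and $\vp$ intertwine $\psi_M,\om_M$ with $\psi_C,\om_C$, the invariance of $\sigma$, the counit axioms Eqs.(\ref{eq:1.11}) and (\ref{eq:12.31}), and Eq.(\ref{eq:01.24aa}), one reduces $\vp(c\g m)$ to $(\g\o\b_C)(\om_C\o\vp)(c\o m)$ and $\rho(m\nu c)$ to $(\a_C\o\nu)(\rho\o\psi_C)(m\o c)$; these are short calculations.

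The main obstacle is condition (3), i.e.\ Eq.(\ref{eq:1.16}): this is where coquasitriangularity is really used, and it is the dual counterpart of the explicit display that verified the bicomodule axiom in the proof of Proposition \ref{pro:20.10}. Concretely, I would plug $c\g m=-\sigma(\om_C(c),m_{-1})\b_M\psi_M^{-1}(m_0)$ and $m\nu c=\a_M\om_M^{-1}(m_{(0)})\sigma(m_{(1)},\psi_C(c))-\l\v(c)\a_M(m)$ into both sides of Eq.(\ref{eq:1.16}) and then transform one side into the other by applying, in turn: the bicomodule identity Eq.(\ref{eq:01.24aa}) together with left/right comodule coassociativity to rearrange Sweedler legs; the $\a_C,\b_C,\psi_C,\om_C$-invariance of $\sigma$ and its compatibility with the structure maps; the definition of $\mu_\sigma$ in Eq.(\ref{eq:01.04}); and finally the constraint Eq.(\ref{eq:01.07}) (equivalently Eq.(\ref{eq:01.06})) that $\sigma$ satisfies by Proposition \ref{pro:12.08}. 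The delicate point is the bookkeeping of the $\l$-terms, those coming from the $-\l\v(c)\a_M(m)$ summand of $\nu$ and from the $-\l\a_C(c)\v(d)$ summand of $\mu_\sigma$; they cancel against the $\l$-terms in Eq.(\ref{eq:01.07}). The anti-coquasitriangular case is entirely parallel: replace Theorems \ref{thm:12.013} and \ref{thm:13.014} by Theorems \ref{thm:13.013} and \ref{thm:13.015} and the constraint Eq.(\ref{eq:01.07})/(\ref{eq:01.06}) by Eq.(\ref{eq:01.11})/(\ref{eq:01.10}). Collecting the four verified conditions and invoking Definition \ref{de:20.01} finishes the proof.
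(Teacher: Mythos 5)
The paper states Proposition \ref{pro:C20.10} without proof, and your architecture is exactly the intended dualization of the proof of Proposition \ref{pro:20.10}: conditions (1) and (2) of Definition \ref{de:20.01} come from Theorems \ref{thm:12.013} and \ref{thm:13.014} (resp.\ \ref{thm:13.013} and \ref{thm:13.015}), condition (4) is the hypothesis, and only the mixed bimodule identity \eqref{eq:1.16} and the Long-module identities \eqref{eq:20.01}--\eqref{eq:20.02} remain to be checked. Your treatment of condition (5) is right. (As in Proposition \ref{pro:20.10}, the statement never introduces $\a_M,\b_M$; you will need to add the intertwining hypotheses $\rho\circ\a_M=(\a_C\o\a_M)\circ\rho$ and $\vp\circ\b_M=(\b_M\o\b_C)\circ\vp$, etc., to run the computations.)

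One correction on where the hypotheses do the work in condition (3). The identity \eqref{eq:1.16}, $\a_C(c)\tl(m\tr c')=(c\tl m)\tr\b_C(c')$, contains no product of $C$, so neither the definition of $\mu_\sigma$ in \eqref{eq:01.04} nor the constraint \eqref{eq:01.07}/\eqref{eq:01.06} can enter its verification, and there are no $\l$-terms to cancel against those of \eqref{eq:01.07}. Substituting $\g$ and $\nu$, both sides reduce directly to $-\sigma(\om_C(c),m_{-1})\,\sigma(\psi_C^{-1}(m_{0(1)}),\psi_C(c'))\,\a_M\b_M\psi_M^{-1}\om_M^{-1}(m_{0(0)})+\l\v(c')\,\sigma(\om_C(c),m_{-1})\,\a_M\b_M\psi_M^{-1}(m_0)$, using only the bicomodule identity \eqref{eq:01.24aa}, the $\a_C,\b_C,\psi_C,\om_C$-invariance of $\sigma$, counitality \eqref{eq:12.31}, and the intertwining relations --- precisely dual to how the paper derives the bicomodule axiom in Proposition \ref{pro:20.10} from \eqref{eq:1.16} and \eqref{eq:12.30} without touching the Yang--Baxter condition. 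The coquasitriangularity (the coabhYBe via \eqref{eq:01.07}/\eqref{eq:01.06}, or \eqref{eq:01.10}/\eqref{eq:01.11} in the anti- case) is consumed entirely inside Theorems \ref{thm:12.013} and \ref{thm:13.014}, where the one-sided module associativity is established; it plays no further role in (3) or (5). With that relocation, your plan closes up and gives a complete proof.
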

\vspace{-3mm}

\section{General Gelfand-Dorfman Theorem on BiHom-Novikov algebras}
 \begin{defi}\mlabel{de:13.4} (\cite[Definition 2.1]{LMMP3}) Let $(A,\cdot,\a,\b)$ be a (left) BiHom-pre-Lie algebra. Then the 4-tuple $(A,\cdot,\a,\b)$ is called a {\bf BiHom-Novikov algebra} if
 \begin{eqnarray}
 &(a\cdot\b(b))\cdot\a\b(c)=(a\cdot\b(c))\cdot\a\b(b), \quad \forall a, b, c\in A.&\mlabel{eq:13.03}
 \end{eqnarray}
 \end{defi}
\vspace{-3mm}

 \begin{defi}\mlabel{de:13.5} (\cite[Definition 2.4]{LMMP3}) A BiHom-associative algebra $(A,\mu,\a,\b)$ is called {\bf BiHom-commutative} if
 \begin{eqnarray}
 &\b(a)\a(b)=\b(b)\a(a),  \quad \forall a, b \in A.&\mlabel{eq:13.3}
 \end{eqnarray}
 \end{defi}
\vspace{-3mm}

 \begin{defi}\mlabel{de:20.14} (\cite[Definition 3.1]{LMMP}) Let $(A,\mu,\a,\b)$ be a BiHom-associative algebra, $\xi,\zeta: A\longrightarrow A$ two algebra maps and $\mathfrak{D}: A\longrightarrow A$ be a linear map. We call $\mathfrak{D}$ a {\bf $\l$-$(\xi,\zeta)$-derivation} if
 \begin{eqnarray}\mlabel{eq:deri}
 \mathfrak{D}(ab)=\xi(a)\mathfrak{D}(b)
 +\mathfrak{D}(a)\zeta(b)+\l\xi(a)\zeta(b), \quad \forall a,b\in A.
 \end{eqnarray}
 \end{defi}
\vspace{-3mm}

 \begin{lem}\mlabel{lem:13.6} Let $(A,\mu,\D,\a,\b,\psi,\om)$ be a $\l$-infBH-bialgebra. Then $\mathfrak{D}=\mu\D$ is a $\l$-$(\a\om,\b\psi)$-derivation, i.e.
 \begin{eqnarray*}
 &\mathfrak{D}(ab)=\a\om(a)\mathfrak{D}(b)+\mathfrak{D}(a)\b\psi(b)
 +\l\a\om(a)\b\psi(b).&
 \end{eqnarray*}
 \end{lem}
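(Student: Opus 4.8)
The plan is to obtain the identity by a direct application of the weight-$\l$ compatibility condition Eq.(\mref{eq:12.4}) followed by BiHom-associativity Eq.(\mref{eq:1.3}). Using Sweedler's notation $\D(a)=a_1\o a_2$ and $\D(b)=b_1\o b_2$, so that $\mathfrak{D}(x)=\mu\D(x)=x_1x_2$, I would first evaluate Eq.(\mref{eq:12.4}) on $a\o b$, which gives
\begin{eqnarray*}
&\D(ab)=\om(a)b_1\o\b(b_2)+\a(a_1)\o a_2\psi(b)+\l\,\a\om(a)\o\b\psi(b),&
\end{eqnarray*}
and then apply $\mu$ to both sides to get
\begin{eqnarray*}
&\mathfrak{D}(ab)=(\om(a)b_1)\b(b_2)+\a(a_1)(a_2\psi(b))+\l\,\a\om(a)\b\psi(b).&
\end{eqnarray*}

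The second step is to recognise the three summands. Applying Eq.(\mref{eq:1.3}) with the three arguments $\om(a),b_1,b_2$ turns the first summand into $\a\om(a)(b_1b_2)=\a\om(a)\mathfrak{D}(b)$; applying it with the arguments $a_1,a_2,\psi(b)$ turns the second summand into $(a_1a_2)\b\psi(b)=\mathfrak{D}(a)\b\psi(b)$; the third summand is already in the required shape. Adding the three pieces gives precisely $\mathfrak{D}(ab)=\a\om(a)\mathfrak{D}(b)+\mathfrak{D}(a)\b\psi(b)+\l\,\a\om(a)\b\psi(b)$, i.e. Eq.(\mref{eq:deri}) with $(\xi,\zeta)=(\a\om,\b\psi)$. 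One should also record that $\a\om$ and $\b\psi$ are indeed algebra maps, as demanded by Definition \mref{de:20.14}, which is immediate from Eqs.(\mref{eq:1.2}) and (\mref{eq:12.3}).

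I do not expect a genuine obstacle: the whole argument is a two-line manipulation of the two defining axioms. The only point that requires a moment's care is bookkeeping of the twisting maps, namely checking that the outer $\a$ (on the left factor) and outer $\b$ (on the right factor) that BiHom-associativity inserts combine with the $\om$ and $\psi$ coming from Eq.(\mref{eq:12.4}) to produce exactly $\a\om$ and $\b\psi$, with no leftover map. This lemma is the BiHom-counterpart of the classical observation that $\mu\D$ is a derivation on a $0$-infinitesimal bialgebra (a $\l$-derivation in the weight-$\l$ setting), and it is what powers the Gelfand--Dorfman-type results that follow.
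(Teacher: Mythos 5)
Your proposal is correct and follows exactly the paper's own proof: apply Eq.(\mref{eq:12.4}) to compute $\D(ab)$, apply $\mu$, and then use BiHom-associativity Eq.(\mref{eq:1.3}) to regroup the first two summands as $\a\om(a)(b_1b_2)$ and $(a_1a_2)\b\psi(b)$. Your additional remark that $\a\om$ and $\b\psi$ are algebra maps (via Eqs.(\mref{eq:1.2}) and (\mref{eq:12.3})) is a small but welcome completeness check that the paper leaves implicit.
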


 \begin{proof} For all $a, b\in A$, we have
 \begin{eqnarray*}
 \mathfrak{D}(ab)&=&\mu\D(ab)\\
 &\stackrel{(\mref{eq:12.4})}=&(\om(a)b_{1})\b(b_{2})+\a(a_{1})(a_{2}\psi(b))+ \l\a\om(a)\b\psi(b)\\
 &\stackrel{(\mref{eq:1.3})}=&\a\om(a)(b_{1} b_{2})+(a_{1} a_{2})\b\psi(b)+ \l\a\om(a)\b\psi(b)\\
 &=&\a\om(a)\mathfrak{D}(b)+\mathfrak{D}(a)\b\psi(b)+ \l\a\om(a)\b\psi(b),
 \end{eqnarray*}
 as required.
 \end{proof}

\subsection{BiHom-type of Gelfand-Dorfman Theorem: Approach 1}
 \begin{thm}\mlabel{thm:20.15} Let $(A,\mu,\a,\b)$ be a commutative BiHom-associative algebra, $\xi,\zeta,\iota:A\longrightarrow A$ three algebra maps and $\mathfrak{D}$ a $\l$-$(\xi,\xi)$-derivation such that any two of the maps $\a,\b,\xi,\zeta,\iota,\mathfrak{D}$ commute. Define a new multiplication on $A$ by
 $$
 a\ast b=\zeta(a)\iota\mathfrak{D}(b),\quad \forall a,b\in A.
 $$
 Then $(A,\ast,\zeta\a,\iota\b\xi)$ is a BiHom-pre-Lie algebra and a BiHom-Novikov algebra.
 \end{thm}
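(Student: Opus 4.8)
The plan is to verify, in the order (i)--(iii) below, the axioms of a BiHom-Novikov algebra for the multiplication $a\ast b=\zeta(a)\,\iota\mathfrak{D}(b)$ with the twisted structure maps $\g:=\zeta\a$ and $\d:=\iota\b\xi$: (i) that $\g,\d$ commute and are multiplicative for $\ast$; (ii) the BiHom-pre-Lie identity Eq.~(\mref{eq:13.1}); (iii) the Novikov identity Eq.~(\mref{eq:13.03}). The only inputs are that $\a,\b,\xi,\zeta,\iota$ are algebra maps, that $\mathfrak{D}$ obeys the $\l$-$(\xi,\xi)$-Leibniz rule Eq.~(\mref{eq:deri}), and that every pair among $\a,\b,\xi,\zeta,\iota,\mathfrak{D}$ commutes. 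Part (i) is immediate: $\g,\d$ are composites of commuting algebra maps, so $\g\d=\d\g$; and, e.g., $\g(a\ast b)=\zeta\a(\zeta(a)\,\iota\mathfrak{D}(b))=\zeta(\g(a))\,\iota\mathfrak{D}(\g(b))=\g(a)\ast\g(b)$ since $\zeta,\a$ are algebra maps commuting with $\zeta,\iota,\mathfrak{D}$, and similarly for $\d$. This gives the structure-map axioms of Definition~\mref{de:13.1}.

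For (ii) I would expand the four terms of Eq.~(\mref{eq:13.1}) for $(A,\ast,\g,\d)$. Writing $x^{D}:=\mathfrak{D}(x)$, one has $\g(b)\ast c=\zeta^{2}\a(b)\cdot\iota c^{D}$, and applying $\mathfrak{D}$ to this product via Eq.~(\mref{eq:deri}) splits $\g\d(a)\ast(\g(b)\ast c)$ into three summands: one carrying $c^{DD}$, one carrying $b^{D}$, and a $\l$-summand carrying $b$ and $c^{D}$; meanwhile $(\d(a)\ast\g(b))\ast\d(c)$ needs no Leibniz rule and is a single summand carrying $b^{D},c^{D}$. Using BiHom-associativity Eq.~(\mref{eq:1.3}) to bring every term into the form $(\text{product})\cdot(\text{element})$ and then BiHom-commutativity Eq.~(\mref{eq:13.3}) to transpose the first two factors --- each step being legitimate because, after reordering the commuting maps, the relevant factor carries exactly the needed $\a$ or $\b$ --- one checks that the $b^{D}$-summand of $\g\d(a)\ast(\g(b)\ast c)$ is \emph{equal} to $(\d(a)\ast\g(b))\ast\d(c)$, so these cancel. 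What survives in $\g\d(a)\ast(\g(b)\ast c)-(\d(a)\ast\g(b))\ast\d(c)$ is the $c^{DD}$-summand plus the $\l$-summand, and in each of them the $a$- and $b$-dependent factors enter as a product $\b(u_{b})\,\a(u_{a})$, which Eq.~(\mref{eq:13.3}) shows is unchanged under $a\leftrightarrow b$. Hence $\g\d(a)\ast(\g(b)\ast c)-(\d(a)\ast\g(b))\ast\d(c)$ is symmetric in $a,b$, i.e. Eq.~(\mref{eq:13.1}) holds, so $(A,\ast,\g,\d)$ is BiHom-pre-Lie.

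For (iii), a direct expansion gives $(a\ast\d(b))\ast\g\d(c)=\big(\zeta^{2}(a)\cdot\iota^{2}\zeta\b\xi(b^{D})\big)\cdot\iota^{2}\zeta\a\b\xi(c^{D})$, the right-hand side of Eq.~(\mref{eq:13.03}) being the same with $b,c$ interchanged. Writing the last two factors as $\b$ and $\a\b$ of $B_{1}:=\iota^{2}\zeta\xi(b^{D})$ and $C_{1}:=\iota^{2}\zeta\xi(c^{D})$, rebracketing by Eq.~(\mref{eq:1.3}) puts the left-hand side into the form $\a\zeta^{2}(a)\cdot\big(\b(B_{1})\cdot\a(C_{1})\big)$; Eq.~(\mref{eq:13.3}) turns $\b(B_{1})\a(C_{1})$ into $\b(C_{1})\a(B_{1})$; and rebracketing back via Eq.~(\mref{eq:1.3}) yields $(a\ast\d(c))\ast\g\d(b)$. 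This is Eq.~(\mref{eq:13.03}), finishing the proof.

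I expect the sole real obstacle to be the bookkeeping in part (ii): at every use of Eqs.~(\mref{eq:1.3}) and~(\mref{eq:13.3}) one must keep exactly the right twist ($\a$ or $\b$, and no extra powers) on the correct factor, which is precisely what the pairwise commutativity of $\a,\b,\xi,\zeta,\iota,\mathfrak{D}$ and their multiplicativity are for; the $\l$-term is harmless because it travels as a product $\b(u_{b})\,\a(u_{a})$ just like the $c^{DD}$-term, and so shares its $a\leftrightarrow b$ symmetry.
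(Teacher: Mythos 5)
Your proposal is correct and follows essentially the same route as the paper's proof: the multiplicativity of $\zeta\a$ and $\iota\b\xi$ is checked directly, the pre-Lie identity is verified by applying the $\l$-$(\xi,\xi)$-Leibniz rule to split $\zeta\a\iota\b\xi(a)\ast(\zeta\a(b)\ast c)$ into the $\mathfrak{D}^{2}(c)$-, $\mathfrak{D}(b)$- and $\l$-summands, cancelling the $\mathfrak{D}(b)$-summand against $(\iota\b\xi(a)\ast\zeta\a(b))\ast\iota\b\xi(c)$ after rebracketing by Eq.~(\mref{eq:1.3}), and observing the $a\leftrightarrow b$ symmetry of the remainder via Eq.~(\mref{eq:13.3}); the Novikov identity is reduced to Eq.~(\mref{eq:13.03}) for the underlying BiHom-commutative algebra exactly as in the paper (you merely spell out the rebracket--swap--rebracket step that the paper delegates to the remark that BiHom-commutative implies BiHom-Novikov).
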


 \begin{proof} It is easy to see that $\zeta\a(a\ast b)=\zeta\a(a)\ast\zeta\a(b)$ and $\iota\b\xi(a\ast b)=\iota\b\xi(a)\ast\iota\b\xi(b)$. Now, for all $a,b,c\in A$, we compute
 \begin{eqnarray*}
 &&\hspace{-20mm}\zeta\a\iota\b\xi(a)\ast(\zeta\a(b)\ast c)-(\iota\b\xi(a)\ast\zeta\a(b))\ast \iota\b\xi(c)\\
 &=&
 \zeta^{2}\a\iota\b\xi(a)(\iota\xi\zeta^{2}\a(b)\iota^{2}\mathfrak{D}^{2}(c))
 +\zeta^{2}\a\iota\b\xi(a)(\iota\zeta^{2}\a\mathfrak{D}(b)\xi\iota^{2}\mathfrak{D}(c))\\
 &&
 +\l\zeta^{2}\a\iota\b\xi(a)(\zeta^{2}\xi\iota\a(b)\iota^{2}\xi\mathfrak{D}(c))
 -(\zeta^{2}\iota\b\xi(a)\zeta^{2}\a\iota\mathfrak{D}(b))\iota^{2}\b\xi\mathfrak{D}(c)\\
 &\stackrel{(\mref{eq:1.3})}=&
 (\zeta^{2}\iota\b\xi(a)\iota\xi\zeta^{2}\a(b))\b\iota^{2}\mathfrak{D}^{2}(c)
 +(\zeta^{2}\iota\b\xi(a)\iota\zeta^{2}\a\mathfrak{D}(b))\b\xi\iota^{2}\mathfrak{D}(c)\\
 &&
 +\l(\zeta^{2}\iota\b\xi(a)\zeta^{2}\xi\iota\a(b))\b\iota^{2}\xi\mathfrak{D}(c)
 -(\zeta^{2}\iota\b\xi(a)\zeta^{2}\a\iota\mathfrak{D}(b))\iota^{2}\b\xi\mathfrak{D}(c)\\
 &=&
 (\zeta^{2}\iota\b\xi(a)\iota\xi\zeta^{2}\a(b))\b\iota^{2}\mathfrak{D}^{2}(c)
 +\l(\zeta^{2}\iota\b\xi(a)\zeta^{2}\xi\iota\a(b))\b\iota^{2}\xi\mathfrak{D}(c)\\
 &\stackrel{(\mref{eq:13.3})}=&
 (\zeta^{2}\iota\b\xi(b)\iota\xi\zeta^{2}\a(a))\b\iota^{2}\mathfrak{D}^{2}(c)
 +\l(\zeta^{2}\iota\b\xi(b)\zeta^{2}\xi\iota\a(a))\b\iota^{2}\xi\mathfrak{D}(c),
 \end{eqnarray*}
 which means that $(A,\ast,\zeta\a,\iota\b\xi)$ is a BiHom-pre-Lie algebra. Next we show that $(A,\ast,\zeta\a,\iota\b\xi)$ is further a BiHom-Novikov algebra. Note first that, since $(A,\mu,\a,\b)$ is BiHom-commutative, it is BiHom-Novikov, so Eq.(\mref{eq:13.03}) holds. Now we compute:
 \begin{eqnarray*}
 (a\ast \iota\b\xi(b))\ast\zeta\a\iota\b\xi(c)
 &=&(\zeta^{2}(a)\b(\zeta\mathfrak{D}\iota^{2}\xi(b)))\a\b\mathfrak{D}(\iota^{2}\xi\zeta(c))\\
 &\stackrel{(\mref{eq:13.03})}=&
 (\zeta^{2}(a)\b(\zeta\mathfrak{D}\iota^{2}\xi(c)))\a\b\mathfrak{D}(\iota^{2}\xi\zeta(b))\\
 &=&(a\ast \iota\b\xi(c))\ast\zeta\a\iota\b\xi(b).
 \end{eqnarray*}
 This completes the proof.
 \end{proof} 
 
 \begin{rmk} If $\l=0$ in Theorem \mref{thm:20.15}, then we obtain \cite[Proposition 2.7]{LMMP3}.
 \end{rmk}

\subsection{BiHom-type of Gelfand-Dorfman Theorem: Approach 2}
 
 \begin{thm}\mlabel{thm:B20.15} Let $\kappa, \l$ be two given elements of $K$, $(A,\mu,\a,\b)$ be a commutative BiHom-associative algebra, $\xi:A\longrightarrow A$ be an algebra map and $\mathfrak{D}$ be a $\l$-$(\xi,\xi)$-derivation such that any two of the maps $\a,\b,\xi,\mathfrak{D}$ commute. Define a new multiplication on $A$ by
 $$
 a\ast b=\xi(a)\mathfrak{D}(b)+\kappa\xi(a)\xi(b),\quad \forall a,b\in A.
 $$
 Then $(A,\ast,\a\xi,\b\xi)$ is a BiHom-pre-Lie algebra and a BiHom-Novikov algebra.
 \end{thm}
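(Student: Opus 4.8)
The plan is to deduce this from Theorem \mref{thm:20.15} by absorbing the extra term $\kappa\xi(a)\xi(b)$ into the derivation. First I would set $\mathfrak{D}':=\mathfrak{D}+\kappa\xi$ and check that $\mathfrak{D}'$ is a $(\l-\kappa)$-$(\xi,\xi)$-derivation of $(A,\mu,\a,\b)$: since $\xi$ is an algebra map and $\mathfrak{D}$ satisfies Eq.(\mref{eq:deri}) with weight $\l$, one computes $\mathfrak{D}'(ab)=\mathfrak{D}(ab)+\kappa\xi(a)\xi(b)=\xi(a)\mathfrak{D}(b)+\mathfrak{D}(a)\xi(b)+(\l+\kappa)\xi(a)\xi(b)$, and regrouping the right-hand side as $\xi(a)\mathfrak{D}'(b)+\mathfrak{D}'(a)\xi(b)+(\l-\kappa)\xi(a)\xi(b)$ gives exactly the $(\l-\kappa)$-$(\xi,\xi)$-derivation identity. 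Moreover, since $\a,\b,\xi,\mathfrak{D}$ pairwise commute and $\xi$ commutes with each of $\a,\b,\xi$, the maps $\a,\b,\xi,\mathfrak{D}'=\mathfrak{D}+\kappa\xi$ still pairwise commute. Finally, the new product is nothing but $a\ast b=\xi(a)\mathfrak{D}(b)+\kappa\xi(a)\xi(b)=\xi(a)\mathfrak{D}'(b)$.

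Now I would invoke Theorem \mref{thm:20.15} with the choices $\zeta:=\xi$, $\iota:=\id_A$ and with $\mathfrak{D}$ there replaced by the $(\l-\kappa)$-$(\xi,\xi)$-derivation $\mathfrak{D}'$ (so that its weight parameter is $\l-\kappa$ rather than $\l$). All the hypotheses are met: $(A,\mu,\a,\b)$ is commutative BiHom-associative; $\xi$, $\zeta=\xi$, $\iota=\id_A$ are algebra maps; $\mathfrak{D}'$ is a $(\l-\kappa)$-$(\xi,\xi)$-derivation; and any two of $\a,\b,\xi,\zeta,\iota,\mathfrak{D}'$ commute, because $\id_A$ commutes with everything while $\a,\b,\xi,\mathfrak{D}'$ pairwise commute by the previous paragraph. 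The multiplication furnished by Theorem \mref{thm:20.15} is $\zeta(a)\,\iota\mathfrak{D}'(b)=\xi(a)\mathfrak{D}'(b)=a\ast b$, and its two structure maps are $\zeta\a=\xi\a=\a\xi$ and $\iota\b\xi=\b\xi$. Hence Theorem \mref{thm:20.15} gives that $(A,\ast,\a\xi,\b\xi)$ is simultaneously a BiHom-pre-Lie algebra and a BiHom-Novikov algebra, which is the assertion.

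The step I expect to be the crux is recognizing the right substitution, namely that $\mathfrak{D}+\kappa\xi$ is again a derivation of the required twisted type but with the shifted weight $\l-\kappa$; once this is noticed, everything else is bookkeeping (checking that the commutativity relations survive the substitution and matching up the structure maps via $\a\xi=\xi\a$). If one prefers a self-contained argument, the statement can instead be proved by a direct computation parallel to the proof of Theorem \mref{thm:20.15}, carrying the $\kappa$-term along with the $\l$-term and using BiHom-associativity Eq.(\mref{eq:1.3}) together with BiHom-commutativity Eq.(\mref{eq:13.3}) and Eq.(\mref{eq:13.03}); the reduction above merely spares us that repetition.
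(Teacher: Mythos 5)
Your argument is correct, but it proceeds by a genuinely different route from the paper. The paper proves Theorem \mref{thm:B20.15} by a direct computation: it expands the pre-Lie associator of $\ast$, carries the $\kappa$-terms alongside the $\l$-terms through Eqs.(\mref{eq:deri}), (\mref{eq:1.3}) and (\mref{eq:13.3}) until the expression is visibly symmetric in $a$ and $b$, and then verifies the Novikov identity (\mref{eq:13.03}) separately. You instead reduce the statement to Theorem \mref{thm:20.15} via the substitution $\mathfrak{D}'=\mathfrak{D}+\kappa\xi$, and your weight bookkeeping is right: $\mathfrak{D}'(ab)=\xi(a)\mathfrak{D}'(b)+\mathfrak{D}'(a)\xi(b)+(\l-\kappa)\xi(a)\xi(b)$, so $\mathfrak{D}'$ is a $(\l-\kappa)$-$(\xi,\xi)$-derivation; the commutation hypotheses pass to $\mathfrak{D}'$ by linearity; and with $\zeta=\xi$, $\iota=\id_A$ the product $\zeta(a)\iota\mathfrak{D}'(b)=\xi(a)\mathfrak{D}(b)+\kappa\xi(a)\xi(b)$ and structure maps $\zeta\a=\a\xi$, $\iota\b\xi=\b\xi$ match the ones in the statement, since Theorem \mref{thm:20.15} holds for an arbitrary weight. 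What your reduction buys is economy and a conceptual point the paper does not make explicit, namely that ``Approach 2'' is, up to the reparametrization $(\mathfrak{D},\l)\mapsto(\mathfrak{D}+\kappa\xi,\l-\kappa)$, the special case $\zeta=\xi$, $\iota=\id_A$ of ``Approach 1''. What the paper's direct computation buys is independence from Theorem \mref{thm:20.15} and an explicit display of how $\l$ and $\kappa$ enter the symmetrized associator, but as a matter of logic your shorter derivation is complete.
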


 \begin{proof} It is easy to see that $\a\xi(a\ast b)=\a\xi(a)\ast\a\xi(b)$ and $\b\xi(a\ast b)=\b\xi(a)\ast\b\xi(b)$. Now, for all $a,b,c\in A$, we compute
 \begin{eqnarray*}
 &&\a\b\xi^{2}(a)\ast(\a\xi(b)\ast c)-(\b\xi(a)\ast\a\xi(b))\ast \b\xi(c)\\
 &&\hspace{10mm}=\a\b\xi^{3}(a)\mathfrak{D}(\a\xi^{2}(b)\mathfrak{D}(c)
 +\kappa\a\xi^{2}(b)\xi(c))+\kappa\a\b\xi^{3}(a)(\a\xi^{3}(b)\xi\mathfrak{D}(c)
 +\kappa\a\xi^{3}(b)\xi^{2}(c))\\
 &&\hspace{14mm}-(\b\xi^{3}(a)\a\xi^{2}\mathfrak{D}(b)
 +\kappa\b\xi^{3}(a)\a\xi^{3}(b))
 \b\xi\mathfrak{D}(c)-\kappa(\b\xi^{3}(a)\a\xi^{2}\mathfrak{D}(b)\\
 &&\hspace{14mm}+\kappa\b\xi^{3}(a)\a\xi^{3}(b))\b\xi^{2}(c)\\
 &&\hspace{10mm}=\a\b\xi^{3}(a)(\a\xi^{3}(b)\mathfrak{D}^{2}(c))
 +\a\b\xi^{3}(a)(\a\xi^{2}\mathfrak{D}(b)\xi\mathfrak{D}(c)) +\l\a\b\xi^{3}(a)(\a\xi^{3}(b)\xi\mathfrak{D}(c))\\
 &&\hspace{14mm}+\kappa\a\b\xi^{3}(a)(\a\xi^{3}(b)\xi\mathfrak{D}(c)) +\kappa\a\b\xi^{3}(a)(\a\xi^{2}\mathfrak{D}(b)\xi^{2}(c))
 +\l\kappa\a\b\xi^{3}(a)(\a\xi^{3}(b)\xi^{2}(c))\\
 &&\hspace{14mm}+\kappa(\b\xi^{3}(a)\a\xi^{3}(b))\xi\b\mathfrak{D}(c)
 +\kappa^{2}(\b\xi^{3}(a)\a\xi^{3}(b))\b\xi^{2}(c) -(\b\xi^{3}(a)\a\xi^{2}\mathfrak{D}(b))\b\xi\mathfrak{D}(c)\\
 &&\hspace{14mm}-\kappa(\b\xi^{3}(a)\a\xi^{3}(b))\b\xi\mathfrak{D}(c) -\kappa(\b\xi^{3}(a)\a\xi^{2}\mathfrak{D}(b))\b\xi^{2}(c)
 -\kappa^{2}(\b\xi^{3}(a)\a\xi^{3}(b))\b\xi^{2}(c)\\
 &&\hspace{9mm}\stackrel{(\mref{eq:1.3})}=(\b\xi^{3}(a)\a\xi^{3}(b))\b\mathfrak{D}^{2}(c)
 +\l(\b\xi^{3}(a)\a\xi^{3}(b))\b\xi\mathfrak{D}(c) +\l\kappa(\b\xi^{3}(a)\a\xi^{3}(b))\b\xi^{2}(c)\\
 &&\hspace{14mm}+\kappa(\b\xi^{3}(a)\a\xi^{3}(b))\xi\b\mathfrak{D}(c)\\
 &&\hspace{9mm}\stackrel{(\mref{eq:13.3})}=(\b\xi^{3}(b)\a\xi^{3}(a))\b\mathfrak{D}^{2}(c)
 +\l(\b\xi^{3}(b)\a\xi^{3}(a))\b\xi\mathfrak{D}(c)+\l\kappa(\b\xi^{3}(b)\a\xi^{3}(a))\b\xi^{2}(c)\\
 &&\hspace{14mm}+\kappa(\b\xi^{3}(b)\a\xi^{3}(a))\xi\b\mathfrak{D}(c)\\
 &&\hspace{10mm}=\a\b\xi^{2}(b)\ast(\a\xi(a)\ast c)-(\b\xi(b)\ast\a\xi(a))\ast \b\xi(c),
 \end{eqnarray*}
 which means that $(A,\ast,\a\xi,\b\xi)$ is a BiHom-pre-Lie algebra. Furthermore, $(A,\ast,\a\xi,\b\xi)$ is a BiHom-Novikov algebra. Since $(A,\mu,\a,\b)$ is BiHom-commutative, it is BiHom-Novikov, so Eq.(\mref{eq:13.03}) holds.
 \begin{eqnarray*}
 (a\ast \b\xi(b))\ast\a\b\xi^{2}(c)
 &=&(\xi^{2}(a)\b\xi^{2}\mathfrak{D}(b))\a\b\xi^{2}\mathfrak{D}(c)
 +\kappa(\xi^{2}(a)\b\xi^{3}(b))\a\b\xi^{2}\mathfrak{D}(c)\\
 &&+\kappa(\xi^{2}(a)\b\xi^{2}\mathfrak{D}(b))\a\b\xi^{3}(c)
 +\kappa^{2}(\xi^{2}(a)\b\xi^{3}(b))\a\b\xi^{3}(c)\\
 &\stackrel{(\mref{eq:13.03})}=&(\xi^{2}(a)\b\xi^{2}\mathfrak{D}(c))\a\b\xi^{2}\mathfrak{D}(b)
 +\kappa(\xi^{2}(a)\b\xi^{2}\mathfrak{D}(c))\a\b\xi^{3}(b)\\
 &&+\kappa(\xi^{2}(a)\b\xi^{3}(c))\a\b\xi^{2}\mathfrak{D}(b)
 +\kappa^{2}(\xi^{2}(a)\b\xi^{3}(c))\a\b\xi^{3}(b)\\
 &=&(a\ast\b\xi(c))\ast\a\b\xi^{2}(b),
 \end{eqnarray*}
 finishing the proof.
 \end{proof}
\vspace{-3mm} 

 \begin{cor}\mlabel{cor:B013.9} Let $\kappa,\l$ be two given elements of $K$, $(A,\mu,\a,\b)$ be a commutative BiHom-associative algebra and $\mathfrak{D}$  be a $\l$-$(\a\b,\a\b)$-derivation such that any two of the maps $\a, \b, \mathfrak{D}$ commute. Define
 \begin{eqnarray*}
 &a\ast b=\a\b(a)\mathfrak{D}(b)+\kappa\a\b(a)\a\b(b),\quad \forall a,b\in A&
 \end{eqnarray*}
 Then $(A,\ast,\a^{2}\b,\a\b^{2})$ is a BiHom-pre-Lie algebra and a BiHom-Novikov algebra.
 \end{cor}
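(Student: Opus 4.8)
The plan is to recognize Corollary \mref{cor:B013.9} as the special case of Theorem \mref{thm:B20.15} obtained by setting $\xi=\a\b$. First I would check that $\xi=\a\b$ fulfils the hypotheses of that theorem. Since $(A,\mu,\a,\b)$ is a BiHom-associative algebra, Eq.(\mref{eq:1.2}) says that $\a$ and $\b$ are both algebra endomorphisms of $A$, so their composite $\xi=\a\b$ is again an algebra map. Moreover $\a\circ\b=\b\circ\a$, and by hypothesis $\mathfrak{D}$ commutes with each of $\a$ and $\b$; therefore the four maps $\a,\b,\xi=\a\b,\mathfrak{D}$ commute pairwise (in particular $\xi$ commutes with $\a$, with $\b$, and with $\mathfrak{D}$). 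Finally, $\mathfrak{D}$ is assumed to be a $\l$-$(\a\b,\a\b)$-derivation, which is exactly the statement that it is a $\l$-$(\xi,\xi)$-derivation.

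Next I would substitute $\xi=\a\b$ into the conclusion of Theorem \mref{thm:B20.15}. The new product $a\ast b=\xi(a)\mathfrak{D}(b)+\kappa\,\xi(a)\xi(b)$ becomes precisely $a\ast b=\a\b(a)\mathfrak{D}(b)+\kappa\,\a\b(a)\a\b(b)$, which is the multiplication in the statement of the corollary. The theorem then immediately gives that $(A,\ast,\a\xi,\b\xi)$ is both a BiHom-pre-Lie algebra and a BiHom-Novikov algebra. Using $\a\circ\b=\b\circ\a$ one rewrites the structure maps as $\a\xi=\a\a\b=\a^{2}\b$ and $\b\xi=\b\a\b=\a\b^{2}$, so that $(A,\ast,\a\xi,\b\xi)=(A,\ast,\a^{2}\b,\a\b^{2})$, which is exactly the desired conclusion.

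Because every step is a direct specialization of an already established result, there is essentially no obstacle here; the only points that need a line of verification are that $\a\b$ is an algebra map (so Theorem \mref{thm:B20.15} applies with $\xi=\a\b$) and that the twisted structure maps $\a\xi$ and $\b\xi$ coincide with $\a^{2}\b$ and $\a\b^{2}$ once the commutativity $\a\circ\b=\b\circ\a$ is invoked. No separate computation of the BiHom-pre-Lie identity or the BiHom-Novikov identity is required, since both are supplied by the theorem.
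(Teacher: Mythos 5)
Your proposal is correct and is exactly the paper's argument: the paper's proof of this corollary is the one-line specialization $\xi=\a\b$ in Theorem \mref{thm:B20.15}, and your additional checks (that $\a\b$ is an algebra map by Eq.(\mref{eq:1.2}), that it commutes with $\a,\b,\mathfrak{D}$, and that $\a\xi=\a^{2}\b$, $\b\xi=\a\b^{2}$) are just the routine verifications the paper leaves implicit.
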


 \begin{proof} Let $\xi=\a\b$ in Theorem \mref{thm:B20.15}.
 \end{proof}
\vspace{-3mm}

 \begin{cor}\mlabel{cor:B113.A19a} Let $\kappa,\l$ be two given elements of $K$, $(A,\mu,\a,\b)$ be a commutative BiHom-associative algebra, $\mathfrak{D}:A\longrightarrow A$ be a $\l$-$(\id,\id)$-derivation such that any two of the maps $\a,\b,\mathfrak{D}$ commute. Define
 \begin{eqnarray*}
 &a\ast b=a\mathfrak{D}(b)+\kappa ab,\quad \forall a,b\in A&
 \end{eqnarray*}
 Then $(A,\ast,\a,\b)$ is a BiHom-pre-Lie algebra and a BiHom-Novikov algebra.
 \end{cor}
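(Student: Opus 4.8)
The plan is to obtain this statement as the special case $\xi=\id$ of Theorem \mref{thm:B20.15}. First I would observe that the identity map $\id\colon A\longrightarrow A$ is an algebra map of the BiHom-associative algebra $(A,\mu,\a,\b)$, since it trivially commutes with $\a$, $\b$ and $\mu$. Next, with $\xi=\id$, the hypothesis that $\mathfrak{D}$ be a $\l$-$(\xi,\xi)$-derivation becomes precisely the hypothesis that $\mathfrak{D}$ be a $\l$-$(\id,\id)$-derivation, i.e. $\mathfrak{D}(ab)=a\mathfrak{D}(b)+\mathfrak{D}(a)b+\l ab$; and the requirement that any two of the maps $\a,\b,\xi,\mathfrak{D}$ commute reduces to the requirement that any two of $\a,\b,\mathfrak{D}$ commute, since $\id$ commutes with every linear map. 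Thus all the hypotheses of Theorem \mref{thm:B20.15} are met.

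Then I would substitute $\xi=\id$ into the construction of Theorem \mref{thm:B20.15}: the new multiplication $a\ast b=\xi(a)\mathfrak{D}(b)+\kappa\,\xi(a)\xi(b)$ specializes to $a\ast b=a\mathfrak{D}(b)+\kappa\,ab$, while the twisted structure maps $\a\xi$ and $\b\xi$ become $\a$ and $\b$ respectively. Applying Theorem \mref{thm:B20.15} then yields directly that $(A,\ast,\a,\b)$ is a BiHom-pre-Lie algebra and a BiHom-Novikov algebra, which is the assertion of the corollary. There is no real obstacle here: the only points to verify are that the identity qualifies as an algebra map and that the data specialize exactly as claimed, both of which are immediate; the substantive computations (the BiHom-pre-Lie identity via Eq.(\mref{eq:1.3}) and Eq.(\mref{eq:13.3}), and the BiHom-Novikov identity via Eq.(\mref{eq:13.03})) were already carried out in the proof of Theorem \mref{thm:B20.15}.
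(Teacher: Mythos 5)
Your proposal is correct and follows exactly the paper's own argument, which consists of the single line ``Let $\xi=\id$ in Theorem \mref{thm:B20.15}.'' Your additional checks that $\id$ is an algebra map, that the commutativity and derivation hypotheses specialize as claimed, and that $\a\xi=\a$, $\b\xi=\b$ are exactly the (immediate) verifications the paper leaves implicit.
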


 \begin{proof} Let $\xi=\id$ in Theorem \mref{thm:B20.15}.
 \end{proof}
\vspace{-3mm}

 \begin{cor}\mlabel{cor:B013.9a} Let $\kappa,\l$ be two given elements of $K$, $(A,\mu,\D,\a,\b,\a,\b)$ be a BiHom-commutative $\l$-infBH-bialgebra, $\xi:A\longrightarrow A$ be an algebra map and coalgebra map such that any two of the maps $\a,\b,\xi$ commute. Define
 \begin{eqnarray*}
 &a\ast b=\xi(a)(b_1b_2)+\kappa\xi(a)\xi(b),\quad \forall a,b\in A&
 \end{eqnarray*}
 Then $(A,\ast,\a\xi,\b\xi)$ is a BiHom-pre-Lie algebra and a BiHom-Novikov algebra.
 \end{cor}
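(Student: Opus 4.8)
The plan is to read this as a direct instance of Theorem~\ref{thm:B20.15}, so that the genuine work is inherited from there, with the $\l$-derivation in that theorem taken to be $\mathfrak{D}:=\mu\circ\D$. First I would invoke Lemma~\ref{lem:13.6} for the $\l$-infBH-bialgebra $(A,\mu,\D,\a,\b,\a,\b)$ (i.e.\ with $\psi_A=\a$, $\om_A=\b$): it gives that $\mathfrak{D}=\mu\D$ is a $\l$-$(\a\om,\b\psi)$-derivation, which, using $\a\circ\b=\b\circ\a$, reads
\begin{eqnarray*}
&\mathfrak{D}(ab)=\a\b(a)\mathfrak{D}(b)+\mathfrak{D}(a)\a\b(b)+\l\a\b(a)\a\b(b),\quad \forall\,a,b\in A.&
\end{eqnarray*}
Since $\mathfrak{D}(b)=b_1b_2$, the multiplication in the statement is exactly $a\ast b=\xi(a)\mathfrak{D}(b)+\kappa\xi(a)\xi(b)$, the form appearing in Theorem~\ref{thm:B20.15}.

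Next I would check the hypotheses of Theorem~\ref{thm:B20.15}. The underlying BiHom-associative algebra $(A,\mu,\a,\b)$ is commutative because the bialgebra is BiHom-commutative in the sense of Definition~\ref{de:13.5}, and $\xi$ is an algebra map by assumption. For the requirement that any two of $\a,\b,\xi,\mathfrak{D}$ commute: the maps $\a,\b,\xi$ commute pairwise by hypothesis; $\mathfrak{D}$ commutes with $\a$ and with $\b$ because $(\a\o\a)\circ\D=\D\circ\a$ and $\a\circ\mu=\mu\circ(\a\o\a)$ (and similarly for $\b$) give $\mathfrak{D}\circ\a=\mu\circ(\a\o\a)\circ\D=\a\circ\mathfrak{D}$; and $\mathfrak{D}$ commutes with $\xi$ because $\xi$ is simultaneously a coalgebra map, $\D\circ\xi=(\xi\o\xi)\circ\D$, and an algebra map, $\xi\circ\mu=\mu\circ(\xi\o\xi)$, so that $\mathfrak{D}\circ\xi=\mu\circ(\xi\o\xi)\circ\D=\xi\circ\mathfrak{D}$. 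With $\mathfrak{D}=\mu\D$ and $\xi$ fed into Theorem~\ref{thm:B20.15}, one concludes that $(A,\ast,\a\xi,\b\xi)$ is a BiHom-pre-Lie algebra and a BiHom-Novikov algebra.

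The long pre-Lie identity verification and the BiHom-Novikov identity (via Eq.~\eqref{eq:13.03}) are already carried out inside the proof of Theorem~\ref{thm:B20.15}, so nothing essentially new needs to be computed; the content of the corollary is purely the translation to $\mathfrak{D}=\mu\D$. Consequently the only step I would spell out carefully is the compatibility bookkeeping above, in particular the identity $\mathfrak{D}\circ\xi=\xi\circ\mathfrak{D}$ — this is precisely where the hypothesis that $\xi$ is both an algebra and a coalgebra map is needed, and it is the one place where the argument could go wrong if that hypothesis were dropped. Taking $\xi=\a\b$ here reproduces exactly Corollary~\ref{cor:B013.9} applied to the derivation $\mathfrak{D}=\mu\D$.
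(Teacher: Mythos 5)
Your proposal follows exactly the route the paper itself takes (the paper's entire proof is ``It can be obtained by Lemma \ref{lem:13.6} and Theorem \ref{thm:B20.15}''), but the reduction has a genuine gap, and your own write-up makes it visible. Theorem \ref{thm:B20.15} requires $\mathfrak{D}$ to be a $\l$-$(\xi,\xi)$-derivation \emph{for the same map $\xi$ that appears in the product} $a\ast b=\xi(a)\mathfrak{D}(b)+\kappa\xi(a)\xi(b)$; this is not a cosmetic condition, since in the pre-Lie computation the term $\mathfrak{D}(\a\xi^{2}(b)\mathfrak{D}(c))$ is expanded via Eq.~\eqref{eq:deri} and the resulting summand $(\b\xi^{3}(a)\a\xi^{2}\mathfrak{D}(b))\b\xi\mathfrak{D}(c)$ must cancel against $-(\b\xi^{3}(a)\a\xi^{2}\mathfrak{D}(b))\b\xi\mathfrak{D}(c)$ coming from the other association; that cancellation uses the derivation twist being $\xi$ itself. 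What Lemma \ref{lem:13.6} actually delivers for the bialgebra $(A,\mu,\D,\a,\b,\a,\b)$ is that $\mathfrak{D}=\mu\D$ is a $\l$-$(\a\b,\a\b)$-derivation --- precisely the identity you display. You then say you will ``check the hypotheses of Theorem \ref{thm:B20.15}'' but only verify that $\xi$ is an algebra map and that $\a,\b,\xi,\mathfrak{D}$ pairwise commute; the one hypothesis you never address is that $\mathfrak{D}$ be a $\l$-$(\xi,\xi)$-derivation, and for a general algebra-and-coalgebra map $\xi$ commuting with $\a,\b$ this simply does not follow from the $(\a\b,\a\b)$-derivation property. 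If one redoes the pre-Lie computation with a $(\eta,\eta)$-derivation, $\eta\neq\xi$, the leftover terms $(\b\xi^{3}(a)\a\xi^{2}\mathfrak{D}(b))\b\eta\mathfrak{D}(c)-(\b\xi^{3}(a)\a\xi^{2}\mathfrak{D}(b))\b\xi\mathfrak{D}(c)$ are not symmetric in $a,b$ and do not vanish.

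Concretely: your argument (and the corollary as stated) is justified only when $\xi=\a\b$, in which case it collapses to Corollary \ref{cor:B013.9} applied to $\mathfrak{D}=\mu\D$ --- exactly the specialization you mention in your last sentence --- or if one adds the hypothesis that $\mu\D$ is a $\l$-$(\xi,\xi)$-derivation. To repair the proposal you should either restrict to $\xi=\a\b$, or explicitly verify (it will require extra assumptions relating $\xi$ to $\a\b$) that $\mathfrak{D}(ab)=\xi(a)\mathfrak{D}(b)+\mathfrak{D}(a)\xi(b)+\l\xi(a)\xi(b)$ holds for the given $\xi$ before invoking Theorem \ref{thm:B20.15}. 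The commutation checks you do carry out ($\mathfrak{D}\circ\a=\a\circ\mathfrak{D}$, $\mathfrak{D}\circ\xi=\xi\circ\mathfrak{D}$ from $\xi$ being simultaneously an algebra and coalgebra map) are correct, but they do not substitute for the missing derivation identity.
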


 \begin{proof} It can be obtained by Lemma \mref{lem:13.6} and Theorem \mref{thm:B20.15}.
 \end{proof}
\vspace{-3mm}

 \begin{cor}\mlabel{cor:B213.A29a} Let $(A,\mu)$ be an associative and commutative algebra, $\xi:A\longrightarrow A$ be an algebra map and $\mathfrak{D}:A\longrightarrow A$ be  a $\l$-$(\xi,\xi)$-derivation. Assume that moreover we have $\mathfrak{D}\circ\xi=\xi\circ\mathfrak{D}$. Define a new multiplication on $A$ by
 \begin{eqnarray*}
 &a\ast b=\xi(a)\mathfrak{D}(b)+\kappa\xi(a)\xi(b),\quad \forall a,b\in A&
 \end{eqnarray*}
 Then $(A,\ast,\xi,\xi)$ is a BiHom-pre-Lie algebra and a BiHom-Novikov algebra.
 \end{cor}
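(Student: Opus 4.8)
The plan is to deduce this statement directly from Theorem \mref{thm:B20.15} by specializing the twisting maps to the identity. First I would observe that any associative and commutative algebra $(A,\mu)$ is a BiHom-associative algebra with $\a=\b=\id_A$: indeed Eqs.~(\mref{eq:1.2})--(\mref{eq:1.3}) are immediate, and the BiHom-commutativity condition (\mref{eq:13.3}), namely $\b(a)\a(b)=\b(b)\a(a)$, reduces to $ab=ba$, which holds by hypothesis. Thus $(A,\mu,\id,\id)$ is a commutative BiHom-associative algebra in the sense of Definition \mref{de:13.5}.

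Next I would check that the remaining hypotheses of Theorem \mref{thm:B20.15} are met with $\a=\b=\id_A$ and with the given $\xi$ and $\mathfrak{D}$: $\xi$ is an algebra map and $\mathfrak{D}$ is a $\l$-$(\xi,\xi)$-derivation by assumption, and the requirement that any two of the maps $\a,\b,\xi,\mathfrak{D}$ commute collapses to the single condition $\mathfrak{D}\circ\xi=\xi\circ\mathfrak{D}$, which is precisely the extra hypothesis imposed here. Applying Theorem \mref{thm:B20.15} then yields that $(A,\ast,\a\xi,\b\xi)=(A,\ast,\xi,\xi)$, with $a\ast b=\xi(a)\mathfrak{D}(b)+\kappa\xi(a)\xi(b)$, is both a BiHom-pre-Lie algebra and a BiHom-Novikov algebra, which is exactly the claim.

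There is essentially no obstacle here: the only point requiring care is the recognition that a plain associative commutative algebra becomes a BiHom-commutative BiHom-associative algebra once equipped with the identity twisting maps, and that the (a priori six-fold) commutativity requirement of Theorem \mref{thm:B20.15} degenerates to the single displayed hypothesis $\mathfrak{D}\circ\xi=\xi\circ\mathfrak{D}$. One could alternatively prove the corollary from scratch by repeating the computations in the proof of Theorem \mref{thm:B20.15} with every occurrence of $\a$ and $\b$ erased, but this would be strictly redundant.
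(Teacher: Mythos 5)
Your proposal is correct and matches the paper's own proof, which simply reads ``Let $\a=\b=\id$ in Theorem \mref{thm:B20.15}''; your additional verification that a commutative associative algebra with identity twisting maps satisfies Definition \mref{de:13.5} and that the commutativity hypotheses collapse to $\mathfrak{D}\circ\xi=\xi\circ\mathfrak{D}$ is exactly the (implicit) content of that one-line argument.
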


 \begin{proof} Let $\a=\b=\id$ in Theorem \mref{thm:B20.15}.
 \end{proof}
\vspace{-3mm}

 \begin{cor}\mlabel{cor:BB013.9a} Let $(A,\mu,\a,\b,\a,\b,r)$ be a quasitriangular (resp.  anti-quasitriangular) $\l$-infBH-bialgebra such that $\a,\b$ are bijective, $\xi:A\longrightarrow A$ be an algebra map such that any two of the maps $\a,\b,\xi$ commute. Define
 \begin{eqnarray*}
 &a\ast b=\xi(a)(\b(b)(r^{1}r^{2}))-\xi(a)((r^{1}r^{2})\a(b))
 -\l\xi(a)\a\b(b)+\kappa\xi(a)\xi(b),\quad \forall a,b\in A&
 \end{eqnarray*}
 Then $(A,\ast,\a\xi,\b\xi)$ is a BiHom-pre-Lie algebra and a BiHom-Novikov algebra.
 \end{cor}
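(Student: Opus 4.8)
The plan is to realize the product $\ast$ as an instance of the construction in Theorem \ref{thm:B20.15}, taking for the $\l$-derivation the map $\mu\circ\D_r$. Since $(A,\mu,\a,\b,\a,\b,r)$ is a quasitriangular (resp.\ anti-quasitriangular) $\l$-infBH-bialgebra, by Definition \ref{de:qt} its comultiplication is $\D_r$ of Eq.~\eqref{eq:14.5} (resp.\ $\widetilde{\D}_r$ of Eq.~\eqref{eq:14.25}) with $\psi=\a$, $\om=\b$, and the $7$-tuple $(A,\mu,\D_r,\a,\b,\a,\b)$ (resp.\ with $\widetilde{\D}_r$) is a $\l$-infBH-bialgebra. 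Put $\mathfrak D:=\mu\circ\D_r$ (resp.\ $\mathfrak D:=\mu\circ\widetilde{\D}_r$).

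The key step is to compute $\mathfrak D$ in closed form. Applying $\mu$ to the right-hand side of Eq.~\eqref{eq:14.5}, using the BiHom-associativity law Eq.~\eqref{eq:1.3} in both directions $(xy)\b(z)=\a(x)(yz)$ and $\a(x)(yz)=(xy)\b(z)$, the unit relations Eq.~\eqref{eq:1.5}, and the bijectivity and commutativity of $\a,\b$, one gets
\[
\mathfrak D(b)=\b(b)(r^{1}r^{2})-(r^{1}r^{2})\a(b)-\l\a\b(b).
\]
The very same expression comes out of $\mu\circ\widetilde{\D}_r$, because the only discrepancy between Eqs.~\eqref{eq:14.5} and \eqref{eq:14.25}, namely the term $-\l(\om(b)\o1)$ versus $-\l(1\o\psi(b))$, collapses under $\mu$ to $-\l\a\b(b)$ in both cases by the unit axioms. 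Hence $a\ast b=\xi(a)\mathfrak D(b)+\kappa\,\xi(a)\xi(b)$ for all $a,b\in A$, which is exactly the product appearing in Theorem \ref{thm:B20.15}.

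It then remains to check the hypotheses of Theorem \ref{thm:B20.15}. By Lemma \ref{lem:13.6} (applied with $\om=\b$, $\psi=\a$), $\mathfrak D$ is a $\l$-$(\a\om,\b\psi)$-derivation, that is a $\l$-$(\a\b,\b\a)=\l$-$(\a\b,\a\b)$-derivation; it commutes with $\a$ and $\b$ because these are algebra and coalgebra endomorphisms (Eqs.~\eqref{eq:1.2} and \eqref{eq:12.2}) and with $\xi$ by the commutation hypotheses, and $(A,\mu,\a,\b)$ is BiHom-commutative by assumption. Feeding this into Theorem \ref{thm:B20.15} yields that $(A,\ast,\a\xi,\b\xi)$ is a BiHom-pre-Lie algebra and a BiHom-Novikov algebra, and the anti-quasitriangular case is identical with $\widetilde{\D}_r$ in place of $\D_r$.

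The main obstacle is the closed-form evaluation of $\mu\circ\D_r$ and $\mu\circ\widetilde{\D}_r$ in the second paragraph: pushing $\mu$ through the $\a,\b$-twisted tensors in Eqs.~\eqref{eq:14.5} and \eqref{eq:14.25} and disposing of the unit terms without losing track of the twists, so that both cases are seen to collapse to the single expression $\b(b)(r^1r^2)-(r^1r^2)\a(b)-\l\a\b(b)$. After that the argument is a direct citation of Lemma \ref{lem:13.6} and Theorem \ref{thm:B20.15}; the only point to watch is that the derivation twist of $\mathfrak D$ is $\a\b$ rather than $\xi$, so Theorem \ref{thm:B20.15} is used in the (same-proof) form in which the multiplier $\xi$ and the derivation twist $\a\b$ are allowed to be distinct commuting algebra endomorphisms.
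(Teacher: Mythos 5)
You follow the same route as the paper, whose entire proof is the citation of Lemma \ref{lem:13.6} and Theorem \ref{thm:B20.15}, and your preparatory step is correct and worth making explicit: applying $\mu$ to \eqref{eq:14.5} or \eqref{eq:14.25} with $\psi=\a$, $\om=\b$, and using \eqref{eq:1.3} and \eqref{eq:1.5}, both comultiplications collapse to $\mathfrak D(b)=\b(b)(r^1r^2)-(r^1r^2)\a(b)-\l\a\b(b)$, so indeed $a\ast b=\xi(a)\mathfrak D(b)+\kappa\,\xi(a)\xi(b)$. The gap is in your last step. Lemma \ref{lem:13.6} makes $\mathfrak D$ a $\l$-$(\a\b,\a\b)$-derivation, whereas Theorem \ref{thm:B20.15} requires the derivation twist to be the \emph{same} map $\xi$ that acts as left multiplier and enters the structure maps $\a\xi,\b\xi$. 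Your assertion that the theorem holds ``by the same proof'' when the twist $\eta=\a\b$ and the multiplier $\xi$ are distinct commuting algebra maps is unjustified and in fact false: in the pre-Lie computation the term $\a\b\xi^{3}(a)\bigl(\a\xi^{2}\mathfrak D(b)\cdot\eta\mathfrak D(c)\bigr)$, which equals $\bigl(\b\xi^{3}(a)\,\a\xi^{2}\mathfrak D(b)\bigr)\b\eta\mathfrak D(c)$ by \eqref{eq:1.3}, must cancel the term $\bigl(\b\xi^{3}(a)\,\a\xi^{2}\mathfrak D(b)\bigr)\b\xi\mathfrak D(c)$ coming from $(\b\xi(a)\ast\a\xi(b))\ast\b\xi(c)$, and this happens only when $\eta=\xi$; likewise the surviving term $\bigl(\b\xi^{3}(a)\,\a\eta\xi^{2}(b)\bigr)\b\mathfrak D^{2}(c)$ is symmetric in $a,b$ via \eqref{eq:13.3} only if $\eta\xi^{2}=\xi^{3}$. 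A concrete failure of the generalized statement you invoke: take $A=K[x,y]$, $\a=\b=\id$ (so $\eta=\a\b=\id$), $\l=\kappa=0$, $\mathfrak D=\partial_x$, $\xi(x)=x$, $\xi(y)=2y$; then $\xi$ and $\mathfrak D$ commute, $a\ast b=\xi(a)\partial_x(b)$, and with $a=1$, $b=x$, $c=xy$ one gets $\xi^{2}(a)\ast(\xi(b)\ast c)-(\xi(a)\ast\xi(b))\ast\xi(c)=-y$ while the $a\leftrightarrow b$ swap gives $0$.

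To be fair, this defect is inherited from the source: the paper's one-line proof of the corollary rests on exactly the same mismatched application of Theorem \ref{thm:B20.15}, and the statement as written appears to need either $\xi=\a\b$ (in which case it is Corollary \ref{cor:B013.9} combined with Lemma \ref{lem:13.6}) or some additional compatibility tying $\xi$ to $\a\b$ and $\mathfrak D$. So you have not overlooked an argument that the paper actually supplies; but the point you flag as ``the only point to watch'' is precisely where the proof, as you (and the paper) give it, breaks down, and your claim that it can be repaired ``by the same proof'' is incorrect.
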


 \begin{proof} It can be obtained by Lemma \mref{lem:13.6} and Theorem \mref{thm:B20.15}.
 \end{proof}
\vspace{-3mm}

 \begin{defi}\mlabel{de:B022.008} (\cite[Definition 3.1]{LMMP3}) {\bf  A BiHom-Novikov-Poisson algebra} is a 5-tuple $(A,\cdot,\ast, \a,\b)$ such that:

 (1) $(A,\cdot,\a,\b)$ is a BiHom-commutative algebra;

 (2) $(A,\ast,\a,\b)$ is a BiHom-Novikov algebra;

 (3) the following compatibility conditions hold for all $a,b,c\in A$:
 \begin{eqnarray}
 &(\b(a)\ast\a(b))\cdot\b(c)-\a\b(a)\ast(\a(b)\cdot c)
 =(\b(b)\ast\a(a))\cdot\b(c)-\a\b(b)\ast(\a(a)\cdot c),&\mlabel{eq:B022.022}\\
 &(a\cdot\b(b))\ast \a\b(c)=(a\ast\b(c))\cdot\a\b(b),&\mlabel{eq:B022.023}\\
 &\a(a)\cdot(b\ast c)=(a\cdot b)\ast \b(c).&\mlabel{eq:B022.024}
 \end{eqnarray}
 \end{defi}
\vspace{-3mm}

 \begin{pro}\mlabel{pro:120.112} With  the hypotheses of Corollary \mref{cor:B113.A19a}, $(A,\mu,\ast,\a,\b)$ is a BiHom-Novikov-Poisson algebra.
 \end{pro}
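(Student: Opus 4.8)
The plan is simply to check the three defining conditions of a BiHom-Novikov-Poisson algebra (Definition~\ref{de:B022.008}) for the two products $ab:=\mu(a\o b)$ and $a\ast b=a\mathfrak{D}(b)+\kappa ab$. Condition~(1), that $(A,\mu,\a,\b)$ is BiHom-commutative, is the standing hypothesis of Corollary~\ref{cor:B113.A19a}, and condition~(2), that $(A,\ast,\a,\b)$ is a BiHom-Novikov algebra, is exactly the conclusion of that corollary. So the only work is to verify the three compatibility identities \eqref{eq:B022.022}, \eqref{eq:B022.023} and \eqref{eq:B022.024}. Throughout I would use only BiHom-associativity \eqref{eq:1.3}, BiHom-commutativity \eqref{eq:13.3}, the fact that $\a,\b$ are algebra maps, the derivation rule $\mathfrak{D}(ab)=a\mathfrak{D}(b)+\mathfrak{D}(a)b+\l ab$ coming from \eqref{eq:deri} with $\xi=\zeta=\id$, and the commutations $\a\mathfrak{D}=\mathfrak{D}\a$, $\b\mathfrak{D}=\mathfrak{D}\b$.

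I would start with \eqref{eq:B022.024}, which is the cheapest: expanding $\a(a)\cdot(b\ast c)=\a(a)(b\mathfrak{D}(c))+\kappa\a(a)(bc)$ and applying \eqref{eq:1.3} to each term gives $(ab)\b\mathfrak{D}(c)+\kappa(ab)\b(c)$, while $(ab)\ast\b(c)=(ab)\mathfrak{D}(\b(c))+\kappa(ab)\b(c)$ equals the same thing once $\mathfrak{D}\b=\b\mathfrak{D}$ is used. For \eqref{eq:B022.023}, both sides split into a $\mathfrak{D}$-part and a $\kappa$-part; pulling $\mathfrak{D}$ through $\a\b$, the $\mathfrak{D}$-parts become $(a\b(b))(\a\b\mathfrak{D}(c))$ on the left and $(a\b\mathfrak{D}(c))(\a\b(b))$ on the right, and the $\kappa$-parts are $\kappa(a\b(b))(\a\b(c))$ and $\kappa(a\b(c))(\a\b(b))$. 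Both pairs coincide by the BiHom-Novikov identity \eqref{eq:13.03} for $(A,\mu,\a,\b)$ — valid because a BiHom-commutative algebra is BiHom-Novikov — the $\mathfrak{D}$-part being \eqref{eq:13.03} with $c$ replaced by $\mathfrak{D}(c)$, which is legitimate since $\mathfrak{D}$ commutes with $\a$ and $\b$.

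The substantive step is \eqref{eq:B022.022}. I would expand its left-hand side, apply the derivation rule to $\mathfrak{D}(\a(b)\,c)=\a(b)\mathfrak{D}(c)+\a\mathfrak{D}(b)\,c+\l\a(b)c$, and then use \eqref{eq:1.3} (writing $\a\b(a)=\a(\b(a))$ to create the correct pattern $\a(x)(yz)=(xy)\b(z)$) to push everything over $\b(c)$ or $\b\mathfrak{D}(c)$. After this the term $(\b(a)\a\mathfrak{D}(b))\b(c)$ and both $\kappa$-terms cancel, and the left-hand side collapses to $-(\b(a)\a(b))\b\mathfrak{D}(c)-\l(\b(a)\a(b))\b(c)$. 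Invoking BiHom-commutativity \eqref{eq:13.3} in the form $\b(a)\a(b)=\b(b)\a(a)$ makes this expression symmetric in $a$ and $b$, which is precisely the assertion of \eqref{eq:B022.022}. The only real hazard in the whole argument is keeping the twisting maps $\a$ and $\b$ in the correct slots each time \eqref{eq:1.3} is applied; once that bookkeeping is handled, all three identities reduce to BiHom-commutativity together with the BiHom-Novikov identity already in hand, and hence $(A,\mu,\ast,\a,\b)$ is a BiHom-Novikov-Poisson algebra.
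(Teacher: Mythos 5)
Your proposal is correct and follows essentially the same route as the paper: conditions (1) and (2) are quoted from Corollary \mref{cor:B113.A19a}, and the three compatibility identities \meqref{eq:B022.022}--\meqref{eq:B022.024} are verified directly using BiHom-associativity \meqref{eq:1.3}, BiHom-commutativity \meqref{eq:13.3}, the derivation rule and the commutation of $\mathfrak{D}$ with $\a,\b$ (your use of the Novikov identity \meqref{eq:13.03} for \meqref{eq:B022.023} is just a repackaging of the paper's \meqref{eq:1.3}/\meqref{eq:13.3} steps). Incidentally, your cancellation of the $\kappa$-terms in \meqref{eq:B022.022}, leaving $-(\b(a)\a(b))\b\mathfrak{D}(c)-\l(\b(a)\a(b))\b(c)$, is the correct bookkeeping; the paper's displayed $2\kappa(\b(a)\a(b))\b(c)$ comes from a sign slip in expanding $-\a\b(a)\ast(\a(b)c)$, which is harmless since either expression is symmetric in $a$ and $b$.
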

\vspace{-3mm}

 \begin{proof} We only need to prove the relations (\mref{eq:B022.022}), (\mref{eq:B022.023}) and (\mref{eq:B022.024}) as follows. 
 \begin{eqnarray*}
 (\b(a)\ast\a(b)) \b(c)-\a\b(a)\ast(\a(b) c)
 \hspace{-2mm}&=&\hspace{-2mm}(\b(a)\a\mathfrak{D}(b)) \b(c)
 +\kappa(\b(a)\a(b)) \b(c)-\a\b(a)\mathfrak{D}(\a(b)  c)\\
 &&+\kappa\a\b(a)(\a(b)  c)\\
 \hspace{-2mm}&=&\hspace{-2mm}(\b(a)\a\mathfrak{D}(b)) \b(c)
 +\kappa(\b(a)\a(b)) \b(c)-\a\b(a) (\a(b) \mathfrak{D}(c))\\
 \hspace{-2mm}&&\hspace{-2mm}-\a\b(a) (\a\mathfrak{D}(b)  c)-\l\a\b(a) (\a(b)  c)
 +\kappa\a\b(a)(\a(b)  c)\\
 \hspace{-2mm}&\stackrel{(\mref{eq:1.3})}=&\hspace{-2mm}(\b(a)\a\mathfrak{D}(b)) \b(c)
 +\kappa(\b(a)\a(b)) \b(c)-(\b(a) \a(b)) \b\mathfrak{D}(c)\\
 \hspace{-2mm}&&\hspace{-2mm}-(\b(a) \a\mathfrak{D}(b)) \b(c)-\l(\b(a)\a(b)) \b(c)
 +\kappa(\b(a)\a(b)) \b(c)\\
 \hspace{-2mm}&=&\hspace{-2mm} 2\kappa(\b(a)\a(b)) \b(c)-(\b(a) \a(b)) \b\mathfrak{D}(c)
 -\l(\b(a)\a(b)) \b(c)\\
 \hspace{-2mm}&=&\hspace{-2mm}
 2\kappa(\b(b)\a(a)) \b(c)-(\b(b) \a(a)) \b\mathfrak{D}(c)
 -\l(\b(b)\a(a)) \b(c)\\
 \hspace{-2mm}&=&\hspace{-2mm}(\b(b)\ast\a(a)) \b(c)-\a\b(b)\ast(\a(a)  c),
 \end{eqnarray*} 
 \begin{eqnarray*}
 (a \b(b))\ast\a\b(c)
 &=&(a \b(b)) \a\b\mathfrak{D}(c)
 +\kappa(a \b(b)) \a\b(c)\\
 &\stackrel{(\mref{eq:1.3})}=&\a(a) (\b(b) \a\mathfrak{D}(c))
 +\kappa\a(a) (\b(b) \a(c))\\
 &\stackrel{(\mref{eq:13.3})}=&\a(a) (\b\mathfrak{D}(c) \a(b))
 +\kappa\a(a) (\b(c) \a(b))\\
 &\stackrel{(\mref{eq:1.3})}=&(a \b\mathfrak{D}(c)) \a\b(b)+\kappa(a \b(c)) \a\b(b)\\
 &=&(a\ast\b(c)) \a\b(b)
 \end{eqnarray*} 
 and
 \begin{eqnarray*}
 \a(a) (b\ast c)
 &=&\a(a) (b \mathfrak{D}(c))
 +\kappa\a(a) (b  c)\\
 &\stackrel{(\mref{eq:1.3})}=&(a  b) \b\mathfrak{D}(c)
 +\kappa (a   b) \b(c)=(a  b)\ast \b(c),
 \end{eqnarray*}
 finishing the proof.
 \end{proof} 
 
 \section*{Acknowledgment} Ma is supported by Natural Science Foundation of Henan Province (No.212300410365). 

 \end{document}